\numberwithin{equation}{section}
\theoremstyle{plain}
\newtheorem{theorem}{Theorem}[section]
\newtheorem{proposition}[theorem]{Proposition}
\newtheorem{lemma}[theorem]{Lemma}
\newtheorem{corollary}[theorem]{Corollary}
\newtheorem*{thmm*}{Main Theorem}
\theoremstyle{definition}
\newtheorem{definition}[theorem]{Definition}
\newtheorem{assumption}[theorem]{Assumption}
\newtheorem{remark}[theorem]{Remark}
\newtheorem*{ackn*}{Acknowledgments}
\newcommand\dd{{\mathrm d}}
\newcommand\id{{\mathrm{id}}}
\newcommand\sm{\sigma}
\newcommand\dt{\delta}
\newcommand\ep{\epsilon}
\newcommand\vep{\varepsilon}
\newcommand\vph{\varphi}
\newcommand\om{\omega}
\newcommand\ta{\theta}
\newcommand\gm{\gamma}
\newcommand\kp{\kappa}
\newcommand\af{\alpha}
\newcommand\bt{\beta}
\newcommand\ld{\lambda}
\newcommand\up{\upsilon}
\newcommand\vsm{\varsigma}
\newcommand\Sm{\Sigma}
\newcommand\Om{\Omega}
\newcommand\Up{\Upsilon}
\newcommand\Ld{\Lambda}
\newcommand\Dt{\Delta}
\newcommand\Gm{\Gamma}
\newcommand\bphi{\boldsymbol{\phi}}
\newcommand\bvep{\boldsymbol{\varepsilon}}
\newcommand\bd{\mathbf{d}}
\newcommand\ba{\mathbf{a}}
\newcommand\bx{\mathbf{x}}
\newcommand\bm{\mathbf{m}}
\newcommand\bfN{\mathbf{N}}
\newcommand\bfU{\mathbf{U}}
\newcommand\RSU{\mathrm{SU}}
\newcommand\RSO{\mathrm{SO}}
\newcommand\RH{\mathrm{H}}
\newcommand\BC{\mathbb{C}}
\newcommand\BR{\mathbb{R}}
\newcommand\BN{\mathbb{N}}
\newcommand\BZ{\mathbb{Z}}
\newcommand\CD{\mathcal{D}}
\newcommand\pl{\partial}
\newcommand\w{\wedge}
\newcommand\ul{\underline}
\newcommand\td{\tilde}
\newcommand\wtd{\widetilde}
\newcommand\fe{\mathfrak{e}}
\newcommand\ptl{\mathfrak{E}}
\newcommand\ptq{\mathfrak{Q}}
\newcommand\stm{\underline{N}}
\newcommand\out{X^{\mathrm{o}}}
\newcommand\srp{\dagger}
\newcommand\ir{\mathfrak{r}}
\newcommand\is{\mathfrak{s}}
\newcommand\xsin{x_{\star}}
\newcommand\coneC{\mathbf{C}}
\DeclareMathOperator\re{Re}
\DeclareMathOperator\im{Im}
\DeclareMathOperator\coker{coker}
\DeclareMathOperator\ind{Ind}
\title[Infinite-Time Singularities of LMCF]{Infinite-Time Singularities of\\the Lagrangian Mean Curvature Flow}
\subjclass{Primary 53E10, Secondary 53D12}
\author{Wei-Bo Su}
\address{Department of Mathematics, National Central University, Taoyuan City 320317, Taiwan}
\email{weibosu@math.ncu.edu.tw}
\author{Chung-Jun Tsai}
\address{Department of Mathematics, National Taiwan University, and National Center for Theoretical Sciences, Mathematics Division, Taipei 106319, Taiwan}
\email{cjtsai@ntu.edu.tw}
\author{Albert Wood}
\address{Department of Mathematics, The Chinese University of Hong Kong, Ma Liu Shui, Hong Kong}
\email{albertwood@math.cuhk.edu.hk}
\date{\today}
\begin{document}
\graphicspath{images/}
\thanks{W.-B.~Su is supported in part by the Taiwan MOST grant 111-2917-I-564-002.  C.-J.~Tsai is supported in part by the Taiwan NSTC grants 111-2636-M-002-022, 112-2636-M-002-003 and 112-2628-M-002-004-MY4.  A.~Wood is supported in part by a Simons Collaboration Grant on “Special holonomy in Geometry, Analysis and Physics”.  This work is supported in part by the National Center for Theoretical Sciences, Mathematics Division.}

\begin{abstract}
In this paper, we construct solutions of Lagrangian mean curvature flow which exist and are embedded for all time, but form an infinite-time singularity and converge to an immersed special Lagrangian as $t\to\infty$. In particular, the flow decomposes the initial data into a union of special Lagrangians intersecting at one point. This result shows that infinite-time singularities can form in the Thomas--Yau \cite{TY02} `semi-stable' situation. A precise polynomial blow-up rate of the second fundamental form is also shown.

The infinite-time singularity formation is obtained by a perturbation of an approximate family $N^{\vep(t)}$ constructed by gluing in special Lagrangian `Lawlor necks' of size $\vep(t)$, where the dynamics of the neck size $\vep(t)$ are driven by the obstruction for the existence of nearby special Lagrangians to $N^{\vep(t)}$. This is inspired by the work of Brendle and Kapouleas \cite{BrendleK2017} regarding ancient solutions of the Ricci flow.

\end{abstract}
\maketitle
\setcounter{tocdepth}{1}
\tableofcontents

\section{Introduction}

\subsection{Singularities of Lagrangian Mean Curvature Flow}

The celebrated theorem of Yau \cite{Yau78} states that if the canonical bundle of a K\"ahler manifold is holomorphically trivial, then it admits a Ricci flat K\"ahler metric, referred to as the Calabi--Yau metric.  Over the past four decades, understanding the Lagrangian submanifolds minimal with respect to such a metric (known as special Lagrangians \cite{HL1982}) has been a major direction in differential geometry.  Calabi--Yau manifolds and their special Lagrangians also appear in various proposals of theoretical physics.  In particular, Strominger, Yau and Zaslow \cite{SYZ96} have proposed to use special Lagrangian fibrations to understand the mirror symmetry of Calabi--Yau manifolds.

The basic question about the existence of a special Lagrangian representative for a given homology class in a Calabi--Yau manifold is still open.  In contrast, the Lagrangian condition is symplectic-topological, and so it is easier to find Lagrangian submanifolds. Moreover, the Lagrangian condition is preserved along the mean curvature flow if the ambient metric is Calabi--Yau \cites{Oh1994,Smoczyk96}, a process known as Lagrangian mean curvature flow. One can therefore naturally deform a Lagrangian submanifold by its mean curvature vector to decrease its volume, and hope that the flow will exist forever and converge to a special Lagrangian submanifold. Motivated by mirror symmetry, Thomas and Yau \cites{Thomas00, TY02} proposed a conjectural picture for Lagrangian mean curvature flow, relating the behavior of the flow to a ``stability" property of the Lagrangian cycle; these conjectures have since been refined and reformulated by Joyce \cite{Joyce2015}.

In practice, the Lagrangian mean curvature flow often forms singularities in finite-time. In fact, Neves \cites{Neves2007, Neves2013} constructed examples of Lagrangian mean curvature flow forming a finite-time singularity within \emph{any} Hamiltonian isotopy class of Lagrangians, in the case of 2-dimensional Lagrangians. A resolution of the Thomas--Yau conjecture will therefore require a detailed understanding of singularity formation. Recently, Lotay--Schulze--Sz\'ekelyhidi \cite{LSchSz} studied the singularity formation in $2$-dimensions, assuming that a tangent flow at a finite-time singularity is given by a special Lagrangian pair of planes. In particular, by proving the uniqueness of the singular tangent flow together with the uniqueness results in \cites{IJO, LLSch}, they showed that the singularity formation is modeled on a family of shrinking `Lawlor necks', the unique special Lagrangian resolving a special Lagrangian pair of planes in $\mathbb{C}^{2}$.  

In this paper, we study the complementary phenomenon of infinite-time singularities, in order to improve our understanding of the Thomas--Yau picture.  Explicitly, we show that there exist Lagrangian mean curvature flows that exist for time $t \in [\Lambda, \infty)$ for which the flow converges to a \emph{singular} special Lagrangian as $t \to \infty$, and the singularity formation is modeled on a family of shrinking Lawlor necks with an explicit rate. Since the limit is not smooth, the second fundamental form cannot remain uniformly bounded; this is therefore an example of an \emph{infinite-time singularity} of Lagrangian mean curvature flow. To the authors' knowledge, this is the first example of an infinite-time singularity of mean curvature flow in the compact setting. 

We remark that Chen and He \cite{CH2010} proved that the mean curvature flow cannot have an infinite-time singularity when the ambient manifold is non-compact satisfying some mild conditions.  In contrast, using a rotationally symmetric Ansatz, Chen and Sun \cite{CS23} recently constructed a non-compact example in $\BR^3$ with an infinite-time singularity.

\subsection{Desingularising Special Lagrangians with Isolated Conical Singularities}

We now give details of our construction. Consider an immersed special Lagrangian $X^m$ in a Calabi-Yau manifold $M^{2m}$, whose singular points are modelled on the transverse intersection of two half-dimensional planes. If the two half-dimensional planes at a singular point satisfy the \emph{angle criterion} (also known as a type 1 intersection, see Definition \ref{int_type}), then there exists an asymptotically conical special Lagrangian $L \subset \mathbb{C}^m$ (known as a \emph{Lawlor Neck} \cite{Lawlor1989}) with these same planes as asymptotes. One may therefore `glue in' this Lawlor neck at scale $\vep$ at the singular point to produce an almost-minimal Lagrangian desingularisation $N^\vep$ (see Figure \ref{fig-intro}).

A natural question is whether the desingularisation $N^\vep$ can be perturbed to a smooth special Lagrangian submanifold.  This question was studied thoroughly by Joyce in a series of papers \cites{Joyce2002SLCS1, Joyce2004SLCS2, Joyce2004SLCS3, Joyce2004SLCS4, Joyce2003SLCS5}, and also by Lee \cite{Lee2003}.  The upshot is that as long as the immersed special Lagrangian satisfies a ``balancing" condition,  the desingularisation can be perturbed into a special Lagrangian \cite{Joyce2003SLCS5}*{Theorem 9.7}.  These theorems can be applied to construct interesting examples of special Lagrangians; see for instance \cite{Hattori19}.

An overview of Joyce's construction is as follows. Firstly, nearby Lagrangians are represented as graphs of closed one forms in the Lagrangian neighborhood of $N^\vep$, and the special Lagrangian equation is expressed as a scalar equation on $N^\vep$ in the potential functions. The potential function of the mean curvature vector corresponds to the Lagrangian angle, and the linearised operator of the special Lagrangian equation is the Laplace operator on $N^\vep$.  In general, the linearised operator may have eigenfunctions with small eigenvalues (relative to the size of the neck), which means the inverse is not bounded independently of $\vep$. However, the balancing condition guarantees that the orthogonal projection of the mean curvature potential to the small eigenspace is sufficiently small, and can be ignored. One can therefore construct an iteration map using the inverse of the linearised operator, and by applying this map iteratively converge to a solution.

In this work, we consider an immersed special Lagrangian $X$ with only one singular point $\xsin$, such that the tangent cone at $\xsin$ satisfies the angle criterion. For this configuration, Joyce's balancing condition means that the complement $X\setminus\{\xsin\}$ is \emph{connected}, and in this case one can apply Joyce's result to desingularise $X$ and perturb to a special Lagrangian. In contrast, if $X\setminus\{\xsin\}$ is \textit{not} connected (e.g. as in Figure \ref{fig-intro}), the linearised operator of the special Lagrangian equation on the desingularisation $N^\vep$ has a one-dimensional space of non-trivial eigenfunctions with small eigenvalues, which acts as an `obstruction' to finding a special Lagrangian nearby to $N^\vep$. The mean curvature flow of $N^\vep$ will therefore not flow to a nearby special Lagrangian, and indeed a heuristic calculation (described in more detail below) suggests that under mean curvature flow, the neck size of $N^\vep$ will decrease, and form a singularity in infinite time. 

Our main theorem verifies that this infinite-time singular behaviour occurs for a particular example of the above configuration: two intersecting special Lagrangian tori in the complex torus. We consider a one-parameter family of desingularisations $N^{\vep(t)}$ for a suitable decreasing function $\vep: [\Lambda, \infty) \to \mathbb{R}^+$, and show that one may perturb the entire family to a Lagrangian mean curvature flow. The main result may be summarised as follows; a more precise statement is given as Theorem \ref{thm-main_precise}, Proposition~\ref{prop: tip convergence}, and Corollary~\ref{cor: curvature blow up}.

\begin{thmm*}\label{thm-main}
    Let $m \geq 3$, and endow $T^{2m} = \BC^m/\Gm$ with the Calabi--Yau structure induced from the standard one on $\BC^m$.  Suppose that $X_1$ and $X_2$ are two special Lagrangian sub-tori in $T^{2m}$ intersecting transversely at a point $\xsin$, and suppose the tangent planes at $\xsin$ satisfy the angle criterion.
    
    Then for $\vep_{0}$ small enough, there exists a desingularisation $N^{\vep_0}$ of $X := X_1 \cup X_2$ obtained by gluing in a Lawlor neck $L$ at scale $\vep_0$, and a Lagrangian mean curvature flow $N_{t}$ starting from $N^{\vep_{0}}$ existing for all time. Moreover, the flow satisfies the following asymptotic behaviour as $t\to\infty$:
    \begin{itemize}
        \item (infinite-time singularity) $N_{t}\to X_1 \cup X_2$ smoothly away from $\xsin$.
        \item (blow-up rate of curvature) The second fundamental form $A_{N_{t}}$ satisfies $|A_{N_{t}}| = O(t^{\frac{1}{m-2}})$.
        \item (singularity model) There exists a smooth $\vep:(0, \infty)\to\mathbb{R}_{+}$ satisfying\\ $C^{-1}t^{-\frac{1}{m-2}}\leq\vep(t)\leq Ct^{-\frac{1}{m-2}}$ for some $C>0$ and $t$ sufficiently large, such that
        \begin{align*}
            \vep(t)^{-1}(B_R(\xsin)\cap N_{t})\to L
        \end{align*}
        locally smoothly, for a suitable neighborhood of $\xsin$.
    \end{itemize}
\end{thmm*}

Note the dimensional constraint $m \geq 3$: such a dimensional constraint also appears in the work of Joyce \cites{Joyce2003SLCS5} and Lee \cite{Lee2003}. One reason for this constraint is that the Green's function in dimension $2$ is different from that in higher dimensions, which causes various analytic issues.

We remark that since $X_{1}$ and $X_{2}$ are special Lagrangian of the same phase, the initial data $N^{\vep_0}$ can be viewed as a {\it Thomas--Yau semi-stable} Lagrangian. Hence, our result indicates that in the semi-stable case, even if one has long-time existence of the flow, the convergence to the limiting special Lagrangian may not be smooth.

In \cite{DLee2004}, Lee proved that by allowing a perturbation of the ambient Calabi--Yau structure, $N^{\vep}$ can still be perturbed into a special Lagrangian.  Our main result can be viewed as a parabolic analogue of Lee's result in the sense that, by allowing the neck size to change, $N^{\vep(t)}$ can be perturbed into a Lagrangian mean curvature flow. Our main theorem can also be viewed as a {\it dynamic stability} result for the `singular' special Lagrangian $X = X_{1}\cup X_{2}$, as a critical point of the volume functional.

\subsection{Remarks on the Proof of the Main Theorem}

Our construction is based on a parabolic gluing technique. Over the past five years, there have been several works based on this method; in particular the work of Brendle and Kapouleas \cite{BrendleK2017} on the Ricci flow provides a strong intellectual input to this work (see also \cites{ADGW20,CDK21,DPS18,WZZ22} for other geometric flows). The idea is to start with a one-parameter family of desingularisations $N^{\vep(t)}$ obtained by gluing in a Lawlor neck $L$ at scale $\vep(t)$ to $X$ at $\xsin$, and perturb to a genuine Lagrangian mean curvature flow. The desingularisations $N^{\vep(t)}$ are formulated as a time-dependent embedding $\iota^{\vep(t)}$ from a static manifold $\stm$ to $T^{2m} = \BC^m/\Gm$, so that time derivatives on $\stm \times [\Lambda, \infty)$ are meaningful.  Nearby Hamiltonian isotopic Lagrangians to $N^\vep$ are graphs of exact one-forms in any Lagrangian neighborhood, so are parametrised by functions on $\stm$. Our aim is therefore to find $u:\stm \times [\Lambda, \infty) \to \mathbb{R}$ representing a Lagrangian mean curvature flow nearby to $N^{\vep(t)}$.

Firstly, we express the Lagrangian mean curvature flow equation as a non-linear parabolic scalar equation in $u$. A complication arises in our case, since the desingularisations $N^{\vep(t)}$, and therefore the Lagrangian neighborhoods themselves, are varying with time - this produces an extra term in the equation which a priori may not be integrable to the level of potentials. These issues are not present in the prior work of Joyce and Lee, and are unique to the parabolic case. To resolve them, it is required to carefully construct suitable `exact' Lagrangian neighborhoods for conical and asymptotically conical Lagrangians, which are employed in the construction of Lagrangian neighborhoods for $N^\vep$.

Besides solving the parabolic equation for the perturbation, another  important aspect of our work is the choice of the neck parameter $\vep(t)$. Since the Lawlor neck is a minimal submanifold, to first order the neck is not shrinking along the flow. It is therefore necessary to look for a suitable `external force' for the definition of $\vep(t)$. This force arises from the `balancing condition' of Joyce. In Joyce's elliptic setting, the orthogonal projection of the mean curvature potential to the approximate kernel is not sufficiently small to allow one to perturb to a special Lagrangian, as previously mentioned. However, in our parabolic problem the orthogonal projection includes time derivatives of $\vep(t)$, and so the equivalent balancing condition is an ODE in $\vep(t)$,
\begin{align}\label{eq: approx dynamics}
    \frac{\dd}{\dd t}(\vep(t))^2 &= -c\,(\vep(t))^m + o(\vep(t)^{m}),
\end{align}
which, up to higher order terms, is solved by $\vep_{0}(t) = (\vep_0^{2-m} + \frac{c}{2}(m-2)\,t)^{-\frac{1}{m-2}}$. Therefore the family $\{N^{\vep_{0}(t)}: \vep_{0}(t) = (\vep_0^{2-m} + \frac{c}{2}(m-2)\,t)^{-\frac{1}{m-2}}  \text{ for } 0\leq t<\infty\}$ is `closest' to a genuine Lagrangian mean curvature flow.

Our goal now is to perturb from this approximate flow to a genuine flow. This requires solving a coupled system of the nonlinear parabolic equation for the potential and the balancing ODE (\ref{eq: approx dynamics}). We employ an iteration scheme inspired by \cite{BrendleK2017}, in which Brendle and Kapouleas construct an ancient solution to the Ricci flow using the obstruction of existence of Einstein metrics. Note that the neck parameter $\vep(t)$ is decreasing in time, producing an infinite-time singularity; this contrasts with the ancient Ricci flow in \cite{BrendleK2017}.

Finally, we remark that the restriction to the case of Calabi--Yau tori in our main theorem is in order to make the error terms small enough that the approximate dynamics (\ref{eq: approx dynamics}) dominate. We aim to address the general case in our upcoming work. 


\begin{figure}
\centering
\includegraphics[scale=0.5]{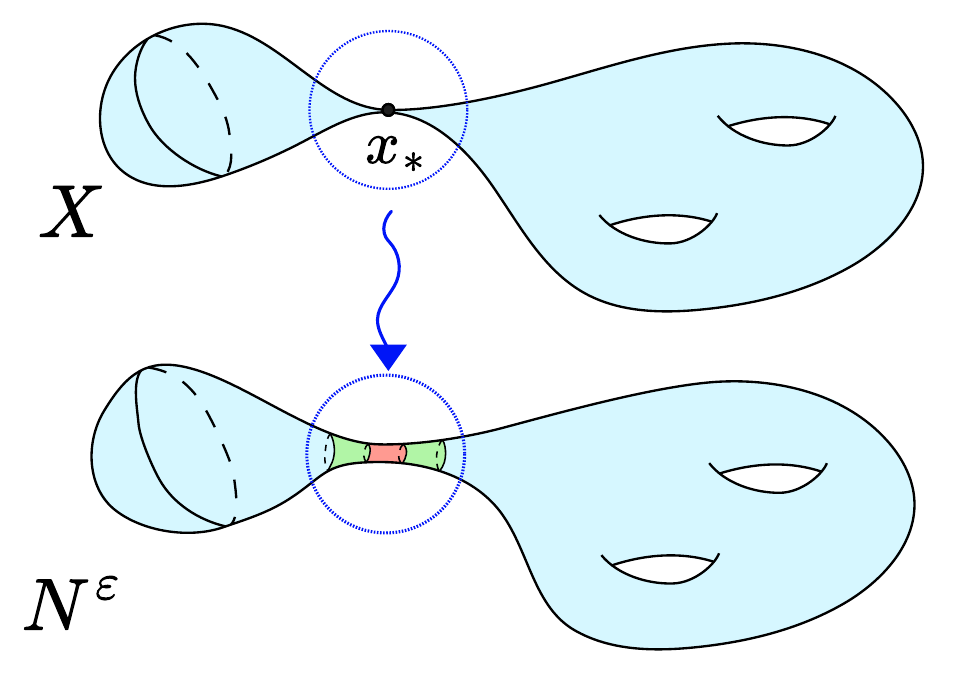}
\caption{An diagram of a special Lagrangian with a single immersed point $X$ such that $X\setminus \{\xsin \}$ is disconnected, along with its desingularisation $N^\vep$. The desingularisation is obtained by `gluing in' an asymptotically conical special Lagrangian (a Lawlor neck) at scale $\vep$. The Laplacian on $N^\vep$ has a one-dimensional space of non-trivial eigenfunctions with small eigenvalues.}
\label{fig-intro}
\end{figure}

\subsection{Structure of the Paper} Section \ref{sec_Lag_nbd} is devoted to the construction of `exact' Lagrangian neighborhoods of Lagrangian cones and asymptotically conical Lagrangians.  In section \ref{sec-desingularisation}, the desingularisation $N^{\vep}$ is introduced, as the image of an $\vep$-dependent map from a static manifold $\iota^\vep:\stm \to T^{2m} = \BC^m/\Gm$. The non-parametric form of the Lagrangian mean curvature flow equation is derived in section \ref{sec-lmcfequation}.
In section \ref{sec-linear-approxkernel}, we compute the linearised operator, and introduce its approximate kernel.  The spatial properties of the approximate kernel are established by Joyce; the materials in sections \ref{sec_Lag_nbd} and \ref{sec-desingularisation} allow us to study their parabolic properties.
In section \ref{sec-apriori}, we prove three Liouville theorems, and use them to establish the weighted Schauder estimate for the solution to the inhomogeneous heat equation.

The main purpose of section \ref{sec-exist-torus} is to establish an existence theorem for solutions to the heat equation on the $L^2$-orthogonal complement of the approximate kernel.  In section \ref{sec-estimate-error}, we derive the projection formula to the approximate kernel, and the estimates of the zeroth order and quadratic terms of the Lagrangian mean curvature flow equation.  Finally, these materials are put together in section \ref{sec-iteration}, and the main theorem is proven by a Schauder fixed point argument.

\subsection{Conventions}
Here are some conventions that will be used throughout this paper.
\begin{enumerate}
\item Unless otherwise specified, $\BC^{m}\cong\BR^{2m}$ is equipped with the standard Calabi--Yau structure $(g_0,J_0,\om_0,\Om_0)$, where $\om_0 = \sum_{j=1}^{m}\dd x_j\w\dd y_j$ and $\Om_0 = \dd z_1\w\ldots\w\dd z_m$.

\item The complex dimension of $T^{2m} = \BC^m/\Gm$ is assumed to be $3$ or greater, $m\geq 3$.  It is always endowed with the Calabi--Yau structure induced from the standard on on $\BC^m$, which will be denoted by $(g,J,\om,\Om)$.

\item A half-dimensional submanifold $L^m$ in $T^{2m}$ is called a \emph{special Lagrangian} submanifold if it is calibrated by $\im\Om$.  Namely, $\im\Om|_L$ coincides with the volume form of $L$.  According to \cite{HL1982}*{p.89}, this is equivalent to the vanishing of $\om|_L$ (the Lagrangian condition) and the vanishing of $\re\Om|_L$ (the special condition).

\item Given a diffeomorphism $\vph: L\to \td{L}$, it induces a diffeomorphism $(\vph^*)^{-1}:T^*L\to T^*\td{L}$.  Such a map will be denoted by $\vph_\srp$.  Given a smooth function $u:\td{L}\to \BR$, $\dd u$ embeds $\td{L}$ into $T^*\td{L}$.  It is straightforward to verify the following relation
\begin{align}
	(\dd u)\circ\vph &= \vph_\srp\circ\dd(u\circ\vph) ~. \label{sharp0}
\end{align}

\item The constant $C$ in the estimates may change from line to line.
\end{enumerate}

\begin{ackn*}
    The authors would like to thank Dominic Joyce and Yng-Ing Lee for their helpful discussions and interest in this work.  The authors are grateful to Simon Brendle for answering our questions regarding \cite{BrendleK2017}.
\end{ackn*}

\section{neighborhood Theorems and Local Models} \label{sec_Lag_nbd}

\subsection{Equivariant neighborhoods of Lagrangian Cones}

In this section, we consider a Lagrangian cone $\coneC \subset \mathbb{C}^m \cong \mathbb{R}^{2m}$, where $\BC^{m}\cong\BR^{2m}$ is equipped with the standard Liouville form $\ld_0 = \frac{1}{2}\sum_{j=1}^{m}(y_j\dd x_j - x_j\dd y_j)$, so that $\dd\ld_0 = -\om_0$. Note that the link $\Sm = \coneC\cap S^{2m-1}$ is a Legendrian submanifold in the contact manifold $(S^{2m-1},\left.\ld_0\right|_{S^{2m-1}})$.  On the other hand, one can equip $T^*\Sm\times\BR$ with the contact form $\ld_\Sm - \dd s$, where $\ld_\Sm$ is the tautological $1$-form on $T^*\Sm$ and $s$ is the coordinate on $\BR$.  It is clear that $\Sm$, as the zero section of $T^*\Sm$, is a Legendrian submanifold in $(T^*\Sm\times\BR, \ld_\Sm-\dd s)$.  By Moser's trick, one can show that the latter is the standard local model of Legendrian neighborhoods.

\begin{lemma}[\cite{Lyc1977}] \label{LN of links}
Denote by $\ul{0}$ the zero section in $T^*\Sm\subset T^*\Sm\times\BR$.  There exist an open neighborhood $W_\Sm$ of $\ul{0}$ in $T^*\Sm\times\BR$ and an embedding $\Psi_\Sm: W_\Sm\to S^{2m-1}$ such that $\left.\Psi_\Sm\right|_{\ul{0}} = \iota_\Sm$ and $\Psi^{*}_\Sm(\left.\ld_{0}\right|_{S^{2m-1}}) = \ld_\Sm - \dd s$, where $\iota_\Sm: \Sm\to S^{2m-1}$ is the inclusion map.
\end{lemma}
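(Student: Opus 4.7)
The plan is to prove this standard Legendrian neighbourhood theorem by Moser's deformation method. The conceptual starting point is that the normal bundle of any Legendrian $\Sm$ of dimension $m-1$ in a contact manifold of dimension $2m-1$ carries a canonical splitting isomorphic to $T^*\Sm\oplus\BR$: the $T^*\Sm$-summand arises from the isomorphism $\xi|_\Sm/T\Sm\cong T^*\Sm$ given by $v\mapsto\dd\ld_0(v,\cdot)|_{T\Sm}$, using that $T\Sm$ is Lagrangian in the symplectic vector bundle $(\xi,\dd\ld_0|_\xi)$ on account of the Legendrian condition, while the $\BR$-summand is the Reeb direction. Both $(S^{2m-1},\ld_0|_{S^{2m-1}})$ along $\iota_\Sm(\Sm)$ and $(T^*\Sm\times\BR,\ld_\Sm-\dd s)$ along the zero section $\ul{0}$ realise exactly this same normal model.

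First, I would construct a smooth diffeomorphism $\Psi_0$ from an open neighbourhood of $\ul{0}$ in $T^*\Sm\times\BR$ onto an open neighbourhood of $\iota_\Sm(\Sm)$ in $S^{2m-1}$ such that $\Psi_0|_{\ul{0}}=\iota_\Sm$ and whose derivative along $\ul{0}$ realises the canonical identification of the two normal bundles described above. Concretely, after choosing an auxiliary metric on $S^{2m-1}$ and a Lagrangian complement to $T\Sm$ inside $\xi|_\Sm$, one combines the normal exponential map of $\Sm$ with a linear bundle isomorphism sending the $T^*\Sm$-fibres into the chosen Lagrangian complement via $\dd\ld_0$ and $\pl_s$ to the Reeb vector field along $\Sm$. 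With this choice, a direct computation along the zero section shows that the two one-forms $\alpha_0:=\ld_\Sm-\dd s$ and $\alpha_1:=\Psi_0^*(\ld_0|_{S^{2m-1}})$ agree on $\ul{0}$ as linear functionals on $T(T^*\Sm\times\BR)|_{\ul{0}}$.

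Next, interpolate $\alpha_t=(1-t)\alpha_0+t\alpha_1$ for $t\in[0,1]$ and apply the contact-form Moser trick. After shrinking the neighbourhood one may assume each $\alpha_t$ is a contact form. Seek a time-dependent vector field $X_t$ lying in $\xi_t:=\ker\alpha_t$ whose flow $\psi_t$ satisfies $\psi_t^*\alpha_t=\lambda_t\alpha_0$ for a positive function $\lambda_t$ with $\lambda_0\equiv 1$. Differentiation and the Cartan formula reduce this to the infinitesimal equation $\iota_{X_t}\dd\alpha_t+(\alpha_1-\alpha_0)=\mu_t\alpha_t$, where $\mu_t=(\dot\lambda_t/\lambda_t)\circ\psi_t^{-1}$. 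Pairing with the Reeb vector field $R_t$ of $\alpha_t$ forces $\mu_t=(\alpha_1-\alpha_0)(R_t)$, after which the non-degeneracy of $\dd\alpha_t|_{\xi_t\times\xi_t}$ determines $X_t$ uniquely from $\iota_{X_t}\dd\alpha_t|_{\xi_t}=-(\alpha_1-\alpha_0)|_{\xi_t}$. Because $\alpha_1-\alpha_0$ vanishes along $\ul{0}$, both $X_t$ and $\mu_t$ vanish there, so $\psi_t$ is defined on a uniform neighbourhood for $t\in[0,1]$, fixes $\ul{0}$ pointwise, and $\lambda_1\equiv 1$ along $\ul{0}$. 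One finally absorbs the positive conformal factor $\lambda_1$ by a second diffeomorphism supported near $\ul{0}$ and fixing it pointwise --- for instance by flowing along the Reeb direction $-\pl_s$ of $\alpha_0$ with a suitably chosen time function determined by $\lambda_1$ --- to upgrade $\psi_1^*\alpha_1=\lambda_1\alpha_0$ to strict equality. The composition of $\Psi_0$ with this corrected isotopy is the desired embedding $\Psi_\Sm$.

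The main obstacle is the zeroth-order matching step: the initial diffeomorphism $\Psi_0$ must be arranged so that $\alpha_1-\alpha_0$ vanishes along $\ul{0}$, for otherwise the Moser vector field $X_t$ will not vanish on $\Sm$ and its flow will not extend to a uniform neighbourhood; this is precisely the role of the canonical normal splitting. The remainder --- the contact-Moser algebra and the absorption of the conformal factor --- is largely bookkeeping that parallels the Weinstein Lagrangian neighbourhood theorem in the contact setting.
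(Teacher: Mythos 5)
The paper offers no proof of this lemma---it is quoted from Lychagin, with the surrounding text merely noting that it follows from Moser's trick---so the comparison is with that standard argument, and your overall architecture is the right one: identify the normal bundle of the Legendrian $\Sm$ with $T^*\Sm\oplus\BR$ via $v\mapsto \dd\ld_0(v,\cdot)|_{T\Sm}$ and the Reeb direction, build a zeroth-order diffeomorphism $\Psi_0$ so that $\alpha_1:=\Psi_0^*(\ld_0|_{S^{2m-1}})$ agrees with $\alpha_0:=\ld_\Sm-\dd s$ at every point of $\ul 0$, then deform by a Moser-type isotopy whose generating field vanishes along $\ul 0$. The one step that fails is the last one: absorbing the conformal factor $\lambda_1$ in $\psi_1^*\alpha_1=\lambda_1\alpha_0$ by ``flowing along the Reeb direction $-\pl_s$ with a suitably chosen time function.'' If you attempt a Moser scheme with a vector field $Z_u=g_u R_0$ purely in the Reeb direction, the equation $\tfrac{\dd}{\dd u}\bigl(\phi_u^*(f_u\alpha_0)\bigr)=0$ reduces, after using $\iota_{R_0}\dd\alpha_0=0$ and $\alpha_0(R_0)=1$, to $\bigl[\dot f_u+g_u\,R_0 f_u\bigr]\alpha_0+f_u\,\dd g_u=0$, which forces $\dd g_u$ to be pointwise proportional to $\alpha_0$; writing $\dd g_u=c\,\alpha_0$ and differentiating gives $c\,\dd\alpha_0=-\dd c\wedge\alpha_0$, whose restriction to $\ker\alpha_0$ and the nondegeneracy of $\dd\alpha_0$ there force $c\equiv0$, i.e.\ $g_u$ constant. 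A constant-time Reeb shift preserves $\alpha_0$ and cannot produce a nonconstant factor, so this mechanism cannot remove $\lambda_1$. Since the exact identity $\Psi_\Sm^*(\ld_0|_{S^{2m-1}})=\ld_\Sm-\dd s$ (not merely a contactomorphism) is what Proposition \ref{LN of cones} and Theorem \ref{AC Lag nbhd} later rely on, this gap matters.

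The repair is standard and stays within your framework: run the Moser argument demanding strict equality $\psi_t^*\alpha_t=\alpha_0$ from the outset (equivalently, apply the same argument a second time to the pair $\alpha_0$, $\lambda_1\alpha_0$, which also agree along $\ul 0$). Writing $X_t=h_tR_t+Y_t$ with $Y_t\in\xi_t=\ker\alpha_t$, the equation $(\alpha_1-\alpha_0)+\iota_{Y_t}\dd\alpha_t+\dd h_t=0$ splits: contracting with $R_t$ gives the transport equation $R_t h_t=-(\alpha_1-\alpha_0)(R_t)$, which you solve near $\ul 0$ with initial condition $h_t\equiv0$ on the hypersurface $T^*\Sm\times\{0\}$ (transverse to every $R_t$ there, since $R_t$ is close to $-\pl_s$), and then $Y_t$ is uniquely determined by nondegeneracy of $\dd\alpha_t$ on $\xi_t$. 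Because $\alpha_1-\alpha_0$ vanishes at points of $\ul 0$, one gets $h_t=0$ and $\dd h_t=0$ along $\ul 0$ (its derivatives along $T^*\Sm\times\{0\}$ vanish by the initial condition, and its Reeb derivative vanishes by the transport equation), hence $X_t=0$ on $\ul 0$; the flow therefore fixes $\ul 0$ pointwise, exists up to time $1$ after shrinking, and yields $\Psi_\Sm$ with the exact pullback identity. With this replacement of your final step, the proof is complete.
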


Using Lemma \ref{LN of links}, we construct an equivariant Lagrangian neighborhood for $C$. Recall the notion of a Lagrangian neighborhood.
\begin{definition}
Let $\iota:L^m\to(M^{2m},\om)$ be a Lagrangian embedding.  A \emph{Lagrangian neighborhood} consists of an open neighborhood $U\subset T^*L$ of the zero section $\ul{0}$, and an embedding $\Psi_L:U\to M$ such that $\left.\Psi_L\right|_{\ul{0}} = \iota$ and $\Psi_L^*(\om) = \om_L$, where $\om_L$ is the canonical symplectic form on $T^*L$.
\end{definition}

We consider the natural $\BR_+$-action on $\BC^{m}\setminus\{0\}$ given by dilations, and the $\BR_+$-action on $T^*\coneC = T^*(\Sm\times(0,\infty))$ defined as follows.  Formally writing a point in $T^*(\Sm\times(0,\infty))$ as $(\sm,r,\vsm,s)$ where $\sm\in\Sm$, $r\in(0,\infty)$, $\vsm\in T^*_\sm\Sm$ and $s\in\BR\cong T^*_r(0,\infty)$, and letting $\ep\in\BR_+$, we define
\begin{align}\label{cone scaling}
	\ep\cdot(\sm, r, \vsm, s) = (\sm, \ep r, \ep^{2}\vsm, \ep s) ~.
\end{align}
The following proposition gives not only the neighborhood, but also the expression of the Liouville form on the neighborhood.  It is an extension of \cite{Joyce2002SLCS1}*{Theorem 4.3}. 
 
\begin{proposition} \label{LN of cones}
Let $\Sm$ be a Legendrian link in $(S^{2m-1},\left.\ld_0\right|_{S^{2m-1}})$, and let $\coneC = \Sm\times(0,\infty)$ be the corresponding Lagrangian cone in $(\BC^m\setminus\{0\},\om_0)$.  There exists a Lagrangian neighborhood $\Phi_{\coneC}: U_{\coneC}\subset T^*{\coneC} = T^*(\Sm\times (0, \infty)) \to \BC^m\setminus\{0\}$ such that
\begin{align*}
	\Phi_{\coneC}^{*}\ld_{0} = \ld_{\coneC} - \dd\left(\frac{rs}{2}\right) ~,
\end{align*}
where $\ld_{\coneC}$ is the tautological $1$-form on $T^*\coneC$, $r\in(0,\infty)$, and $s\in\BR\cong T^*_r(0, \infty)$.
Moreover, $U_{\coneC}$ is invariant under the $\mathbb{R}_{+}$-action defined in \eqref{cone scaling}, and $\Phi_{\coneC}$ is equivariant with respect to it.
\end{proposition}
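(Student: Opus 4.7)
The plan is to build $\Phi_C$ by extending the Legendrian embedding $\Psi_\Sm$ of Lemma \ref{LN of links} along the radial direction of the cone. The first step is a direct computation showing that in polar coordinates $\BC^m \setminus \{0\} \cong (0,\infty) \times S^{2m-1}$, writing $z = \rho\zeta$, the Liouville form decomposes as $\ld_0 = \rho^2\, \pi^*(\left.\ld_0\right|_{S^{2m-1}})$, where $\pi$ denotes the projection onto $S^{2m-1}$. This identity will serve as the bridge between the cone Lagrangian theory on $\BC^m \setminus \{0\}$ and the link Legendrian theory on $S^{2m-1}$.

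I will then seek $\Phi_C$ of the form $\Phi_C(\sm, r, \vsm, s) = r\cdot \Psi_\Sm(\sm, a(r)\vsm, b(r)s)$ for scalar functions $a, b$ to be determined. The $\BR_+$-equivariance condition $\Phi_C(\sm, \ep r, \ep^2 \vsm, \ep s) = \ep\,\Phi_C(\sm, r, \vsm, s)$ forces the arguments of $\Psi_\Sm$ to be $\BR_+$-invariant, which pins the rescalings down to $a(r) = c_1/r^2$ and $b(r) = c_2/r$ for constants $c_1, c_2$. The domain will then be set as $U_C = \{(\sm, r, \vsm, s) \in T^*C : (\sm, c_1\vsm/r^2, c_2 s/r) \in W_\Sm\}$, which is manifestly open, $\BR_+$-invariant, and contains the zero section.

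Next I will compute the Liouville form pullback to fix the constants $c_1, c_2$. Factoring $\Phi_C$ through the polar decomposition and writing $F(\sm, r, \vsm, s) = (\sm, c_1\vsm/r^2, c_2 s/r)$, one obtains
\begin{align*}
    \Phi_C^*\ld_0 \;=\; r^2\, F^*\bigl(\Psi_\Sm^*(\left.\ld_0\right|_{S^{2m-1}})\bigr) \;=\; r^2\, F^*(\ld_\Sm - \dd s),
\end{align*}
using Lemma \ref{LN of links}. A short local-coordinate computation with $\ld_\Sm = \vsm_i\,\dd\sm^i$ and $\ld_C = \vsm_i\,\dd\sm^i + s\,\dd r$ should produce an expression involving $\vsm_i\,\dd\sm^i$, $\dd r$, and $\dd s$; matching against the target $\ld_C - \dd(rs/2)$ is expected to uniquely determine $c_1 = 1$ and $c_2 = 1/2$. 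The embedding property of $\Phi_C$ on $U_C$ then follows from the same factorization, since $F$ is a fibrewise diffeomorphism onto its image, $\Psi_\Sm$ is an embedding, and the polar map $(r,\zeta) \mapsto r\zeta$ is a diffeomorphism onto $\BC^m \setminus \{0\}$.

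The main subtlety I expect will be tuning the constant $c_2$ for the $s$-coordinate. A naive normalisation such as $c_2 = 1$ would produce a residual $-r\,\dd s$, which is closed but not exact, whereas the anticipated value $c_2 = 1/2$ converts it into the exact form $-\dd(rs/2)$. This refinement of Joyce's Theorem 4.3 --- supplying an explicit Liouville-form expression rather than only the Lagrangian condition --- is precisely what will enable the later integration of the flow equation to the level of potentials discussed in Section \ref{sec-lmcfequation}.
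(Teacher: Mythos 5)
Your proposal is correct and follows essentially the same route as the paper: both factor through the symplectization identification $\ld_0 = r^2\left.\ld_0\right|_{S^{2m-1}}$, invoke Lemma \ref{LN of links}, and conjugate by the fibrewise rescaling $(\vsm,s)\mapsto(r^{-2}\vsm,(2r)^{-1}s)$, which is exactly your $F$ with $c_1=1$, $c_2=\tfrac12$. The only difference is presentational — you recover these constants from the equivariance ansatz and the Liouville-form matching, whereas the paper writes the map $\psi$ down directly — and your anticipated computation does close as expected.
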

\begin{proof}
The \emph{symplectisation} of $(S^{2m-1},\left.\ld_0\right|_{S^{2m-1}})$ is $(\BR^{2m}\setminus\{0\},\om_0 = -\dd\ld_0)$.  More precisely, identify $\bx\in \BR^{2m}\setminus\{0\}$ with $(\frac{\bx}{|\bx|},|\bx|)\in S^{2m-1}\times(0,\infty)$.  Under this identification, $\ld_0$ is the pull-back of $r^2\left.\ld_0\right|_{S^{2m-1}}$, where $r$ is the coordinate on $(0,\infty)$.  With this understood, it is equivalent to construct the embedding $\Phi_{\coneC}$ to $S^{2m-1}\times(0,\infty)$ so that $\Phi_{\coneC}^{*}(r^2\left.\ld_0\right|_{S^{2m-1}}) = \ld_{\coneC} - \dd\left(\frac{rs}{2}\right)$.  Note that on $S^{2m-1}\times(0,\infty)$, the dilation acts only on the $(0,\infty)$-summand.

Consider the diffeomorphism
\begin{align*}
\begin{array}{cccl}
	\psi: &T^*(\Sm\times(0,\infty)) &\to &T^*\Sm\times\BR\times(0,\infty) \\
	&(\sm,r,\vsm,s) &\mapsto & \left((\sm,r^{-2}\vsm),(2r)^{-1}s,r\right) ~.
\end{array}
\end{align*}
For the open set $W_\Sm$ given by Lemma \ref{LN of links}, it is not hard to see that $U_{\coneC} = \psi^{-1}(W_\Sm\times(0,\infty))$ is an open neighborhood of the zero section in $T^*(\Sm\times(0,\infty))$.  Let
\begin{align*}
\Phi_{\coneC} &= (\Psi_\Sm\times\id_{(0,\infty)}) \circ \psi : U_{\coneC}\subset T^*(\Sm\times(0,\infty)) \to S^{2m-1}\times(0,\infty)
\end{align*}
where $\Phi_{\coneC}$ is given by Lemma \ref{LN of links}.  The pull-back of the Liouville form under $\Phi_{\coneC}$ is
\begin{align*}
	\Phi_{\coneC}^*(r^2\left.\ld_0\right|_{S^{2m-1}}) &= \psi^*(r^2 (\ld_\Sm - \dd s)) \\
	&= \ld_\Sm - r^2\dd\left(\frac{s}{2r}\right) = (\ld_\Sm + s\dd r) - \dd\left(\frac{rs}{2}\right) ~.
\end{align*}
Note that $\ld_\Sm+s\dd r$ is exactly the tautological $1$-form on $T^*(\Sm\times(0,\infty))$.

The invariance of $U_{\coneC}$ under \eqref{cone scaling} follows from the construction.  It remains to check the $\BR_+$-equivariance of $\Phi_{\coneC}$.  For any $\ep>0$,
\begin{align*}
	\Phi_{\coneC}(\ep\cdot(\sm, r, \vsm, s)) &= \Phi_{\coneC}(\sm, \ep r, \ep^2\vsm,\ep s) \\
	&=  (\Psi_\Sm\times\id_{(0,\infty)})\left((\sm,r^{-2}\vsm),(2r)^{-1}s, \ep r\right) = \ep\cdot\Phi_{\coneC}(\sm,r,\vsm,s) ~.
\end{align*}
This finishes the proof of the proposition.
\end{proof}

\subsection{Asymptotically Conical Lagrangians}

Proposition \ref{LN of cones} can be used to construct good neighborhoods for asymptotically conical Lagrangians.  We first recall their definition.

\begin{definition} \label{AC lag}
A Lagrangian $L\subset \BC^m$ is called \emph{asymptotically conical} with cone $\coneC$ and rate $\gm$ if the following holds.  Let $\Sm = \coneC\cap S^{2m-1}$ be the link of $\coneC$.  There exist a compact subset $K\subset L$, a constant $R_1 > 0$, and a diffeomorphism $\varphi: \Sm\times(R_1, \infty)\to L\setminus K$ such that for any non-negative integer $k$,
\begin{align}\label{AC condition}
	|\nabla^{k}(\varphi - \iota_{\coneC})|(\sm, r) = O(r^{\gm - 1- k}) \quad\text{as } r\to\infty ~,
\end{align}
where $\nabla$ and $|\cdot|$ are computed using the cone metric $g_{\coneC} = \dd r^{2} + r^{2}g_{\Sm}$.
\end{definition}
\begin{remark}\hfill
\begin{itemize}
\item Later on, we will consider the ``potential" of $L$ over $\Sm\times(R_1,\infty)$.  The $-1$ in the power of $r$ in \eqref{AC condition} will imply that the potential is of order $\gm$.
\item In \cite{Joyce2004SLCS3}*{Definition 4.1}, \eqref{AC condition} is only required for $k=0,1$.  Under suitable assumptions, it can be upgraded to all $k\geq0$; see for instance Theorem 3.8 and Theorem 4.6 in \cite{Joyce2004SLCS3}.  Since we will only work with specific asymptotically conical Lagrangians, a more restrictive assumption is chosen here for convenience.
\end{itemize}
\end{remark}

Suppose that the rate satisfies $\gm<0$.  According to Proposition \ref{LN of cones}, $L\setminus K$ can be written as the graph of a smooth \emph{closed} $1$-form on $\Sm\times(R_1,\infty)$, after taking $R_1$ larger if necessary.  That is to say, $L\setminus K$ belongs to $\Phi_{\coneC}(U_{\coneC})$.  Thus, there exists a closed $1$-form, $\fe$, on $\Sm\times(R_1,\infty)$ such that
\begin{align}
	\vph(\sm,r) = \Phi_{\coneC}(\sm,r,\fe_1(\sm,r),\fe_2(\sm,r)) \label{def_kp}
\end{align}
for any $(\sm, r)\in \Sm\times (R_1, \infty)$, where $\fe_2 = \fe(\frac{\pl\,}{\pl r})$ and $\fe_1 = \fe - \fe_2\dd r$.  The condition \eqref{AC condition} implies that
\begin{align}
	|\nabla^{k}\fe |= O(r^{\gamma - 1 - k}) \quad\text{as } r\to\infty \label{kp rate}
\end{align}
where $\nabla$ and $|\cdot|$ are computed using the cone metric $g_{\coneC} = \dd r^{2} + r^{2}g_{\Sm}$.

Recall that a Lagrangian submanifold $L\subset\BC^m$ is said to be \emph{exact} if the restriction of the Liouville form, $\iota_L^*\ld_0$, is exact.  Here is the neighborhood theorem we will need.

\begin{theorem} \label{AC Lag nbhd}
Let $L\subset(\mathbb{C}^{m},\om_0)$ be an exact, connected, asymptotically conical Lagrangian submanifold with cone $\coneC = \Sm\times(0,\infty)$ and rate $\gamma<0$. Then: 
\begin{itemize}
\item There exist a Lagrangian neighborhood $\Phi_{L}:U_{L}\subset T^*L \to \mathbb{C}^{m}$ and a function $\af_L:U_L\to\BR$ such that 
\begin{align*}
	\Phi_{L}^{*}\ld_{0} = \ld_{L} - \dd\af_{L} ~.
\end{align*}
Moreover, $\Phi_L$ can be chosen so that
\begin{align}
	(\Phi_{L}\circ\vph_{\srp})(\sm, r, \vsm, s) = \Phi_{\coneC}(\sm, r, \vsm+\fe_1(\sm,r), s + \fe_2(\sm,r)) \label{Phi kp}
\end{align}
for any $(\sigma, r, \vsm, s)\in \vph_\srp^{-1}(U_L)\subset T^*(\Sm\times(R_1,\infty))$, where $\Phi_{\coneC}$ is the map given by Proposition \ref{LN of cones}, $\vph$ is the map in Definition \ref{AC lag}, and $\fe = \fe_1 + \fe_2\dd r\in\Om^1(\Sm\times(R_1,\infty))$ is explained in \eqref{def_kp}.

\item The $1$-form $\fe$ on $\Sm\times(R_1,\infty)$ is exact, $\fe = \dd\ptl$, and thus $\vph = \Phi_{\coneC}\circ\dd\ptl$.  Moreover, the potential function $\ptl$ can taken to obey that $|\nabla^\ell\ptl| = O(r^{\gm-\ell})$ as $r\to\infty$, for every $\ell\geq0$.

\item The function $\af_L$ is unique up to adding a constant. Moreover, there are constants $c_a$, associated with the connected components $L_a$ of $L\setminus K$, and on each $\vph_\srp^{-1}(T^*L_a)$:
\begin{align}
 (\af_{L}\circ\vph_{\srp})(\sm, r, \vsm, s) - \left(\frac{rs}{2} + c_a \right) &= \frac{r}{2}(\pl_r\ptl)(\sm,r) - \ptl(\sm,r),  \label{afL_def} & \\
\left| (\af_{L}\circ\vph_{\srp})(\sm, r, \vsm, s) - \left(\frac{rs}{2} + c_{a}\right)\right| &= O(r^{\gm}) &\text{ as } r\to \infty. \notag
\end{align}
The restriction of $\alpha_L:U_L \rightarrow \mathbb{R}$ to the zero section $\beta_L := \alpha_L|_{\ul 0}$ is a primitive of the Liouville form up to a minus sign, $\iota^{*}_{L}\ld_{0} = -\dd\bt_{L}$. 
\end{itemize}
\end{theorem}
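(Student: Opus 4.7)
The plan is to first build the Lagrangian neighbourhood $\Phi_L$ so that the prescribed asymptotic formula \eqref{Phi kp} holds exactly on the end, then to obtain the potential $\af_L$ from the exactness of $L$, then to establish exactness of $\fe$, and finally to read off the asymptotic shape of $\af_L$ by direct calculation.

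For the construction of $\Phi_L$, observe that since $\vph(\Sm\times(R_1,\infty)) = L\setminus K$ is Lagrangian and lies inside the Lagrangian neighbourhood $\Phi_C(U_C)$ of the cone, the $1$-form $\fe$ exhibiting $L\setminus K$ as a graph over $C$ is necessarily closed. Consequently the fibre-translation $T_\fe:(\sm,r,\vsm,s)\mapsto(\sm,r,\vsm+\fe_1,s+\fe_2)$ is a symplectomorphism of $T^*C$, and $\Phi_C\circ T_\fe\circ \vph_\srp^{-1}$ already defines a symplectic embedding of a neighbourhood of the zero section of $T^*(L\setminus K)$ into $\BC^m$ restricting to $\iota_L$ on the zero section. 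I would extend this across the compact piece $K$ using the standard Weinstein Lagrangian neighbourhood theorem and patch the two constructions on a collar via a Moser-type interpolation, yielding a global $\Phi_L$ satisfying \eqref{Phi kp} on the end. Given $\Phi_L$, the $1$-form $\Phi_L^*\ld_0 - \ld_L$ is closed, since $\Phi_L$ intertwines the symplectic forms and both Liouville forms are primitives of minus these; its restriction to the zero section is $\iota_L^*\ld_0$, exact by exactness of $L$; and since $U_L$ deformation retracts to the zero section and $L$ is connected, this closed $1$-form is itself exact, giving $\af_L$ with $\Phi_L^*\ld_0 = \ld_L - \dd\af_L$, unique up to an additive constant. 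Evaluating this identity on the zero section and using $\ld_L|_{\ul 0} = 0$ produces $\iota_L^*\ld_0 = -\dd(\af_L|_{\ul 0})$, so $\bt_L := \af_L|_{\ul 0}$ is the claimed primitive.

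To show $\fe$ is exact, factor $\vph = \Phi_C\circ\tau_\fe$ with $\tau_\fe(\sm,r) = (\sm,r,\fe_1,\fe_2)$, and compute
\begin{align*}
\vph^*\ld_0 = \tau_\fe^*\bigl(\ld_C - \dd(rs/2)\bigr) = \fe - \dd(r\fe_2/2),
\end{align*}
which equals $-\dd(\bt_L\circ\vph)$ by exactness of $L$; hence $\fe = \dd(r\fe_2/2 - \bt_L\circ\vph)$. The $O(r^{\gm-1})$ estimate \eqref{kp rate} shows that $\bt_L\circ\vph$ converges to a constant on each end, and absorbing these constants into the primitive on each connected component yields $\ptl$ with $|\ptl| = O(r^\gm)$; the derivative bounds $|\nabla^\ell\ptl| = O(r^{\gm-\ell})$ then follow from $\dd\ptl = \fe$ and \eqref{kp rate}. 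Finally, pulling back $\Phi_L^*\ld_0 = \ld_L - \dd\af_L$ by $\vph_\srp$ and using $\vph_\srp^*\ld_L = \ld_C$ together with \eqref{Phi kp} gives
\begin{align*}
\ld_C - \dd(\af_L\circ\vph_\srp) = T_\fe^*\bigl(\ld_C - \dd(rs/2)\bigr) = \ld_C + \fe - \dd(rs/2 + r\fe_2/2),
\end{align*}
so $\dd(\af_L\circ\vph_\srp) = \dd(rs/2 + r\fe_2/2 - \ptl)$. Integration on each connected component $L_a$, together with $\fe_2 = \pl_r\ptl$, delivers \eqref{afL_def} with the constants $c_a$ absorbing the choice of primitive on each end, and the $O(r^\gm)$ estimate is immediate from the decay of $\ptl$ and $r\pl_r\ptl$.

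The main technical difficulty lies in the first step: ensuring that the Weinstein-type extension over the compact part of $L$ combines with the prescribed asymptotic description to produce a genuine global Lagrangian neighbourhood satisfying \eqref{Phi kp} exactly, rather than only up to a fibre-preserving symplectic isotopy. A related bookkeeping subtlety is the tracking of the constants $c_a$, which appear because $L\setminus K$ may have several connected components even though $L$ itself is assumed connected.
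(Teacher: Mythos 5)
Your proposal is correct and follows the same overall skeleton as the paper's proof: describe $L\setminus K$ as the graph of the closed $1$-form $\fe$ over the cone, define the neighbourhood on the end by composing $\Phi_C$ with the fibrewise translation by $\fe$ and $\vph_\srp$, extend over the compact piece, correct by a Moser argument supported over $K$ (so that \eqref{Phi kp} survives untouched), and then read off \eqref{afL_def} by pulling back $\Phi_L^*\ld_0=\ld_L-\dd\af_L$ through $\vph_\srp$ exactly as you compute. Two of your sub-arguments differ from the paper's, in ways worth noting. First, for exactness of $\fe$ the paper does not invoke exactness of $L$ at all: it sets $\ptl(\sm,r)=-\int_r^\infty\fe_2(\sm,y)\,\dd y$, which is well defined by the decay $|\fe|=O(r^{\gm-1})$, and verifies $\dd\ptl=\fe$ and $|\ptl|=O(r^\gm)$ directly; your route via $\vph^*\ld_0=\fe-\dd(r\fe_2/2)=-\dd(\bt_L\circ\vph)$ is also valid (and your absorption of the per-end limiting constants of $\bt_L\circ\vph - r\fe_2/2$ is exactly what is needed to make $\ptl$ decay), but it uses the hypothesis on $L$ where an elementary radial integration suffices. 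Second, you obtain $\af_L$ abstractly, from closedness of $\ld_L-\Phi_L^*\ld_0$ together with the deformation retraction of $U_L$ onto the zero section and exactness of $\iota_L^*\ld_0$; the paper instead produces $\af_L$ concretely as a by-product of the Moser construction, fixing a primitive $\bt_L$ first and choosing the constants $c_a$ by matching the end formula \eqref{af_LK} to $\bt_L$. Your cohomological argument is a clean alternative for existence and uniqueness up to a constant, and it recovers $\bt_L:=\af_L|_{\ul 0}$ by restriction since $\ld_L|_{\ul 0}=0$; the only thing it presupposes is that $\Phi_L$ is genuinely symplectic, which brings us to the one place where your write-up is a sketch rather than a proof: the patching of the Weinstein extension over $K$ with the exact model on the end. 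That step is precisely the content of the paper's Step 3, where the Moser vector field is defined by $\iota_{Y_t}[(1-t)(-\om_L)-t\wtd\Phi_L^*\om_0]=\ld_L-\dd\wtd\af_L-\wtd\Phi_L^*\ld_0$, is supported in $\pi^{-1}(K)$ because the right-hand side vanishes on $\pi^{-1}(L\setminus K)$, and is tangent to the zero section; since you identified the right device and the right support property, this is a gap of detail rather than of idea, but in a full proof it must be carried out, as it is what guarantees both that $\Phi_L^*\om_0=\om_L$ globally and that \eqref{Phi kp} holds exactly rather than up to isotopy.
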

\begin{proof}
The argument is very similar to \cite{LotayNeves2013}*{Proposition 5.3}.  The proof is separated into 4 steps.  The bundle projection map is denoted by $\pi$.
\smallskip

\noindent{\it Step 1: Lagrangian neighborhood near infinity}.
We first construct the neighborhood of $L\setminus K$.  Define an open subset $U_{\coneC} - \fe$ of $T^{*}(\Sm\times (R_1, \infty))$ by
\begin{align*}
	U_{\coneC} - \fe = \{ (\sigma, r, \vsm, s) :~ (\sm, r, \vsm+\fe_1(\sm,r), s + \fe_2(\sm,r))\in U_{\coneC}\} ~.
\end{align*}
Its image under $\vph_{\srp}$ is an open neighborhood of the zero section in $T^*(L\setminus K)$.  Denote its image, $\vph_{\srp}(U_{\coneC}-\fe) \subset T^*(L\setminus K)$, by $U_{L\setminus K}$.  One naturally defines an embedding $\Phi_{L\setminus K}: U_{L\setminus K}\subset T^*(L\setminus K)\to \mathbb{C}^{m}$ by
\begin{align}
	(\Phi_{L\setminus K}\circ\vph_{\srp})(\sm, r, \vsm, s) = \Phi_{\coneC}(\sigma, r, \vsm + \fe_{1}(\sm, r), s + \fe_{2}(\sm, r)) ~.
\label{Phi LK} \end{align}

We prove the exactness of $\fe$.  For the self-diffeomorphism $f_\fe$ on $T^*(\Sm\times(R_1,\infty))$ defined by
\begin{align}
	f_\fe(\sm, r, \vsm, s) = (\sm, r, \vsm + \fe_{1}(\sm, r), s + \fe_{2}(\sm, r)) ~, \label{1form_trans}
\end{align}
one has $f_\fe^*(\ld_{\coneC}) = \ld_{\coneC} + \fe$, where $\ld_{\coneC}$ is the tautological $1$-form on $T^*\coneC\supset T^*(\Sm\times(R_1,\infty))$, and $\fe$ is regarded as a $1$-form on $T^*(\Sm\times(R_1,\infty))$ under the pull-back of the projection.

Since $\dd\fe = \dd (\fe_1+\fe_2\dd r)= 0$,
\begin{align*}
	\dd_\Sm\fe_2 = \frac{\pl\fe_1}{\pl r}  \quad\text{and}\quad  \dd_\Sm\fe_1 = 0 ~.
\end{align*}
Since $|\fe| = O(r^{\gm-1})$ as $r\to\infty$, the function
\begin{align}
	\ptl(\sm, r) = -\int_r^\infty\fe_2(\sigma,y) \dd y
\label{exact_potential} \end{align}
is well-defined, and $|\ptl(\sm, r)| = O(r^{\gm})$ as $r\to\infty$. With $\dd\fe = 0$, one finds that $\dd\ptl = \fe$.  The rate on $|\nabla\ptl|$ follows directly from its construction.

According to Proposition \ref{LN of cones}, \eqref{Phi LK}, $\dd\ptl = \fe$, and the fact that $\vph_{\srp}$ preserves the tautological $1$-form,
\begin{align}
	\Phi_{L\setminus K}^{*}\ld_{0} = \left((\vph_{\srp}^{-1})^{*}\circ f_{\fe}^{*}\circ\Phi_{\coneC}^{*}\right)(\ld_{0}) &= (\vph_{\srp}^{-1})^{*}f_{\fe}^{*}\left(\ld_{\coneC} - \dd\left(\frac{rs}{2}\right)\right) \notag \\
	&= (\vph_{\srp}^{-1})^{*}\left[ \ld_C - \dd\left(f_{\fe}^{*}(\frac{rs}{2})  - \ptl\right)\right] \notag \\
	&= \ld_{L} - \dd\left[ \left( \frac{rs}{2} + \frac{r\fe_2}{2} - \ptl\right)\circ \vph_\srp^{-1} \right] \label{ld_LK}
\end{align}
where $\lambda_{L}$ is the tautological $1$-form on $T^{*}L \supset T^*(L\setminus K)$.
\smallskip

\noindent{\it Step 2: constants}.
Since $L$ is exact, there exists $\bt_{L}:L\to \BR$ such that $\iota^{*}_{L}\ld_{0} = -\dd\bt_{L}$.  Due to the connectedness of $L$, $\bt_L$ is unique up to adding a constant.  Fix a choice of $\bt_L$.

On the other hand, the restriction of \eqref{ld_LK} on the zero section, $L\setminus K$, implies that
\begin{align*}
	\left(\left.\iota^*_{L}\ld_0\right)\right|_{L\setminus K} &= -\dd \left.\left[ \left(\frac{rs}{2} + \frac{r\fe_2}{2} - \ptl \right) \circ \vph_\srp^{-1} \right] \right|_{L\setminus K} .
\end{align*}
Therefore, one each connected component, $L_a$, of $L\setminus K$, there must exist a constant $c_a$ such that
\begin{align*}
	\left.\bt_L\right|_{L_a} &= \left.\left[ \left(\frac{rs}{2} + \frac{r\fe_2}{2} - \ptl \right) \circ \vph_\srp^{-1} \right] \right|_{L_a} + c_a.
\end{align*}
With these constants $c_a$'s, define $\af_{L\setminus K}: U_{L\setminus K}\to\BR$ by 
\begin{align}
	\left.\af_{L\setminus K}\right|_{\pi^{-1}(L_{a})} &= \left. \left[ \left(\frac{rs}{2} + \frac{r\fe_2}{2} - \ptl\right) \circ \vph_\srp^{-1} \right]\right|_{\pi^{-1}(L_{a})} + c_{a} ~. \label{af_LK}
\end{align}
\smallskip

\noindent{\it Step 3: Moser's trick on $T^*K$}.
Let $\wtd{U}_{L}\subset T^{*}L$ and $\wtd{\Phi}_{L}:\wtd{U}_{L}\to\BC^{m}$ be smooth extensions of the open neighborhood $U_{L\setminus K}$ and the embedding $\Phi_{L\setminus K}$ over the compact subset $K$.  Namely, $\wtd{U}_L$ is an open neighborhood of the zero section in $T^*L$ with $\wtd{U}_{L}\cap\pi^{-1}(L\setminus K) = U_{L\setminus K}$, and $\wtd{\Phi}_{L}\big|_{U_{L\setminus K}} = \Phi_{L\setminus K}$.  Moreover, the embedding can be chosen so that $\wtd{\Phi}_{L}\big|_{\ul{0}} = \iota_{L}$.  The neighborhood of $L$ asserted in this theorem will be constructed by perturbing $\wtd{U}_{L}$ and $\wtd{\Phi}_{L}$.

Let $h\in C^{\infty}(L)$ be a cut-off function such that $h\equiv 1$ on $L\setminus K$, $h\equiv 0$ on $K'\subset\subset K$.  Define an extension $\wtd{\af}_{L}$ of $\af_{L\setminus K}$ \eqref{af_LK} to $\wtd{U}_{L}$ by 
\begin{align*}
	\wtd{\af}_{L} := (h\circ\pi)\,\af_{L\setminus K} + (1-h\circ\pi)(\bt_{L}\circ\pi) ~.
\end{align*}
From step 2, the restriction of $\wtd{\af}_{L}$ on the zero section is $\bt_{L}$, $\left.\wtd{\af}_{L}\right|_{\ul{0}} = \bt_{L}$.
The goal is to construct a one-parameter family of self-diffeomorphisms, $\{\up_{t}\}_{t\in[0,1]}$, of $\wtd{U}_{L}$ with the following properties.
\begin{itemize}
\item $\up_{0} = \id_{\wtd{U}_{L}}$ and $\left.\up_{t}\right|_{\underline{0}}\in\mbox{Diff}_{c}(L)$ for all $t\in[0,1]$.
\item Let $\ld^{t} = (1-t)(\ld_{L} - \dd\widetilde{\af}_{L}) + t\wtd{\Phi}_{L}^{*}\ld_{0}$.  There exists a family of functions, $\{\af_{t}\}_{t\in[0,1]}$, on $\wtd{U}_{L}$ with $\af_{0} = 0$ and
\begin{align}\label{moser family}
	\up_{t}^{*}\ld^{t} = \ld_{L} - \dd(\wtd{\alpha}_{L} + \af_{t})
\end{align}
for every $t\in[0,1]$.
\end{itemize}

Suppose $\frac{\dd}{\dd t}\up_{t} = Y_{t}\circ\up_{t}$. Differentiating (\ref{moser family}) gives
\begin{align*}
	- \dd\left(\frac{\dd\af_{t}}{\dd t}\right) &= \up_{t}^{*}\left\{\frac{\dd\ld^{t}}{\dd t} + \iota_{Y_{t}}(\dd\ld^{t}) + \dd(\iota_{Y_{t}}\ld^{t})\right\}\\
	&= \up_{t}^{*}\left\{-(\ld_{L} - \dd\wtd{\alpha}_{L}) + \wtd{\Phi}_{L}^{*}\ld_{0} + \iota_{Y_{t}}\left[(1-t)(-\om_{L}) - t\wtd{\Phi}_{L}^{*}\om_{0}\right] + \dd(\iota_{Y_{t}}\ld^{t})\right\}\\
	&= \up_{t}^{*}\left\{\iota_{Y_{t}}\left[(1-t)(-\om_{L}) - t\wtd{\Phi}_{L}^{*}\om_{0}\right] - \left[\ld_{L} - \dd\wtd{\af}_{L} - \wtd{\Phi}_{L}^{*}\ld_{0}\right] \right\} + \dd\left(\up_{t}^{*}\iota_{Y_{t}}\lambda^{t}\right) ~,
\end{align*}
where $\om_{L} = -\dd\ld_{L}$ is the canonical symplectic form on $T^{*}L$. By shrinking $\widetilde{U}_{L}$ in the fiber direction if necessary,  the $2$-form  $(1-t)(-\om_{L}) - t\wtd{\Phi}_{L}^{*}\om_{0}$ is non-degenerate for every $t\in [0, 1]$.  Define the one-parameter family of vector field $\{Y_t\}_{t\in[0,1]}$ by
\begin{align*}
	\iota_{Y_{t}}\left[(1-t)(-\om_{L}) - t\wtd{\Phi}_{L}^{*}\om_{0}\right] = \ld_{L} - \dd\wtd{\af}_{L} - \wtd{\Phi}_{L}^{*}\ld_{0} ~.
\end{align*}
Due to \eqref{ld_LK}, the right hand side vanishes on $\pi^{-1}(L\setminus K)$.  Thus, $Y_t$ only supports on $\pi^{-1}(K)$.

Note that the zero section is Lagrangian with respect to $(1-t)(-\om_{L}) - t\wtd{\Phi}_{L}^{*}\om_{0}$, and for every $V$ tangent to the zero section,
\begin{align*}
	\left[(1-t)(-\om_{L}) - t\wtd{\Phi}_{L}^{*}\om_{0}\right](\left.Y_{t}\right|_{\ul{0}}, V) = - \dd\left[\left.\wtd{\af}\right|_{\ul{0}} - \bt_{L}\right](V) = 0 .
\end{align*}
It follows that $Y_{t}$ is tangent to the zero section. Therefore, for the diffeomorphism $\up_{t}$ generated by $Y_{t}$, one has $\left.\up_{t}\right|_{\ul{0}}\in\mbox{Diff}_{c}(L)$ and 
\begin{align*}
	\frac{\dd}{\dd t}\up_{t}^{*}\ld^{t} = - \dd\left(\frac{\dd\af_{t}}{\dd t}\right) = \dd\left(\up_{t}^{*}\iota_{Y_{t}}\ld^{t}\right).
\end{align*}
Integrating it against with $t$ gives
\begin{align*}
	\up_{t}^{*}\ld^{t} = \ld_{L} - \dd\wtd{\af}_{L} + \dd\left(\int_{0}^{t}\up_{\tau}^{*}\iota_{Y_{\tau}}\ld^{\tau}\,\dd\tau\right) .
\end{align*}
Hence, $\up_{t}$ is the desired diffeomorphism, and $\af_t = -\int_{0}^{t}\up_{\tau}^{*}\iota_{Y_{\tau}}\ld^{\tau}\,\dd\tau$. 

Finally, set $\Phi_{L}$ to be $\wtd{\Phi}_{L}\circ\up_{1}$. 
It follows that
\begin{align*}
	\Phi_{L}^{*}\ld_{0} = \up_{1}^{*}\wtd{\Phi}_{L}^{*}\ld_{0} = \up_{1}^{*}\ld^{t=1} = \ld_{L} - \dd\af_{L} 
\end{align*}
where $\af_{L} = \wtd{\af}_{L} + \af_{t=1}$.
\smallskip

\noindent{\it Step 4: asymptotic behavior}.
It remains to verify the decay rate of $\af_L$.  Note that $\af_{t}$ only supports on $\pi^{-1}(K)$.  By construction, we have
\begin{align}
	\left.\af_{L}\right|_{\pi^{-1}(L_{a})} = \left(\frac{rs}{2} + \frac{r\fe_2}{2} - \ptl\right)\circ\vph_\srp^{-1} + c_a ~. \label{eq-alphaasymptotes}
\end{align}
Thus,
\begin{align*}
	\left|(\af_{L}\circ\vph_{\srp})(\sm, r, \vsm, s) - \left(\frac{rs}{2} + c_{a}\right)\right| = O\left(r|\fe_2| + |\ptl|\right) = O(r^{\gamma}) \quad\text{as } r\to\infty
\end{align*}
on each $\vph_\srp^{-1}(T^*L_a)\subset T^*(\Sm\times(R_1,\infty))$.  This completes the proof of this theorem.
\end{proof}

\begin{remark}\hfill
\begin{itemize}
    \item In Theorem \ref{AC Lag nbhd}, the function $\bt_L$ is harmonic if $L$ is a special Lagrangian (calibrated by $\re\Om_0$).  See \cite{Neves2007}*{Lemma 6.2}.
    \item By \cite{LotayNeves2013}*{Lemma 5.4}, one finds that $\af_L$ can be expressed as follows
    \begin{align}
        \af_L &= \bt_L\circ\pi + \frac{1}{2}\int_0^1 \langle{\bx},{\bar{\nabla}\bar{u}}\rangle_{\Phi_L\circ s\dd u} \,\dd s ~.
    \end{align}
    Here, $\bx$ is the position vector in $\BC^m$, $\bar{\nabla}$ is taken with respect to the standard structure of $\BC^m$, and $\bar{u}$ is a function on $\Phi_L(U_L)$ defined to be $u\circ\pi \circ \Phi_L^{-1}$.
\end{itemize}
\end{remark}

We will also consider the dilation of an asymptotically conical Lagrangian submanifold $L$ by a scale $\vep>0$, which will be denoted by $\iota_{\vep L} = \vep\cdot\iota_{L}:L\to\BC^{m}$.  It is clear that $\vep L$ is asymptotically conical with the same cone and the same rate.  The following corollary describes the effect of dilation on Theorem \ref{AC Lag nbhd}.

\begin{corollary}\label{Lag nbhd scaling}
Let $\Phi_L: U_{L}\subset T^{*}L \to \BC^m$ be the Lagrangian neighborhood constructed by Theorem \ref{AC Lag nbhd}.  For any $\varepsilon>0$, let $f_{\varepsilon}:T^{*}L\to T^{*}L$ be the diffeomorphism defined by $f_{\vep}(q, p) = (q, \vep^{-2}p)$.  Then, the open neighborhood $U_{\vep L}:= f_{\vep}^{-1}(U_{L})\subset T^{*}L$ of the zero section and the embedding
\begin{align*}
	\Phi_{\vep L} &= \vep\cdot\Phi_{L}\circ f_{\vep}: U_{\vep L} \to \BC^{m}
\end{align*}
constitute a Lagrangian neighborhood of $\iota_{\vep L}$, and
\begin{align*}
	\Phi_{\vep L}^{*}\ld_{0} = \ld_{L} -  \dd\left(\vep^{2}\cdot(\af_{L}\circ f_\vep)\right) ~\text{ on } U_{\vep L},
\end{align*}
where $\af_{L}$ is the function given by Theorem \ref{AC Lag nbhd}.
\end{corollary}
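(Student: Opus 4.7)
The corollary follows from a direct chain-rule calculation, once the two relevant scaling properties are identified. Let me denote by $s_\vep:\BC^m\to\BC^m$ the ambient dilation $\bx\mapsto\vep\bx$, so that $\Phi_{\vep L} = s_\vep\circ\Phi_L\circ f_\vep$. The key observations are:

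\begin{itemize}
\item The standard Liouville form $\ld_0 = \tfrac{1}{2}\sum_{j}(y_j\dd x_j - x_j\dd y_j)$ is homogeneous of degree $2$ under dilation, i.e.\ $s_\vep^*\ld_0 = \vep^2\ld_0$.
\item The tautological $1$-form $\ld_L$ on $T^*L$, in fibre coordinates $(q^i,p_i)$ writing $\ld_L = p_i\,\dd q^i$, pulls back under the fibre rescaling $f_\vep(q,p)=(q,\vep^{-2}p)$ as $f_\vep^*\ld_L = \vep^{-2}\ld_L$.
\end{itemize}

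First I would verify the embedding properties. Since $f_\vep$ is a diffeomorphism of $T^*L$ fixing the zero section pointwise, $U_{\vep L} = f_\vep^{-1}(U_L)$ is an open neighbourhood of $\ul{0}$, and $\Phi_{\vep L}$ is an embedding as a composition of embeddings and diffeomorphisms. On the zero section one has $\Phi_{\vep L}|_{\ul{0}} = s_\vep\circ\Phi_L|_{\ul{0}} = s_\vep\circ\iota_L = \iota_{\vep L}$, as required.

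For the symplectic content, combining the two scaling identities with the formula $\Phi_L^*\ld_0 = \ld_L - \dd\af_L$ supplied by Theorem \ref{AC Lag nbhd} gives
\begin{align*}
	\Phi_{\vep L}^*\ld_0 &= f_\vep^*\Phi_L^*s_\vep^*\ld_0 = \vep^2 f_\vep^*\left(\ld_L - \dd\af_L\right) \\
	&= \vep^2\left(\vep^{-2}\ld_L - \dd(\af_L\circ f_\vep)\right) = \ld_L - \dd\left(\vep^2\cdot(\af_L\circ f_\vep)\right),
\end{align*}
which is precisely the asserted primitive formula. Applying $\dd$ to both sides yields $\Phi_{\vep L}^*\om_0 = \om_L$, confirming that $(\U_{\vep L},\Phi_{\vep L})$ is a Lagrangian neighbourhood of $\iota_{\vep L}$. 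There is no real analytic obstacle here; the only thing to be careful about is keeping track of the two different scaling conventions (on the base of $T^*L$ the map is identity, while the fibre is scaled by $\vep^{-2}$ and the ambient target is scaled by $\vep$), which is arranged precisely so that the factors of $\vep^{\pm 2}$ cancel in the tautological term and survive in the primitive $\af_L$.
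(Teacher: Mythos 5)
Your argument is correct and is essentially identical to the paper's proof: both rest on the two scaling identities $(\vep\cdot)^*\ld_0 = \vep^2\ld_0$ and $f_\vep^*\ld_L = \vep^{-2}\ld_L$, combined with $\Phi_L^*\ld_0 = \ld_L - \dd\af_L$ from Theorem \ref{AC Lag nbhd}. Your additional verification of the zero-section and embedding properties is a harmless elaboration that the paper leaves implicit.
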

\begin{proof}
By $f_{\vep}^{*}\ld_{L} = \vep^{-2}\ld_{L}$, $(\varepsilon\cdot\,)^{*}\ld_{0} = \vep^{2}\ld_{0}$ and Theorem \ref{AC Lag nbhd},
\begin{align*}
	\Phi_{\vep L}^{*}\ld_{0} = f_{\vep}^{*}\Phi_{L}^{*}(\vep^{2}\ld_{0}) = \vep^{2}f_{\vep}^{*}\left(\ld_{L} - \dd\af_{L}\right) = \ld_{L} -  \dd\left(\vep^{2}f_{\vep}^{*}\af_{L}\right) ~.
\end{align*}
This finishes the proof of the corollary.
\end{proof}

The notations $f_\vep$ and $f_{\fe}$ defined by \eqref{1form_trans} both denote self-diffeomorphisms of the cotangent bundle, whose restriction on the fibers are affine transformations.  When the subscript is a real number, it is a fiberwise dilation; when the subscript is a $1$-form, it is a fiberwise translation.

\begin{remark} \label{dilate_potential}
Denoting $\mathfrak{r}(r) := \vep r$, by \eqref{def_kp} and \eqref{cone scaling} $\vep L\setminus\vep K$ is given by
\begin{align*}
	\vep\cdot\Phi_{\coneC}(\sm, r, \fe_1(\sm,r), \fe_2(\sm, r)) &= \Phi_{\coneC}(\sm, \vep r, \vep^2\fe_1\,(\sm, r), \vep\,\fe_2(\sm,r)) \\
	&= \Phi_{\coneC}(\sm, \mathfrak{r}, \vep^2\fe_1\,(\sm,\vep^{-1}\mathfrak{r}), \vep\,\fe_2(\sm,\vep^{-1}\mathfrak{r}))
\end{align*}
This means that $\vep L\setminus\vep K$ is the graph of a closed $1$-form on $\Sm\times(\vep R_1,\infty)$.  Its potential function (for $\mathfrak{r}>\vep R_1$) is given by
\begin{align*}
	-\int_{r_\vep}^\infty \vep\,\kp_2(\sm,\vep^{-1}y)\,\dd y &= -\vep^{2}\int_{\vep^{-1}\mathfrak{r}}\kp_2(\sm,y')\dd y' = \vep^2\,\ptl(\sm,\vep^{-1}\mathfrak{r}) ~,
\end{align*}
so that
\begin{equation}
\vep \cdot \varphi \, = \, \Phi_{\coneC} \circ d(\vep^2 \ptl(\sigma, \vep^{-1} r)) \circ \mathfrak{r} \label{epsilon-immersion-identity}
\end{equation}
The above identity may be extended to a similar identity on the cotangent bundle:
\begin{align}
\varepsilon\cdot \Phi_L \circ \varphi_\srp \, = \, \Phi_{\coneC} \circ f_{d(\vep^2 \ptl(\sigma, \vep^{-1} r))} \circ f_{\vep}^{-1} \circ \mathfrak{r}_\srp, \label{epsilon-cotangent-identity}
\end{align}
where $f_{\varepsilon},\, f_{dA}:T^{*}L\to T^{*}L$ are the diffeomorphisms defined by $f_{\vep}(q, p) = (q, \vep^{-2}p)$ and $f_{dA}(q,p) := (q, p + dA)$ respectively. 
\end{remark}

When an asymptotically conical Lagrangian $L$ is also a special Lagrangian, Joyce in \cite{Joyce2004SLCS3}*{section 4.1} defines two cohomological invariants.  We will require one of them.

\begin{definition} \label{Z_inv}
Let $L$ be an asymptotically conical, special Lagrangian submanifold with cone $\coneC = \Sm\times(0,\infty)$.  It follows from $\im\Om_0|_L = 0$ that $(\im\Om_0,0)$ defines an element in the relative de Rham cohomology $\RH^m(\BC^m,L;\,\BR)$.  Since $\Sm$ is in effect the boundary of $L$, there is a natural map $\RH^{m-1}(L;\,\BR) \to \RH^{m-1}(\Sm;\,\BR)$.  Together with the long exact sequence
\begin{align*}
	0 = \RH^{m-1}(\BC^m;\,\BR) \to \RH^{m-1}(L;\,\BR) \stackrel{\cong}{\longrightarrow} \RH^m(\BC^m,L;\,\BR) \to \RH^{m}(\BC^m;\,\BR) = 0 ~,
\end{align*}
the invariant $Z(L) \in \RH^{m-1}(\Sm;\,\BR)$ is defined to be the image of $[(\im\Om_0,0)] \in \RH^m(\BC^m,L;\,\BR)\cong \RH^{m-1}(L;\,\BR)$ under the map $\RH^{m-1}(L;\,\BR) \to \RH^{m-1}(\Sm;\,\BR)$.
\end{definition}

\subsection{The Lawlor Neck}\label{sec-lawlorneck}

It is known that $\RSU(m)$ acts transitively on the space of special Lagrangian $m$-planes in $\BC^m$.  In fact, the Grassmannian of oriented special Lagrangians is $\RSU(m)/\RSO(m)$.  It follows that up to an $\RSU(m)$ transformation, one may assume that a special Lagrangian $m$-plane is $\BR^m\subset\BC^m$.  It turns out that given a pair of special Lagrangian $m$-planes, one may still put them into a \emph{standard} form by $\RSU(m)$.

\begin{lemma}[\cite{Joyce2003SLCS5}*{Proposition 9.1}]\label{two_planes}
Let $(\Pi^-,\Pi^+)$ be a pair of transverse special Lagrangian $m$-planes in $\BC^m$, namely, $\Pi^-\cap\Pi^+ = \{0\}$.  There exist $\bfU\in\RSU(m)$ and $0< \phi_1\leq \ldots\leq \phi_m< \pi$ such that $\bfU(\Pi^-) = \Pi^0$ and $\bfU(\Pi^+) = \Pi^{\bphi}$, where
\begin{align}
	\Pi^0 = \{(x_1,\ldots,x_m):~ x_j\in\BR^m\} \quad\text{and}\quad \Pi^{\bphi} = \{(e^{i\phi_1}x_1,\ldots,e^{i\phi_m}x_m):~ x_j\in\BR^m\} ~. \label{standard_planes} \end{align}
Moreover, $\bphi = (\phi_1,\ldots,\phi_m)$ is unique, and $\sum_{j=1}^{m}\phi_j = k\pi$ for some $k\in\{1,\ldots,m-1\}$.
\end{lemma}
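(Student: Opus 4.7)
The plan is to reduce the pair $(\Pi^{-},\Pi^{+})$ to the standard form in two stages, exploiting the double-coset structure of the Lagrangian Grassmannian $\mathrm{U}(m)/\mathrm{O}(m)$. First, since $\RSU(m)$ acts transitively on oriented special Lagrangian $m$-planes in $\BC^{m}$ (the corresponding Grassmannian being $\RSU(m)/\RSO(m)$), one may choose $\bfU_{1}\in\RSU(m)$ with $\bfU_{1}(\Pi^{-})=\Pi^{0}$. After this reduction, the image $\bfU_{1}(\Pi^{+})$ has the form $\bfV(\Pi^{0})$ for some $\bfV\in\RSU(m)$, and it suffices to find $\bfA\in\RSO(m)$ stabilising $\Pi^{0}$ such that $\bfA^{-1}\bfV(\Pi^{0})=\Pi^{\bphi}$; then $\bfU=\bfA^{-1}\bfU_{1}\in\RSU(m)$ is the desired element.

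The key ingredient is the Cartan (KAK) decomposition for the symmetric pair $(\mathrm{U}(m),\mathrm{O}(m))$: every $\bfV\in\mathrm{U}(m)$ factors as $\bfV=\bfA_{1}\,D\,\bfA_{2}$ with $\bfA_{1},\bfA_{2}\in\mathrm{O}(m)$ and $D=\mathrm{diag}(e^{i\phi_{1}},\ldots,e^{i\phi_{m}})$.  Substituting yields $\bfV(\Pi^{0})=\bfA_{1}(\Pi^{\bphi'})$, which reduces the problem to normalising the phases: permutation matrices absorbed into $\bfA_{1}$ or $\bfA_{2}$ reorder the $\phi_{j}$'s, and diagonal sign matrices shift individual $\phi_{j}$'s by $\pi$.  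Since the transversality hypothesis $\Pi^{-}\cap\Pi^{+}=\{0\}$ forbids any $\phi_{j}\in\pi\BZ$, one can arrange $0<\phi_{1}\leq\cdots\leq\phi_{m}<\pi$; a final sign adjustment places $\bfA_{1}$ in $\RSO(m)$ while preserving the range.  To pin down the range of $k$, compute $\Om_{0}|_{\Pi^{\bphi}}=e^{i\sum_{j}\phi_{j}}\,\dd x_{1}\w\cdots\w\dd x_{m}$; the hypothesis that $\Pi^{\bphi}$ is special Lagrangian forces $\Om_{0}|_{\Pi^{\bphi}}$ to be a real multiple of the volume form, so $\sum_{j}\phi_{j}\in\pi\BZ$, and combined with $\sum_{j}\phi_{j}\in(0,m\pi)$ this gives $\sum_{j}\phi_{j}=k\pi$ for $k\in\{1,\ldots,m-1\}$.

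Uniqueness of $\bphi$ follows because the multiset $\{e^{2i\phi_{j}}\}$ is intrinsic to $\bfV$ as the spectrum of the unitary symmetric matrix $\bfV\bfV^{T}$, and the Weyl-group action of permutations and sign-flips on the phases coincides precisely with the $\RO(m)\times\RO(m)$ ambiguity in the KAK factorisation, so the normalisation $\phi_{j}\in(0,\pi)$ in non-decreasing order singles out a unique representative.  The main technical obstacle is the careful bookkeeping of determinants in the KAK procedure: one must verify that the various $\mathrm{O}(m)$-adjustments can always be arranged so that the final transformation $\bfA^{-1}\bfU_{1}$ lies in $\RSU(m)$, and that the individual $\pi$-shifts used to force $\phi_{j}\in(0,\pi)$ remain compatible with both the non-decreasing ordering and the special Lagrangian constraint $\sum_{j}\phi_{j}\in\pi\BZ$.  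Invariance of the latter constraint under any single shift $\phi_{j}\mapsto\phi_{j}+\pi$ is the crucial consistency check that makes the entire normalisation procedure work.
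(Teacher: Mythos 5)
Your proposal is correct, and it is essentially the argument behind the result as cited: the paper itself gives no proof of this lemma, importing it from Joyce (\cite{Joyce2003SLCS5}*{Proposition 9.1}), and the proof underlying that citation is precisely your reduction — move $\Pi^-$ to $\mathbb{R}^m$ by $\mathrm{SU}(m)$, write $\Pi^+=V(\Pi^0)$ and diagonalise the symmetric unitary matrix $VV^T$ by an orthogonal matrix (equivalently your $KAK$ decomposition of $\mathrm{U}(m)$ relative to $\mathrm{O}(m)$), use transversality to rule out $\phi_j\in\pi\mathbb{Z}$, use the special condition to force $\sum_j\phi_j\in\pi\mathbb{Z}$, and obtain uniqueness from the conjugation-invariance of the spectrum $\{e^{2i\phi_j}\}$ of $VV^T$ under the $\mathrm{O}(m)\times\mathrm{O}(m)$ ambiguity. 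One caveat worth recording: your step ``special Lagrangian forces $\Omega_0|_{\Pi^{\bphi}}$ to be a real multiple of the volume form'' implicitly uses the Harvey--Lawson/Joyce convention (calibration by $\re\Omega$, i.e. $\im\Omega|_L=0$), which is the convention under which the lemma and the conclusion $\sum_j\phi_j=k\pi$ make sense; under the convention announced in this paper's Conventions (calibration by $\im\Omega$, i.e. $\re\Omega|_L=0$) the same computation would give $\sum_j\phi_j\in\tfrac{\pi}{2}+\pi\mathbb{Z}$, so the tension lies in the paper's stated conventions rather than in your proof.
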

\begin{definition} \label{int_type}
For a pair of transverse special Lagrangian $m$-planes in $\BC^m$, $(\Pi^-,\Pi^+)$, the integer $k$ given by Lemma \ref{two_planes} is called the type of $(\Pi^-,\Pi^+)$.  Note that $(\Pi^+,\Pi^-)$ is of type $m-k$.
\end{definition}

Clearly, $(\Pi^-\cup\Pi^+)\setminus\{0\}$ is a Lagrangian cone, whose link is the disjoint union of two $S^{m-1}$'s.  When $(\Pi^-,\Pi^+)$ is of type $1$, there are \emph{special} Lagrangians asymptotic to $\Pi^-\cup\Pi^+$.  They are constructed by Lawlor in \cite{Lawlor1989}, and are usually referred as \emph{Lawlor necks}.  The explanation below is based on \cite{Joyce2003SLCS5}*{Example 6.11} and \cite{Lee2003}*{section 1}.

For positive numbers $a_1,\ldots,a_m$, introduce the functions
\begin{align*}
	{\mathrm P}_{\ba}(y) &= \frac{-1 + \prod_{j=1}^m(1+a_jy^2)}{y^2} \quad\text{and} \\
	z_j(s) &= \exp\left(i\,a_j\int_{-\infty}^s \frac{\dd y}{(1+a_jy^2)\sqrt{{\mathrm P}_{\ba}(y)}} \right) \sqrt{a_j^{-1}+s^2}
\end{align*}
for $j\in\{1,\ldots,m\}$.  Define the real numbers $\phi_1,\ldots,\phi_m$ and $A$ by
\begin{align}
	\phi_j = a_j\int_{-\infty}^{\infty} \frac{\dd y}{(1+a_jy^2)\sqrt{{\mathrm P}_{\ba}(y)}}
	\quad\text{and}\quad
	A = \om_m\left(\prod_{j=1}^m a_j\right)^{-\frac{1}{2}} ~,
\label{Lawlor_A} \end{align}
where $\om_m$ is the volume of the unit $S^{m-1}\subset\BR^m$.  With these functions and constants, the construction and the properties of Lawlor necks are summarised in the following proposition.  The proof can be found in the aforementioned references.
\begin{proposition}\label{Lawlor}
For positive numbers $a_1,\ldots,a_m$, the followings hold true.
\begin{enumerate}
\item The numbers defined by \eqref{Lawlor_A} satisfy
\begin{align}
	\phi_j\in(0,\pi) \quad\text{for all } j ~,\quad  \sum_{j=1}^m\phi_j = \pi ~,\quad\text{and }~ A>0 ~.
\label{Lawlor_condition}\end{align}
Moreover, \eqref{Lawlor_A} gives a \emph{bijection} between
\begin{align*}
	\{(a_1,\ldots,a_m):~ a_j>0\text{ for all }j\} \quad\text{and}\quad
	\{(\phi_1,\ldots,\phi_m,A) \text{ obeying \eqref{Lawlor_condition}}\} ~.
\end{align*}
\end{enumerate}
With this understood, denote $(\phi_1,\ldots,\phi_m)$ by $\bphi$.  For $(\bphi,A)$ obeying \eqref{Lawlor_condition}, let
\begin{align}
	L^{\bphi,A} &= \left\{ \left(z_1(s)x_1, \ldots, z_m(s)x_m\right)\in\BC^m :~
	s\in\BR ,~ x_j\in\BR,~ \sum_{j=1}^mx_j^2=1 \right\} ~,
\label{Lawlor_eqn}\end{align}
They are called Lawlor necks.
\begin{enumerate}
\setcounter{enumi}{1}
\item\label{Lawlor2} The Lawlor neck $L^{\bphi,A}$ defined by \eqref{Lawlor_eqn} is an embedded, special Lagrangian submanifold in $\BC^m$.  It is diffeomorphic to $S^{m-1}\times\BR$, and is thus an exact Lagrangian.  It is asymptotically conical to $\Pi^0\cup\Pi^{\bphi}$ with rate $\gamma = {2-m}$, where
\begin{align*}
	\Pi^0 = \{(x_1,\ldots,x_m):~ x_j\in\BR^m\} \quad\text{and}\quad \Pi^{\bphi} = \{(e^{i\phi_1}x_1,\ldots,e^{i\phi_m}x_m):~ x_j\in\BR^m\} ~.
\end{align*}
\item\label{Lawlor3} The number $A$ is essentially the volume of the topological $B^m$ bound by the $S^{m-1}$ defined by $s=0$.  The dilation of a Lawlor neck is still a Lawlor neck.  Specifically, $\vep\cdot L^{\bphi,A} = L^{\bphi,\vep^m A}$ for any $\vep>0$.
\end{enumerate}
\end{proposition}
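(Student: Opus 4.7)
\medskip\noindent\textbf{Proof plan.}

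\smallskip\noindent\emph{Part (1).}  The crucial identity is $\sum_j \phi_j = \pi$.  Setting $Q(y):=\prod_j(1+a_jy^2)=1+y^2P_{\ba}(y)$, one has $\sqrt{P_{\ba}(y)}=\sqrt{Q-1}/|y|$ and $Q'(y)=2yQ(y)\sum_j a_j/(1+a_jy^2)$.  Summing the defining integrals,
\begin{align*}
	\sum_j\phi_j = \int_{-\infty}^{\infty}\frac{1}{\sqrt{P_{\ba}(y)}}\cdot\frac{Q'(y)}{2yQ(y)}\,\dd y = \int_1^{\infty}\frac{\dd Q}{Q\sqrt{Q-1}} = \pi,
\end{align*}
the last equality via $v=\sqrt{Q-1}$.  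Positivity of each $\phi_j$ and of $A$ is immediate, and combined with $\sum\phi_j=\pi$ forces $\phi_j<\pi$.  For the bijection, the substitution $y=\ld^{-1/2}z$ shows that $a_j\mapsto\ld a_j$ leaves each $\phi_j$ invariant while sending $A\mapsto\ld^{-m/2}A$, reducing the problem to showing that $(a_1,\ldots,a_m)\mapsto(\phi_1,\ldots,\phi_m)$ is a diffeomorphism between $\{\prod a_j=1\}\subset\BR_+^m$ and the open simplex $\{\phi_j>0,\sum\phi_j=\pi\}$.  I would verify properness by boundary analysis (as $a_j\to 0$ or $\infty$, the corresponding $\phi_j$ tends to $0$ or $\pi$) and the local diffeomorphism property via monotonicity of $\phi_j$ in $a_k$, and then conclude by a degree argument; \emph{this bijection is the principal technical obstacle}.

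\smallskip\noindent\emph{Part (2): Lagrangian, special, embedded, exact.}  For $F(s,x)=(z_1(s)x_1,\ldots,z_m(s)x_m)$, direct expansion yields $F^{*}\om_0=-\sum_j\im(z_j'\bar z_j)\,x_j\,\dd s\w\dd x_j$.  Writing $z_j=r_je^{i\theta_j}$ with $r_j=\sqrt{a_j^{-1}+s^2}$, one computes $\im(z_j'\bar z_j)=r_j^2\theta_j'=1/\sqrt{P_{\ba}(s)}$, \emph{independent of $j$}; pulling this factor out leaves $\sum_j x_j\,\dd x_j\equiv 0$ on $S^{m-1}$, forcing $F^{*}\om_0=0$.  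For the special condition, the identity $z_j'/z_j=(s+i/\sqrt{P_{\ba}(s)})/r_j^2$ allows one to factor
\begin{align*}
	F^{*}\Om_0 = \Big(\prod_j z_j(s)\Big)\big(s+i/\sqrt{P_{\ba}(s)}\big)\cdot\Theta,
\end{align*}
with $\Theta := \sum_k(-1)^{k-1}\tfrac{x_k}{r_k^2}\,\dd s\w\dd x_1\w\cdots\widehat{\dd x_k}\cdots\w\dd x_m$ real.  Differentiation, together with $2yP_{\ba}(y)+y^2P_{\ba}'(y)=Q'(y)$, shows that $\sum_j\theta_j'(s)+\tfrac{d}{ds}\arg(s+i/\sqrt{P_{\ba}(s)})=0$, so the argument of the complex factor is constant; its value is pinned down by evaluation at $s=0$, where $\theta_j(0)=\phi_j/2$ by evenness of the defining integrand.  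This verifies the special condition.  Embeddedness follows because $s\mapsto\arg z_j(s)$ is strictly monotone ($\theta_j'>0$), so $s$ is recovered from the image and then each $x_j=F_j/z_j(s)$; the same nonvanishing-phase observation gives injectivity of $dF$.  Exactness of $\iota^{*}_L\ld_0$ follows from $H^1(\BR\times S^{m-1})=0$ for $m\geq 3$.

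\smallskip\noindent\emph{Asymptotics and Part (3).}  As $s\to+\infty$, $P_{\ba}(y)\sim(\prod_k a_k)\,y^{2m-2}$, so the tail integral $\int_s^{\infty}(1+a_jy^2)^{-1}P_{\ba}^{-1/2}\,\dd y = O(s^{-m})$, giving $\arg z_j(s)=\phi_j-O(s^{-m})$ and $|z_j(s)|=|s|(1+O(s^{-2}))$.  Expressing the end of $L$ as a graph of a closed $1$-form over $\Pi^{\bphi}$ via the standard Lagrangian neighbourhood (project onto $\Pi^{\bphi}$ and record the Lagrangian ``vertical'' displacement in the $J$-direction), the graphing $1$-form has components of size $|z_j|\cdot|\arg z_j-\phi_j| = O(s^{1-m}) = O(r^{1-m})$, matching the rate $\gm-1=1-m$, i.e.\ $\gm=2-m$; the higher-order rates follow from differentiating the expansion.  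The $s\to-\infty$ end is symmetric and yields the asymptote to $\Pi^0$.  For Part (3), set $s'=\vep s$ and $a_j'=\vep^{-2}a_j$: then $(a_j')^{-1}+(s')^2=\vep^2(a_j^{-1}+s^2)$, and the $\phi_j$-invariance under $a_j\mapsto\ld a_j$ from Part (1) ensures the angular factors match, yielding $\vep\cdot L^{\bphi,A}=L^{\bphi,A'}$ with $A'=\om_m(\prod a_j')^{-1/2}=\vep^m A$ and unchanged $\bphi$.  Finally, $F(0,x)=(a_j^{-1/2}x_j)\in\BR^m$ parametrises the ellipsoid $\{\sum_j a_jy_j^2=1\}\subset\BR^m$, whose enclosed $m$-ball has Euclidean volume $\om_m(\prod a_j^{-1/2})/m = A/m$, the claimed volumetric interpretation of $A$ (up to the conventional factor of $m$).
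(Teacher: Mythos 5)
The paper does not actually prove this proposition: it is stated as a summary, with the proof deferred to Lawlor \cite{Lawlor1989}, Joyce \cite{Joyce2003SLCS5}*{Example 6.11} and Lee \cite{Lee2003}*{section 1}. Measured against those sources, your computational core is correct and is essentially the standard argument: the identity $\sum_j\phi_j=\pi$ via $Q(y)=\prod_j(1+a_jy^2)$ and the substitution $Q\mapsto\sqrt{Q-1}$; the key observation $\im(z_j'\bar z_j)=r_j^2\theta_j'=P_{\ba}(s)^{-1/2}$ independent of $j$, which kills $F^*\om_0$ on $S^{m-1}$; the constancy of $\arg\big[(\prod_jz_j)(s+iP_{\ba}^{-1/2})\big]$ from $2yP_{\ba}+y^2P_{\ba}'=Q'$; injectivity and immersivity from strict monotonicity of $\theta_j$ and $\theta_j'>0$; the tail estimate $\phi_j-\theta_j(s)=O(s^{-m})$ giving the $O(r^{1-m})$ graphing rate, i.e.\ $\gm=2-m$; and the scaling $s\mapsto\vep s$, $a_j\mapsto\vep^{-2}a_j$ for item (3). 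Two small points: since $\theta_j(0)=\phi_j/2$ (which you yourself use), the $s=0$ sphere lies in the plane $\Pi^{\bphi/2}$, not in $\BR^m$, so your sentence ``$F(0,x)=(a_j^{-1/2}x_j)\in\BR^m$'' is internally inconsistent, although the enclosed volume $A/m$ is unaffected; and your constant phase evaluates to $\pi$, so the neck is calibrated by $\pm\re\Om_0$ in the standard convention --- squaring this with the paper's stated ``calibrated by $\im\Om$'' convention is a phase bookkeeping matter, not an error, but you should say which convention you are verifying.

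The genuine gap is the bijectivity claim in part (1), which you flag but do not prove. Reducing by the scaling $a_j\mapsto\ld a_j$ to the slice $\prod_ja_j=1$ is fine, but the remaining step --- that $(a_1,\ldots,a_m)\mapsto(\phi_1,\ldots,\phi_m)$ is a diffeomorphism from that slice onto the open simplex $\{\phi_j>0,\ \sum_j\phi_j=\pi\}$ --- is the actual content of Lawlor's theorem, and your proposed route does not yet deliver it: ``monotonicity of $\phi_j$ in $a_k$'' does not give nondegeneracy of the Jacobian (and even the sign of $\pl\phi_j/\pl a_j$ is not obvious, since $a_j$ also enters through $P_{\ba}$), so the local-diffeomorphism input to your degree argument is unproven; properness likewise needs a genuine boundary analysis on the slice, where several $a_j$ may degenerate simultaneously. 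Note that the surjectivity half is exactly what the rest of the paper uses (Definition \ref{static_mfd}, Step 4, chooses $A>0$ and invokes the existence of $L^{\bphi_j,A}$ for the prescribed angles), so this step cannot be waved through; either supply the injectivity/surjectivity argument (e.g.\ along Lawlor's variational characterisation) or cite it.
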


In item (\ref{Lawlor3}), one may also describe the dilation effect on the data $(a_1,\ldots,a_m)$; $\vep\cdot L^{\bphi,A}$ corresponds to $\vep\cdot(a_1,\ldots,a_m) = (\vep^{-2}a_1,\ldots,\vep^{-2}a_m)$.

Because of item (\ref{Lawlor2}), Theorem \ref{AC Lag nbhd} applies to the Lawlor necks.  We would like to determine the constants $c_a$'s described in that theorem.  The Lawlor neck $L^{\bphi,A}$ has two ends.  One is asymptotic to $\Pi^0$, whose constant is denoted by $c_-(L^{\bphi,A})$.  The other is asymptotic to $\Pi^{\bphi}$, whose constant is denoted by $c_+(L^{\bphi,A})$.  According to step 2 of the proof of Theorem \ref{AC Lag nbhd}, these constants are the limit of a primitive of $-\ld_0$.  A direct computation shows that
$\left.-\ld_0\right|_{L^{\bphi,A}} = \frac{1}{2\sqrt{{\mathrm P}_{\ba}(y)}}\dd y $,
and hence
\begin{align}
	c_+(L^{\bphi,A}) - c_-(L^{\bphi,A}) &= \int_{-\infty}^{\infty} \frac{1}{2\sqrt{{\mathrm P}_{\ba}(y)}}\dd y ~.
\label{Lawlor_const} \end{align}
By a change of variable, $c_+(\vep\cdot L^{\bphi,A}) - c_-(\vep\cdot L^{\bphi,A}) = \vep^2\left[c_+(L^{\bphi,A}) - c_-(L^{\bphi,A})\right]$.  This coincides with Corollary \ref{Lag nbhd scaling}.

Since $\alpha_{L^{\phi,A}}$ is unique up to the addition of a constant, we may choose the asymptotic constant $c_-(L^{\phi,A}) = 0$, from which it follows that $c_+(L^{\phi,A}) = \int_{-\infty}^{\infty} \frac{1}{2\sqrt{{\mathrm P}_{\ba}(y)}}\dd y $. We maintain this choice for the remainder of our work.

The $Z$-invariant (see Definition \ref{Z_inv}) of the Lawlor necks is computed by Joyce in \cite{Joyce2003SLCS5}*{section 9.1}:
\begin{lemma} \label{Lawlor_Z}
For a Lawlor neck $L^{\bphi,A}$, let $\Sm^-$ be the link of $\Pi^0$, and $\Sm^+$ be the link of $\Pi^{\bphi}$.  The $Z$-invariant of the Lawlor neck satisfies
\begin{align*}
Z(L^{\bphi,A})\cdot[\Sm^-] = A \quad\text{and}\quad Z(L^{\bphi,A})\cdot[\Sm^+] = -A ~,
\end{align*}
where the notation means the evaluation on the fundamental cycles.
\end{lemma}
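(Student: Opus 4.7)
The plan is to unpack the definition of $Z(L)$ into a concrete period integral, and then evaluate it by Stokes' theorem on a well-chosen bounding chain.

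First, I will obtain a working representative. Since $\RH^{m}(\BC^{m};\BR)=0$, pick a global primitive $\xi\in\Om^{m-1}(\BC^{m})$ with $\dd\xi=\im\Om_0$. As $L$ is special Lagrangian, $\xi|_L$ is closed on $L$, and under the connecting isomorphism $\RH^m(\BC^m,L;\BR)\cong\RH^{m-1}(L;\BR)$ the class $[(\im\Om_0,0)]$ corresponds to $[\xi|_L]$. Restricting to the links at infinity and applying Stokes, the pairings we want become
\begin{align*}
    Z(L)\cdot[\Sm^\pm]=\int_{D^\pm}\im\Om_0,
\end{align*}
where $D^\pm$ is any $m$-chain in $\BC^m$ whose boundary represents the image of $[\Sm^\pm]$ in $\RH_{m-1}(L;\BR)$ under the inclusion $\Sm\hookrightarrow L$.

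Second, I will construct $D^\pm$ explicitly using the symmetry at $s=0$. Because the integrand $((1+a_jy^2)\sqrt{\mathrm{P}_{\ba}(y)})^{-1}$ in the exponent of $z_j$ is even in $y$, evaluation at $s=0$ gives $z_j(0)=e^{i\phi_j/2}a_j^{-1/2}$, so the cross-section $\gm_0:=L\cap\{s=0\}$ lies entirely in the Lagrangian plane $P:=\{(e^{i\phi_1/2}u_1,\dotsc,e^{i\phi_m/2}u_m):u_j\in\BR\}$ and, in the real coordinates $u_j$, is the ellipsoid $\{\sum_j a_j u_j^2=1\}$. Let $B\subset P$ be the solid ellipsoid bounded by $\gm_0$, and set $L^-:=L\cap\{s\leq 0\}$, $L^+:=L\cap\{s\geq 0\}$. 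With appropriate orientations, $D^-:=L^-+B$ and $D^+:=L^+-B$ have boundaries representing (limits of) the images of $[\Sm^-]$ and $[\Sm^+]$ in $L$; the opposite signs on $B$ reflect that $\gm_0$ is traversed in opposite senses as the finite boundary of $L^-$ versus $L^+$.

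Third, I will compute the integrals. Since $L$ is special Lagrangian, $\int_{L^\pm}\im\Om_0=0$. On $P$, the constraint $\sum_{j=1}^m\phi_j=\pi$ from Proposition \ref{Lawlor} yields
\begin{align*}
    \Om_0|_P=e^{i\sum_j\phi_j/2}\,\dd u_1\w\cdots\w\dd u_m=i\,\mathrm{vol}_P,
\end{align*}
so $\im\Om_0|_P=\mathrm{vol}_P$. Hence $\int_B\im\Om_0=\mathrm{Vol}(B)$, and the substitution $u_j=x_j/\sqrt{a_j}$ identifies $B$ with the Euclidean unit ball scaled by $a_j^{-1/2}$ in the $j$-th direction, giving $\mathrm{Vol}(B)=\om_m\prod_{j=1}^m a_j^{-1/2}=A$ via \eqref{Lawlor_A}. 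Assembling, $Z(L)\cdot[\Sm^-]=A$ and $Z(L)\cdot[\Sm^+]=-A$.

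The main technical point requiring care is the orientation bookkeeping in Step 2: with $\Sm^\pm$ oriented as the outward boundary of a unit ball in $\Pi^0$ respectively $\Pi^{\bphi}$, one needs to check that the images of $[\Sm^\pm]$ under the inclusion $\Sm\hookrightarrow L$ match the boundary orientations of $D^\pm$ as prescribed. This follows by direct inspection of the Lawlor parametrisation $(x,s)\mapsto(z_j(s)x_j)$, using that the two ends correspond to $s\to\mp\infty$ with opposite outward directions.
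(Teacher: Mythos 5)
The paper itself gives no proof of this lemma (it is quoted from Joyce \cite{Joyce2003SLCS5}*{section 9.1}), so your argument supplies what the paper only cites; in substance it is the standard computation behind Joyce's statement, and its structure is sound: unwind Definition \ref{Z_inv} into a period $\int_{D^\pm}\im\Om_0$ over a chain bounding (a representative of) the image of $[\Sm^\pm]$ in $L$, use $\im\Om_0|_L\equiv 0$ so the half-necks $L^\pm$ contribute nothing, and observe that the $s=0$ slice is the ellipsoid $\{\sum_j a_j u_j^2=1\}$ inside the Lagrangian plane $P$ of phase $e^{i\pi/2}$ (here $\sum_j\phi_j=\pi$ is used), so the two pairings are $\pm\int_B\im\Om_0=\pm\mathrm{Vol}(B)$ with opposite signs. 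Deferring the orientation bookkeeping is acceptable, since the paper itself notes its sign convention is opposite to Joyce's.

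Two points need attention, one of them a genuine gap as written. First, the assertion ``since $L$ is special Lagrangian, $\int_{L^\pm}\im\Om_0=0$'' does not follow from the paper's stated convention in the conventions section (calibration by $\im\Om$, which would give $\im\Om_0|_L=\dd V_L$); it is the convention implicit in Definition \ref{Z_inv}, and for the Lawlor parametrisation it deserves a line of verification: pulling back $\Om_0$ by $(x,s)\mapsto(z_j(s)x_j)$ one finds its phase is constant (equal to $\pi$ for the parametrisation orientation), so indeed $\im\Om_0|_{L^{\bphi,A}}=0$. Second, and more seriously, your final step $\mathrm{Vol}(B)=\om_m\prod_j a_j^{-1/2}=A$ is only valid if $\om_m$ denotes the volume of the unit \emph{ball} $B^m$; with the paper's literal definition in \eqref{Lawlor_A}, where $\om_m$ is the volume of the unit $S^{m-1}\subset\BR^m$, the solid ellipsoid with semi-axes $a_j^{-1/2}$ has volume $\frac{1}{m}\om_m\prod_j a_j^{-1/2}=A/m$, so as written your computation yields $Z(L^{\bphi,A})\cdot[\Sm^\mp]=\pm A/m$. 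The geometric content (opposite signs at the two ends, magnitude equal to the Euclidean volume of the central ellipsoid) is correct, and the discrepancy is a pure normalisation of $A$ that almost certainly traces to the wording of $\om_m$; but to prove the lemma as stated you must either establish the equality under the correct reading of $\om_m$ (unit-ball volume) or flag and resolve the factor of $m$ explicitly rather than passing over it.
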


It is not hard to see the effect of the dilations: $Z(\vep\cdot L^{\bphi,A})\cdot[\Sm^\mp] = \pm\vep^m A$.

\begin{remark}
The notation $\pm$ here plays the role of the index $a$ in Theorem \ref{AC Lag nbhd}.  The choice here matches with the $s$-parameter in \eqref{Lawlor_eqn}, but is opposite to that in \cite{Joyce2003SLCS5}*{section 9.1}.  Note that this does not mean that we reverse the orientation of $L^{\bphi,A}$; the orientation of a special Lagrangian is always given by $\re\Om$.
\end{remark}

\section{Desingularisations of Special Lagrangians with Transverse Self-Intersections} \label{sec-desingularisation}

Given a special Lagrangian with isolated conical singularities in a Calabi-Yau manifold $\iota:X \to M$, such that there exist suitable local models for the desingularisations, Joyce in \cite{Joyce2004SLCS3}*{Definition 6.2} shows how to construct desingularisations of the special Lagrangian.  The main purpose of this section is to give an exposition of Joyce's construction, in the particular case of Lagrangians with transverse self-intersections of type $1$. We make this restriction so that there exist suitable local models for the desingularisation process (the Lawlor necks of section \ref{sec-lawlorneck}). 

From now on, we will focus on the setting of the Main Theorem, depicted in Figure \ref{fig-torus}.

\begin{assumption} \label{assumption torus}
    Let $X = X_1\cup X_2$ be the the disjoint union of two $m$-tori, and $\iota: X\to T^{2m}$ be a special Lagrangian immersion such that $\iota: X_b\to T^{2m}$ for $b = 1,2$, and $\iota: X\to T^{2m}$ has only one transverse self-intersection point of type $1$.

    To be more precise, there exist $\xsin^-\in X_1$ and $\xsin^+\in X_2$ such that $\iota(\xsin^-) = \iota(\xsin^+) = \xsin$, and $( \iota_*(T_{\xsin^-}X_1),\iota_*(T_{\xsin^+}X_2) )$ is of type $1$ as defined in Definition \ref{int_type}.
\end{assumption}

\begin{figure}
    \centering
    \includegraphics[scale=0.3]{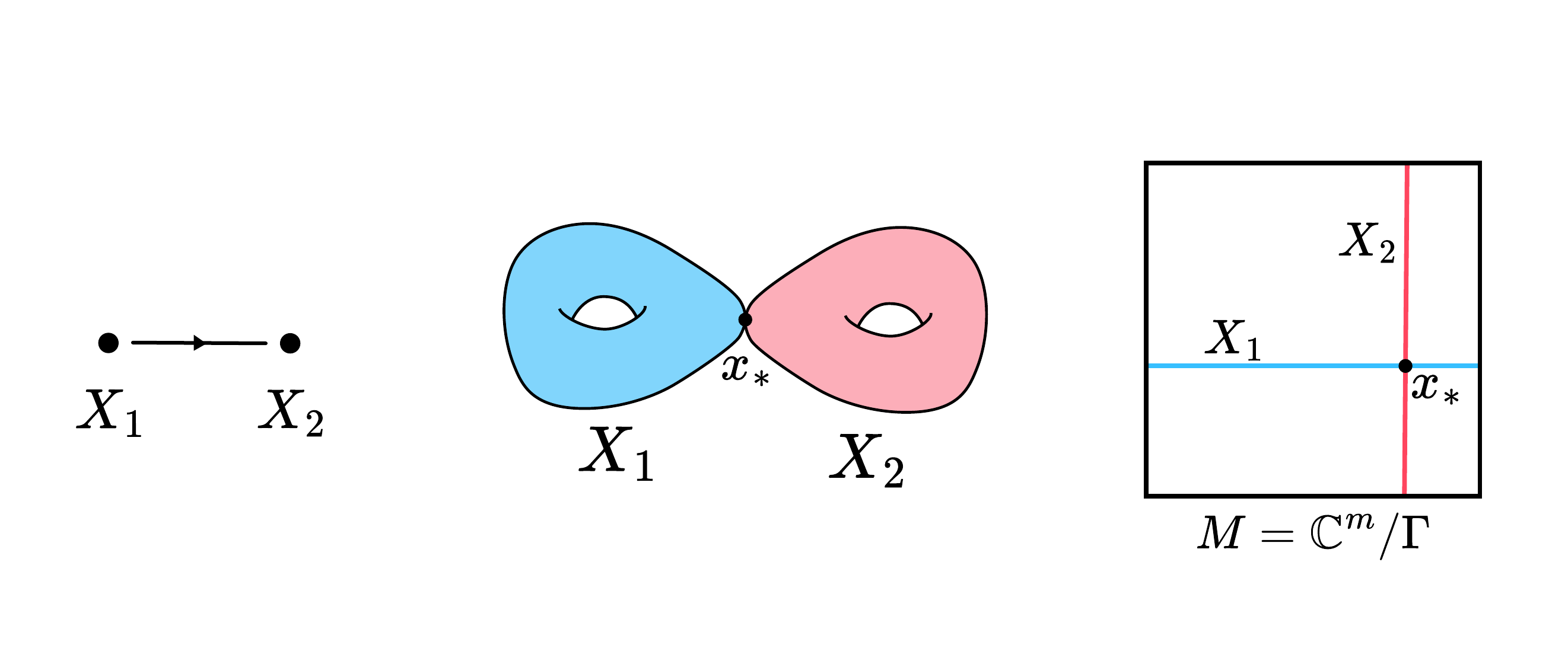}
\caption{Three figures of two Lagrangian tori $X_1, X_2$ inside a complex torus $M$ with a type 1 transverse intersection. In order, the figures depict: a schematic (directed) graph of their intersection type (c.f. \cite{Joyce2003SLCS5}*{Sec. 9.2}), a topological representation, a diagrammatic representation of the containment $X_1 \cup X_2 \subset M$.}
    \label{fig-torus}
\end{figure}

\begin{remark}
    It is fairly easy to construct examples.  Consider the planes $\Pi^0$ and $\Pi^{\bphi}$ as in \eqref{standard_planes} with $k=1$.  Choose bases for $\Pi^0$ and $\Pi^{\bphi}$, and set the lattice to be generated by them.
\end{remark}

We will modify the construction in \cite{Joyce2004SLCS3} to allow for diffeomorphisms between the desingularisations of different sizes of necks; this will allow us to set up and solve the Lagrangian mean curvature flow equation. In particular, we construct a family of embeddings $\iota^\vep$ from a fixed manifold $\stm$ to $T^{2m}$, satisfying $\iota^\vep \to \iota$ as $\vep \to 0$.

\subsection{The Static Manifold}

We first construct the underlying topological manifold for the desingularisations.

\begin{definition} \label{static_mfd}
Under Assumption \ref{assumption torus}, the domain manifold $\stm$ for the Lagrangian mean curvature flow is constructed as follows.
There are three positive numbers in the construction, $\hbar$, $R_1$ and $R_2$, with $(1+2\hbar)R_1 \leq (1-\hbar)R_2$.  The number $\hbar$ is no greater than $1/100$, and plays no significant role.  The radii $R_1$ and $R_2$ may have to be taken smaller in each step if necessary.

{\it Step 1}.
By Lemma \ref{two_planes}, there exists a standard neighborhood $\Up: B_{R_2}\subset\BC^{m}\to M = T^{2m}$ of $\xsin$ such that $( \iota_*(T_{\xsin^-}X_1),\iota_*(T_{\xsin^+}X_2) )$ corresponds to $(\Pi^0, \Pi^{\bphi})$ under $\Up_*$, where $\bphi = (\phi_1,\ldots\,\phi_m)$ is produced by that lemma.

{\it Step 2}. Let $\coneC$ be the special Lagrangian cone $\Pi^0\cup\Pi^{\bphi}$, and let $\Sm = \coneC\cap S^{2m-1}$ be its link.  This link is the union of two $S^{m-1}$'s, which we label $\Sigma^-$ and $\Sigma^+$ respectively.  Apply Proposition \ref{LN of cones} to find the Lagrangian neighborhood $\Phi_{\coneC}:U_{\coneC}\subset T^*(\Sm_j\times(0,\infty))\to\BC^m$.


{\it Step 3}. By Proposition \ref{Lawlor}, Theorem \ref{AC Lag nbhd} and Remark \ref{dilate_potential}, there exists an $A>0$ such that the Lawlor neck $L = L^{\bphi,A}$ satisfies
\begin{align*}
	L\cap(\BC^m\setminus\overline{B_{R_1}}) &= \{(\Phi_{\coneC}\circ\dd\ptl)(\sm,r):~ \sm\in\Sm,~ r> R_1\}
\end{align*}
for some function $\ptl: \Sm\times(R_1,\infty)\to\BR$ with $|\nabla^\ell\ptl| = O(r^{(2-m)-\ell})$ as $r\to\infty$, for any $\ell\geq0$.

{\it Step 4}. The \emph{static manifold} $\stm$ for the desingularisation of $X$ will be constructed from the following three types of pieces:
\begin{itemize}
\item $\out = X\setminus\iota^{-1}(\Up(\overline{B_{(1-\hbar)R_2}})) = X\setminus\overline{B_{(1-\hbar)R_2}(\xsin)}$, the \emph{outer region},
\item $Q = \Sm\times(R_1,R_2)$, the \emph{intermediate region}, consisting of the connected components $Q^- = \Sigma^- \times (R_1,R_2)$ and $Q^+ = \Sigma^+ \times (R_1,R_1)$,
\item $P = L\cap B_{(1+\hbar)R_1}$, the \emph{tip region}.
\end{itemize}
For $(\sm,r)\in\Sm\times((1-\hbar)R_2,R_2)\subset Q$, it is naturally identified with its image under $\iota^{-1}\circ\Up\circ\Phi_{\coneC}\circ\ul{0}$ in $\out$.  For $(\sm,r)\in\Sm\times(R_1,(1+\hbar)R_1)$, identify it with its image under $\Phi_{\coneC}\circ\dd\ptl$ in $P$.  The resulting manifold is the static manifold $\stm$, which is clearly a compact, smooth manifold.  Topologically, it is the connected sum of two $T^{m}$'s.
\end{definition}

\subsection{Desingularisations}

In \cite{Joyce2004SLCS3}*{section 6.1}, Joyce constructs the desingularisations as a submanifold in $M$. Here, we instead construct an embedding $\iota^{\vep}:\stm \to M$.

\begin{definition} \label{desing}
Under Assumption \ref{assumption torus}, let $\stm$ be the static manifold constructed by Definition \ref{static_mfd}.  Fix a $\tau\in(0,\frac{1}{2})$, whose precise value will be determined later.  Given $\vep$ with
\begin{align}
	0< \vep &< \min\left\{ 1, \left((1+\hbar)R_1\right)^{\frac{-1}{1-\tau}}, \left(\frac{(1-\hbar)R_2}{2}\right)^{\frac{1}{\tau}} \right\}  ~,
\label{ep_range} \end{align}
define a Lagrangian embedding $\iota^{\vep}: \stm\to M$ on the pieces of $\stm$ as follows.  Its image, $\iota^{\vep}(\stm)$, will be denoted by $N^{\vep}$.

{\it Step 0}. Choose a smooth, increasing function $\chi:(0,\infty)\to \BR$ such that $\chi(y)\in[0,1]$ for all $y$, and
\begin{align*}
\chi(y) = \begin{cases} 0 & \text{when }0< y\leq 1 ~, \\ 1 & \text{when }2\leq y <\infty ~. \end{cases}
\end{align*}
 
{\it Step 1}. For any $q$ in the outer region $\out$, $\iota^{\bvep}$ is set to be the original immersion, $\iota^{\bvep}(q) = \iota(q)$.

{\it Step 2}. For any $q$ in the tip region $P$, set $\iota^{\vep}(q)$ to be $\Up(\vep q)$.  Namely, it is simply the dilation of the Lawlor neck by $\vep$ at $\xsin$.


{\it Step 3}. The map on the intermediate region interpolates between the above two maps.
\begin{align}
	\kp_{\vep}(r) &= \left[ 1 - \chi\left(\frac{r-R_1}{\hbar R_1}\right) \right]\vep\,r + \chi\left(\frac{r-R_1}{\hbar R_1}\right) r
\label{static_dilate} \end{align}
for $r\in(R_1,R_2)$.  It is a diffeomorphism from $(R_1,R_2)$ to $(\vep R_1, R_2)$, which will be verified momentarily.  Denote the diffeomorphism $\id_{\Sm}\times\kp_{\vep}: \Sm\times(R_1,R_2) \to \Sm\times(\vep R_1, R_2)$ by $\bar{\kp}_{\vep}$.
Next, for $(\sm, \ir)\in\Sm\times(\vep R_1,R_2)$, let
\begin{align}
	\ptq_{\vep} (\sm, \ir) &= (1-\chi(\vep^{-\tau}\ir))\,\vep^{2}\,\ptl(\sm,\vep^{-1}\ir) ~.
\label{int_potential} \end{align}
As noted in Remark \ref{dilate_potential}, $\vep^{2}\,\ptl(\sm,\vep^{-1}\ir)$ is the potential function of $\vep\,L^{\bphi,A}$.  It naturally gives a Lagrangian embedding:
\begin{align*}
	\Up\circ\Phi_{\coneC}\circ\dd\ptq_{\vep}: \Sm\times(\vep R_1,R_2) \to M ~.
\end{align*}
Finally, $\iota^{\vep}$ on $Q = \Sm\times(R_1,R_2)$ is set to be $\iota^{\vep} := \Up\circ\Phi_{\coneC}\circ\dd\ptq_{\vep}\circ\bar{\kp}_{\vep}$.
\end{definition}

\begin{remark}\hfill
\begin{itemize}
\item We leave it for the readers to check that $N^{\bvep}$ is the same as the desingularisation constructed in \cite{Joyce2004SLCS3}*{section 6.1}. This allows us to invoke the estimates established in that paper.
\item For the intermediate region, $\Sm\times(\vep R_1,R_2)$ is more geometric.  To be precise, the coordinate ``$\ir$" is the (Euclidean) distance to the origin in the Darboux chart.  However, in order to take the ``time" derivative of a potential function, we must work on the time-independent region $\Sm\times(R_1,R_2)$.
\item Since $\iota(X)$ is an immersed special Lagrangian in $T^{2m}$, and the Lawlor necks are special Lagrangians in $\BC^m$, one can verify that $N^{\vep}$ is of zero-Maslov\footnote{The argument of the complex valued function $\frac{\Om|_{N^{\vep}}}{\dd V_{N^{\vep}}}$ is a well-defined function on $N^{\vep}$.} class.
\end{itemize}
\end{remark}

We now verify that $\kp_{\vep}$ gives a diffeomorphism from $(R_1,R_2)$ to $(\vep R_1,R_2)$, and $\iota^{\vep}$ is well-defined.  
\begin{lemma} \label{static_dilate_property}
Suppose that $(1+2\hbar)R_1 \leq (1-\hbar)R_2$, then the function $\kp_{\vep}(r)$ defined by \eqref{static_dilate} is increasing for $r\in(R_1,R_2)$.  Indeed, $\frac{\dd}{\dd r}\kp_\vep(r) \geq \vep$.  There exists $c_\ell>0$ for all $\ell\in\BN$, depending on $\hbar$, $R_1$, $R_2$ and $\chi$, such that $\left|\frac{\dd^\ell}{\dd r^\ell}\kp_\vep(r)\right| \leq c_\ell$ for $r\in(R_1,R_2)$.  Moreover, $\kp_\vep(r) = \vep r$ when $R_1<r<(1+\hbar)R_1$, and $\kp_\vep(r) = r$ when $(1-\hbar)R_2<r<R_2$.
\end{lemma}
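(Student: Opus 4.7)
The plan is to expand $\kp_\vep$ into a more convenient form, then verify each of the three claims by direct computation.

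First I would rewrite \eqref{static_dilate} as
\begin{align*}
\kp_\vep(r) \;=\; \vep\, r \;+\; (1-\vep)\, r\, \chi_\vep(r),
\qquad \text{where } \chi_\vep(r) := \chi\!\left(\tfrac{r-R_1}{\hbar R_1}\right).
\end{align*}
This makes the two endpoint cases immediate from the definition of $\chi$. For $r\in(R_1,(1+\hbar)R_1)$ we have $\frac{r-R_1}{\hbar R_1}\in(0,1)$, hence $\chi_\vep(r)=0$ and $\kp_\vep(r)=\vep r$. On the other hand, the hypothesis $(1+2\hbar)R_1\le(1-\hbar)R_2$ ensures that for $r\in((1-\hbar)R_2,R_2)$ one has $\frac{r-R_1}{\hbar R_1}\ge 2$, so $\chi_\vep(r)=1$ and $\kp_\vep(r)=r$. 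This gives the last sentence of the lemma.

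Next, for the monotonicity, I would differentiate once to obtain
\begin{align*}
\kp_\vep'(r) \;=\; \vep \;+\; (1-\vep)\,\chi_\vep(r) \;+\; \tfrac{(1-\vep)\,r}{\hbar R_1}\, \chi'\!\left(\tfrac{r-R_1}{\hbar R_1}\right).
\end{align*}
Since $\vep\in(0,1)$, and since $\chi$ is non-negative and non-decreasing (so $\chi_\vep\ge 0$ and $\chi'\ge 0$), and $r>0$, all three summands are non-negative and the first equals $\vep$. Hence $\kp_\vep'(r)\ge \vep$.

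For the uniform derivative bounds with $\ell\ge 2$, I would apply the Leibniz rule to the product $r\,\chi_\vep(r)$ in the decomposition above, using that the linear part $\vep r$ contributes nothing beyond $\ell=1$:
\begin{align*}
\kp_\vep^{(\ell)}(r) \;=\; (1-\vep)\Bigl(\,r\,\chi_\vep^{(\ell)}(r) + \ell\,\chi_\vep^{(\ell-1)}(r)\Bigr),
\end{align*}
together with $\chi_\vep^{(k)}(r) = (\hbar R_1)^{-k}\,\chi^{(k)}\!\left(\tfrac{r-R_1}{\hbar R_1}\right)$. Estimating $|1-\vep|\le 1$, $r\le R_2$, and using $\|\chi^{(k)}\|_{L^\infty}<\infty$ since $\chi$ is smooth with compactly supported derivatives, yields a bound
\begin{align*}
\left|\kp_\vep^{(\ell)}(r)\right| \;\le\; R_2\,(\hbar R_1)^{-\ell}\,\|\chi^{(\ell)}\|_{L^\infty} \;+\; \ell\,(\hbar R_1)^{-(\ell-1)}\,\|\chi^{(\ell-1)}\|_{L^\infty} \;=:\; c_\ell,
\end{align*}
independent of $\vep$. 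The case $\ell=1$ is handled by the displayed formula above using $\chi_\vep\le 1$. There is no real obstacle here; the only subtle point is noticing that the hypothesis $(1+2\hbar)R_1 \le (1-\hbar)R_2$ is precisely what guarantees the cutoff has completed its transition before $r$ reaches $(1-\hbar)R_2$, which is needed for the final identity $\kp_\vep(r)=r$ in the asserted range.
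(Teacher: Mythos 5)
Your proof is correct and follows essentially the same route as the paper: differentiate the interpolation formula to see $\kp_\vep'(r)=\vep+(1-\vep)\chi_\vep(r)+\frac{(1-\vep)r}{\hbar R_1}\chi'(\tfrac{r-R_1}{\hbar R_1})\geq\vep$, note the uniform bounds on higher derivatives coming from the smooth compactly-transitioning cutoff, and use the hypothesis $(1+2\hbar)R_1\leq(1-\hbar)R_2$ to see that $\tfrac{r-R_1}{\hbar R_1}>2$ on $((1-\hbar)R_2,R_2)$ and $\leq 1$ on $(R_1,(1+\hbar)R_1)$. The explicit Leibniz-rule constants $c_\ell$ are a slightly more detailed version of the paper's "it is not hard to see" step, but the argument is the same.
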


\begin{proof}
The derivative of $\kp_\vep$ is
\begin{align*}
	\frac{\dd}{\dd r}\kp_\vep(r) &=  \vep + (1-\vep)\,\chi\left(\frac{r-R_1}{\hbar R_1}\right)
	+ \frac{(1-\vep)r}{\hbar R_1}\chi'\left(\frac{r-R_1}{\hbar R_1}\right) ~,
\end{align*}
which is clearly no less than $\vep$, and is bounded from above.  It is not hard to see that the higher order derivatives of $\kp_\vep(r)$ are uniformly bounded on $(R_1,R_2)$.

When $r<(1+\hbar)R_1$, $\frac{r-R}{\hbar R_1} < 1$, and hence $\kp_\vep(r) = \vep r$.  It follows from $(1+2\hbar)R_1 \leq (1-\hbar)R_2$ that $\frac{r-R_1}{\hbar R_1} > 2$ when $r > (1-\hbar)R_2$.  Hence, $\kp_\vep(r) = r$ when $r > (1-\hbar)R_2$.
\end{proof}

\begin{lemma} \label{wd_desing}
The map $\iota^{\bvep}$ introduced in Definition \ref{desing} is well-defined.
\end{lemma}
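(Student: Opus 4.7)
The map $\iota^{\bvep}$ is specified piecewise on three types of pieces of $\stm$ (outer, intermediate, tip), while $\stm$ itself is built by identifying points on two kinds of overlaps described in Step~5 of Definition \ref{static_mfd}. My plan is simply to check that on each of these overlaps, the two candidate definitions of $\iota^{\bvep}$ agree. The non-trivial input is Lemma \ref{static_dilate_property}, the range restriction \eqref{ep_range}, and the dilation identity \eqref{epsilon-immersion-identity} of Remark \ref{dilate_potential}.

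First, consider the outer/intermediate overlap, where $(\sm, r)\in\Sm_j\times((1-\hbar)R_2, R_2)\subset Q_j$ is identified with $\iota^{-1}\circ\Up_j\circ\Phi_{C_j}\circ\dd\ptx_j(\sm,r)\in\out$. From Step~1 of Definition \ref{desing}, the value of $\iota^{\bvep}$ at the $\out$-representative equals $\Up_j\circ\Phi_{C_j}\circ\dd\ptx_j(\sm,r)$. I would match this with Step~3 of the definition by showing two things: (i)~Lemma \ref{static_dilate_property} forces $\kp_{\vep_j}(r)=r$, hence $\bar\kp_{\vep_j}(\sm,r)=(\sm,r)$ for $r$ in this range; and (ii)~the bound $\vep_j<\left((1-\hbar)R_2/2\right)^{1/\tau}$ from \eqref{ep_range} ensures $\vep_j^{-\tau} r>2$, so $\chi(\vep_j^{-\tau} r)=1$ and therefore $\ptq_{\vep_j}(\sm,r)=\ptx_j(\sm,r)$ by \eqref{int_potential}. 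Combining the two expressions then coincide.

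Second, consider the intermediate/tip overlap, where $(\sm,r)\in\Sm_j\times(R_1,(1+\hbar)R_1)\subset Q_j$ is identified with $\Phi_{C_j}\circ\dd\ptl_j(\sm,r)\in P_j\subset L_j$. From Step~2 of Definition \ref{desing}, the value at the $P_j$-representative is $\Up_j\bigl(\vep_j\cdot \Phi_{C_j}\circ\dd\ptl_j(\sm,r)\bigr)$. To match this from the intermediate side, I would: (i)~use Lemma \ref{static_dilate_property} to get $\kp_{\vep_j}(r)=\vep_j r$ in this range, so $\bar\kp_{\vep_j}(\sm,r)=(\sm,\vep_j r)$; (ii)~use the bound $\vep_j<((1+\hbar)R_1)^{-1/(1-\tau)}$ from \eqref{ep_range} to conclude $\vep_j^{-\tau}(\vep_j r)=\vep_j^{1-\tau} r\leq 1$, hence $\chi\bigl(\vep_j^{-\tau}\cdot \vep_j r\bigr)=0$ and thus $\ptq_{\vep_j}(\sm,\vep_j r)=\vep_j^{2}\,\ptl_j(\sm, r)$ by \eqref{int_potential}; and (iii)~invoke the dilation identity $\vep_j\cdot\Phi_{C_j}\circ\dd\ptl_j(\sm,r)=\Phi_{C_j}\circ\dd\bigl(\vep_j^{2}\,\ptl_j(\sm,\vep_j^{-1}\ir)\bigr)\bigr|_{\ir=\vep_j r}$ from \eqref{epsilon-immersion-identity} to conclude that the two expressions coincide.

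The main thing to be careful about is precisely that the two algebraic restrictions on $\vep_j$ in \eqref{ep_range} were designed for exactly this purpose: the upper bound $((1-\hbar)R_2/2)^{1/\tau}$ forces the cutoff $\chi(\vep_j^{-\tau}\ir)$ to equal $1$ in the outer overlap, while the bound $((1+\hbar)R_1)^{-1/(1-\tau)}$ forces it to equal $0$ in the tip overlap; together with the endpoint behavior of $\kp_{\vep_j}$ recorded in Lemma \ref{static_dilate_property}, this makes $\iota^{\bvep}$ interpolate smoothly between the two ``end'' definitions. Once the cutoff regimes are pinned down, the only non-trivial identification, on the tip side, is the scaling identity \eqref{epsilon-immersion-identity}, which is the content of Remark \ref{dilate_potential}. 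Everything else is bookkeeping, and I would not expect any serious obstacle.
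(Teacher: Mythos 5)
Your proposal is correct and follows essentially the same route as the paper: both overlaps are checked using the endpoint behaviour of $\kp_{\vep_j}$ from Lemma \ref{static_dilate_property}, the cutoff regimes pinned down by \eqref{ep_range} (so that $\ptq_{\vep_j}=\ptx_j$ near $R_2$ and $\ptq_{\vep_j}(\sm,\ir)=\vep_j^2\ptl_j(\sm,\vep_j^{-1}\ir)$ near $R_1$), and the dilation identity of Remark \ref{dilate_potential} on the tip side. The only cosmetic difference is that the paper pushes the derivative through $\bar\kp_{\vep_j}$ via \eqref{sharp0} and then recognises the result as the right-hand side of Remark \ref{dilate_potential}, whereas you differentiate the scalar identity for $\ptq_{\vep_j}$ directly and quote \eqref{epsilon-immersion-identity}; these are the same computation.
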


\begin{proof}
It follows from \eqref{ep_range} that $(1+\hbar)R_1\vep < \vep^\tau < 2\vep^\tau < (1-\hbar)R_2$.

{\it Intermediate-Tip region}. When $R_1<r<(1+\hbar)R_1$, it follows from Lemma \ref{static_dilate_property} that $\kp_\vep(r) = \vep r$, and $\ir = \kp_\vep(r) < (1+\hbar)R_1\vep$.  Thus, $\ptq_{\vep}(\sm,\ir) = \vep^{2}\,\ptl(\sm,\vep^{-1}\ir)$, and $(\ptq_{\vep}\circ\bar{\kp}_\vep) (\sm,r) = \vep^2\,\ptl(\sm, r)$.  Denote $\dd_\Sm\ptl$ by $\fe_1$, and $\frac{\pl}{\pl r}\ptl$ by $\fe_2$.  By \eqref{sharp0} and using the coordinate system introduced around \eqref{cone scaling},
\begin{align*}
	\left((\dd\ptq_{\vep})\circ\bar{\kp}_{\vep}\right)(\sm,r) &= \left((\bar{\kp}_\vep)_\srp\circ\dd(\ptq_{\vep}\circ\bar{\kp}_\vep)\right)(\sm,r) \\
	&= (\bar{\kp}_\vep)_\srp\left(\sm, r, \vep^2\fe_1(\sm,r), \vep^2\fe_2(\sm,r)\right) = (\sm, \vep r, \vep^2\fe_1(\sm,r), \vep\fe_2(\sm,r)) ~.
\end{align*}
This coincides with the right hand side of the first equation in Remark \eqref{dilate_potential}.  It follows that $\iota^{\vep}$ is well-defined in this region.

{\it Intermediate-Outer region}.  This part is left as an exercise for the readers.
\end{proof}

\subsection{Weight Function} Later on, the equations on $N^{\vep}$ will be analysed on some weighted H\"older spaces, requiring the following weight function designed to capture the geometry of the self-intersection points and the Lawlor necks.

\begin{definition} \label{weight}
Under Assumption \ref{assumption torus}, let $\stm$ be the static manifold constructed by Definition \ref{static_mfd}.  Given $\vep$ satisfying \eqref{ep_range}, let $\iota^{\vep}$ be the Lagrangian embedding constructed by Definition \ref{desing}.  Define a smooth function $\rho_{\vep}: \stm\to\BR_+$ as follows.
\begin{itemize}
\item {\it Tip region}. For the Lawlor neck $L$, choose a smooth function $\hat{\rho}: L\to[1,\infty)$ such that $\hat{\rho}(\bx)$ depends only on $|\bx|$, and $\hat{\rho}(\bx) = |\bx|$ when $|\bx| > R_1$.  For $\bx\in P = L\cap B_{(1+\hbar)R_1}$, $\rho_{\vep}(\bx)$ is defined to be $\vep\cdot\hat{\rho}(\bx)$.

\item {\it Intermediate region}. On $Q = \Sm\times(R_1,R_2)$,
\begin{align*}
\rho_{\vep}(\sm,r) &= \kp_{\vep}(r) + (R_2-r)\left[1-\chi\left(\frac{R_2-r}{\hbar R_2}\right)\right]
\end{align*}
where $\kp_{\vep}(r)$ is given by \eqref{static_dilate}.  Note that $\rho_{\vep}(\sm,r) = \kp_{\vep}(r)$ when $R_1<r\leq(1-2\hbar)R_2$.  It is not hard to see that $\rho_{\vep}(\sm,r) = R_2$ when $r\geq(1-\hbar) R_2$.  Also, note that when $(1-2\hbar)R_2\leq r<R_2$,  $\rho_{\vep}(\sm,r)$ is independent of $\vep$, and is increasing in $r$.

\item {\it Outer region}. On the outer region $\out$, extend the function from the intermediate region by the constant $R_2$.
\end{itemize}
\end{definition}

\subsection{Lagrangian neighborhoods}

In \cite{Joyce2004SLCS3}*{section 6.3}, Joyce constructs the Lagrangian neighborhood $\Psi_{N^{\bvep}}: U_{N^{\bvep}}\subset T^*\stm \to M$ of $\iota^{\bvep}(\stm)$ as follows.

\begin{definition} \label{desing_nbd}
Under Assumption \ref{assumption torus}, let $\stm$ be the static manifold constructed by Definition \ref{static_mfd}.  Given $\vep$ satisfying \eqref{ep_range}, let $\iota^{\bvep}$ be the Lagrangian embedding constructed by Definition \ref{desing}.  Define a Lagrangian neighborhood $\Psi_{N^{\vep}}: U_{N^{\vep}}\subset T^*\stm \to M$ for $\iota^{\vep}$ as follows.


{\it Step 1: Tip region}.  Remember that the tip region $P$ is a subset of the Lawlor neck $L$.  Denote by $\pi$ the bundle projection of the cotangent bundle.  Apply Corollary \ref{Lag nbhd scaling} to the Lawlor neck $L$ and $\vep$ to find an open set $U_{\vep L}\subset T^*L$ and an embedding $\Phi_{\vep L}: U_{\vep L} \to \BC^m$.  Define
\begin{align*}
	U_{N^{\vep}}\cap \pi^{-1}(P) &= U_{\vep L}\cap\pi^{-1}(P) ~, \\
	\left.\Psi_{N^{\vep}}\right|_{U_{N^{\vep}}\cap \pi^{-1}(P)} &= \Up\circ\Phi_{\vep L} ~.
\end{align*}

{\it Step 2: Intermediate region}.  As in step 2 of Definition \ref{static_mfd}, apply Proposition \ref{LN of cones} to the cone $\coneC = \Sm\times(0,\infty)$ to find an $\BR_+$-invariant open set $U_{\coneC}\subset T^*\coneC$ and equivariant embedding $\Phi_{\coneC}:U_{\coneC}\to\BC^m$.  The map $\bar{\kp}_{\vep}$ given by step 3 of Definition \ref{desing} induces a diffeomorphism
\begin{align*}
	(\bar{\kp}_{\vep})_\srp: T^*Q = T^*(\Sm\times(R_1,R_2)) \to T^*(\Sm\times(\vep R_1,R_2)) ~.
\end{align*}
Similar to \eqref{1form_trans}, let $f_{\dd\ptq_{\vep}}(q,p) = \left(q, p + (\dd\ptq_{\vep})(q)\right)$ be the self-diffeomorphism\footnote{Or equivalently, $f_{\dd\ptq_\vep}$ sends $(\sm,\ir,\vsm,\is)$ to $\left(\sm, \ir, \vsm+(\dd_\Sm\ptq_\vep)(\sm,\ir), \is + \frac{\pl\ptq_{\vep}}{\pl \ir}(\sm, \ir)\right)$.}  of $T^*(\Sm\times(\vep R_1,R_2))$.
Define $U_{N^{\vep}}\cap \pi^{-1}(Q)$ to be
\begin{align*}
	\left( (\bar{\kp}_{\vep})_\srp \right)^{-1} \left\{ (\sm,\ir,\vsm,\is)\in T^*(\Sm\times(\vep_j R_1,R_2)):~ f_{\dd\ptq_{\vep}}((\sm,\ir),\vsm+\is\dd\ir)\in U_{\coneC} \right\} ~,
\end{align*}
and define the map to be
\begin{align*}
	\left.\Psi_{N^{\vep}}\right|_{U_{N^{\vep}}\cap \pi^{-1}(Q)} &= \Up\circ\Phi_{\coneC}\circ f_{\dd\ptq_{\vep}} \circ (\bar{\kp}_{\vep})_\srp ~.
\end{align*}

{\it Step 3: Outer region}.  With the help of Lemma \ref{static_dilate_property} and Lemma \ref{wd_desing}, $U_{N^{\vep}}$ and $\Psi_{N^{\vep}}$ are independent of $\vep$ on the overlap between the intermediate and outer region.  One use the same Moser's trick argument as that in step 3 in the proof of Theorem \ref{AC Lag nbhd} to extend $U_{N^{\vep}}$ and $\Psi_{N^{\vep}}$ over $\out$.  The extensions are also independent of $\vep$.
\end{definition}

We leave it for the readers to check the well-definedness of the open set and the embedding, or one may consult \cite{Joyce2004SLCS3}*{Definition 6.7}.

\section{The Lagrangian Mean Curvature Flow Equation}\label{sec-lmcfequation}

Under Assumption \ref{assumption torus}, our goal for the remainder of this work is to construct $u:\stm\times[\Lambda,\infty)\to\BR$, and $\vep(t)$ such that $\dd u\in U_{N^{\vep(t)}}$ for all $t$, and $\Psi_{N^{\vep(t)}}\circ\dd u$ is the solution to the mean curvature flow (where the notation $\dd u$ denotes the spatial exterior derivative at time $t$). The equation reads
\begin{align}
	\left( \frac{\pl}{\pl t}\Psi_{N^{\vep(t)}}\circ\dd u \right)^\perp &= H(t) ~,
\label{MCF0} \end{align}
where $H(t)$ is the mean curvature vector of $\left(\Psi_{N^{\vep(t)}}\circ\dd u\right)(\stm)$, and $\perp$ denotes the orthogonal projection onto its normal bundle. In this section, we take our first step towards this goal, by rewriting (\ref{MCF0}) as a differential equation involving the potential function $u$. 

We will require the following basic facts about the geometry of Lagrangian submanifolds.  Suppose that $F: L\to M = T^{2m}$ is a Lagrangian immersion, and denote by $T^\perp L$ the normal bundle of $F(L)$.  Then there is a bundle isomorphism
\begin{align*}
	\begin{array}{ccl}
	T^\perp L &\to &T^*L\\
	\eta &\mapsto &F^*\left(\om(\eta,\,\cdot\,) \right) ~.
	\end{array}
\end{align*}
In particular, suppose that $Y$ is a section of $F^*(TM)$, then
\begin{align}
	F^*\left(\om(Y^\perp, \,\cdot\,)\right) = F^*\left(\om(Y-Y^\top\,\cdot\,)\right) = F^*\left(\om(Y, \,\cdot\,)\right) ~,
\label{normal_cot} \end{align}
using the fact that $F^*\om$ vanishes. Furthermore, one has the (multi-valued) function $\ta_L = \arg(\frac{F^*(\Om)}{\dd V_L})$, the \emph{Lagrangian angle}, whose exterior derivative is a well-defined $1$-form on $L$. According to \cite{HL1982}*{section III.2.D}, $\dd\ta_L$ is the image of the mean curvature vector of $L$ under the above isomorphism,
\begin{align}
	F^*\left(\om(H_L,\,\cdot\,)\right) &= -\dd\ta_L ~.
\label{grad_ta} \end{align}

\subsection{The Equation} \label{sec-theequation}

Denote by $F_t$ the embedding $\Psi_{N^{\vep(t)}}\circ\dd u$, and by $\ta(\dd u)$ the Lagrangian angle of $F_t: \stm\to M$.  Since $N^{\vep(t)}$ is of zero-Maslov class, we may choose $\ta(\dd u)$ to be a single-valued function.  By \eqref{grad_ta} and \eqref{normal_cot}, 
\eqref{MCF0} reads
\begin{align}
	\dd \left[ \ta(\dd u) \right] &=  -F_t^*\left(\om\Big(\frac{\pl F_t}{\pl t}, \,\cdot\,\Big)\right) ~.
\label{MCF1} \end{align}
The right hand side will be computed on different pieces.

\subsubsection{Outer Region}

On the outer region $\out$, $\Psi_{N^{\vep(t)}}$ is independent of $\vep(t)$.  In this case, the right hand side of  \eqref{MCF1} was computed by Behrndt in his thesis \cite{Behrndt2011}*{Lemma 4.11}.  The proof is included for completeness.

\begin{lemma} \label{Behrndt lemma}
Let $\iota_L: L\to (M,g,J,\om)$ be a Lagrangian embedding in a K\"ahler manifold, and $\Psi_L: U_L\subset T^*L\to M$ be a Lagrangian neighborhood.  Then, given a one-parameter family of closed $1$-forms $\eta_t$ on $L$ whose image belongs to $U_L$,
\begin{align*}
	(\Psi_L\circ\eta_t)^* \left( \om\Big( \Big( \frac{\pl (\Psi_L\circ\eta_t)}{\pl t} \Big)^\perp, \,\cdot\,\Big) \right) &= - \frac{\pl\eta_t}{\pl t} ~.
\end{align*}
\end{lemma}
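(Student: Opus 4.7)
The strategy is to reinterpret the left-hand side as a computation involving the total space map $\tilde F : L\times I\to M$, $\tilde F(x,t) = (\Psi_L\circ \eta_t)(x)$, and then exploit the tautological property of the canonical $1$-form on $T^*L$.

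First I would use (\ref{normal_cot}) to drop the normal projection, since $F_t := \Psi_L\circ\eta_t$ is Lagrangian and $F_t^*\omega \equiv 0$:
\begin{align*}
F_t^*\!\left(\omega\Big( \big(\tfrac{\partial F_t}{\partial t}\big)^\perp,\,\cdot\,\Big)\right) = F_t^*\!\left(\omega\Big(\tfrac{\partial F_t}{\partial t},\,\cdot\,\Big)\right)=F_t^*(\iota_{V_t}\omega),
\end{align*}
where $V_t := \partial_t F_t$ is a section of $F_t^*TM$. Viewing $V_t$ as $\tilde F_*(\partial/\partial t)$, for any tangent vector $Y$ on $L$ one has $F_t^*(\iota_{V_t}\omega)(Y)=\omega(\tilde F_*\partial_t,\tilde F_* Y)=(\tilde F^*\omega)(\partial_t,Y)$, so the identity to prove reduces to the pointwise statement
\[
\iota_{\partial/\partial t}(\tilde F^*\omega)\big|_{L\times\{t\}}=-\frac{\partial\eta_t}{\partial t}.
\]

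Next I would write $\tilde F = \Psi_L\circ \tilde\eta$, where $\tilde\eta : L\times I\to T^*L$ is the total map $(x,t)\mapsto \eta_t(x)$. Since $\Psi_L$ is a Lagrangian neighbourhood, $\Psi_L^*\omega=\omega_L=-d\lambda_L$, so
\[
\tilde F^*\omega = -d\big(\tilde\eta^*\lambda_L\big),
\]
where $d$ denotes the exterior derivative on $L\times I$. The key computation is that $\tilde\eta^*\lambda_L$ equals $\eta_t$, interpreted as a $1$-form on $L\times I$ with no $dt$-component. Indeed, for $W=Y+s\,\partial_t\in T_{(x,t)}(L\times I)$, the bundle projection $\pi:T^*L\to L$ satisfies $\pi\circ\tilde\eta(x,t)=x$, so $\pi_*\tilde\eta_*W=Y$, and by the tautological property of $\lambda_L$,
\[
(\tilde\eta^*\lambda_L)(W)=\lambda_L|_{\eta_t(x)}(\tilde\eta_*W)=\eta_t(x)(Y).
\]

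Finally, since each $\eta_t$ is closed on $L$, the spatial exterior derivative of $\tilde\eta^*\lambda_L$ vanishes, and only the $t$-derivative survives:
\[
d(\tilde\eta^*\lambda_L) = dt\wedge \frac{\partial\eta_t}{\partial t}.
\]
Contracting with $\partial/\partial t$ gives $\iota_{\partial/\partial t}\,\tilde F^*\omega = -\partial_t\eta_t$, which is the desired identity. The only step that requires any care is the bookkeeping around the tautological property for the time-dependent section $\tilde\eta$; everything else is Cartan calculus. No dimensional or regularity subtleties arise, so this should be a short, clean proof.
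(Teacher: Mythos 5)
Your proof is correct and follows essentially the same route as the paper: both arguments drop the normal projection via \eqref{normal_cot} using that $F_t$ is Lagrangian, and then transfer the computation to $T^*L$ through $\Psi_L^*\omega=\omega_L$. The only difference is the final step, which the paper does by writing $\eta_t=\sum_i\eta_t^{\,i}\,\dd q_i$ in Darboux coordinates and contracting with $\sum_i \dd q_i\wedge\dd p^i$, whereas you carry it out invariantly on $L\times I$ via the tautological property $\tilde\eta^*\lambda_L=\eta_t$ and Cartan calculus — a cosmetically cleaner but equivalent computation.
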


\begin{proof}
We work on $U_L$ equipped with the induced K\"ahler triple $\left(\Psi_L^*(g), \Psi_L^*(J), \om_L\right)$.  Denote by $\td{F}_t: L\to U_L$ the embedding given by $\eta_t$, i.e. $\td{F}_t := \Psi_L \circ \eta_t$.  Since $\eta_t$ is closed, $\td{F}_t$ is a Lagrangian embedding.  By \eqref{normal_cot}, computing the left hand side is equivalent to computing $\td{F}_t^*(\om_L(\frac{\pl\td{F}_t}{\pl t}, \,\cdot\,) )$.

Choose a local coordinate system $\{q_i\}$ on $L$.  Let $\{p^i\}$ be the coordinate induced by $\{\dd q_i\}$ for the fibers of $T^*L$.  The canonical symplectic form is $\sum_i\dd q_i\w\dd p^i$.  Write $\eta_t$ as $\sum_i \eta_t^{\,i}(q)\dd q_i$.  In terms of the $(q,p)$ coordinate, $\td{F}_t(q_1,\ldots,q_m) = (q_1,\ldots,q_m,\eta_t^{\,1}(q),\ldots,\eta_t^{\,m}(q))$, and $\frac{\pl\td{F}_t}{\pl t} = \sum_i \frac{\pl\eta_t^{\,i}}{\pl t}\frac{\pl}{\pl p^i}$.  It follows that
\begin{align*}
	\td{F}_t^*\Big(\om_L\Big(\frac{\pl\td{F}_t}{\pl t}, \,\cdot\,\Big) \Big) =  -\td{F}_t^*\Big(\, \sum_i \frac{\pl\eta_t^{\,i}}{\pl t}\dd q_i\Big) = -\frac{\pl\eta_t}{\pl t} ~.
\end{align*}
It finishes the proof of this lemma.
\end{proof}

It follows that on the outer region, \eqref{MCF1} becomes
\begin{align}
	\dd\left[\frac{\pl u}{\pl t}\right] &= \dd\left[\ta(\dd u)\right] ~.
\label{MCF1_ext} \end{align}

\subsubsection{Tip Region}

On the tip region $P = L\cap B_{(1+\hbar)R_1}$, the image of $\iota^{\vep}$ belongs to $\Up(B_{R_2})$.  Computing the right-hand side of (\ref{MCF1}) is equivalent to computing $\td{F}_t^*(\om_0(\frac{\pl\td{F}_t}{\pl t}, \,\cdot\,) )$ for
\begin{align*}
	\td{F}_t = \Phi_{\vep(t) L}\circ\dd u : P\subset L\to B_{R_2}\subset\BC^m ~,
\end{align*}
where $\Phi_{\vep(t) L}$ is defined in Corollary \ref{Lag nbhd scaling}.

By using the chain rule on $\td{F}_t(q) =  \vep(t)\cdot\Phi_{L}(q,\vep(t)^{-2}\dd u(q))$,
\begin{align}
	\frac{\pl\td{F}_t}{\pl t} &= \frac{\vep'(t)}{\vep(t)}\td{F}_t + \vep(t)\frac{\pl(\Phi_{L}\circ\eta_t)}{\pl t}
\label{MCF1_tip0} \end{align}
where $\eta_t = \vep(t)^{-2}\dd u$.

For the first term on the right hand side of \eqref{MCF1_tip0}, note that $\td{F}_t$ is the position vector, and thus $\td{F}_t^*(\om_0( \td{F}_t, \,\cdot\,)) = \td{F}_t^*(-2\ld_0)$.  According to Corollary \ref{Lag nbhd scaling},
\begin{align*}
	\td{F}_t^*\Big(\om_0\Big( \frac{\vep'(t)}{\vep(t)}\td{F}_t, \,\cdot\,\Big) \Big) &= -2\frac{\vep'(t)}{\vep(t)}\,(\dd u)^*\Phi_{\vep(t) L}^*(\ld_0) \\
	&= -2\frac{\vep'(t)}{\vep(t)}\cdot(\dd u)^*\left[ \ld_{L} -  \dd\left(\vep(t)^{2}\cdot(\af_{L}\circ f_{\vep(t)})\right) \right] \\
	&= -2\frac{\vep'(t)}{\vep(t)} \dd u + 2\vep(t)\vep'(t)\,\dd\left[ \af_{L}\circ f_{\vep(t)}\circ\dd u \right] ~.
\end{align*}
The map $f_{\vep(t)}\circ\dd u: L\to T^*L$ is $\vep(t)^{-2}\dd u$.

For the second term on the right hand side of \eqref{MCF1_tip0}, apply Lemma \ref{Behrndt lemma} to the map $\Phi_{L}$ and the family of $1$-forms $\eta_t$.  One finds that
\begin{align*}
	\td{F}_t^*\Big(\om_0\Big( \vep(t)\frac{\pl(\Phi_L\circ\eta_t)}{\pl t}, \,\cdot\,\Big) \Big) &= -\vep(t)^2\frac{\pl(\vep(t)^{-2}\dd u)}{\pl t} \\
	&= 2\frac{\vep'(t)}{\vep(t)} \dd u - \dd\left[\frac{\pl u}{\pl t}\right].
\end{align*}

To sum up, on the tip region $P_j$, \eqref{MCF1} becomes
\begin{align}
	\dd\left[\frac{\pl u}{\pl t}\right] &= \dd\left[\ta(\dd u) + (\vep(t)^2)'\cdot \af_{L_j}\circ(\vep(t)^{-2}\dd u) \right] ~.
\label{MCF1_tip} \end{align}

\subsubsection{Intermediate Region}

On the intermediate region $Q = \Sm\times (R_1,R_2)$, the image belongs to $(\Up\circ\Phi_{\coneC})(\Sm\times(0,R_2))$, where $\coneC = \Sm\times(0,\infty) = \Pi^0 \cup \Pi^{\bphi}$, and $\Phi_{\coneC}$ is as in Definition \ref{static_mfd}. Computing the right hand side of (\ref{MCF1}) is therefore equivalent to computing $\td{F}_t^*(\om_{\coneC}(\frac{\pl\td{F}_t}{\pl t}, \,\cdot\,) )$ for
$$ \td{F}_t = f_{\dd\ptq_{\vep(t)}} \circ (\bar{\kp}_{\vep(t)})_\srp \circ \dd u : Q \to U_{\coneC}\subset T^*\coneC ~. $$
The canonical symplectic form $\om_{\coneC}$ on $T^*{\coneC} = T^*(\Sm\times(0,\infty))$ is $\om_{\Sm} + \dd\ir \w\dd\is$.

In the following calculation, we use the equivariant coordinates introduced around \eqref{cone scaling}.  By a direct computation,
\begin{align*}
	(\dd u)(\sm,r) &= \left( \sm, r, (\dd_\Sm u)(\sm,r), (\pl_r u)(\sm,r) \right) ~, \\
	(\bar{\kp}_{\vep(t)})_\srp(\sm, r, \vsm, s) &= \left( \sm, \kp_{\vep(t)}(r), \vsm, \big(({\pl_r\kp_{\vep(t)})(r)}\big)^{-1}s \right) ~, \\
	f_{\dd\ptq_{\vep(t)}}(\sm, \ir, \vsm, \is) &= \left( \sm, \ir, \vsm + (\dd_\Sm\ptq_{\vep(t)})(\sm,\ir), \is + (\pl_{\ir}\ptq_{\vep(t)})(\sm,\ir) \right) ~.
\end{align*}
In this setting, we will also take the partial derivative of $\ptq_{\vep}(\sm,\ir)$ \eqref{int_potential} and $\kp_\vep(r)$ \eqref{static_dilate} in $\vep$.  Let
\begin{align*}
	\hat{F}_t(\sm, r) = \left( \sm, (\dd_\Sm u)(\sm,r) + (\dd_\Sm\ptq_{\vep(t)})(\sm,{\kp}_{\vep(t)}(r)) \right)
	&: \Sm\times(R_1,R_2)\to T^*\Sm ~, \\
	\check{F}_t(\sm,r) = \left( \kp_{\vep(t)}(r), \frac{(\pl_r u)(\sm,r)}{(\pl_r\kp_{\vep(t)})(r)} + (\pl_{\ir}\ptq_{\vep(t)})(\sm,{\kp}_{\vep(t)}(r)) \right)
	&: \Sm\times(R_1,R_2)\to T^*(0,\infty) ~.
\end{align*}
The above computation means that $\td{F}_t = (\hat{F}_t,\check{F}_t)$, and
\begin{align}
	\td{F}_t^*\Big(\om_{\coneC}\Big(\frac{\pl\td{F}_t}{\pl t}, \,\cdot\,\Big) \Big) &= \hat{F}_t^*\Big(\om_{\Sm}\Big(\frac{\pl\hat{F}_t}{\pl t}, \,\cdot\,\Big) \Big) + \check{F}_t^*\Big((\dd\ir\w\dd\is)\Big(\frac{\pl\check{F}_t}{\pl t}, \,\cdot\,\Big) \Big) ~.
\label{int_split} \end{align}

For the first term on the right hand side of \eqref{int_split}, the same argument as that in the proof of Lemma \ref{Behrndt lemma} shows that
\begin{align*}
	\hat{F}_t^*\Big(\om_{\Sm}\Big(\frac{\pl\hat{F}_t}{\pl t}, \,\cdot\,\Big) \Big) &= -\frac{\pl}{\pl t} \left[ (\dd_\Sm u)(\sm,r) + (\dd_\Sm\ptq_{\vep(t)})(\sm,{\kp}_{\vep(t)}(r)) \right] \\
	&= -\left(\dd_\Sm\left[\frac{\pl u}{\pl t}\right]\right)(\sm,r) - \vep'(t)\cdot \left(\dd_\Sm({\pl_\vep\ptq_{\vep(t)}})\right)(\sm,{\kp}_{\vep(t)}(r)) \\
	&\qquad - \vep'(t)\cdot (\pl_\vep\kp_{\vep(t)})(r)\cdot \left(\dd_\Sm({\pl_\ir\ptq_{\vep(t)}})\right)(\sm,{\kp}_{\vep(t)}(r)) ~.
\end{align*}
By a direct computation,
\begin{align*}
	\check{F}_t^*\Big((\dd\ir\w\dd\is)\Big(\frac{\pl\check{F}_t}{\pl t}, \,\cdot\,\Big) \Big) &= \vep'(t)\cdot (\pl_\vep\kp_{\vep(t)})(r)\cdot \dd\left[ \frac{(\pl_r u)(\sm,r)}{(\pl_r\kp_{\vep(t)})(r)} + (\pl_{\ir}\ptq_{\vep(t)})(\sm,{\kp}_{\vep(t)}(r)) \right] \\
	&\quad - \frac{\pl}{\pl t}\left[\frac{(\pl_r u)(\sm,r)}{(\pl_r\kp_{\vep(t)})(r)} + (\pl_{\ir}\ptq_{\vep(t)})(\sm,{\kp}_{\vep(t)}(r))\right]\cdot (\pl_r\kp_{\vep(t)})(r)\,\dd r \\
	&= -\left(\pl_r\left[\frac{\pl u}{\pl t}\right]\right)(\sm,r)\,\dd r + \dd\left[ \vep'(t)\cdot (\pl_\vep\kp_{\vep(t)})(r)\cdot \frac{(\pl_r u)(\sm,r)}{(\pl_r\kp_{\vep(t)})(r)} \right] \\
	&\quad + \vep'(t)\cdot (\pl_\vep\kp_{\vep(t)})(r)\cdot \left(\dd_\Sm({\pl_\ir\ptq_{\vep(t)}})\right)(\sm,{\kp}_{\vep(t)}(r)) \\
	&\quad - \vep'(t)\cdot \left(\pl_{\ir}({\pl_\vep\ptq_{\vep(t)}})\right)(\sm,{\kp}_{\vep(t)}(r))\cdot (\pl_r\kp_{\vep(t)})(r)\,\dd r ~.
\end{align*}
Putting these into \eqref{int_split} gives that
\begin{align*}
	\td{F}_t^*(\om_{\coneC}(\frac{\pl\td{F}_t}{\pl t}, \,\cdot\,) ) &= \dd\left[ - \frac{\pl u}{\pl t} + \vep'(t)\cdot \frac{\pl_\vep\kp_{\vep(t)}}{\pl_r\kp_{\vep(t)}}\cdot \pl_ru - \vep'(t)\cdot (\pl_\vep\ptq_{\vep(t)})\circ\bar{\kp}_{\vep(t)} \right] ~.
\end{align*}

It follows that, on the intermediate region $Q$, \eqref{MCF1} becomes
\begin{align}
	\dd\left[\frac{\pl u}{\pl t}\right] &= \dd\left[\ta(\dd u) + \vep'(t)\cdot \frac{\pl_\vep\kp_{\vep(t)}}{\pl_r\kp_{\vep(t)}}\cdot \pl_ru - \vep'(t)\cdot (\pl_\vep\ptq_{\vep(t)})\circ\bar{\kp}_{\vep(t)} \right] ~.
\label{MCF1_int} \end{align}

\subsubsection{Conclusion}

Equations \eqref{MCF1_ext}, \eqref{MCF1_tip} and \eqref{MCF1_int} are summarised in the following proposition:
\begin{proposition} \label{dMCF}
Under Assumption \ref{assumption torus}, suppose that there exist a $1$-parameter family of functions, $u$, on $\stm$, a real number $\Ld>0$, and $\vep(t)$ such that for $t \in [\Lambda, \infty)$
\begin{itemize}
    \item $\vep(t)$ satisfying \eqref{ep_range};
    \item $\dd u$ belongs to the open set $U_{N^{\vep(t)}}$, where $U_{N^{\vep(t)}}$ is constructed by Definition \ref{desing_nbd}.
\end{itemize}
Then, the one-parameter family of immersions $\Psi_{N^{\vep(t)}}\circ\dd u: \stm \to M$ is a solution to the mean curvature flow if and only if
\begin{align}
	\dd\left[\frac{\pl u}{\pl t}\right] &= \begin{cases}
	\dd\left[\ta(\dd u) + (\vep(t)^2)'\cdot \af_{L}\circ\dd(\vep(t)^{-2} u) \right] & \text{on }P ~, \\
	\dd\left[\ta(\dd u) + \vep'(t)\cdot \frac{\pl_\vep\kp_{\vep(t)}}{\pl_r\kp_{\vep(t)}}\cdot \pl_ru - \vep'(t)\cdot (\pl_\vep\ptq_{\vep(t)})\circ\bar{\kp}_{\vep(t)} \right] & \text{on }Q ~, \\
	\dd\left[\ta(\dd u)\right] & \text{on }\out ~.
\end{cases}
\label{MCF2} \end{align}
Here, $\kp_{\vep(t)}$ and $\ptq_{\vep(t)}$ are defined in Definition \ref{desing}; $\af_{L}$ is the output of Theorem \ref{AC Lag nbhd} on the Lawlor neck $L$ (found by step 3 of Definition \ref{static_mfd}).
\end{proposition}

\subsection{On the Potential} \label{sec_on_potential}

The right hand side of \eqref{MCF2} is locally exact.  It is natural to ask when the \eqref{MCF2} can be integrated to the level of potentials.  The Lagrangian angle $\ta(\dd u_t)$ is globally defined. We fix the branch by requiring that $\theta_{N^{\vep}}|_{\out} \, = \, \ta[\dd 0]|_{\out} \,=\, 0$.

The intermediate region $Q$ has two connected component, $Q^-$ and $Q^+$, corresponding to $\Pi^0$ and $\Pi^{\bphi}$ respectively.  The outer region $\out$ has two connected components, $\out_{\,1} = \out\cap X_1$ and $\out_{\,2} = \out\cap X_2$.

Note that on $\Sm\times((1-\hbar)R_2,R_2)\subset Q$, it is not hard to check that the middle line of \eqref{MCF2} is exactly $\dd\left[\ta(\dd u)\right]$.
Thus, if there exist time-dependent constants, $C_{P}(t)$, $C_{Q^-}(t)$, and $C_{Q^+(t)}$ such that
\begin{align}
	\frac{\pl u}{\pl t} &= \begin{cases}
	\ta(\dd u) + (\vep(t)^2)'\cdot \af_{L}\circ\dd(\vep(t)^{-2} u) + C_{P}(t) & \text{on }P ~, \\
	\ta(\dd u) + \vep'(t)\cdot \frac{\pl_\vep\kp_{\vep(t)}}{\pl_r\kp_{\vep(t)}}\cdot \pl_ru - \vep'(t)\cdot (\pl_\vep\ptq_{\vep(t)})\circ\bar{\kp}_{\vep(t)} + C_{Q^\pm}(t) & \text{on }Q^\pm ~, \\
	\ta(\dd u) + C_{Q^-}(t) & \text{on }\out_{\,1} ~, \\
        \ta(\dd u) + C_{Q^+}(t) & \text{on }\out_{\,2} ~,
\end{cases}
\label{MCF3}
\end{align}
then \eqref{MCF2} holds.  It remains to study the relation between $C_{P}(t)$ and $C_{Q^\pm}(t)$.


The overlap between the intermediate and tip region corresponds to $\Sm\times(R_1,(1+\hbar)R_1)\subset Q$.  The expression on the right hand side of \eqref{MCF3} is based on the coordinate of each piece.  To compare the equation, we have to use the same parametrisation.  Parametrise the overlap part of $P$ by the transition map:
\begin{align*}
	\Phi_{\coneC}\circ\dd\ptl: \Sm\times(R_1,(1+\hbar)R_1)\subset Q \to P ~.
\end{align*}
Denote $\Phi_{\coneC}\circ\dd\ptl$ by $\vph$.  For $u: Q\to\BR$, one has to plug $u\circ\vph^{-1}$ for $u$ into \eqref{MCF3} on $P$, and compose with $\vph$.  That is to say, \eqref{MCF3} on $P$ transforms into the following expression on $\Sm\times(R_1,(1+\hbar)R_1)\subset Q$: 
\begin{align*}
	&\quad \ta(\dd u) + (\vep(t)^2)'\cdot \af_{L}\circ\dd(\vep(t)^{-2} u\circ\vph^{-1})\circ\vph + C_{P}(t) & & \\
	&= \ta(\dd u) + (\vep(t)^2)'\cdot \af_{L}\circ(\vph)_\srp\circ\dd(\vep(t)^{-2} u) + C_{P}(t) &\text{by \eqref{sharp0}} & \\
	&= \ta(\dd u) + (\vep(t)^2)'\left[ \frac{r}{2}\frac{\pl_r u}{\vep(t)^{2}} + \frac{r}{2}(\pl_r\ptl)(\sm,r) - \ptl(\sm,r) + c_{\pm}(L) \right]  + C_{P}(t) &\text{by \eqref{afL_def}} & ~.
\end{align*}
Since $L$ has two ends, $c_{\pm}(L)$ are the corresponding constants produced by Theorem \ref{AC Lag nbhd} for the Lawlor neck $L = L^{\bphi,A}$. Recall that we choose $\alpha_{L}$ such that $c_-(L) = 0$, and so $c_+(L)$ is given by the right-hand side of \eqref{Lawlor_const}. For convenience, we denote $c_+(L) = c_+(L) - c_-(L)$ by $c_L$.

For \eqref{MCF3} on $Q\supset\Sm_j\times(R_1,(1+\hbar)R_1)$, it follows from Lemma \ref{static_dilate_property} that $\kp_{\vep(t)}(r) = \vep(t)\,r$, and $\ptq_{\vep(t)}(\sm,\ir) = \vep(t)^2\,\ptl_j(\sm, \vep(t)^{-1}\ir)$.  A direct computation shows that \eqref{MCF3} on $Q^\pm$ becomes
\begin{align*}
	\ta(\dd u) + \vep'(t) \frac{r}{\vep(t)}\pl_ru - \vep'(t) \left[ 2\vep(t)\ptl(\sm,r) - \vep(t)\,r\,(\pl_r\ptl)(\sm,r) \right] + C_{Q^{\pm}}(t) ~.
\end{align*}

To sum up, the matching condition of \eqref{MCF3}  on the intermediate-tip region is $ (\vep(t)^2)'\, c_L + C_{P}(t) = C_{Q^+}(t)$ and $C_{P}(t) = C_{Q^-}(t)$. 
We now fix a particular choice of these constants.  Denote by $V_b$ be the volume of $\iota(X_b)$ for $b \in \{1, 2\}$.  Let
\begin{align}\label{choice of constants in l.o.t} \begin{split}
    &C_{P}(t) = C_{Q^{-}}(t) = -\frac{c_LV_{2}}{V_{1}+V_{2}}\frac{\dd\vep^{2}(t)}{\dd t} ~, \\
    &C_{Q^{+}}(t) =  \frac{c_LV_{1}}{V_{1}+V_{2}}\frac{\dd\vep^{2}(t)}{\dd t} ~.
\end{split} \end{align}

\section{The Linear Operator and its Approximate Kernel}\label{sec-linear-approxkernel}

In this section, we derive the linearisation of the LMCF equation (\eqref{MCF3} and \eqref{choice of constants in l.o.t}), and construct an approximate kernel for this operator (\ref{def: approximate kernel}).

Given any smooth function $u:\stm\times[\Ld, \infty)\to\BR$ such that $\dd u(x, t)\in U_{N^{\vep}(t)}$ for all $(x, t)\in\stm\times[\Ld, \infty)$, define a function $\xi(\dd u):\stm\to\BR$ by
\begin{align}\label{l.o.t}
    \xi(\dd u) &= \begin{cases}
	(\vep(t)^2)'\cdot \af_{L}\circ\dd(\vep(t)^{-2} u) + C_{P}(t) & \text{on }P ~, \\
	\vep'(t)\cdot \frac{\pl_\vep\kp_{\vep(t)}}{\pl_r\kp_{\vep(t)}}\cdot \pl_ru - \vep'(t)\cdot (\pl_\vep\ptq_{\vep(t)})\circ\bar{\kp}_{\vep(t)} + C_{Q^\pm}(t) & \text{on }Q^\pm ~, \\
        C_{Q^-}(t) & \text{on }\out_{\,1} ~, \\
	C_{Q^+}(t) & \text{on }\out_{\,2} ~. \\
\end{cases}
\end{align}
It follows from the discussion in section \ref{sec_on_potential} that $\xi(\dd u)$ is well-defined, and 
\eqref{MCF3} becomes
\begin{align}\label{single_eq}
    \pl_{t}u = \ta(\dd u) + \xi(\dd u).
\end{align}
The function $\xi(0)$ has its geometric significance. It is not hard to see that $\xi(0)$ is the potential of the velocity of $N^{\vep}$, namely,
\begin{align}
    (\iota^{\vep})^{*}(\omega(\frac{\dd\iota^{\vep}}{\dd t}, \cdot)) = \dd[\xi(0)].
\end{align}

For the remainder of the paper, we will linearise the right hand side of (\ref{single_eq}) at the zero section $\ul{0}$ and split it into zeroth order, linear and higher order parts, denoted as follows:

\begin{equation}
    \partial_t u \, = \, \theta_{N^{\bvep}} + \xi(0) + \mathcal{L}^\vep_{\ul 0}[u] + Q^{\vep}[u].
\end{equation}

\subsection{Linearised LMCF}\label{sec-linearisation}

Denote the embedding of the zero section by $\iota^{\vep} := \Phi_{N^{\vep}}\circ \ul{0}: \stm\times[\Ld, \infty)\to M$. Let $u:\stm\times[\Ld, \infty)\to\BR$ be a smooth function such that $s\,\dd u(x, t)\in U_{N^{\vep(t)}}$ for all $(x, t)\in\stm\times[\Ld, \infty)$ and small $s\in\BR$.

We first employ the following result by Behrndt \cite{Behrndt2011}.
\begin{lemma}\label{lem: linearised angle}
The deformation vector field of $\iota^{\vep}$ in the direction of $\dd u$ is given by
\begin{align}\label{deformation vector field}
    \left.\frac{\dd}{\dd s}\right|_{s=0}\Phi_{N^{\vep}}\circ\dd(su) = J(\iota^{\vep})_{*}\nabla u + (\iota^{\vep})_{*}\widehat{V}(\dd u)
\end{align}
for some $\widehat{V}(\dd u)\in\Gamma(T\underline{N})$, where the gradient $\nabla u$ is computed using the induced metric $g^{\vep} := (\iota^{\vep})^{*}g$ on $\stm$. Moreover, the linearisation of the Lagrangian angle is given by
\begin{align}\label{linearised angle}
    \left.\frac{\dd}{\dd s}\right|_{s=0}\ta(\dd(su)) = \Delta_{g^{\vep}}u \,-\, \langle\nabla\ta, \widehat{V}(\dd u)\rangle_{g^{\vep}}~. 
\end{align}
\end{lemma}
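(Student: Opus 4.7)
The plan is, for part~(1), to unpack the Lagrangian neighbourhood condition $\Psi_{N^{\bvep}}^*\om = \om_{\stm}$ and use the bundle isomorphism $T^\perp N^{\bvep}\cong T^*\stm$ to peel off the normal and tangential parts of the variation vector; and for part~(2) to absorb the tangential term via a reparametrisation, reducing to a purely normal (Hamiltonian) deformation, for which the variation of the Lagrangian angle is classical.

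For (1), in local coordinates $(q^i,p_i)$ on $T^*\stm$ adapted to the zero section, I would first compute
\[ V \,:=\, \frac{\dd}{\dd s}\bigg|_{s=0}\Psi_{N^{\bvep}}\circ\dd(su) \,=\, \sum_i\frac{\pl u}{\pl q^i}\,(\Psi_{N^{\bvep}})_*\frac{\pl}{\pl p_i}. \]
Since $J$ sends $T\iota^{\bvep}(\stm)$ to $T^\perp\iota^{\bvep}(\stm)$, the vector $J(\iota^{\bvep})_*\nabla u$ is automatically normal, so it suffices to show that $V^\perp = J(\iota^{\bvep})_*\nabla u$. I would verify this via the isomorphism $\eta\mapsto (\iota^{\bvep})^*\om(\eta,\cdot)$: the condition $\Psi_{N^{\bvep}}^*\om = \om_{\stm}$ forces $V^\perp$ to correspond to $\pm\dd u$ (by pairing against push-forwards of horizontal vectors), while the K\"ahler identities applied to $J(\iota^{\bvep})_*\nabla u$ yield the same 1-form. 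Then $\widehat{V}(\dd u)\in\Gm(T\stm)$ is defined by $(\iota^{\bvep})_*\widehat{V}(\dd u) := V - J(\iota^{\bvep})_*\nabla u$, which is tangential by construction.

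For (2), pick a diffeomorphism $\phi_s:\stm\to\stm$ with $\phi_0 = \id$ and $\frac{\dd\phi_s}{\dd s}|_0 = -\widehat{V}(\dd u)$, and set $\tilde\Psi_s := (\Psi_{N^{\bvep}}\circ\dd(su))\circ\phi_s$. By (1) its velocity at $s=0$ is $J(\iota^{\bvep})_*\nabla u$, a pure Hamiltonian normal deformation. The classical first-variation formula for the Lagrangian angle --- obtained by differentiating the Lagrangian identity $e^{-i\ta}\Om|_L = \dd V_L$, or equivalently from $\dd\ta = -\iota^*\om(H,\cdot)$ combined with the normal variation of the second fundamental form --- then gives $\frac{\dd}{\dd s}|_{s=0}\tilde\Psi_s^*\ta = \Delta_{g^{\bvep}}u$. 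Since $\tilde\Psi_s^*\ta = \phi_s^*\,\ta(\dd(su))$, its $s$-derivative at $0$ equals $-\langle\nabla\ta,\widehat{V}(\dd u)\rangle_{g^{\bvep}} + \frac{\dd}{\dd s}|_{s=0}\ta(\dd(su))$, and rearranging yields~(\ref{linearised angle}).

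\textbf{Main obstacle.} The one non-trivial input is the classical Hamiltonian variation formula $\Delta u$ for the Lagrangian angle, which is standard but sign-sensitive. Tracking the conventions for $\om(J\cdot,\cdot) = g(\cdot,\cdot)$, the normal/tangential splitting, and the (geometer's) Laplacian throughout the computation must be done carefully to land on the minus sign in front of $\langle\nabla\ta,\widehat{V}(\dd u)\rangle_{g^{\bvep}}$. The other potentially fiddly point is that the explicit identification of $\widehat{V}(\dd u)$ depends on how $\Psi_{N^{\bvep}}$ deviates from the fibrewise metric identification on $T^*\stm$; this is implicit in the statement but will matter later when one needs quantitative estimates on $\widehat{V}(\dd u)$.
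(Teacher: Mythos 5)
Your strategy is the natural one, and in fact the paper gives no proof of this lemma at all (it is quoted from Behrndt's thesis), so there is nothing to compare line by line; your part (1) is essentially the paper's Lemma \ref{Behrndt lemma} evaluated at $s=0$, combined with the pointwise identity $\iota^*\bigl(\om(J\iota_*\nabla u,\cdot)\bigr)=-\dd u$ under the convention $\om(\cdot,\cdot)=g(J\cdot,\cdot)$, and that part is fine. Your part (2) — absorb the tangential component by a reparametrisation $\phi_s$ and invoke the first variation of the angle under a purely normal deformation $J\iota_*\nabla u$ — is also the right mechanism, and the normal-variation formula you quote is correct with no extra terms even for non-special Lagrangians: differentiating $F_s^*\Om=e^{i\ta_s}\dd V_{g_s}$ and using $\iota_{JX}\Om=i\,\iota_X\Om$ gives exactly $\dot\ta=\Dt u$ for normal velocity $J\iota_*\nabla u$ in the paper's conventions (consistent with $H=J\iota_*\nabla\ta$ from \eqref{grad_ta} and $\pl_t\ta=\Dt\ta$ along LMCF).

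The genuine problem is the last line. You correctly record that $\frac{\dd}{\dd s}\big|_{0}\phi_s^*\ta(\dd(su)) = \frac{\dd}{\dd s}\big|_{0}\ta(\dd(su)) - \langle\nabla\ta,\widehat V(\dd u)\rangle$, and equating this to $\Dt u$ gives $\frac{\dd}{\dd s}\big|_{0}\ta(\dd(su)) = \Dt u \,+\, \langle\nabla\ta,\widehat V(\dd u)\rangle$ — the \emph{opposite} sign to \eqref{linearised angle}, so "rearranging yields \eqref{linearised angle}" is not what your computation produces. Moreover this is not a sign you can recover by more careful convention-chasing of $g,J,\om$: the tangential contribution comes from a pure reparametrisation and is convention-free (a tangential velocity $+\iota_*Z$ always contributes $+\langle\nabla\ta,Z\rangle$), so with the decomposition written exactly as in \eqref{deformation vector field} the plus sign is forced; the printed minus can only arise if $\widehat V$ is defined as the negative of the tangential part (i.e.\ a conventions mismatch with Behrndt). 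You flagged sign-sensitivity as the main obstacle but then asserted the conclusion without resolving it; you should either exhibit where the extra sign enters (it does not, given \eqref{deformation vector field} as stated) or state explicitly that your argument proves the formula with $+\langle\nabla\ta,\widehat V(\dd u)\rangle$ and that the discrepancy is a matter of convention — which is harmless for the rest of the paper, since later only the magnitude $|\langle\nabla\ta_{N^{\bvep}},\widehat V(\dd u)\rangle|$ is used (cf.\ Lemma \ref{lem-lower order term estimate}).
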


Next, we linearise $\xi(\dd u)$. In the tip region, we have
\begin{align}\label{l.o.t. tip}
    \left.\frac{\dd}{\dd s}\right|_{s=0}\xi(s\dd u) &= (\vep(t)^{2})'\cdot\left.\frac{\dd}{\dd s}\right|_{s=0}\af_{L}\circ\dd(\vep(t)^{-2}su) \notag \\
    &=(2\log\vep(t))'(\dd\bt_{L})(\dd u^{V}),
\end{align}
where, in standard local coordinates $\{x^{i}, p_{i}\}_{i=1}^{m}$ of $T^{*}\stm$, $\dd u^{V}$ is the vertical vector field $\dd u^{V} := \frac{\partial u}{\partial x^{j}}\frac{\partial}{\partial p_{j}}$ on $U_{N^{\vep}}$. In the intermediate region, it is clear that
\begin{align}\label{l.o.t. intermediate}
    \left.\frac{\dd}{\dd s}\right|_{s=0}\xi(s\dd u)  = \vep'(t)\cdot \frac{\pl_\vep\kp_{\vep(t)}}{\pl_r\kp_{\vep(t)}}\cdot \pl_ru.
\end{align}
Finally, in the outer region, we have 
\begin{align}\label{l.o.t. outer}
    \left.\frac{\dd}{\dd s}\right|_{s=0}\xi(s\dd u) = 0.
\end{align}
The above computations are summarised by the following proposition.
\begin{proposition}\label{prop: linearised operator}
The linearisation of (\ref{single_eq}) at the zero section is given by
\begin{align}
    \partial_{t}u - \mathcal{L}^{\vep}_{\ul 0}[u] &:= \partial_{t}u - \frac{\dd}{\dd s}_{|_{s=0}}(\theta + \xi)(s\dd u) \notag \\
    & = \partial_{t}u - \Delta_{g^{\vep}}u + \langle\nabla\ta, \widehat{V}(\dd u)\rangle_{g^{\vep}} - S^{\vep}[u]. \label{linearisation}
\end{align}
where $S^{\vep}[u]$ is a first order linear differential operator defined by
\begin{align}
    S^{\vep}[u] &= \begin{cases}
	(2\log\vep(t))'(\dd\bt_{L})(\dd u^{V}) & \text{on }P ~, \\
	\vep'(t)\cdot \frac{\pl_\vep\kp_{\vep(t)}}{\pl_r\kp_{\vep(t)}}\cdot \pl_ru & \text{on }Q^\pm ~, \\
	0 & \text{on }\out ~.
\end{cases}
\end{align}
\end{proposition}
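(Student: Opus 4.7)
The plan is to linearise each summand on the right-hand side of (\ref{single_eq}) separately. The time derivative $\partial_t u$ is already linear in $u$ and passes through unchanged, so the work is to compute $\left.\frac{d}{ds}\right|_{s=0}[\theta(s\,du) + \xi(s\,du)]$. For the Lagrangian angle contribution I would simply invoke Lemma \ref{lem: linearised angle}: the deformation vector field decomposes as $J\iota^{\bvep}_*\nabla u + \iota^{\bvep}_* \widehat{V}(du)$, and the linearised Lagrangian angle is $\Delta_{g^{\bvep}} u - \langle\nabla\theta, \widehat{V}(du)\rangle_{g^{\bvep}}$, supplying the Laplacian and the $\widehat{V}$-correction in (\ref{linearisation}). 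It remains to compute $\left.\frac{d}{ds}\right|_{s=0}\xi(s\,du)$ region by region using the piecewise definition (\ref{l.o.t}).

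On the outer region $\out_{\,b}$, $\xi$ is the $u$-independent constant $C_{\out_{\,b}}(t)$, so the derivative is zero. On the intermediate region $Q_j^\pm$, the only $u$-dependent term is the manifestly linear $\vep_j'(t)\,(\partial_\vep \kp_{\vep_j(t)}/\partial_r \kp_{\vep_j(t)})\,\partial_r u$; the remaining pieces $-\vep_j'(t)\,(\partial_\vep \ptq_{\vep_j(t)})\circ \bar{\kp}_{\vep_j(t)}$ and $C_{Q_j^\pm}(t)$ depend only on time and drop out. Each of these contributions agrees with the corresponding line of $S^{\bvep}[u]$.

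The delicate step is the tip region $P_j$, where $\xi(s\,du) = (\vep_j(t)^2)'\,\af_{L_j}\circ d(\vep_j(t)^{-2} s u) + C_{P_j}(t)$. Applying the chain rule, noting that $d(\vep_j^{-2}s u)|_{s=0}$ is the zero section and that $\frac{d}{ds}|_{s=0} d(\vep_j^{-2}s u) = \vep_j^{-2}\, du^V$ as a vertical vector field in $T^*L_j$, I expect
\begin{align*}
\frac{d}{ds}\bigg|_{s=0}\xi(s\,du)\Big|_{P_j} &= (\vep_j^{2})' \cdot \vep_j^{-2} \cdot (d\af_{L_j})|_{\ul 0}(du^V) = (2\log\vep_j)'\,(d\af_{L_j})|_{\ul 0}(du^V).
\end{align*}
Since $\af_{L_j}|_{\ul 0} = \bt_{L_j}$ by Theorem \ref{AC Lag nbhd}, this matches the assertion $(2\log\vep_j)'(d\bt_{L_j})(du^V)$ in $S^{\bvep}[u]$. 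The main obstacle is the careful bookkeeping at this stage: one must identify which components of $d\af_{L_j}|_{\ul 0}$ survive contraction against the purely vertical $du^V$ and reconcile this with the notation $d\bt_{L_j}$, which one does using $\Phi_{L_j}^*\ld_0 = \ld_{L_j} - d\af_{L_j}$ from Theorem \ref{AC Lag nbhd} to pin down the vertical derivative of $\af_{L_j}$ on the zero section. Once this identification is in place, assembling the three pieces of $S^{\bvep}[u]$ with Behrndt's contribution yields (\ref{linearisation}); no further estimates are needed, since the proposition is a purely algebraic identity at each fixed time.
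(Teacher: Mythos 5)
Your proposal is correct and follows essentially the same route as the paper: Behrndt's Lemma \ref{lem: linearised angle} supplies $\Delta_{g^{\bvep}}u - \langle\nabla\ta,\widehat V(\dd u)\rangle_{g^{\bvep}}$, and $\xi(s\,\dd u)$ is differentiated at $s=0$ region by region, with the outer and intermediate contributions immediate and the tip contribution obtained by the chain rule as $(2\log\vep_j(t))'$ times the vertical derivative of $\af_{L_j}$ at the zero section contracted with $\dd u^{V}$. The only point needing care --- which you flag yourself --- is that $\af_{L_j}|_{\ul 0}=\bt_{L_j}$ alone only controls horizontal derivatives, so identifying the vertical contraction with the notation $(\dd\bt_{L_j})(\dd u^{V})$ must indeed be pinned down via $\Phi_{L_j}^{*}\ld_{0}=\ld_{L_j}-\dd\af_{L_j}$, exactly as you indicate (the paper presents this step with no more detail).
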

\subsection{Approximate Kernels} \label{sec-approxkernel}

Define the function $\underline{\alpha} :P \cup Q \to \mathbb{R}$ by
\begin{align*}
    \underline{\alpha}(p) &:= \alpha_{L}\big|_{\underline{0}}(p) &\text{for } p \in P ~,\\
    \underline{\alpha}(\sigma,r) &:= \alpha_{L}\big|_{\underline{0}} \left(\vph( \sigma,\vep^{-1}\kp_{\vep}(r))\right) &\text{for } (\sigma,r) \in Q ~,
\end{align*}
where $\varphi$ is as in Definition \ref{AC lag} for the Lawlor neck $L$.  By considering its asymptotic behavior, one can kind of extend it to $\stm$ by
\begin{align}\label{def: approximate kernel}
    w_{(0,1)}^{\vep} := 
    \begin{cases}
        \frac{1}{c_L}\underline{\alpha} & \text{on } j\\
        \frac{1}{c_L}\underline{\alpha} \cdot \left(1 - \chi\big(2\vep^{-\tau}\kp_{\vep_j}(r)\big) \right) & \text{on } Q^- \\
        1 + \frac{1}{c_L}\underline{\alpha} \cdot \left(1 - \chi\big(2\vep^{-\tau}\kp_{\vep_j}(r)\big) \right) & \text{on } Q^+ \\
        0 & \text{on } \out_{\,1}\\
        1 & \text{on } \out_{\,2}\\
    \end{cases}
\end{align}
We normalise it so that it has zero total integral:
 \begin{align}
    w^\vep := w^\vep_{(0,1)} - \frac{1}{\text{Vol}(N^\vep)} \int_{\ul N}w^{\vep}_{(0,1)} \dd V_{g^\vep} ~. \label{eq-normalisedapproxkernel}
\end{align}



It turns out that the functions $\{1, w^{\vep}\}$ form the `approximate kernel' of $\mathcal{L}^{\vep}_{\ul{0}}$.  According to \cite{Joyce2004SLCS3}*{p.49}  (see also \cite{DLee2004}*{Lemma~11}), they approximate the {\it small eigenfunctions} of the Laplacian $\Delta_{g^{\vep}}$, with eigenvalues of the order $O(|\vep|^{m-2})$.  Since our linearised operator $\mathcal{L}^{\vep}_{\ul{0}}$ will turn out to be a small perturbation of the Laplacian (see Lemma~\ref{lem-lower order term estimate}), we have $\mathcal{L}^{\vep}_{\ul{0}}w^{\vep}\approx 0$. 

\section{A Priori Estimates for the Linear Operator}\label{sec-apriori}

In this section, we prove a uniform injectivity estimate for solutions to the inhomogeneous heat equation which are orthogonal to the approximate kernel (Theorem \ref{weighted sup norm estimate}). 

We will from now on assume that $\vep(t)$ satisfies the following estimates. Ultimately, $\vep$ will solve an ODE that appears as the dominant term in an integral error, and such a solution will automatically satisfy these estimates (see the last paragraph of section \ref{sec-estimate-error}).
\begin{assumption} \label{assumption on epsilon}
    There exist $\Lambda>0$ and $C,C'>1$ such that:
    \begin{align}
        C^{-1}t^{-\frac{1}{m-2}} \,\leq \,\vep(t) &\leq\, Ct^{-\frac{1}{m-2}},\quad |\vep'(t)|\leq Ct^{\frac{1-m}{m-2}} \leq C' \vep(t)^{m-1}, \notag \\
        \text{and}\quad \frac{|\vep'(t_1) - \vep'(t_2)|}{|t_1 - t_2|^{\alpha}}\,&\leq\, C t^{\frac{1-m}{m-2} + \frac{2\alpha}{m-2}} \,\leq\, C' \bvep(t)^{m-1-2\alpha}.
    \end{align}
    for all $t \in[\Lambda, \infty)$, $t_1,t_2 \in [t,2t]$, $0<|t_1 - t_2|<t^{-\frac{2}{m-2}}$.
\end{assumption}
\noindent Note that Assumption \ref{assumption on epsilon} also provides the following bound on the weight function given in Definition \ref{weight}, for some constant $C' > 1$:
\begin{equation}\label{assumption on rho}
    (C')^{-1} \rho_{t^{-\frac{1}{m-2}}} \leq \rho_{\vep(t)} \leq C' \rho_{t^{-\frac{1}{m-2}}}. 
\end{equation}

\subsection{Liouville Theorems}

The proof of Theorem \ref{weighted sup norm estimate} is based on a blow-up argument which ultimately reduces the question to Liouville-type theorems on various model spaces. We start by establishing these theorems.

\subsubsection{Lawlor Necks}

The corresponding Liouville theorem on the Lawlor neck is obtained by adapting the scheme of Lockhart and McOwen \cites{Lock1987, LockMc1985}.  The main machinery in the current setting is established by Joyce in \cite{Joyce2002SLCS1}*{section 7.3}, which is summarised here for the reader's convenience.  Note that $\Dt$ in \cite{Joyce2002SLCS1} is the Hodge Laplacian, which differs from the Laplacian in this paper by a minus sign.

Let $L\subset\BC^m$ be a Lawlor neck described by Proposition \ref{Lawlor}.  Let ${\hat\rho}(\bx):L \to[1, \infty)$ be the smooth function defined in Definition \ref{weight}.  Given $k\in\BN\cup\{0\}$ and $\nu\in\mathbb{R}$, define the spaces $C^{k}_{\nu}(L)$ to be the set of locally $C^{k}$ functions whose weighted norm
\begin{align*}
    \|u\|_{C^{k}_{\nu}} = \sum_{j=0}^{k}\sup_{L}\left|\hat\rho^{j+\nu}\,\nabla^{j}u\right|_{g}
\end{align*}
is finite.  The covariant derivative and the norm are computed using the induced metric $g = \iota_{L}^{*}g_{0}$, where $\iota_{L}:L\to\mathbb{C}^{m}$ is the inclusion map.  The $C^\infty_\nu$ space is defined to be the intersection of all $C^{k}_{\nu}$ spaces: $C^{\infty}_{\nu}(L) = \bigcap_{k=0}^{\infty} C^{k}_{\nu}(L)$. 

Similarly, the weighted Sobolev spaces $W^{k, p}_{\dt}$ is defined by the norm
\begin{align*}
    \|u\|_{W^{k, p}_{\nu}} = \left(\sum_{j=0}^{k}\int_{L}\left|\hat\rho^{j+\nu}\,\nabla^{j}u\right|^{p}\,\hat\rho^{-m}\,\dd V_{g}\right)^{1/p} ~.
\end{align*}
As usual, denote $W^{0, p}_{\nu}(L)$ by $L^{p}_{\nu}(L)$.  For any $\nu\in\BR$, $p>1$, and $k\geq 2$, the Laplace operator $\Dt_{g}: C^{\infty}_{\text{cpt}}(L)\to C^{\infty}_{\text{cpt}}(L)$ extends to a continuous operator
\begin{align*}
    \Dt^{k,p}_{\nu}: W^{k, p}_{\nu}(L) \longrightarrow W^{k-2,p}_{\nu+2}(L) ~.
\end{align*}

The operator $\Dt^{k, p}_{\nu}$ is Fredholm for generic $\nu$.  Here is the complete characterisation.  The Lawlor neck $L$ is asymptotic to $\Pi^0\cup\Pi^{\bphi}$ near infinity.  The link $\Sm$ of $\Pi^0\cup\Pi^{\bphi}$ is the disjoint union of two round spheres.  Let
\begin{align*}
    \CD_{\Sm} = \left\{ \nu\in\BR \,|\, \nu(-\nu+m-2) \text{ is an eigenvalue of } \Dt_\Sm \right\} ~.
\end{align*}
It is not hard to verify that $\CD_\Sm$ is a discrete subset of $\BR$ satisfying $m-2\in\CD_\Sm$, $0\in\CD_\Sm$, and $\CD_\Sm\cap(2-m, 0) = \varnothing$.  It turns out that $\Dt^{k, p}_{\nu}$ is Fredholm if and only if $\nu\in\BR\setminus\CD_\Sm$.

Its Fredholm index,
$\ind\left(\Dt^{k,p}_{\nu}\right) = \dim\ker\left(\Dt^{k,p}_{\nu}\right) - \dim\coker\left(\Dt^{k,p}_{\nu}\right)$,
depends only on the connected components of $\BR\setminus\CD_\Sm\ni\nu$, and is given by
\begin{align}\label{index formula}
    \ind\left(\Dt^{k,p}_{\nu}\right) = \bfN_{\Sm}(\nu) ~,
\end{align}
where $\bfN_{\Sm}:\BR\to\BZ$ is defined by
\begin{align*}
    \bfN_{\Sm}(\nu) =\begin{cases}
        -\sum_{\gm\in\CD_\Sm\cap(0,\nu)}\bm_{\Sm}(\gm) &\text{when }\nu>0 ~, \\
        \sum_{\gm\in\CD_\Sm\cap[\nu,0]}\bm_{\Sm}(\gm) &\text{when }\nu\leq0 ~,
    \end{cases}
\end{align*}
and $\bm_\Sm(\gm)$ is the multiplicity of the eigenvalue $\gm(-\gm+m-2)$ of $\Dt_{\Sm}$.

From now on, focus on the Fredholm case, $\nu\notin\CD_\Sm$.  According to the weighted elliptic estimate and the weighted Sobolev embedding, any $u \in \ker(\Dt^{k,p}_{\nu}) \subset W^{k,p}_{\nu}(L)$ must be smooth, $u\in C^{\infty}_{\nu}(L)$.  By using the maximum principle, if $u \in \ker(\Dt^{k,p}_{\nu}) \subset W^{k,p}_{\nu}(L)$ with $\nu>0$, then $u\equiv 0$.  In other words, $\Delta^{k,p}_{\nu}$ is injective when $\nu>0$.

\begin{lemma}\label{kernel of Laplacian on Lawlor neck}
    Let $u\in W^{k,p}_{\nu}(L)$ with $\nu>0$. Suppose $\Delta_{g}u = 0$ in the distributional sense, then $u\equiv 0$.
\end{lemma}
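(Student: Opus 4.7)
The argument is essentially already sketched in the paragraph preceding the statement, so the task is to fill in the details. The plan has three steps: promote the distributional solution to a classical one, transfer the weighted decay to pointwise decay at infinity, and then conclude by the maximum principle.

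First, I would reduce to the Fredholm case. Since $\mathcal{D}_\Sigma$ is a discrete subset of $\mathbb{R}$ with $\mathcal{D}_\Sigma\cap(2-m,0)=\varnothing$ and $0\in\mathcal{D}_\Sigma$, given $\nu>0$ I may choose $\nu'\in(0,\nu]\setminus\mathcal{D}_\Sigma$. Because the weight $\hat\rho^{\,j+\nu'}$ is no larger than $\hat\rho^{\,j+\nu}$ on the region $\{\hat\rho\geq 1\}$ (and the two weights are equivalent on any fixed compact piece of $L$), we have a continuous inclusion $W^{k,p}_{\nu}(L)\hookrightarrow W^{k,p}_{\nu'}(L)$. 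Hence $u\in W^{k,p}_{\nu'}(L)$ and $\Delta^{k,p}_{\nu'}u=0$, so WLOG $\nu\notin\mathcal{D}_\Sigma$.

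Second, I would upgrade $u$ to a smooth, decaying classical harmonic function. Standard interior elliptic regularity (Weyl's lemma for the Laplacian of the smooth metric $g$) applied locally on $L$ shows that the distributional harmonic function $u$ is $C^\infty$ on $L$. The weighted elliptic estimate of \cite{Joyce2002SLCS1}*{section 7.3}, combined with a scaling/bootstrap argument on cylindrical annuli of the conical ends $\Sigma\times\{r\sim 2^k\}$, upgrades the weighted $W^{k,p}_\nu$ bound to a weighted $C^j_\nu$ bound for every $j\geq 0$, so $u\in C^\infty_\nu(L)$. In particular, $|u|(\bx)=O(\hat\rho^{-\nu})$, and since $\nu>0$ we have $u(\bx)\to 0$ as $|\bx|\to\infty$.

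Third, I apply the maximum principle. The function $u$ is a smooth harmonic function on the connected (non-compact) manifold $L\cong S^{m-1}\times\mathbb{R}$ that tends to zero at both ends. If $u$ is not identically zero, then either $\sup_L u>0$ or $\inf_L u<0$; in either case, because $u\to 0$ at infinity, the extremum is attained at an interior point of $L$, contradicting the strong maximum principle for harmonic functions on a connected Riemannian manifold. Hence $u\equiv 0$.

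There is no genuine obstacle here: the only points requiring care are (a) verifying that decreasing the weight lies inside the scale of weighted Sobolev spaces (straightforward from the inequality of weights for $\hat\rho\geq 1$ and the fact that $L$ has finite $\hat\rho^{-m}\,\dd V_g$-mass on compact subsets), and (b) the bootstrap from $W^{k,p}_\nu$ to $C^\infty_\nu$, which is the standard weighted Schauder/Sobolev machinery on asymptotically conical manifolds and is recorded in the references cited before the lemma.
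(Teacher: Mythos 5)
Your argument is correct and is essentially the paper's own proof: weighted elliptic regularity plus the weighted Sobolev embedding upgrade $u$ to $C^\infty_\nu(L)$, and then, since $\nu>0$ forces $u\to 0$ at both ends of the connected manifold $L$, the strong maximum principle gives $u\equiv 0$. Your preliminary reduction to $\nu\notin\mathcal{D}_\Sigma$ is harmless but not actually needed, since neither the regularity bootstrap nor the maximum principle uses Fredholmness.
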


\begin{remark}
    By the duality property, the cokernel of $\Dt^{k,p}_{\nu}$ is isomorphic to the dual space of the kernel of $\Dt^{k, q}_{-\nu+m-2}$, where $1/p+1/q=1$.  Thus, $\Dt^{k,p}_{\nu}$ is a surjective when $\nu<m-2$.  It follows that $\Dt^{k,p}_{\nu}$ is an isomorphism when $\nu\in(0,m-2)$.
\end{remark}

We now prove a Liouville theorem for the heat equation on the Lawlor neck.
\begin{proposition}\label{Liouville thm tip}
    Let $u:L\times(-\infty,0)\to\mathbb{R}$ be a solution to the heat equation $\partial_{t}u = \Dt_{g}u$.  Suppose there exist $c>0$ and $\nu\in(0, m-2)$ such that $|u(\,\cdot\,, t)|\leq C\hat\rho^{-\nu}$ for all $t\in(-\infty,0)$.  Then, $u\equiv 0$.
\end{proposition}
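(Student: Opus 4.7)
The plan is to prove this via the heat kernel representation on $L$ combined with a Gaussian upper bound and the decay of $u$. Since $L \subset \BC^m$ is a complete special Lagrangian whose two ends are each asymptotic to an $m$-plane (Proposition~\ref{Lawlor}), the induced metric $g$ is asymptotic to the flat metric on two copies of $\BR^m$. In particular $(L,g)$ has bounded geometry, uniform volume doubling, and a scale-invariant Poincar\'e inequality, so by the theory of Grigor'yan and Saloff-Coste the heat kernel $p_L(x,y,t)$ satisfies the Gaussian upper bound
\begin{align*}
p_L(x,y,t) \,\le\, C\,t^{-m/2} \exp\!\left(-\frac{d(x,y)^2}{C\,t}\right)
\end{align*}
for all $t > 0$ and $x, y \in L$.

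Next I would establish the integral representation
\begin{align*}
u(x,0) \,=\, \int_L p_L(x, y, s)\, u(y, -s)\, \dd V_g(y) \qquad\text{for every } s > 0,~ x \in L.
\end{align*}
Both sides solve the heat equation on $L \times (-s, 0]$ with the same value at $t = -s$; since $u$ is uniformly bounded (as $\hat\rho \ge 1$ forces $|u| \le C$), uniqueness of bounded solutions to the forward Cauchy problem on a complete manifold with bounded geometry yields equality. Combining this with the decay $|u(y, -s)| \le C\hat\rho(y)^{-\nu}$,
\begin{align*}
|u(x, 0)| \,\le\, C\,s^{-m/2} \int_L e^{-d(x,y)^2/Cs}\, \hat\rho(y)^{-\nu}\, \dd V_g(y).
\end{align*}
Splitting $L$ into its compact core and the two asymptotically Euclidean ends, and using $\int_{\BR^m} e^{-|y|^2/Cs}\, |y|^{-\nu}\,\dd y = O(s^{(m-\nu)/2})$ (the integrand is integrable near $y = 0$ since $\nu < m$), the right-hand side is $O(s^{-\nu/2}) \to 0$ as $s \to \infty$. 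Hence $u(x,0) = 0$ for every $x \in L$, and the same argument with any other base time in place of $0$ gives $u \equiv 0$.

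The main obstacle is the Gaussian heat kernel bound itself. While this is intuitively clear from the asymptotically Euclidean geometry of $L$, its rigorous derivation requires verifying uniform volume doubling, a scale-invariant Poincar\'e inequality, and appropriate curvature and injectivity-radius bounds for $L$; these can all be read off from the explicit description of the Lawlor neck in Proposition~\ref{Lawlor}. A conceptually lighter alternative that avoids the Gaussian bound is a blow-down contradiction: assuming $u \not\equiv 0$, normalise $\sup \hat\rho^{\nu}|u| = 1$ and extract a sequence $(x_k, t_k)$ along which this supremum is approached, then rescale. If $x_k \to \infty$, a subsequential limit yields a nontrivial ancient solution on $\BR^m$ with spatial decay $|y|^{-\nu}$, contradicting the Euclidean parabolic Liouville theorem; if $x_k$ stays in a compact set, time translation and parabolic compactness yield a nontrivial ancient solution on $L$ whose supremum $M$ is attained in space-time, and a contradiction is obtained from the strong maximum principle combined with the elliptic Liouville theorem (Lemma~\ref{kernel of Laplacian on Lawlor neck}) applied to the subsequential spatial profile.
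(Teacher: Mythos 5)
Your main (heat-kernel) route is genuinely different from the paper's argument, and it can be made to work, but the justification you offer for the key Gaussian bound is wrong as stated. The Lawlor neck has two asymptotically Euclidean ends, and a complete manifold with two or more Euclidean ends does \emph{not} satisfy a scale-invariant Poincar\'e inequality at large scales; this is precisely the classical counterexample for which the two-sided Li--Yau/parabolic Harnack theory fails, and it is the reason Grigor'yan--Saloff-Coste treat ``manifolds with ends'' separately. So you cannot obtain the kernel estimate from ``doubling $+$ Poincar\'e''. Fortunately you only need the \emph{upper} bound, and that is still true: since $L$ is minimal in $\BR^{2m}$, the Michael--Simon Sobolev inequality gives the Euclidean-type $L^2$-Sobolev inequality on $(L,g)$ (here $m\geq 3$), hence a Faber--Krahn inequality, hence the on-diagonal bound $p_L(x,x,t)\le Ct^{-m/2}$ and then the Gaussian upper bound $p_L(x,y,t)\le Ct^{-m/2}e^{-d(x,y)^2/Ct}$ by standard arguments (Grigor'yan, Davies). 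With that replacement, your representation formula (valid by uniqueness of bounded solutions, since $\hat\rho\ge 1$ makes $u$ bounded and $L$ has bounded geometry) and the computation $|u(\cdot,0)|=O(s^{-\nu/2})\to 0$ as $s\to\infty$ close the argument at every time slice, using only $0<\nu<m$.

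For comparison, the paper proceeds quite differently: it bootstraps weighted Schauder estimates to get $|(\Dt_g)^\ell u|\le C_\ell\,\hat\rho^{-\nu-2\ell}$, chooses $\ell$ so that $w=(\Dt_g)^\ell u\in L^2$, uses the energy monotonicity $\frac{\dd}{\dd t}\int_L w^2 = -2\int_L|\nabla w|^2\le 0$ together with a time-translation compactness argument to force the limiting energy to vanish, concludes $w\equiv 0$, and then invokes the weighted elliptic Liouville theorem (Lemma \ref{kernel of Laplacian on Lawlor neck}) to deduce $u\equiv 0$. Your route trades the weighted Fredholm/elliptic input for a heat kernel upper bound; the paper's route avoids heat kernel estimates entirely but leans on the Lockhart--McOwen theory already set up for the Lawlor neck. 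Your fallback blow-down sketch is the weakest part: in the case where the maximizing points stay in a compact set, the limiting eternal solution is only known to attain the \emph{weighted} supremum $\hat\rho^{\nu}|u|=1$, to which the strong maximum principle does not directly apply, so that alternative would need substantially more detail than given.
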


\begin{proof}
    By the the weighted Schauder estimate (\cite{Behrndt2011} and \cite{Su2020}*{section 3.2}) and the bootstrapping argument, $u(\,\cdot\,, t)\in C^{k}_{\nu}(L)$ for any $k\geq 2$. 
    It follows that $|(\Dt_g)^\ell u| \leq C_\ell\hat\rho^{-\nu-2\ell}$ for any $\ell\in\BN$.

    Fix $\ell$ with $4\ell>m-2\nu$, and let $w = (\Dt_g)^\ell u$.  Clearly, $\partial_{t}w = \Dt_{g}w$.  Consider $E(t)=\frac{1}{2}\int_{L}w^{2}(\,\cdot\,, t)\,\dd V_{g}$.  The choice of $\ell$ guarantees that $E(t)<\infty$.  Its derivative is
    \begin{align*}
        \frac{\dd}{\dd t}E(t) = \int_{L}w\Dt_{g}w\,\dd V_{g} = -\int_{L}|\nabla w|^{2}_{g}\,\dd V_{g}\leq 0 ~,
    \end{align*}
    and hence $E(t)$ is non-increasing in $t$.  It follows that $\lim_{t\to-\infty}E(t)$ exists, and denote the limit by $E$.

    We claim that $E = 0$, which implies that $w\equiv0$.  Pick a sequence $t_{j}\to -\infty$. Define $w^{(j)}(x, t)$ to be $w(x, t_{j}+t)$.  After passing to a subsequence, $w^{(j)}$ converges smoothly on every compact subset of $L\times\BR$ to an eternal solution $\widehat{w}:L\times\mathbb{R}\to\mathbb{R}$ of the heat equation.  Since $|\widehat{w}|\leq C\hat\rho^{-\nu-2\ell}$, it follows from the dominated convergence theorem that
    \begin{align*}
        \int_{L}\widehat{w}(\,\cdot\,, t)^{2}\,\dd V_{g} = \lim_{t_{j}\to-\infty}\int_{L}w(\,\cdot\,, t_{j}+t)^{2}\,\dd V_{g} = 2E \quad\text{for all } t\in\BR ~.
    \end{align*}
    Taking derivative in $t$ give
    \begin{align*}
        0 = \frac{\dd}{\dd t}\frac{1}{2}\int_{L}\widehat{w}(\,\cdot\,, t)^{2}\,\dd V_{g} = -\int_{L}|\nabla\widehat{w}(\,\cdot\,, t)|^{2}_{g}\,\dd V_{g} ~.
    \end{align*}
    It follows that $\widehat{w}(\,\cdot\,, t)$ is a time-dependent constant function. Since $\widehat{w}(\,\cdot\,, t)$ tends to zero at the end of $L$, the constant must be $0$.  Hence, $E = 0$ as claimed.

    In other words, $(\Dt_g)^\ell u\equiv0$.  According to Lemma \ref{kernel of Laplacian on Lawlor neck}, $u\equiv0$.
\end{proof}

\subsubsection{Punctured $m$-Planes}

The model space for the intermediate region is the cone over the link.  In our setting, it is the union of two punctured $\BR^m$'s, endowed with the standard metric on $\BR^m$.

\begin{proposition} \label{liouville intermediate}
Let $u:(\mathbb{R}^m\backslash\{0\})\times(-\infty, \Ld)\to\mathbb{R}$ be a solution to the heat equation $\partial_{t}u = \Delta_{g}u$, for some $\Ld\in\BR$.  Suppose that there exist $C>0$ and $0<\nu<m-2$ such that $|\nabla^{\ell} u(x, t)|\leq C|x|^{-\nu-\ell}$ for all $t\in(-\infty, \Ld)$ and $\ell\in\{0,1,2\}$.  Then, $u\equiv 0$.
\end{proposition}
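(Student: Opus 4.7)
The strategy is to first remove the singularity at the origin so that $u$ extends to a smooth ancient solution on $\BR^{m}\times(-\infty,\Ld)$, and then apply the heat kernel representation to pass to the limit $T\to-\infty$.

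For the removal, observe that since $\nu\in(0,m-2)$, the hypotheses imply $u,\nabla u,\Dt u\in L^{1}_{\mathrm{loc}}$ on $\BR^{m}\times(-\infty,\Ld)$.  For any test function $\phi\in C^{\infty}_{c}(\BR^{m}\times(-\infty,\Ld))$, integration by parts on $(\BR^{m}\setminus B_{\ep})\times\BR$ produces boundary integrals on $\{|x|=\ep\}$ bounded by $C(\ep^{m-\nu-1}+\ep^{m-\nu-2})$, both of which vanish as $\ep\to 0$ because $\nu<m-2$.  This shows that $(\pl_{t}-\Dt)u=0$ distributionally on the full space, and by hypoellipticity of the heat operator, $u$ extends to a smooth ancient solution on $\BR^{m}\times(-\infty,\Ld)$.

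Next, fix $(x_{0},t_{0})\in\BR^{m}\times(-\infty,\Ld)$ and $T<t_{0}$.  Smoothness of $u$ together with the uniform spatial decay $|u(\,\cdot\,,t)|\leq C|x|^{-\nu}$ imply $u$ is bounded on $\BR^{m}\times[T,t_{0}]$.  Since
\begin{equation*}
v(x,t):=u(x,t)-\int_{\BR^m}K_{t-T}(x-y)\,u(y,T)\,\dd y,\qquad K_{s}(x)=(4\pi s)^{-m/2}e^{-|x|^{2}/(4s)},
\end{equation*}
is a bounded solution of the heat equation vanishing at $t=T$, Tychonoff's uniqueness theorem gives $v\equiv 0$, so
\begin{equation*}
u(x_{0},t_{0})=\int_{\BR^{m}}K_{t_{0}-T}(x_{0}-y)\,u(y,T)\,\dd y.
\end{equation*}
Using $|u(y,T)|\leq C|y|^{-\nu}$ and the substitution $y=x_{0}+\sqrt{t_{0}-T}\,z$, one verifies that this integral is bounded by $C'(t_{0}-T)^{-\nu/2}$ once $T$ is sufficiently negative that $|x_{0}|\leq\tfrac{1}{2}\sqrt{t_{0}-T}$.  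Letting $T\to-\infty$ yields $u(x_{0},t_{0})=0$, and hence $u\equiv 0$.

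The main obstacle is the removal step: the range $\nu<m-2$ is precisely the threshold that makes $\Dt u$ locally integrable and ensures that the boundary contributions in the distributional computation vanish, while the bounds on $\nabla u$ are needed to control the $\pl_{\nu}u$ contribution on $\{|x|=\ep\}$.  Once $u$ is known to be a smooth ancient solution on $\BR^{m}$ with spatial decay $|u|\leq C|x|^{-\nu}$, the remaining steps are standard heat-equation arguments; notably, this approach uses the decay only in a qualitative way and avoids any delicate energy or separation-of-variables computation on the punctured space.
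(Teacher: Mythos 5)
Your proof is correct and follows essentially the same route as the paper's: both pass to the observation that the decay hypotheses make $u$ a distributional (hence smooth) solution of the heat equation across the origin, then use the heat kernel representation together with the bound $|u|\leq C|x|^{-\nu}$ to obtain a $(t_0-T)^{-\nu/2}$ estimate and let $T\to-\infty$. Your additional details (the explicit boundary-term computation on $\{|x|=\ep\}$ and the appeal to Tychonoff's uniqueness to justify the representation formula) merely spell out steps the paper leaves implicit.
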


\begin{proof}
This proof is a modification of the proof of \cite{BrendleK2017}*{Proposition 5.3}.  It follows from the rate condition that $u$ satisfies the heat equation on $\mathbb{R}^m$ in the sense of distribution.

Fix any $t_0\in(-\infty,\Ld)$.  For any $t>0$ and any $x_0\in\mathbb{R}^m\backslash\{0\}$, 
\begin{align*}
u(x_0,t_0) &= \int_{\mathbb{R}^m\backslash\{0\}} \frac{1}{(4\pi t)^{\frac{m}{2}}} e^{-\frac{|x-x_0|^2}{4t}} u(x,t_0-t) \:\dd x .
\end{align*}
Now, fix $x_0$, and suppose that $t>9|x_0|^2$.  When $|x-x_0|^2\leq t$, it follows from the triangle inequality that $|x| < \frac{4}{3} \sqrt{t}$, and thus
\begin{align*}
\left|\int_{|x-x_0|^2\leq t} \frac{1}{(4\pi t)^{\frac{m}{2}}} e^{-\frac{|x-x_0|^2}{4t}} u(x,t_0-t) \:\dd x\right| &\leq C_1\,t^{-\frac{m}{2}} \int_{|x|^2<\frac{16}{9}t} |x|^{-\nu}\dd x \leq C_2\,t^{-\frac{\nu}{2}} .
\end{align*}
When $|x-x_0|^2\geq t$, it follows from the triangle inequality that $|x| \geq \frac{2}{3} \sqrt{t}$ and $|x-x_0| \leq \frac{1}{2}|x|$.  Since $e^{-s} \leq C_3\,s^{-\frac{m}{2}}$ for any $s>0$,
\begin{align*}
\left|\frac{1}{(4\pi t)^{\frac{m}{2}}} e^{-\frac{|x-x_0|^2}{4t}}\right| &\leq C_4\,|x-x_0|^{-m} \leq C_5\,|x|^{-m}.
\end{align*}
Therefore,
\begin{align*}
\left|\int_{|x-x_0|^2\geq t} \frac{1}{(4\pi t)^{\frac{m}{2}}} e^{-\frac{|x-x_0|^2}{4t}} u(x,t_0-t) \:\dd x\right| &\leq C_6\,\int_{|x|^2\geq\frac{4}{9}t} |x|^{-\nu-m}\dd x \leq C_7\,t^{-\frac{\nu}{2}} .
\end{align*}
Putting the estimates together gives
\begin{align}
|u(x_0,t_0)| &\leq (C_2 + C_7)\,t^{-\frac{\nu}{2}}
\end{align}
whenever $t>9|x_0|^2$.  By taking $t\to\infty$, it implies that $u\equiv0$.
\end{proof}

\subsubsection{Immersed Special Lagrangians} \label{sec-Xliouville}

We now return to the setting of Assumption \ref{assumption torus}.
Using the notation of Definition \ref{static_mfd} and in analogy with Definition \ref{weight}, there exists a continuous function $\rho: X\to [0,\infty)$ such that 
\begin{itemize}
    \item $\rho\circ \iota^{-1} \circ \Up$ on $B_{R_1} \cap \Up^{-1}(\iota(X))$ is the distance to the origin, with respect to $g_0$;
    \item $\rho \equiv R_2$ on $X \setminus \iota^{-1}(\Up(B_{R_2}))$;
    \item the zero set of $\rho$ is exactly $\xsin$;
    \item $\rho$ is smooth on $X\setminus\{\xsin\}$.
\end{itemize}
The manifold $X$ is endowed with the smooth metric $\iota^*g$.

\begin{proposition} \label{Liouville outer}
    Let $u: X_1\setminus\{\xsin^-\} \,\times\,(-\infty,0)\to\mathbb{R}$ be a solution to the heat equation $\partial_{t}u = \Delta u$.
    Suppose that
    \begin{align*}
        \int_{X_1} u\,\dd V_{\iota^*g} = 0  \quad\text{and}\quad  |\nabla^{\ell} u(\cdot, t)|\leq C\rho^{-\nu-\ell}
    \end{align*}
    for some $C>0$, $\nu\in(0,{m-2})$, and all $t\in(-\infty,0)$ and $\ell\in\{0,1,2\}$.  Then, $u$ vanishes on $X_1$.  The same statement holds true for $X_2\setminus\{\xsin^+\}$
\end{proposition}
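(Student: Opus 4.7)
The plan is to work on the compactified manifold $X_{b}$ via a three-step argument. First, I would remove the point singularities of $u$ at the preimages $\{y_{1},\ldots,y_{k}\}:=\iota^{-1}(\{x_{1},\ldots,x_{n}\})\cap X_{b}$, extending $u$ to a smooth solution of the heat equation on $X_{b}\times(-\infty,0)$. Second, I would observe that the spatial integral of $u$ is conserved, so it vanishes at every $t$ by the hypothesis. Third, I would exploit the exponential convergence of the heat kernel on the closed manifold $X_{b}$ to its constant mode to force $u\equiv 0$.

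For the removable-singularity step, introduce a cutoff family $\eta_{\dt}\in C^{\infty}(X_{b})$ which vanishes on $\bigcup_{j}B_{\dt}(y_{j})$, equals $1$ off $\bigcup_{j}B_{2\dt}(y_{j})$, and satisfies $|\nabla\eta_{\dt}|=O(\dt^{-1})$, $|\Dt\eta_{\dt}|=O(\dt^{-2})$.  For any test function $\phi\in C^{\infty}_{c}(X_{b}\times(-\infty,0))$, the heat equation on the smooth locus combined with integration by parts (valid because $u\eta_{\dt}$ is globally smooth on $X_{b}$) will yield
\begin{align*}
\int u\,\eta_{\dt}(-\pl_{t}\phi + \Dt\phi)\,\dd V\,\dd t \,=\, \int\phi\,u\,\Dt\eta_{\dt}\,\dd V\,\dd t \,+\, 2\int\phi\,(\nabla u\cdot\nabla\eta_{\dt})\,\dd V\,\dd t.
\end{align*}
Using the assumed bounds $|u|\leq C\rho^{-\nu}$ and $|\nabla u|\leq C\rho^{-\nu-1}$ on the annular support of $\dd\eta_{\dt}$, whose volume is $O(\dt^{m})$, each right-hand-side term is of order $\dt^{m-\nu-2}\to 0$ as $\dt\to 0$, precisely because $\nu<m-2$.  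The left-hand side converges to $\int u(-\pl_{t}\phi + \Dt\phi)\,\dd V\,\dd t$ by dominated convergence (using $\nu<m$ to guarantee $u\in L^{1}_{\mathrm{loc}}$). Thus $u$ is a distributional solution of the heat equation across the punctures, and parabolic hypoellipticity upgrades it to a smooth function on $X_{b}\times(-\infty,0)$.

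With the extension in hand, the remaining steps are short.  The smooth solution obeys $\frac{\dd}{\dd t}\int_{X_{b}}u\,\dd V = \int_{X_{b}}\Dt u\,\dd V = 0$, so this integral equals $0$ for every $t$ by the hypothesis.  Letting $K(x,y,t)$ denote the heat kernel on $(X_{b},\iota^{*}g)$, the semigroup formula then gives
\begin{align*}
u(x,t_{0}) \,=\, \int_{X_{b}}\!\left(K(x,y,t) - \tfrac{1}{\mathrm{Vol}(X_{b})}\right)u(y,t_{0}-t)\,\dd V(y)
\end{align*}
for every $t>0$.  The spectral expansion of $K$ together with the Cauchy--Schwarz estimate $|K(x,y,t)-1/\mathrm{Vol}|^{2}\leq (K(x,x,t)-1/\mathrm{Vol})(K(y,y,t)-1/\mathrm{Vol})$ and the bound $\sup_{x}K(x,x,1)<\infty$ give $|K(x,y,t)-1/\mathrm{Vol}(X_{b})|\leq Ce^{-\ld_{1} t}$ for $t\geq 1$, where $\ld_{1}>0$ is the first nonzero eigenvalue of $-\Dt$.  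Combining this with the uniform-in-time bound $\|u(\cdot,s)\|_{L^{1}(X_{b})}\leq C\int_{X_{b}}\rho^{-\nu}\,\dd V<\infty$ (finite because $\nu<m$) gives $|u(x,t_{0})|\leq C'e^{-\ld_{1} t}$, and letting $t\to\infty$ forces $u(x,t_{0})=0$ for all $(x,t_{0})$.

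I expect the main obstacle to be the removable-singularity step, where the decay hypothesis $\nu<m-2$ is used sharply: the commutator boundary terms scale exactly as $\dt^{m-\nu-2}$, and any weaker decay assumption would leave behind a nonzero residue (indeed a nontrivial multiple of $\phi(y_{j},t)$, yielding a Dirac source at each puncture).  This critical rate $m-2$ is the same threshold appearing in Propositions \ref{Liouville thm tip} and \ref{liouville intermediate}, reflecting the indicial roots of the Laplacian at a point and at the asymptotic ends of a Lawlor neck.
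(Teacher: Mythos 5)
Your proposal is correct and takes essentially the same route as the paper: the decay hypothesis with $\nu<m-2$ makes the punctures removable, so $u$ solves the heat equation distributionally (hence smoothly) on the compact manifold $X_b$, and then ancientness together with the mean-zero condition forces $u\equiv 0$ — your heat-kernel/spectral-gap step is just a fleshed-out version of the paper's one-line assertion that such an ancient solution must be constant. One cosmetic remark: in your cutoff identity the adjoint heat operator should be $-\pl_t\phi-\Dt\phi$ (or equivalently $\pl_t\phi+\Dt\phi$) rather than $-\pl_t\phi+\Dt\phi$; the commutator terms and their $O(\dt^{m-\nu-2})$ estimates are unaffected.
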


\begin{proof}
    Note that $(X_1,\iota^*g)$ is a flat torus.  It follows from the growth rate condition that $u$ obeys the linear heat equation on $X_1$ in the sense of distribution.  Hence, $u$ must be a constant on $X_1$.  It follows from the zero integration condition that $u\equiv0$.
\end{proof}

\subsection{A Priori Estimate for the Heat Operator}

We now apply our Liouville theorems to prove an a priori sup estimate via a blowup argument. In what follows, we recall the weight function $\rho_{\vep}$ of Definition \ref{weight}, we denote the induced metric on $\stm$ by $g^\vep := (\iota^\vep) ^* g$, and define the following weighted norm for tensors on $\ul N$:
\[ \|T\|_{\mu, \nu, \Lambda} := \sup_{(x, t)\in\ul{N}\times[\Lambda, \infty)}t^{\mu}\rho^{\nu}_{t^{\frac{-1}{m-2}}}(x)|T|_{g^\vep} \]

\begin{theorem}\label{weighted sup norm estimate}
    Let $\vep : [\Lambda,\infty) \rightarrow \mathbb{R}_+^n$ be a smooth function satisfying (\ref{ep_range}) and (\ref{assumption on epsilon}), and fix $(\mu, \nu) \in(\frac{\nu + 2}{m-2}, \infty) \times(0,m-2)$. Then there exists a constant $C>0$ with the following significance.
    
    Suppose $u,\psi:\stm\times[\Lambda, \infty)\to\mathbb{R}$ satisfy $\|\psi\|_{\mu,\nu, \Lambda}<\infty$ and solve the Cauchy problem:
    \begin{align}\label{linearised problem}
        \begin{cases}
            \partial_{t}u(x, t) = \Delta_{g^\vep}[u](x, t) + \psi(x, t), & (x, t)\in \stm\times[\Lambda, \infty),\\
            u(x, \Lambda) = 0, & x\in \stm
        \end{cases}
    \end{align}
    and $u$ satisfies the orthogonality conditions
    \begin{align}\label{uniform sup norm orthogonality assumption}
        \int_{\stm}u(x, t)\:\dd V_{g^{\vep}}(x) = \int_{\stm}u(x, t)w^{\vep}(x)\:\dd V_{g^{\vep}}(x) = 0, \quad t\in[\Lambda, \infty) ~.
    \end{align}
    Then,
    \begin{align}\label{uniform sup norm estimate}
        \sup_{\stm\times[\Lambda, \infty)}t^{\mu}\rho_{t^{-\frac{1}{m-2}}}^{\nu}|u|\leq C\sup_{\stm\times[\Lambda, \infty)}t^{\mu}\rho_{t^{-\frac{1}{m-2}}}^{\nu+2}|\psi|.
    \end{align}
\end{theorem}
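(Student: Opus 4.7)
The plan is to argue by a parabolic blow-up contradiction, reducing a hypothetical failure of the estimate to one of the Liouville theorems of Propositions \ref{Liouville thm tip}, \ref{liouville intermediate}, and \ref{Liouville outer} depending on where the failure occurs. Suppose the estimate is false: for every $k \in \mathbb{N}$ there are $\bvep_k, u_k, \psi_k$ satisfying all the hypotheses with $\|u_k\|_{\mu,\nu,\Lambda} = 1$ and $\|\psi_k\|_{\mu,\nu+2,\Lambda} \to 0$, and choose $(x_k, t_k) \in \stm \times [\Lambda, \infty)$ with $t_k^{\mu} \rho_{\bvep_k(t_k)}^{\nu}(x_k)|u_k(x_k,t_k)| \geq \frac{1}{2}$. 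Standard parabolic regularity on any bounded time interval, combined with the zero initial data and vanishing forcing in the limit, forces $t_k \to \infty$. Set $\lambda_k := \rho_{\bvep_k(t_k)}(x_k) \in [C^{-1}|\bvep_k(t_k)|, R_2]$; after extracting a subsequence we are in exactly one of three regimes, depending on whether $\lambda_k/|\bvep_k(t_k)|$ stays bounded (the \emph{tip regime}), tends to infinity while $\lambda_k \to 0$ (the \emph{intermediate regime}), or $\lambda_k$ stays bounded below by a positive constant (the \emph{outer regime}).

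In the tip regime, fix $j$ with $x_k \in P_j \cup Q_j$ and apply the parabolic rescaling
\begin{align*}
    \tilde u_k(y,s) \,:=\, t_k^{\mu}\, \vep_{j,k}(t_k)^{\nu}\, u_k\bigl(\Psi_{N^{\bvep_k}}(\vep_{j,k}(t_k) y),\; t_k + \vep_{j,k}(t_k)^{2} s\bigr)
\end{align*}
in a neighborhood of $y_k := x_k/\vep_{j,k}(t_k) \in L_j$. By construction the pulled-back manifolds $(\stm, g^{\bvep_k(t_k + \vep_{j,k}(t_k)^2 s)})$ converge in $C^\infty_{\mathrm{loc}}$ to the Lawlor neck $(L_j, g_{L_j})$, the weighted bound on $u_k$ translates to $|\tilde u_k(y,s)| \leq C \hat\rho(y)^{-\nu}$ on compact sets, and the rescaled forcing $\tilde\psi_k = t_k^\mu \vep_{j,k}(t_k)^{\nu+2}\psi_k$ tends to zero uniformly on compact sets (the constraint $\mu > (\nu+2)/(m-2)$ enters here to balance the time and spatial weights at the tip scale $\rho \sim \vep \sim t^{-1/(m-2)}$). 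Parabolic regularity then extracts a $C^\infty_{\mathrm{loc}}$ limit $\tilde u_\infty$ on $L_j \times (-\infty,0]$ solving the heat equation with $|\tilde u_\infty| \leq C \hat\rho^{-\nu}$ and $|\tilde u_\infty(y_\infty,0)| > 0$, contradicting Proposition \ref{Liouville thm tip}. The intermediate regime is handled identically, rescaling by $\lambda_k$ in space and $\lambda_k^2$ in time in the Darboux chart $\Up_j$ containing $x_k$: since $|\bvep_k(t_k)|/\lambda_k \to 0$ the neck becomes invisible at this scale, and the limit is a non-trivial ancient solution on $(\mathbb{R}^m\setminus\{0\}) \times (-\infty,0]$ with $|\tilde u_\infty(y,s)| \leq C|y|^{-\nu}$, contradicting Proposition \ref{liouville intermediate}.

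The main obstacle is the outer regime, for which only a time rescaling $\tilde u_k(x,s) := t_k^\mu u_k(x, t_k+s)$ is applied. The embeddings $\iota^{\bvep_k(\cdot)}$ are independent of $\bvep_k$ on $\out$, and the metrics $g^{\bvep_k(t_k+s)}$ converge smoothly on compact subsets of $X \setminus \{x_1,\ldots,x_n\}$ to the fixed metric $\iota^*g$. The weighted bound gives $|\tilde u_k| \leq C \rho^{-\nu}$ with $\nu < m-2$, which supplies an integrable dominating function and lets us extract a subsequential limit $\tilde u_\infty$ on $(X_b \setminus \{x_j\}) \times (-\infty,0]$ solving the heat equation, nontrivial at $(x_\infty,0)$. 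To invoke Proposition \ref{Liouville outer} one must pass the orthogonality condition to the limit: multiplying $\int_{\stm} u_k\, w^{\bvep_k}_b \,\dd V_{g^{\bvep_k}} = 0$ by $t_k^\mu$, the contribution from the tip region is controlled by its volume $O(|\bvep_k(t_k)|^m)$ paired with $|\tilde u_k| \leq C|\bvep_k(t_k)|^{-\nu}$, and that of the intermediate region is controlled by the integrability of $\rho^{-\nu}$ on the cone (since $\nu < m-2 < m$). Dominated convergence then yields $\int_{X_b} \tilde u_\infty\,\dd V = 0$ for all $s$, whereupon Proposition \ref{Liouville outer} forces $\tilde u_\infty \equiv 0$, the desired contradiction.
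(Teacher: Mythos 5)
Your proposal is correct and follows essentially the same route as the paper's proof: a normalized blow-up by contradiction, the trichotomy tip/intermediate/outer governed by $\rho_{\bvep(t_k)}(x_k)$ versus $|\bvep(t_k)|$, and the three Liouville theorems (Propositions \ref{Liouville thm tip}, \ref{liouville intermediate}, \ref{Liouville outer}), with the orthogonality condition passed to the limit by dominated convergence in the outer case exactly as in the paper. The only point you gloss over is that the time-dependence of $\bvep$ in the reparametrization $\bar\kp_{\vep(t)}$ produces an extra first-order drift term in the rescaled equation on the tip and intermediate regions; the paper computes this term explicitly and shows its coefficient is $O(\vep^{m-2}\lambda_k^2)$, hence vanishes in the limit, so your conclusion that the limits solve the pure heat equation is unaffected.
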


\begin{proof}
    Assume that the estimate does not hold. Then there exist sequences $u^{(j)}:\stm\times[\Lambda, \infty)\to\mathbb{R}$, $\psi^{(j)}:\stm\times[\Lambda, \infty)\to\mathbb{R}$, and $\vep^{(j)}:[\Lambda, \infty)\to\mathbb{R}_{+}^n$, satisfying the following properties:
    \begin{itemize}
        \item $u^{(j)}$, $\psi^{(j)}$ solve the Cauchy problem (\ref{linearised problem}) for each $j\in\mathbb{N}$.
        \item $u^{(j)}$ satisfies the orthogonality conditions (\ref{uniform sup norm orthogonality assumption}) for each $j\in\mathbb{N}$.
        \item $\sup_{\stm\times[\Lambda, \infty)}t^{\mu}\rho^{\nu}_{t^{-\frac{1}{m-2}}}|u^{(j)}| > j\cdot \sup_{\stm\times[\Lambda, \infty)}t^{\mu}\rho_{t^{-\frac{1}{m-2}}}^{\nu+2}|\psi^{(j)}|$ for all $j\in\mathbb{N}$.
    \end{itemize}
    Thus, for each $j$, we can pick $(x_{j}', t_{j})\in \stm\times[\Lambda, \infty)$ such that
    \begin{equation}
        \sup_{\stm\times[\Lambda,t_j]} t^\mu \rho_{t^{-\frac{1}{m-2}}}^\nu |u^{(j)}| \,\, = \,\, t^{\mu}_{j}\rho^{\nu}_{{t_{j}}^{-\frac{1}{m-2}}}(x_{j}')|u^{(j)}(x_{j}', t_{j})|\,\,\geq \,\,\frac{j}{2}\cdot  \sup_{\stm\times[\Lambda,t_j]} t^\mu \rho_{t^{-\frac{1}{m-2}}}^{\nu+2} |\psi^{(j)}|.
    \end{equation}
    By interior parabolic Schauder estimates, we must have $t_j \rightarrow \infty$. By passing to a subsequence we assume that $\Lambda < \frac{1}{2}t_j$, so that in particular $\Lambda - t_j < -\frac{1}{2} t_j < 0$. Defining 
    \[ {\parallel} u{\parallel}_{\mu,\nu,\Lambda,j} := \sup_{\stm\times[\Lambda,t_j]} t^\mu \rho_{t^{-\frac{1}{m-2}}}^\nu |u|,\]
    we rescale $u^{(j)}$, $\psi^{(j)}$ by ${\parallel} u^{(j)}{\parallel}_{\mu,\nu,\Lambda,j}^{-1}$ so that in addition to the three properties above,
    \begin{equation}\label{nontrivial condition}
        t^{\mu}_{j}\rho^{\nu}_{{t_{j}}^{-\frac{1}{m-2}}}(x_{j}')|u^{(j)}(x_{j}', t_{j})| = {\parallel} u^{(j)}{\parallel}_{\mu,\nu,\Lambda,j} = 1, \quad {\parallel} \psi^{(j)}{\parallel}_{\mu,\nu+2,\Lambda,j} \rightarrow 0.
    \end{equation}
    By passing to a subsequence, we assume that $x_{j}'$ converges on $\stm$, to $x_\infty$. We consider the following three cases. Throughout, we denote the pullback metric by $g_j := (\iota^{\vep^{(j)}})^*g$.
    
    \noindent{\it Case 1.} $\lim_{j\to\infty}t_{j}^{\frac{1}{m-2}}\rho_{t_{j}^{-\frac{1}{m-2}}}(x_{j}')<\infty$
    
    By the definition of $\rho_\vep$, it follows that, on passing to a subsequence, $x_j' \in P \cup Q$ for sufficiently large $j$.
    We define the scaling factors $\lambda_j$, the region $P^{(j)}_s$, the map $S_t^{(j)}$ and the rescaled functions $\widetilde u^{(j)}$, $\widetilde\psi^{(j)}$ as follows:
    \begin{align*}
        \lambda_j &:= \varepsilon^{(j)}(t_j) \rightarrow 0, \quad t(s):= t_j + \lambda_j^2 s, \\
        P^{(j)}_s &:= P \cup \Sigma \times (R_1,\varepsilon^{(j)}(t(s))^{\tau-1}), \quad P^{(j)} :=  \{(y,s) \,:\, s \in  [-\tfrac{1}{2}\lambda_j^{-2}t_j,0), y \in P^{(j)}_s \}\\
        S^{(j)}_s &: \Sigma \times (R_1,\varepsilon^{(j)}(t(s))^{\tau-1}) \rightarrow \Sigma \times (\varepsilon^{(j)}(t(s))R_1,R_2), \quad S^{(j)}_s (p,r) := (p,\varepsilon^{(j)}(t(s))r)\\
        \widetilde u^{(j)}&: P^{(j)} \rightarrow \mathbb{R}, \quad \widetilde u^{(j)}(y,s):=
        \begin{cases}
            t_{j}^{\mu}\lambda_{j}^{\nu}u^{(j)}(y,t(s)) & \text{ for } y \in P,\\
            t_{j}^{\mu}\lambda_{j}^{\nu}u^{(j)}\left((\overline\kp_{\vep^{(j)}(t(s))})^{-1}\circ S^{(j)}_{s}(y), t(s)\right) & \text{ for } y \in P^{(j)}_s \setminus P,
        \end{cases}\\
        \widetilde \psi^{(j)}&: P^{(j)} \rightarrow \mathbb{R}, \quad \widetilde \psi^{(j)}(y,s):=
        \begin{cases}
            t_{j}^{\mu}\lambda_{j}^{\nu+2}\psi^{(j)}(y,t(s))& \text{ for } y \in P,\\
            t_{j}^{\mu}\lambda_{j}^{\nu+2}\psi^{(j)}\left((\overline\kp_{\vep^{(j)}(t(s))})^{-1}\circ S^{(j)}_{s}(y),t(s)\right) & \text{ for } y \in P^{(j)}_s \setminus P.
        \end{cases}
    \end{align*}
    
    Define $x: P^{(j)} \to \stm$, $x(y,s) := (\overline\kp_{\vep^{(j)}(t(s))})^{-1}\circ S^{(j)}_{s}(y)$, and $y_j := (S^{(j)}_{0})^{-1}\circ \overline\kp_{\vep^{(j)}(t_j)}(x_j')$, and define the time-independent weight function $\hat \rho^{(j)}:P^{(j)} \rightarrow \mathbb{R}_+$ as in Definition \ref{weight} (without the outer region interpolation) using the standard embedding of the Lawlor neck $L$ into $\mathbb{C}^m$. We also endow $P_s^{(j)}$ with the rescaled metric (writing $
    \vep^{(j)}$ for $\vep^{(j)}(t(s))$ for simplicity):
    \begin{align*}
        g_j'(y,s) := \begin{cases}
        \lambda_j^{-2}\iota_{\vep^{(j)}L}^*g_0 &\text{ for } y \in P\\
        \lambda_j^{-2}\varphi^* \iota_{\vep^{(j)}L}^*g_0 &\text{ for } y \in P^{(j)}_s \setminus P.
        \end{cases}
    \end{align*}	
    Up to pullback, this metric is simply $g_j'(\cdot, s) = \lambda_j^{-2} g_j(\cdot, t(s))$. Note that $\vep^{(j)}(t_j)^{-2}(\times \vep^{(j)})^*g_0 \rightarrow g_0$ locally uniformly in $C^\infty$. We therefore see that the metric $g_j'$ converges in $C^\infty_{\text{loc}}$ to $\iota_L^* g_0 = g_L$:
    \begin{align*}
        \lim_{j\rightarrow \infty} \lambda_j^{-2}\iota_{\vep^{(j)}L}^*g_0 &= (\iota_L)^*g_0, \quad\quad \lim_{j\rightarrow \infty} \lambda_j^{-2}\varphi^* \iota_{\vep^{(j)}L}^*g_0 = \varphi^* g_0.
    \end{align*}
Now, the Case 1 assumption along with (\ref{assumption on rho}) imply that, after passing to a subsequence, there exists a constant $C$ such that for all $j \in \mathbb{N}$ with $x'_j \in Q^{\pm}$:
	\begin{align*}
		\varepsilon^{(j)}(t_j)^{-1} \,\rho_{\varepsilon^{(j)}(t_j)}(x_j') &< C \,\implies\, \kp_{\varepsilon^{(j)}(t_j)}(r(x_j'))< C \varepsilon^{(j)}(t_j) < \varepsilon^{(j)}(t_j)^{\tau},
	\end{align*}
	implying that $y_j \in P^{(j)}_0$ lies in a compact region independent of $j$. Defining the time-independent weight function $\hat\rho: P_s^{(j)} \to \mathbb{R}^+$ as in Definition \ref{weight}, we may use (\ref{nontrivial condition}) and Assumption \ref{assumption on epsilon} to derive bounds on $\widetilde u^{(j)}$, $\widetilde \psi^{(j)}$ as follows:
 \begin{itemize}
     \item \quad$t^\mu \rho_{t^{-\frac{1}{m-2}}}(x)^\nu |u^{(j)}(x,t)| \leq 1$ for $x \in \stm \text{, }\,t\in [\Lambda,t_j)$
	\begin{align*}
 \begin{array}{ll}
      \displaystyle\implies |\widetilde u^{(j)}(y,s)| \leq \left(\frac{t_j}{t_j + \lambda_j^2s}\right)^\mu \left( \frac{\varepsilon^{(j)}(t_j)}{\rho_{t(s)^{-\frac{1}{m-2}}}(x(y))} \right)^\nu &\text{ for } s \in [\lambda_j^{-2}(\Lambda -t_j),\infty), \,y\in P^{(j)}_s,\\
      \implies |\widetilde u^{(j)}(y,s)| \leq C \hat\rho(y)^{-\nu}  &\text{ for } s \in [-\tfrac{1}{2}\lambda_j^{-2}t_j,0), \,y\in P^{(j)}_s.
 \end{array}
 \end{align*}
 \item \quad$\parallel \psi^{(j)}\parallel_{\mu,\nu+2,j} \,\rightarrow\, 0$
 \begin{align*}
 \begin{array}{ll}
      \displaystyle\implies |\widetilde \psi^{(j)}(y,s)| \left(\frac{t_j + \lambda_j^2s}{t_j}\right)^\mu \left( \frac{\rho_{t(s)^{-\frac{1}{m-2}}}(x(y,s))}{\varepsilon^{(j)}(t_j)} \right)^\nu \,\rightarrow\, 0 & \text{ uniformly on } P^{(j)}\\
      \implies |\widetilde \psi^{(j)}(y,s)| \,\rightarrow \,0 &\text{ uniformly on } P^{(j)}.
 \end{array}
	\end{align*}
 \end{itemize}
	Now we calculate the PDE that is satisfied by $\widetilde u^{(j)}$ and $\widetilde \psi^{(j)}$. We consider the tip and intermediate regions of $P^{(j)}$ separately.
	
	In the tip region, $\overline\kp_{\vep^{(j)}(t)}^{-1}\circ S^{(j)}_t = \text{Id}$. Then, by (\ref{linearisation}) and (\ref{linearised problem}):
	\begin{align*}
		&\partial_s \widetilde u^{(j)}(y,s) = t_j^\mu \lambda_j^{\nu + 2} \partial_s u^{(j)}(x(y,s),t(s)) = t_j^\mu \lambda_j^{\nu + 2} \left( \Delta_{g_j(t)}u^{(j)} + \psi^{(j)} \right) = \Delta_{g'_j(s))}\widetilde u^{(j)} + \widetilde \psi^{(j)}.
	\end{align*}
	In the intermediate region,
	\begin{align*}
		\partial_s \widetilde u^{(j)}&(y,s) 
		&= \Delta_{g'_j(s)} \widetilde u^{(j)} + \widetilde \psi^{(j)} - \lambda_j^2\left( \frac{\dd\kp_{\vep^{(j)}}}{\dd r}(x)\right)^{-1}\frac{\dd\varepsilon^{(j)}}{\dd t}\left( - \frac{\dd\kp_{\vep^{(j)}}}{\dd\vep^{(j)}}(x)  + r(x)\right)\frac{\dd \widetilde{u}^{(j)}}{\dd r}(y,s) .
	\end{align*}
    We note that $\left|\lambda_j^2\left( \frac{\dd\kp_{\vep^{(j)}}}{\dd r}(x)\right)^{-1}\frac{\dd\varepsilon^{(j)}}{\dd t}\left( - \frac{\dd \kp_{\vep^{(j)}}}{\dd\vep^{(j)}}(x)  + r(x)\right)\right| \leq C \cdot t(s)^{\frac{1}{m-2}} \cdot t_j^{-\frac{2}{m-2}}$, so that the coefficient of $\frac{\dd \widetilde{u}^{(j)}}{\dd r}(y,s)$ converges to 0 on compact spacetime regions as $j \to \infty$.
	Therefore, passing to a subsequence, we have the convergences $y_j \rightarrow y_\infty$, $(P^{(j)},g'_j) \rightarrow (L\times (-\infty,0),g_{L})$ locally smoothly, $\widetilde u^{(j)} \rightarrow \overline u$ in $C^{1,2}_{\text{loc}}$, $\widetilde\psi^{(j)} \rightarrow 0$ in $C^0_{\text{loc}}$, where $\overline u$ is an ancient solution to the heat equation \[ \partial_s \overline u(y,s) = \Delta \overline u (y,s), \quad (y,s) \in L \times (-\infty, 0),\] satisfying $|\overline u(\cdot, \tau)|\leq c\hat\rho^{-\nu}$. Finally, $\overline u$ is nontrivial since $\overline u(y_\infty, 0) =1$ by (\ref{nontrivial condition}). This contradicts Proposition \ref{Liouville thm tip}. 

 \ 
 
		\noindent{\it Case 2.} $\lim_{j\to\infty}t_{j}^{\frac{1}{m-2}}\rho_{t_{j}^{-\frac{1}{m-2}}}(x_{j}')=\infty$ and $\lim_{j\to\infty}\rho_{t_{j}^{-\frac{1}{m-2}}}(x_{j}') = 0$.
	
	In this case, \[\rho_{t_j^{-\frac{1}{m-2}}}(x) = \begin{cases} t_j^{-\frac{1}{m-2}} r(x) \quad &\text{near } R_1\\ R_2 \quad &\text{near } R_2,\end{cases}\]
	and so the Case 2 assumption implies that $x_j' \in \Sigma\times (R_1,(1-\hbar)R_2) \subset Q$, after passing to a subsequence. Define the following rescaled region $Q_s^{(j)}$ and rescalings of $u^{(j)}$ and $\psi^{(j)}$:
	\begin{align*}
		&\lambda_j := \kp_{\varepsilon^{(j)}(t_j)}(r(x_j')) = \rho_{\varepsilon^{(j)}(t_j)}(x_j') \rightarrow 0.\\
		&Q^{(j)}_s := \Sigma \times (\lambda_j^{-1} \varepsilon^{(j)}(t_j + \lambda_j^2s) R_1,\, \lambda_j^{-1} (1-\hbar)R_2), \quad	Q^{(j)} := \left\{(y,s) \,:\, y \in Q_s^{(j)},\, s \in [-\tfrac{1}{2}\lambda_j^{-2}t_j,0) \right\}\\
		&S^{(j)}: Q^{(j)} \rightarrow \Sigma \times (0,R_2), \quad S^{(j)}(\sigma,r) := (\sigma, \lambda_j r)\\
		&\widetilde u^{(j)}: Q^{(j)} \rightarrow \mathbb{R}, \quad \widetilde u^{(j)}(y,s) := t_j^\mu \lambda_j^\nu u^{(j)} \left( \overline \kp_{\vep^{(j)}}^{-1} \circ S^{(j)}(y), t_j  + \lambda_j^2 s\right)\\
		&\widetilde\psi^{(j)}: Q^{(j)}\rightarrow \mathbb{R}, \quad \widetilde \psi^{(j)}(y, s) := t_j^\mu \lambda_j^{\nu+2} \psi^{(j)}(\overline \kp_{\vep^{(j)}}^{-1} \circ S^{(j)}(y), t_j + \lambda_j^2s). 
	\end{align*}
	Define $t(s)$, $x(y,s)$, $y_j$ and $g'_j$ as in Case 1. Using (\ref{nontrivial condition}) we may derive bounds for $\widetilde u^{(j)}$:
	\begin{align*}
 \begin{array}{ll}
      t^\mu \rho_{t^{-\frac{1}{m-2}}}(x)^\nu |u^{(j)}(x,t)| \leq 1 & \text{ for } t \in [\Lambda,t_j], \\
      \displaystyle\implies |\widetilde u^{(j)}(y,s)| \leq \left( \frac{t_j}{t_j + \lambda_j^2 s}\right)^\mu r(y)^{-\nu} & \text{ for } s \in [\lambda^{-2}_j(\Lambda-t_j), 0],\\
      \implies |\widetilde u^{(j)}(y,s)| \leq C \, r(y)^{-\nu} & \text{ for } s \in [-\tfrac{1}{2} \lambda^{-2}_jt_j, 0].
 \end{array}
	\end{align*}
	The linear PDE satisfied by $\widetilde u^{(j)}$ is given by (\ref{linearisation}) and (\ref{linearised problem}) in the same way as for Case 1 in the intermediate region:
	\begin{align*}\pl_s u^{(j)} \, &=\,\Delta_{g'_j(s)} \widetilde u^{(j)} + \widetilde \psi^{(j)} + \lambda_j^2\left( \frac{\dd\kp_{\vep^{(j)}}}{\dd r}(x)\right)^{-1}\frac{\dd\varepsilon^{(j)}}{\dd t}\frac{\dd\kp_{\vep^{(j)}}}{\dd\vep^{(j)}}(x)\frac{\dd \widetilde{u}^{(j)}}{\dd r}(y,s).
	\end{align*}
	As in Case 1, after passing to a subsequence we have the convergences $\widetilde u^{(j)} \rightarrow \overline u$, $\widetilde \psi^{(j)} \rightarrow 0$, $Q^{(j)} \to \left(\Sigma \times (0,\infty)\right) \times (\infty, 0)$, $g'_j \rightarrow \overline g$, where $\overline g$ is the metric on $\Sigma \times (0, \infty)$ corresponding to the flat metric on two punctured copies of $\mathbb{R}^m$, and $\overline u$ satisfies
	\[ |\overline u(y,s)| \leq c \:r(y)^{-\nu}, \quad \frac{\partial \overline u}{\partial s} = \Delta_{\overline g}\overline u\;\; \text{ for } s \in (-\infty,0).\] 
	Furthermore, by interior parabolic Schauder estimates, $\overline u$ satisfies $|\nabla^{k} \overline u(y, s)|\leq C|y|^{-\nu-k}$, for $k\in\{0,1,2\}$. Finally, to show that $\overline u$ is nontrivial, note that
	\[ r(y_j) = \lambda_j^{-1} \rho_{t_j^{-\frac{1}{m-2}}}(x_j') = \frac{\rho_{t_j^{-\frac{1}{m-2}}}(x_j')}{\rho_{\varepsilon^{(j)}(t_j)}(x_j')} \in I :=  \left(\frac{1}{C_{\vep}},C_{\vep}\right).\]
	So passing to a subsequence if necessary, $y_j \rightarrow \overline y \in \Sigma \times I$, and by (\ref{nontrivial condition}), $\overline u (\overline y, 0) \geq \frac{1}{2}.$ This contradicts Proposition \ref{liouville intermediate}.

 \ 
 
    \noindent{\it Case 3.}  $\lim_{j\to\infty}\rho_{t_{j}^{-\frac{1}{m-2}}}(x_{j}') > 0$.
    
    In this case, taking $C$ as in Assumption \ref{assumption on epsilon}, we make the definitions:
    \begin{align*}
        &Q^{(j)} := \left\{ (y,s) \,:\, s \in [-\tfrac{1}{2}t_j,\infty), \, y\in \Sigma  \times [C(t_j + s)^{-\frac{1}{m-2}}R_1,R_2] \right\}\\
        &X^{(j)} := Q^{(j)} \cup \left(\out \times [-\tfrac{1}{2}t_j,\infty)\right) \\
        &\widetilde u^{(j)}: X^{(j)} \rightarrow \mathbb{R}, \quad\quad \widetilde u^{(j)}(y,s) \,:=\,
        \begin{cases}
            t_j^\mu u^{(j)}(\overline \kp_{\vep^{(j)}}^{-1}(y)\,, t_j + s) & \text{ on } Q^{(j)} \\
            t_j^\mu u^{(j)}(y,\, t_j + s) & \text{ on } \out \times [\Lambda - t_j, \infty),
        \end{cases}\\
        &\tilde \psi^{(j)}:X^{(j)}\rightarrow \mathbb{R}, \quad\quad \widetilde \psi^{(j)}(y,s) \,:=\,
        \begin{cases}
            t_j^\mu \psi^{(j)}(\overline \kp_{\vep^{(j)}}^{-1}(y)\,, t_j + s) & \text{ on } Q^{(j)}\\
            t_j^\mu \psi^{(j)}(y,\, t_j + s) & \text{ on } \out \times [\Lambda - t_j, \infty),
        \end{cases}
    \end{align*}
    We equip $X^{(j)}$ with the metric $g_j'(\cdot,s) := g_j(\cdot, t_j + s)$, so that we have the convergence $(X^{(j)},g'_j) \rightarrow (X\setminus\iota^{-1}(\xsin), \iota^* g)$ in $C^\infty_\text{loc}$. As before we may derive bounds on $\widetilde u^{(j)}$ and $\widetilde \psi^{(j)}$:
    \begin{align*}
    \begin{array}{ll}
         |\widetilde u^{(j)}(y,s)| \leq C\,\rho_{t_j^{-\frac{1}{m-2}}}(\overline \kp_{\vep^{(j)}}^{-1}(y))^{-\nu}& \text{ for } s \in [-\tfrac{1}{2}t_j,0), \\
         |\widetilde \psi^{(j)}(y,s)| \to 0 & \text{ uniformly on compact subsets}.
    \end{array}
    \end{align*}
    After passing to a subsequence, we have convergences $\widetilde u^{(j)} \rightarrow \overline u$, $\widetilde \psi^{(j)} \rightarrow \overline \psi$, where $\overline u$ satisfies $\overline u(y,s) \leq c \rho(y)$ for $\rho:X \to [0, \infty)$ as in section \ref{sec-Xliouville}. The Case 3 assumption implies that $\rho_{t_j^{-\frac{1}{m-2}}}(x_j')\rightarrow P > 0$ so that $x_\infty \in X\setminus\iota^{-1}(\xsin)$, and (\ref{nontrivial condition}) implies that $\overline u(x_\infty,0) \neq 0$. It follows from (\ref{linearised problem}) that $\pl_s \overline u = \Delta_{\iota^* g} \overline u$, and (\ref{uniform sup norm orthogonality assumption}) implies that $\int_{X_1 \setminus x_\star^-} \overline u\,\, dV_{\iota^* g} = 0$ and $\int_{X_2 \setminus x_\star^+} \overline u\,\, dV_{\iota^* g} = 0$. Finally, by interior parabolic Schauder estimates (see for example, \cite{Behrndt2011}*{Proposition~7.3}), $\overline u$ satisfies $|\nabla^{k} \overline u(y, \tau)|\leq C\rho(y)^{-\nu-k}$, for $k\in\{0,1,2\}$. Proposition \ref{Liouville outer} gives a contradiction.
\end{proof}

\section{Existence Theory} \label{sec-exist-torus}

We now focus on the primary goal of this paper: find $\Lambda$, $\varepsilon$ and $u$ solving (\ref{single_eq}) under Assumption \ref{assumption torus}.

\subsection{Estimates in the Torus Case} We will require the following estimates for the induced metric $g^\vep$, its volume form $\dd V_{g^{\vep}}$, the nontrivial approximate kernel element $w^\vep$, and the Lagrangian angle. Throughout we use Assumption \ref{assumption on epsilon} for estimating the time derivative and H\"older derivative of $\vep(t)$, and for convenience use the notation $\slashed\partial^\alpha_{t_1,t_2} f := \frac{f(t_1) - f(t_2)}{|t_1-t_2|^\alpha}$ for the H\"older quotient.

By a straightforward computation, we have the following estimate for the induced metric and the volume.
\begin{lemma}\label{lem-volumeestimates}
    Let $g_{0}$ be the Euclidean metric on $\mathbb{R}^{m}$, and $g_{\coneC}$ be the cone metric on $\Sigma\times(0, \infty)$. Under Assumptions \ref{assumption on epsilon} \and \ref{assumption torus}, the induced metric $g^{\vep}$ on $\ul{N}$ satisfies
    \begin{align}
    g^{\vep} &= g_{0} \quad\mbox{on $X_{b}^{o}\cup(\Sigma\times[2\vep^{\tau}, R_{2}))$},\;b=1,2,\\
        |\nabla^{k}(g^{\vep} - g_{\coneC})|_{g_{\coneC}}(\sigma, \ir) &= \begin{cases}
            O(\vep^{2(1-\tau)m - \tau k}),\;\;&(\sigma,\ir)\in\Sigma\times(\vep^{\tau}, 2\vep^{\tau}),\\
            O(\ir^{-2m-k}\vep^{2m}),\;\;&(\sigma,\ir)\in\Sigma\times(\vep R_{1}, \vep^{\tau}),
        \end{cases}, \; k=0, 1, 2,\\
        g^{\vep} &= \vep^{2}g_{L}\quad\mbox{on $P$}.
    \end{align}
    The volume form $\dd V_{g^{\vep}}$ on the tip region $P$ satisfies (for $t_1, t_2 \in [t, 2t]$,  $0 < |t_1-t_2| < t^{-\frac{1}{m-2}}$):
    \begin{align}
        \dd V_{g^{\vep}} = \vep^m \:\dd V_L, \quad \partial_{\vep} \:\dd V_{g^{\vep}} &= O(\vep^{m-1}) \:\dd V_L, \quad \partial_{t}\: \dd V_{g^{\vep}} = O(\vep(t)^{2m-2})\:\dd V_L,\\
        |\slashed \partial^\alpha_{t_1,t_2}\:\dd V_{g^{\vep}}(t)| &= O(\vep(t)^{2m-2\alpha}) \:\dd V_L,\\
        |\slashed \partial^\alpha_{t_1,t_2} \partial_t \:\dd V_{g^{\vep}}(t)|&= O(\vep(t)^{2m-2-2\alpha})\: \dd V_L.
    \end{align}
    The volume form $\dd V_{g^{\vep}}$ on $\Sigma\times(\vep R_{1}, 2\vep^{\tau})$ satisfies  (for $t_1, t_2 \in [t, 2t]$, $|t_1 - t_2| < t^{-\frac{2}{m-2}}$):
    \begin{align}
        \dd V_{g^{\vep}} &= \begin{cases}
            ( 1 + O(\vep^{m(1-\tau)}))\:\dd V_{\coneC},\;\;&(\sigma,\ir)\in\Sigma\times(\vep^{\tau}, 2\vep^{\tau}),\\
            ( 1 + O(\ir^{-m}\vep^{m}))\:\dd V_{\coneC},\;\;&(\sigma,\ir)\in\Sigma\times(\vep R_{1}, \vep^{\tau}),
        \end{cases}\\
        \partial_{\vep}\dd V_{g^{\vep}} &= \begin{cases}
            O(\vep^{m(1-\tau)-1})\:\dd V_{\coneC},\;\;&(\sigma,\ir)\in\Sigma\times(\vep^{\tau}, 2\vep^{\tau}),\\
            O(\ir^{-m}\vep^{m-1})\:\dd V_{\coneC},\;\;&(\sigma,\ir)\in\Sigma\times(\vep R_{1}, \vep^{\tau}),
        \end{cases}\\
        |\slashed \partial^\alpha_{t_1,t_2}\:\dd V_{g^{\vep}}(t)| &= \begin{cases}
            O(\vep^{m(2-\tau)-2\alpha})\:\dd V_{\coneC},\;\;&(\sigma,\ir)\in\Sigma\times(\vep^{\tau}, 2\vep^{\tau}),\\
            O(\ir^{-m}\vep^{2m-2\alpha})\:\dd V_{\coneC},\;\;&(\sigma,\ir)\in\Sigma\times(\vep R_{1}, \vep^{\tau}),
        \end{cases}\\
        |\slashed \partial^\alpha_{t_1,t_2} \partial_t \dd V_{g^{\vep}}(t)| &= \begin{cases}
            O(\vep^{m(2-\tau)-2-2\alpha})\:\dd V_{\coneC},\;\;&(\sigma,\ir)\in\Sigma\times(\vep^{\tau}, 2\vep^{\tau}),\\
            O(\ir^{-m}\vep^{2m-2-2\alpha})\:\dd V_{\coneC},\;\;&(\sigma,\ir)\in\Sigma\times(\vep R_{1}, \vep^{\tau}).
        \end{cases}
    \end{align}
\end{lemma}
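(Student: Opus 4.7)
The plan is to prove the estimates by a case analysis on the three regions of $\ul{N}$, exploiting two crucial simplifications afforded by Assumption \ref{assumption torus}: first, the Darboux map $\Up$ pulls back $g$ to $g_0$ exactly; second, since the tori in $X$ are flat and locally coincide with the cone $C = \Pi^0 \cup \Pi^{\bphi}$ near $\xsin$, the potential $\mathfrak{A}$ from Lemma \ref{CS_rate} vanishes identically, so that the interpolated potential simplifies to $\mathfrak{Q}_{\vep}(\sm, \ir) = (1-\chi(\vep^{-\tau} \ir))\,\vep^2 \,\mathfrak{E}(\sm, \vep^{-1}\ir)$. The equalities $g^{\vep} = g_0$ on $\out$ and $g^{\vep} = \vep^2 g_L$ on $P$ then follow immediately from $\iota^\vep = \iota$ and $\iota^\vep = \Up \circ (\vep \cdot \iota_L)$ respectively, together with $\Up^* g = g_0$. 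On the region $\Sigma \times [2\vep^\tau, R_2)$, the cutoff $\chi$ equals $1$, forcing $\mathfrak{Q}_\vep = 0$ at the image point, so $\iota^\vep = \Up \circ \iota_C \circ \bar\kp_\vep$ induces a flat cone-type metric.

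The bulk of the work is in the two non-trivial subregions of the intermediate region. The starting point is a Taylor expansion of the pulled-back metric: for a closed 1-form $\dd\mathfrak{Q}$ on $C$, using that the zero section is Lagrangian (so the linear-in-$\dd\mathfrak{Q}$ contribution to $(\Phi_C \circ \dd\mathfrak{Q})^* g_0$ vanishes to leading order), we obtain an expansion of the form
\begin{equation*}
    (\Phi_C \circ \dd\mathfrak{Q})^* g_0 \,=\, g_C \,+\, A[\nabla^2\mathfrak{Q}, \nabla^2\mathfrak{Q}] \,+\, \text{(higher order in } \dd\mathfrak{Q}, \nabla^2\mathfrak{Q}\text{)},
\end{equation*}
where $A$ is bilinear. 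Differentiating $k$ times and applying Leibniz gives $|\nabla^k(g^\vep - g_C)|_{g_C} \lesssim \sum_{j=0}^k |\nabla^{j+2}\mathfrak{Q}_\vep|\cdot |\nabla^{k-j+2}\mathfrak{Q}_\vep|$ plus lower-order remainders. The two estimates then reduce to pointwise estimates on derivatives of $\mathfrak{Q}_\vep$. On $\Sigma \times (\vep R_1, \vep^\tau)$, where $\chi = 0$ and $\mathfrak{Q}_\vep = \vep^2 \mathfrak{E}(\sm, \vep^{-1}\ir)$, the scaling $|\nabla^\ell \mathfrak{E}|_{g_C} = O(r^{2-m-\ell})$ from the Lawlor neck construction yields $|\nabla^\ell \mathfrak{Q}_\vep|_{g_C} = O(\vep^m \ir^{2-m-\ell})$, giving $|\nabla^k(g^\vep - g_C)|_{g_C} = O(\vep^{2m}\ir^{-2m - k})$. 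On the transition region $\Sigma \times (\vep^\tau, 2\vep^\tau)$, derivatives of $\chi(\vep^{-\tau}\ir)$ contribute factors of $\vep^{-\tau}$; combining these with $|\nabla^\ell \mathfrak{E}|$ evaluated at $\vep^{-1}\ir \sim \vep^{\tau - 1}$ gives $|\nabla^{\ell+2}\mathfrak{Q}_\vep|_{g_C} = O(\vep^{m(1-\tau) - \tau\ell})$, from which the stated estimate $|\nabla^k(g^\vep - g_C)|_{g_C} = O(\vep^{2m(1-\tau) - \tau k})$ follows. Pulling back by the reparametrisation $\bar\kp_\vep$ preserves these estimates up to uniform constants since Lemma \ref{static_dilate_property} bounds $\kp_\vep'$ and its derivatives uniformly from above and below.

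The volume form estimates follow from the expansion $\sqrt{\det g^\vep} = \sqrt{\det g_C}\,\big(1 + \tfrac{1}{2}\mathrm{tr}\,g_C^{-1}(g^\vep - g_C) + O(|g^\vep - g_C|^2)\big)$, noting that on $\Sigma \times (\vep R_1, \vep^\tau)$ the simpler bound $O(\vep^m \ir^{-m})$ for the multiplicative error is the trace estimate, since $|\nabla^2\mathfrak{Q}_\vep|\cdot |\nabla^2 \mathfrak{Q}_\vep| = O(\vep^{2m}\ir^{-2m})$ exceeds $\vep^m \ir^{-m}$ only when $\ir < \vep$, which is excluded. The $\vep$-derivative estimates come from direct differentiation of $\mathfrak{Q}_\vep$ in $\vep$, using the identity $\pl_\vep\bigl[\vep^2 \mathfrak{E}(\sm, \vep^{-1}\ir)\bigr] = 2\vep\,\mathfrak{E} - \ir\,\pl_r\mathfrak{E}$ together with $\pl_\vep \chi(\vep^{-\tau}\ir) = -\tau \vep^{-1}(\vep^{-\tau}\ir)\chi'$, which preserves the pointwise decay estimates up to one factor of $\vep^{-1}$.

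For the $t$-derivative and H\"older estimates, apply the chain rule $\pl_t = \vep'(t)\,\pl_\vep$ and use Assumption \ref{assumption on epsilon} to convert $\pl_\vep$ estimates into $\pl_t$ estimates via $|\vep'(t)| = O(\vep^{m-1})$. For the H\"older quotients of $\dd V_{g^\vep}$ and $\pl_t \dd V_{g^\vep}$, combine the mean-value theorem with the H\"older continuity $\slashed\pl^\alpha \vep'(t) = O(\vep^{m-1-2\alpha})$, and use the constraint $|t_1 - t_2| < t^{-2/(m-2)} \sim \vep^2$ on the intermediate region to obtain powers $|t_1 - t_2|^{1-\alpha} \leq \vep^{2(1-\alpha)}$, yielding the claimed exponents. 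The main obstacle in executing this plan is the careful bookkeeping of the Taylor expansion of the pulled-back metric to verify the absence of linear-in-$\dd\mathfrak{Q}$ contributions (using the Lagrangian property of the zero section), and tracking all cutoff derivatives with their scaling powers of $\vep^{-\tau}$ through the Leibniz rule.
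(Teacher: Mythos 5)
Your treatment of the outer and tip regions, and of the conversion of $\vep$-derivatives into $t$-derivatives and H\"older quotients via Assumption \ref{assumption on epsilon}, is correct and essentially matches the paper's proof, which only writes out the tip region (local coordinates, $\iota^\vep=\Up\circ(\vep\,\iota_L)$, $\pl_\vep\iota^\vep=\Up_*(\iota_L)=O(1)$, then the determinant bounds to pass to $\dd V_{g^\vep}$) and declares the remaining regions analogous; in the torus case your observation that $g^\vep=\vep^2 g_L$ holds exactly on $P$ is if anything cleaner than the paper's $O$-estimates.

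The gap is in the intermediate region, which is precisely the part the paper leaves implicit and the part where your proposal carries the load. You claim that the linear-in-$\dd\ptq_\vep$ term in the expansion of $(\Phi_C\circ\dd\ptq_\vep)^*g_0$ vanishes ``because the zero section is Lagrangian'', so that the deviation from $g_C$ is quadratic in the $2$-jet of $\ptq_\vep$; this is what produces the rates $O(\vep^{2m}\ir^{-2m-k})$ and $O(\vep^{2(1-\tau)m-\tau k})$. But the Lagrangian condition only gives symplectic information: it says $\om_0\bigl(\iota_{C*}v,(\Phi_C)_*(0,q)\bigr)=q(v)$, and says nothing about $g_0\bigl(\iota_{C*}v,(\Phi_C)_*(0,q)\bigr)$. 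The neighbourhood $\Phi_C$ of Proposition \ref{LN of cones} is produced from Lychagin's Legendrian neighbourhood theorem (Moser's trick) together with the symplectization, so it is normalized symplectically and equivariantly, not metrically; along the zero section the fibre directions need not be $g_0$-orthogonal to $C$ (the metrically normal model $x\mapsto x+ip$ over a plane, for which the cross terms do vanish, is not equivariant and is not the map used here). Consequently the general expansion contains a first-order term of size $\ir^{-1}|\dd\ptq_\vep|+|\nabla\dd\ptq_\vep|=O(\vep^m\ir^{-m})$ on $\Sm\times(\vep R_1,\vep^\tau)$ (resp. $O(\vep^{m(1-\tau)})$ on the transition annulus), and your argument as written only yields these linear-order metric bounds --- which are enough for all the volume-form statements (your own volume computation in fact implicitly treats the deviation at linear order), but not the quadratic metric rates you assert. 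To obtain those you would need to establish an additional property of the specific $\Phi_C$ (for instance $g_0$-orthogonality of the fibre directions along the zero section, or an explicit computation for the flat cone $\Pi^0\cup\Pi^{\bphi}$ in the torus case), which the proposal neither states nor verifies.
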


\begin{lemma}\label{lem-approxkernelestimates}
Under Assumptions \ref{assumption on epsilon} and \ref{assumption torus}, the function $w^\vep$ satisfies (for $t_1, t_2 \in [t, 2t]$, $|t_1 - t_2| < t^{-\frac{2}{m-2}}$):
\begin{align*}
    |w^\vep| &\leq 1,\\
    |\partial_t w^\vep| &=  
    \begin{cases}
        O(\vep^{m-2}) &\quad\text{on } \overline\kappa_\vep^{-1}(\Sigma \times (\vep R_1, \vep^{\tau})),\\
    0 &\quad\text{otherwise},
    \end{cases}\\
    |\slashed \partial^\alpha_{t_1,t_2} \partial_t w^\vep| &= 
    \begin{cases}
        O(\vep^{m-2-2\alpha}) &\quad\text{on } \overline\kappa_\vep^{-1}(\Sigma \times (\vep R_1, \vep^{\tau}))\\
        0 &\quad\text{otherwise},
    \end{cases}\\
    |\Delta_{g^\vep} w^\vep| &\leq 
    \begin{cases}
        \kappa_{\vep}(r)^{-m}\vep^{m-2} &\quad\text{on } \overline\kappa_\vep^{-1}(\Sigma \times (\vep R_1, \vep^{\tau})),\\
        0 &\quad\text{otherwise},
    \end{cases}\\
    |\slashed\partial^\alpha_{t_1,t_2}\Delta_{g^\vep} w^\vep| &\leq 
    \begin{cases}
        \kappa_{\vep}(r)^{-m}\vep^{2m-2-2\alpha} &\quad\text{on } \overline\kappa_\vep^{-1}(\Sigma \times (\vep R_1, \vep^{\tau})),\\
        0 &\quad\text{otherwise}.
    \end{cases}
\end{align*}
\end{lemma}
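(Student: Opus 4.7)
The proof proceeds by directly computing each quantity in the four regions making up $\ul N$, using the explicit formula \eqref{def: approximate kernel}. For the sup bound, note that $w^\vep_{(0,1)}$ is a convex combination (with coefficients in $[0,1]$) of values in $[0,1]$: indeed on $P$ one has $w^\vep_{(0,1)} = c_L^{-1}\underline{\alpha}$ with $\underline\alpha \in [0, c_L]$ along the Lawlor neck, and on $Q^\pm$ a cutoff interpolation yields the same range. Hence $0 \leq w^\vep_{(0,1)} \leq 1$, and subtracting the spatial mean gives $|w^\vep| \leq 1$. The remaining estimates rest on the observation that $w^\vep_{(0,1)}$ is $\vep$-\emph{independent} on (i) the outer region (constantly $0$ or $1$), (ii) the tip $P$ (where it equals $c_L^{-1}\alpha_L|_{\ul 0}$, a function on the abstract Lawlor neck), and (iii) the part of $Q^\pm$ with $r \in (R_1, (1+\hbar)R_1)$, where $\kp_{\vep}(r) = \vep r$ makes $\vep^{-1}\kp_\vep(r) = r$, and where $\chi(2\vep^{-\tau}\kp_\vep(r)) = \chi(2\vep^{1-\tau}r) \equiv 0$ for $\vep$ sufficiently small. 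Consequently $\partial_t w^\vep_{(0,1)}$ is supported (up to the lower-order normalisation constant $c'(t)$ inherited from the mean-subtraction) in the transition region $\overline\kp_\vep^{-1}(\Sigma \times (\vep R_1, \vep^\tau))$, and $\Delta_{g^\vep}w^\vep$ vanishes on $P$ via the harmonicity of $\alpha_L|_{\ul 0}$ with respect to $g_L$ (remark after Theorem~\ref{AC Lag nbhd}) together with $g^\vep = \vep^2 g_L$ on $P$.

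For the time-derivative estimate on the transition region, I would differentiate \eqref{def: approximate kernel} via the chain rule and bound each term using three inputs: the asymptotic estimate $|\partial_u \underline\alpha|(\sigma, u) = O(u^{1-m})$ from \eqref{afL_def} with $\gamma = 2-m$; the identity $\partial_\vep\kp_\vep = [1-\chi((r-R_1)/\hbar R_1)]r$ from Lemma~\ref{static_dilate_property}; and $|\vep'| \leq C\vep^{m-1}$ from Assumption~\ref{assumption on epsilon}. The crucial sharpening is the observation that on the relevant region $\kp_\vep(r) \geq \chi\bigl((r-R_1)/(\hbar R_1)\bigr)\,r$, so the nontrivial factor of $\chi((r-R_1)/(\hbar R_1))\cdot r$ that appears in $\partial_t(\vep^{-1}\kp_\vep)$ is bounded by $C\kp_\vep$. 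Combining yields $|\partial_t w^\vep_{(0,1)}| \leq C\vep^{2m-4}\kp_\vep^{2-m}$, which is dominated by $C\vep^{m-2}$ since $\kp_\vep \geq \vep R_1$. A completely parallel computation using the Hölder bound for $\vep'$ in Assumption~\ref{assumption on epsilon} gives the H\"older-quotient estimate, picking up the extra factor $|t_1-t_2|^{-\alpha}$ and hence the scaling $\vep^{m-2-2\alpha}$.

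For the Laplacian estimate, I would first use the harmonicity/constancy facts above to reduce to the transition region. There I would write $w^\vep_{(0,1)} = c_L^{-1}(1-\chi(2\vep^{-\tau}\kp_\vep))\,\underline\alpha(\sigma, \vep^{-1}\kp_\vep)$, expand $\Delta_{g^\vep}$ in the local product structure on $\Sigma \times (R_1,R_2)$, and split into two contributions: (a) the leading piece coming from differentiating $\underline\alpha$, which equals the Laplacian of the pulled-back potential $\beta_L$ along the rescaled embedding and is controlled by comparing $\vep\cdot\iota_L^*g_0$ to the cone metric $g_C$ via Lemma~\ref{lem-volumeestimates}, giving a bound of order $\kp_\vep^{-m}\vep^{m-2}$ on the transition region; and (b) the cutoff terms $\Delta\chi$ and $\langle\nabla\chi,\nabla\underline\alpha\rangle$, which are supported on $\overline\kp_\vep^{-1}(\Sigma\times(\vep^\tau/2,\vep^\tau))$ where $\underline\alpha = O(\vep^{(m-2)(1-\tau)})$ and picks up only two $\vep^{-\tau}$ factors from $\chi$, contributing $O(\vep^{(m-2)(1-\tau)-2\tau}) = O(\vep^{m-2-m\tau})$, which again falls under the weight $\kp_\vep^{-m}\vep^{m-2}$ at $\kp_\vep\sim\vep^\tau$. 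The H\"older-in-$t$ bound is obtained by repeating the same computation and applying Assumption~\ref{assumption on epsilon} together with Lemma~\ref{lem-volumeestimates}, yielding the additional $\vep^{-2\alpha}$ factor.

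The principal obstacle is the careful bookkeeping in step (a) above: one must track how the metric error $g^\vep - g_C$ (of size $O(\kp_\vep^{-2m}\vep^{2m})$ in the inner part of the intermediate region and $O(\vep^{2(1-\tau)m})$ in the outer part, per Lemma~\ref{lem-volumeestimates}) interacts with the two derivatives of $\underline\alpha$, ensuring that all subleading pieces fit within the claimed weight $\kp_\vep^{-m}$, which is tight near the matching boundary $\kp_\vep \sim \vep R_1$. Once this is done for the Laplacian, the H\"older-quotient analogues require only replacing the metric estimates by their H\"older-in-$t$ counterparts in Lemma~\ref{lem-volumeestimates} and differentiating the explicit $\vep(t)$-dependent factors one further time in the Hölder sense.
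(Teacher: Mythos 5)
Your proposal is correct, and for the spatial estimates it takes a genuinely different (more self-contained) route than the paper. The paper's own proof simply quotes \cite{Joyce2004SLCS3}*{Proposition 7.3} for the bound $|w^\vep|\le 1$ and for the Laplacian estimates, and only computes the time derivative by hand: it records $\partial_t\kp_\vep = [1-\chi(\tfrac{r-R_1}{\hbar R_1})]\vep' r$, applies the chain rule to $\ul{\alpha} = \alpha_L|_{\ul{0}}\circ\varphi(\sigma,\vep^{-1}\kp_\vep(r))$ using the decay $|\partial_u(\alpha_L|_{\ul{0}}\circ\varphi)| = O(u^{1-m})$ and $|\vep'|\le C\vep^{m-1}$, and then converts to H\"older quotients via $|\slashed\partial^\alpha_{t_1,t_2}f|\le C|\partial_t f(c)|\,\vep^{2-2\alpha}$ on the admissible time window --- this is exactly your second paragraph, with your observation $\chi(\tfrac{r-R_1}{\hbar R_1})\,r\le\kp_\vep(r)$ being a clean formulation of the same bookkeeping (and your use of the H\"older bound on $\vep'$ from Assumption~\ref{assumption on epsilon} is equivalent to the paper's device). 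For the spatial part you replace the citation by a direct argument: convexity of the interpolation in \eqref{def: approximate kernel} plus monotonicity of $\beta_L\in[0,c_L]$ for the sup bound, harmonicity of $\beta_L$ (remark after Theorem~\ref{AC Lag nbhd}) together with $g^\vep=\vep^2 g_L$ on $P$, and on the transition region a two-derivative estimate of $\ul{\alpha}$ against the cone metric (which by \eqref{afL_def} is precisely of size $\kp_\vep^{-m}\vep^{m-2}$) plus the cutoff terms at $\kp_\vep\sim\vep^\tau$, whose size $\vep^{m-2-m\tau}$ sits exactly at the envelope. This amounts to re-deriving Joyce's estimates in this special case; the paper's route buys brevity, yours buys self-containedness and a more careful acknowledgement of the time-dependent mean-normalisation constant in \eqref{eq-normalisedapproxkernel}, which the paper's ``$=0$ otherwise'' silently suppresses.

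One bookkeeping slip to repair: your closing claim that the H\"older-in-$t$ bound for the Laplacian follows by ``picking up an additional $\vep^{-2\alpha}$ factor'' would only give $\kp_\vep^{-m}\vep^{m-2-2\alpha}$, which misses the stated bound $\kp_\vep^{-m}\vep^{2m-2-2\alpha}$ by a factor $\vep^{m}$. The correct accounting is: write $\Delta_{g^\vep}w^\vep = G(\vep(t),\cdot)$ with $|\partial_\vep G|\le C\kp_\vep^{-m}\vep^{m-3}$ (each $\vep$-differentiation of the envelope costs $\vep^{-1}$, using $\partial_\vep\kp_\vep\le\kp_\vep/\vep$), so that $|\slashed\partial^\alpha_{t_1,t_2}\Delta_{g^\vep}w^\vep|\le |\partial_\vep G|\,|\vep'|\,|t_1-t_2|^{1-\alpha}\le C\kp_\vep^{-m}\vep^{m-3}\cdot\vep^{m-1}\cdot\vep^{2-2\alpha}$, which is the claimed exponent. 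The machinery you already set up (Assumption~\ref{assumption on epsilon} and Lemma~\ref{lem-volumeestimates}) delivers this; only the quoted factor is misstated.
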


\begin{proof}
    The spatial estimates of $w^{\vep}$ follow from \cite{Joyce2004SLCS3}*{Proposition 7.3}. For the time derivative, note that $\partial_{t}w^{\vep} = 0$ on $\out_{b}$ and $P$. On $Q$, we compute
\begin{align*}
    \partial_{t}\kappa_{\vep} &= \left[1-\chi\left(\frac{r-R_{1}}{\hbar R_1}\right)\right]\vep'r = O(\vep'),\\
    \partial_{t}\ul{\alpha} &= (\alpha_{L}\big|_{\ul{0}}\circ\varphi)(\sigma, \vep^{-1}\kappa_{\vep}(r))[\vep^{-2}\vep'\kappa_{\vep}(r) + \vep^{-1}\partial_{t}\kappa_{\vep}]\\
    &= O(\vep^{(1-\tau)(m-3)}\vep^{-1}\vep')\quad\text{for } r\in \kappa_{\vep}^{-1}(\vep R_{1}, \vep^{\tau}).
\end{align*}
The result now follows from a calculation. The H\"older derivative estimates follow similarly, using the fact that $|t_1 - t_2| < t^{-\frac{2}{m-2}} \implies |\slashed \partial^\alpha_{t_1,t_2}f| \leq C\cdot |\partial_t f (c)| \vep^{2-2\alpha}$ for some $c \in [t_1,t_2]$.
\end{proof}

\begin{lemma}\label{lem: angle and mean curvature in torus case}
    Under Assumption \ref{assumption torus}, for any $\tau\in(0, \frac{1}{2})$ and $k \in \{0,1,2\}$, we have
    \begin{align}
        |\nabla^k\theta_{N^{\vep}}(x)| = \begin{cases}
            O(\vep^{m(1-\tau)-k\tau})& x = (\sigma,\mathfrak{r}),\;\mathfrak{r}\in(\vep^{\tau}, 2\vep^{\tau}),\\
        0, & \mbox{otherwise,}
        \end{cases}
    \end{align}
    where $|\cdot|$ is computed using the pullback metric $g^\vep$.
\end{lemma}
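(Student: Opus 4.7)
The plan is to decompose $\stm$ region by region and show that $\theta_{N^\vep}$ vanishes everywhere except on $\Sm\times(\vep^\tau,2\vep^\tau)$, then estimate it there. First I would dispose of the outer region $\out$, where $\iota^\vep=\iota$ and $\theta_{N^\vep}=0$ because $X$ is special Lagrangian, and of the tip region $P$, where $\iota^\vep=\Up\circ(\vep\cdot)$ composes the Darboux chart with dilation of the Lawlor neck $L$; under Assumption \ref{assumption torus}, $\Up^*\Om=\Om_0$ exactly, so the special Lagrangian property of $L\subset\BC^m$ transfers directly. On the intermediate region $Q$, the cutoff $\chi(\vep^{-\tau}\ir)$ is identically $0$ for $\ir<\vep^\tau$ (so $\ptq_\vep$ is the potential of the dilated Lawlor neck $\vep L$) and identically $1$ for $\ir>2\vep^\tau$ (so $\ptq_\vep=\ptx$). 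Here I would invoke the torus assumption in its essential form: since $X_1$ and $X_2$ are flat sub-tori, their preimages under $\Up$ agree with $\Pi^0$ and $\Pi^{\bphi}$ on an open neighbourhood of the origin, so the potential $\ptx$ of Lemma \ref{CS_rate} vanishes identically, giving $\theta_{N^\vep}=0$ on $\ir>2\vep^\tau$ as well.

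It remains to estimate $\theta_{N^\vep}$ on the interpolation region $\Sm\times(\vep^\tau,2\vep^\tau)$. The vanishing of $\ptx$ simplifies the formula \eqref{int_potential} to
\[
\ptq_\vep(\sm,\ir) \,=\, (1-\chi(\vep^{-\tau}\ir))\,\vep^2\ptl(\sm,\vep^{-1}\ir) \,=:\, (1-\chi)\,\wh\ptl_\vep.
\]
Combining the asymptotic $|\nabla^\ell\ptl|(\sm,r)=O(r^{2-m-\ell})$ from step 4 of Definition \ref{static_mfd} with the scaling $|\nabla^\ell\chi(\vep^{-\tau}\ir)|=O(\vep^{-\ell\tau})$, taken in the cone metric $g_C$, I would derive $|\nabla^\ell\ptq_\vep|=O(\vep^{m-\tau(m-2+\ell)})$ on this region. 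Then I would write the Lagrangian angle of a graph over the flat cone $C$ via $\Phi_C$ as $\theta(u)=\Delta_C u+\mathcal N(\nabla^2 u)$; the standard identity $\theta=\arg\det(I+i\nabla^2 u)$ on each flat sheet of $C$ shows $\mathcal N(A)=O(|A|^3)$. Since $\wh\ptl_\vep$ is itself the potential of a special Lagrangian, $\theta(\wh\ptl_\vep)=0$, so $\Delta_C\wh\ptl_\vep=-\mathcal N(\nabla^2\wh\ptl_\vep)=O(\vep^{3m(1-\tau)})$. Expanding $\theta(\ptq_\vep)=\Delta_C((1-\chi)\wh\ptl_\vep)+\mathcal N(\nabla^2\ptq_\vep)$ by Leibniz reduces everything to the commutator terms $\wh\ptl_\vep\,\Delta_C\chi$ and $\nabla\chi\cdot\nabla\wh\ptl_\vep$, each of order $O(\vep^{m(1-\tau)})$. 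Differentiating the identity $k$ times with the derivative bounds above yields the claimed $O(\vep^{m(1-\tau)-k\tau})$.

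A small additional step is to switch from $g_C$ to the induced metric $g^\vep$: by Lemma \ref{lem-volumeestimates} the difference is $O(\vep^{2m(1-\tau)})$, which is strictly subdominant, so the bound transfers. I expect the main obstacle to be the bookkeeping of cross terms: under the constraints $m\geq3$ and $\tau\in(0,1/2)$, one must verify that every term produced by distributing derivatives of the cutoff against the scaled potential $\wh\ptl_\vep$, and by Taylor expanding $\mathcal N$ cubically, is indeed controlled by the target $O(\vep^{m(1-\tau)-k\tau})$ for each $k\in\{0,1,2\}$.
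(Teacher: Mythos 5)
Your argument is correct and is essentially the paper's own: the paper just invokes Joyce's Proposition 6.4 together with the two simplifications you identify under Assumption \ref{assumption torus} (namely $\ptx\equiv 0$, so the angle vanishes identically off the interpolation annulus, and $\Up^*g=g_0$, $\Up^*\Om=\Om_0$, so the only surviving contribution on $\Sm\times(\vep^\tau,2\vep^\tau)$ is the cutoff derivatives hitting the scaled Lawlor potential, of size $O(\vep^{m(1-\tau)-k\tau})$), which is exactly the computation you carry out. The one cosmetic caveat is that the $\Phi_C$ of Proposition \ref{LN of cones} need not be the standard identification $T^*\Pi\cong\BC^m$, so the remainder $\mathcal{N}$ is more safely treated as a smooth function of the $2$-jet vanishing to second order at the zero potential rather than via $\arg\det(I+i\nabla^2u)$ — which changes nothing, since quadratic (rather than cubic) smallness of the remainder already makes it subdominant to the $O(\vep^{m(1-\tau)-k\tau})$ commutator terms.
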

\begin{proof}
    The proof follows as in \cite{Joyce2004SLCS3}*{Proposition 6.4}, with the improvements coming from Assumption \ref{assumption torus}.
\end{proof}

\subsection{Weighted Parabolic H\"older Spaces} We define suitable H\"older spaces for our differential operators. Given $\Lambda>0$, $(\mu, \nu)\in\mathbb{R}^{2}$, $\alpha\in(0, \frac{1}{2})$, and a time-dependent tensor $T$ on $\ul{N}$, and letting $\text{inj}(g^\vep)$ denote the injectivity radius of the induced metric $g^\vep = (\iota^{\vep})^{*}g$, we define:
\begin{align}
    &\|T\|_{\mu, \nu, \Lambda} := \sup_{(x, t)\in\ul{N}\times[\Lambda, \infty)}t^{\mu}\rho^{\nu}_{t^{\frac{-1}{m-2}}}(x)|T|_{g^\vep},\\
    &[T]_{\mu, \nu, \alpha, \Lambda} := \sup_{t\in[\Lambda, \infty)}\sup_{\substack{x_{1}, x_{2}\in\ul{N}\\d_{g^\vep}(x_1, x_2)<\min\{\text{inj}(g^\vep), 1\} }} t^{\mu}\min\{\rho^{\nu+2\alpha}_{t^{\frac{-1}{m-2}}}(x_1), \rho^{\nu+2\alpha}_{t^{\frac{-1}{m-2}}}(x_2)\}\frac{|T(x_{1}, t) - T(x_{2}, t)|_{g^\vep}}{d_{g^\vep}(x_{1}, x_{2})^{2\alpha}},\\
    &\langle T \rangle_{\mu, \nu, \alpha, \Lambda} := \sup_{x\in\ul{N}}\;\sup_{t>\Lambda}\sup_{\substack{t_{1}, t_{2}\in[t, 2t],\\0<|t_{1} - t_{2}|<t^{\frac{-2}{m-2}}}}t^{\mu}\rho^{\nu+2\alpha}_{t^{\frac{-1}{m-2}}}(x)\frac{|T(x, t_{1}) - T(x, t_{2})|_{g^\vep}}{|t_1 - t_2|^{\alpha}}.
\end{align}
Here, the norms are computed by the induced metric on the corresponding tensor bundles, and the difference $T(x_{1}, t) - T(x_{2}, t)$ is understood using the parallel transport along the unique shortest geodesic between $x_{1}$ and $x_{2}$ to compare the values.
\begin{definition}\label{def-parabolicholdernorms}
Define a weighted parabolic H\"older norm for tensors $T$ on $\ul{N}$ by
\begin{align}
    \|T\|_{P^{0, 0, \alpha}_{\mu, \nu, \Lambda}} := \|T\|_{\mu, \nu, \Lambda} + [T]_{\mu, \nu, \alpha, \Lambda} + \langle T \rangle_{\mu, \nu, \alpha, \Lambda}.
\end{align}
The weighted parabolic H\"older spaces $P^{l, k, \alpha}_{\mu, \nu, \Lambda}$ are then defined to be the space of functions $u:\ul{N}\times[\Lambda, \infty)\to\mathbb{R}$ such that the norm
\begin{align}
    \|u\|_{P^{l, k, \alpha}_{\mu, \nu, \Lambda}} := \sum_{i=0}^{l}\|\partial_{t}^{i}u\|_{P^{0, 0, \alpha}_{\mu, \nu+2i, \Lambda}} + \sum_{j=0}^{k}\|\nabla^{j}u\|_{P^{0, 0, \alpha}_{\mu, \nu+j, \Lambda}}
\end{align}
is finite. Analogously, we define the weighted parabolic H\"older norm $\|\cdot\|_{C^{0,\alpha}_{\zeta, \Lambda}}$ (and corresponding Banach space $C^{0,\alpha}_{\zeta, \Lambda}$) for functions $h:[\Lambda, \infty) \to \mathbb{R}$:
\begin{equation}
    \|h\|_{C^{0,\alpha}_{\zeta, \Lambda}} \, := \, \sup_{t \in [\Lambda, \infty)} t^\zeta \, |h(t)| \, + \, \sup_{t \in[\Lambda, \infty)} \sup_{\substack{t_{1}, t_{2}\in[t, 2t],\\0<|t_{1} - t_{2}|<t^{\frac{-2}{m-2}}}} t^\zeta \, \frac{|h(t_1) - h(t_2)|}{|t_1-t_2|^\alpha}.
\end{equation}
\end{definition}

In order to apply the Schauder fixed point theorem to solve our nonlinear PDE for functions belonging to these spaces, we will require the following standard compact embedding theorem.

\begin{lemma}\label{lem-compactness}
    For $\mu'<\mu$, $\alpha'<\alpha$, $\zeta' < \zeta$, and $\Lambda > 1$, the inclusions
    \begin{align*}
        C^{0, \alpha}_{\zeta, \Lambda} \hookrightarrow C^{0, \alpha'}_{\zeta', \Lambda}, \quad P^{l,k,\alpha}_{\mu, \nu, \Lambda} \hookrightarrow P^{l, k, \alpha'}_{\mu', \nu, \Lambda}
    \end{align*}
    are compact.
\end{lemma}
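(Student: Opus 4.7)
The plan is to combine a parabolic Arzel\`a--Ascoli argument on finite time slabs with a tail estimate coming from $\mu' < \mu$ (resp.\ $\zeta' < \zeta$) and the classical H\"older interpolation for $\alpha' < \alpha$. I will focus on the parabolic inclusion; the scalar case is an easier variant obtained by dropping the spatial variable.

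First, let $\{u_n\}$ be bounded in $P^{l,k,\alpha}_{\mu,\nu,\Lambda}$ by some constant $K$. On any slab $\stm \times [\Lambda,T]$ with $T < \infty$, the spatial weight $\rho_{t^{-1/(m-2)}}$ is bounded above by $R_2$ and below by $c\,\vep(T) > 0$, and $t^{\mu}$ is comparable to a constant. By Lemma~\ref{lem-volumeestimates} the pullback metric $g^{\vep}$ is uniformly $C^{2}$-equivalent on $[\Lambda,T]$ to a fixed reference metric (e.g.\ $g^{\vep(T)}$), so $\{u_n\}$ is uniformly bounded in the ordinary parabolic H\"older space $C^{l,k,\alpha}(\stm \times [\Lambda,T])$ with respect to a fixed metric. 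Standard parabolic Arzel\`a--Ascoli then yields a subsequence converging in $C^{l,k,\alpha''}(\stm \times [\Lambda,T])$ for every $\alpha'' \in (\alpha',\alpha)$; a diagonal extraction over an exhaustion $T_j \nearrow \infty$ produces a limit $u:\stm \times [\Lambda,\infty)\to\mathbb{R}$ with $u_n\to u$ locally in $C^{l,k,\alpha''}$. Taking $\liminf$ in the defining seminorms shows $u \in P^{l,k,\alpha}_{\mu,\nu,\Lambda}$ with norm at most $K$, hence $v_n := u_n - u$ obeys $\|v_n\|_{P^{l,k,\alpha}_{\mu,\nu,\Lambda}} \le 2K$.

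Given $\delta>0$, I will split the target norm into a ``tail'' and a ``core''. For the tail $\stm \times [T,\infty)$, each seminorm entering $\|v_n\|_{P^{l,k,\alpha'}_{\mu',\nu,\Lambda}}$ is bounded by $T^{\mu'-\mu}$ times the corresponding seminorm of $v_n$ in $P^{l,k,\alpha}_{\mu,\nu,\Lambda}$, using $\rho \leq R_{2}$ to absorb the difference between the weights $\rho^{\nu+\bullet+2\alpha'}$ and $\rho^{\nu+\bullet+2\alpha}$ into a constant. Choosing $T$ so that $2K\cdot T^{\mu'-\mu} < \delta$ (possible because $\mu' < \mu$) dispatches the tail uniformly in $n$. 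On the core $\stm \times [\Lambda,T]$ the weights are bounded, so local $C^{l,k,\alpha''}$-convergence of $v_n$ to $0$ gives convergence in the weighted $P^{l,k,\alpha'}$-norm restricted to the slab; the step from $\alpha''$ to $\alpha'$ is handled by the H\"older interpolation identity
\[ \frac{|v_n(x_1,t)-v_n(x_2,t)|}{d_{g^{\vep}}(x_1,x_2)^{2\alpha'}} \,=\, \left(\frac{|v_n(x_1,t)-v_n(x_2,t)|}{d_{g^{\vep}}(x_1,x_2)^{2\alpha}}\right)^{\!\alpha'/\alpha} |v_n(x_1,t)-v_n(x_2,t)|^{1-\alpha'/\alpha}, \]
and the analogous identity in the time variable, combined with the uniform H\"older bound on $v_n$ from Step~1 and uniform $C^{l,k}$-convergence $v_n \to 0$. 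Summing the two regimes gives $\|u_n - u\|_{P^{l,k,\alpha'}_{\mu',\nu,\Lambda}} < 2\delta$ for all $n$ large, proving compactness.

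The principal technical subtlety is the $t$-dependence of the pullback metric $g^{\vep}$, which enters the definitions of $|\cdot|_{g^{\vep}}$, $d_{g^{\vep}}$ and the covariant derivatives. The point is that on any finite slab $[\Lambda,T]$ one has uniform $C^2$-equivalence of $g^{\vep}$ with a fixed background metric, with constants depending on $T$ but not $n$; this is a direct consequence of Lemma~\ref{lem-volumeestimates}, which controls $g^{\vep}-g_0$, $g^{\vep}-g_C$, and $g^{\vep}-\vep^2 g_L$ in the outer, intermediate, and tip regions respectively by powers of $\vep(t)$ which remain bounded on $[\Lambda,T]$. Once this equivalence is in place the usual parabolic compactness applies verbatim. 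The scalar embedding $C^{0,\alpha}_{\zeta,\Lambda}\hookrightarrow C^{0,\alpha'}_{\zeta',\Lambda}$ follows by the same scheme with no spatial variable: classical Arzel\`a--Ascoli on each $[\Lambda,T]$, tail control from $\zeta' < \zeta$, and temporal H\"older interpolation.
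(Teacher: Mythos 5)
Your proposal is correct and follows essentially the same route as the paper: Arzel\`a--Ascoli with a diagonal argument on finite time slabs, a tail estimate coming from $\mu'<\mu$ (resp.\ $\zeta'<\zeta$), and H\"older interpolation to pass from $\alpha$ to $\alpha'$; the paper phrases the conclusion as Cauchy-ness of the extracted subsequence rather than convergence to a limit identified beforehand, which is only a cosmetic difference (and it spares you the step of verifying $u\in P^{l,k,\alpha}_{\mu,\nu,\Lambda}$). One small caveat on your tail step: passing from the weight $\rho^{\nu+\bullet+2\alpha}$ to $\rho^{\nu+\bullet+2\alpha'}$ introduces a \emph{negative} power of $\rho$, so it is not absorbed by the upper bound $\rho\le R_2$; it is instead controlled either by the same interpolation identity you use on the core, or by the constraints $d_{g^{\vep}}(x_1,x_2)<\min\{\mathrm{inj}(g^{\vep}),1\}\lesssim \vep(t)$ and $|t_1-t_2|<t^{-2/(m-2)}$ built into the seminorms together with the lower bound $\rho_{t^{-1/(m-2)}}\gtrsim t^{-1/(m-2)}$ (the paper asserts the analogous tail inequality without comment).
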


\subsection{A Priori Estimates and Existence Theory for the Linearised Operator}

We now proceed with the linear theory for our linearised operator, which will be viewed as a bounded operator on the weighted H\"older spaces of Definition \ref{def-parabolicholdernorms}. The main result is Theorem \ref{linear theory}. Since the linearised operator has a non-trivial kernel, we prove our estimates and existence theory on the orthogonal complement of the approximate kernel, which will be denoted by:
\begin{align}
    \langle 1, w^{\vep}\rangle^{\perp} := \left\{u\in C^{0}([\Lambda, \infty), L^{2}(\ul{N}, g^{\vep}))\::\:\int_{\ul{N}}u\cdot w^{\vep}\:\dd V_{g^{\vep}} = \int_{\ul{N}}u\cdot 1\:\dd V_{g^{\vep}} = 0,\;\forall\:t\in[\Lambda, \infty)\right\}.
\end{align} 
We will first consider the simpler case of the heat operator. It is clear from the definition that we have the following.

\begin{lemma}
Let $\mu>0$, $\nu\in(0, m-2)$, $\alpha\in(0, 1/2)$. The linear operator $\partial_{t} - \mathcal{L}^{\vep}_{\ul{0}}:C_{c}^{\infty}(\ul{N}\times(\Lambda, \infty))\to C_{c}^{\infty}(\ul{N}\times(\Lambda, \infty))$ extends to a bounded operator
\begin{align*}
    \partial_{t} - \Delta_{g^{\vep}}: P^{1, 2, \alpha}_{\mu, \nu, \Lambda}\longrightarrow P^{0, 0, \alpha}_{\mu, \nu+2, \Lambda}.
\end{align*}
\end{lemma}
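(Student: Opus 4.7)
The author's phrase ``It is clear from the definition'' signals a routine verification: the weighted parabolic H\"older spaces of Definition \ref{def-parabolicholdernorms} are designed precisely so that a second-order parabolic operator is bounded with a loss of two in the $\rho$-weight. My plan is to bound each of the three seminorms making up $\|(\partial_t - \Delta_{g^\vep})u\|_{P^{0,0,\alpha}_{\mu,\nu+2,\Lambda}}$ separately by $\|u\|_{P^{1,2,\alpha}_{\mu,\nu,\Lambda}}$.

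The contribution from $\partial_t u$ is immediate: $\|\partial_t u\|_{P^{0,0,\alpha}_{\mu,\nu+2,\Lambda}} \leq \|u\|_{P^{1,2,\alpha}_{\mu,\nu,\Lambda}}$ by inspection of the source norm (it appears as the $i=1$ summand). For $\Delta_{g^\vep} u = \mathrm{tr}_{g^\vep}(\nabla^2 u)$, the pointwise inequality $|\Delta_{g^\vep} u|_{g^\vep}\le \sqrt{m}\,|\nabla^2 u|_{g^\vep}$ handles the sup-norm piece. For the spatial H\"older seminorm at fixed $t$, I will use that parallel transport with respect to the Levi-Civita connection of $g^\vep(\,\cdot\,,t)$ is an isometry, and therefore commutes with the metric trace; writing $P$ for parallel transport along the minimising geodesic from $x_2$ to $x_1$,
\[
\Delta_{g^\vep}u(x_1,t)-\Delta_{g^\vep}u(x_2,t) \,=\, \mathrm{tr}_{g^\vep(x_1,t)}\bigl(\nabla^2 u(x_1,t) - P(\nabla^2 u(x_2,t))\bigr),
\]
which reduces $[\Delta_{g^\vep}u]_{\mu,\nu+2,\alpha,\Lambda}$ to $\sqrt{m}\,[\nabla^2 u]_{\mu,\nu+2,\alpha,\Lambda}$.

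The only step requiring more than a formal argument is the temporal H\"older seminorm, because $g^\vep$ itself varies in $t$ through $\vep(t)$. I will work in a fixed local coordinate chart (so that the Hessians are unambiguously $(0,2)$-tensors) and split
\[
\Delta_{g^\vep}u(x,t_1)-\Delta_{g^\vep}u(x,t_2) \,=\, \mathrm{tr}_{g^\vep(x,t_1)}\!\bigl(\nabla^2u(x,t_1)-\nabla^2u(x,t_2)\bigr) \,+\, \bigl[\mathrm{tr}_{g^\vep(x,t_1)}-\mathrm{tr}_{g^\vep(x,t_2)}\bigr]\!\nabla^2u(x,t_2),
\]
where the difference of connections hidden in the first summand produces only a lower-order term in $\nabla u$ that is controlled by $\langle\nabla u\rangle_{\mu,\nu+1,\alpha,\Lambda}$. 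The first summand is then bounded by $\langle\nabla^2 u\rangle_{\mu,\nu+2,\alpha,\Lambda}$, and the second by $\|\nabla^2 u\|_{\mu,\nu+2,\Lambda}$ multiplied by the temporal H\"older norm of the coefficients of $g^\vep$. The restriction $t_1,t_2\in[t,2t]$ with $|t_1-t_2|<t^{-2/(m-2)}$ in the definition of $\langle\cdot\rangle$ is exactly what makes this last factor uniformly bounded, via Assumption \ref{assumption on epsilon} and the type of coefficient estimates worked out in the proof of Lemma \ref{lem-volumeestimates}.

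The main (and only) conceptual obstacle is the bookkeeping of weight exponents: each spatial derivative raises the $\rho$-weight by one and each time derivative by two (the parabolic scaling $t\sim\rho^2$), so both $\partial_t$ and $\Delta_{g^\vep}$ send weight $\nu$ to $\nu+2$, matching the target space. Summing the three estimates above yields the claimed boundedness.
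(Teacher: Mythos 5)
Your verification is correct; the paper gives no argument for this lemma at all (it is introduced with ``It is clear from the definition that we have the following''), and your term-by-term bound of the three seminorms — $\partial_t u$ appearing directly in the source norm, $|\Delta_{g^\vep}u|\le\sqrt{m}\,|\nabla^2u|_{g^\vep}$ with parallel transport commuting with the trace for the spatial H\"older piece, and the time-H\"older piece handled via the smallness of $\partial_t g^\vep$ on the time scales $|t_1-t_2|<t^{-2/(m-2)}$ from Assumption \ref{assumption on epsilon} — is exactly the routine check the authors are taking for granted. The only cosmetic slip is attributing the connection-difference term to $\langle\nabla u\rangle_{\mu,\nu+1,\alpha,\Lambda}$, when it is really bounded by $\|\nabla u\|_{\mu,\nu+1,\Lambda}$ multiplied by the time-H\"older smallness of the Christoffel symbols; since either quantity is dominated by $\|u\|_{P^{1,2,\alpha}_{\mu,\nu,\Lambda}}$, this does not affect the conclusion.
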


We first note that our a priori estimate for the heat operator implies the following weighted Schauder estimate.

\begin{corollary}\label{weighted schauder estimate}
    Let $\mu>0$, $\nu\in(0, m-2)$, $\alpha\in(0, \frac{1}{2})$ and $\Lambda>0$. There exists a constant $C>0$ such that if $u\in P^{1, 2, \alpha}_{\mu, \nu, \Lambda} \cap \langle 1, w^{\vep}\rangle^{\perp}$ and $\psi\in P^{0, 0, \alpha}_{\mu, \nu+2, \Lambda}$ solves the Cauchy problem
    \begin{align}
        \begin{cases}
		\partial_{t}u - \Delta_{g^\vep}u = \psi \quad t\in[\Lambda, \infty),\\
		u(x, \Lambda) = 0,\quad x\in\ul{N},
	\end{cases}
    \end{align}
then
\begin{align*}
    \|u\|_{P^{1, 2, \alpha}_{\mu, \nu, \Lambda}}\leq C\:\|\psi\|_{P^{0, 0, \alpha}_{\mu, \nu+2, \Lambda}}.
\end{align*}
\end{corollary}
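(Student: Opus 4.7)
The plan is to combine the sup-norm estimate of Theorem~\ref{weighted sup norm estimate} with standard interior parabolic Schauder estimates, applied after rescaling at each point to the local geometric scale dictated by the weight $\rho_{t^{-1/(m-2)}}$.

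First, Theorem~\ref{weighted sup norm estimate} directly yields the sup-norm seminorm
\begin{align*}
\|u\|_{\mu,\nu,\Lambda} \,\leq\, C\,\|\psi\|_{\mu,\nu+2,\Lambda} \,\leq\, C\,\|\psi\|_{P^{0,0,\alpha}_{\mu,\nu+2,\Lambda}}.
\end{align*}
To control the remaining seminorms that make up $\|u\|_{P^{1,2,\alpha}_{\mu,\nu,\Lambda}}$, I would argue locally around each point. Fix $(x_0,t_0)\in\stm\times(\Lambda,\infty)$, and set the local scale $r_0:=\min\{\tfrac{1}{4}\rho_{t_0^{-1/(m-2)}}(x_0),\,c\}$ for a small absolute constant $c$. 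Rescale by defining $\tilde u(y,\tau):=u(\exp_{x_0}(r_0 y),\,t_0+r_0^{2}\tau)$ and $\tilde\psi(y,\tau):=r_0^{2}\psi(\exp_{x_0}(r_0 y),\,t_0+r_0^{2}\tau)$, so that $\tilde u$ satisfies $\partial_\tau\tilde u-\Delta_{\tilde g}\tilde u=\tilde\psi$ on a parabolic cylinder $B_1\times[-1/16,0]$, where $\tilde g=r_0^{-2}g^{\vep(t_0+r_0^{2}\tau)}$. If $t_0-r_0^{2}/16<\Lambda$, the cylinder is truncated at $\tau=(\Lambda-t_0)/r_0^{2}$ and the zero initial condition is retained.

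Next, I would verify using Lemma~\ref{lem-volumeestimates} (and its analogue for the connection and curvature) together with Assumption~\ref{assumption on epsilon} that, uniformly across the outer, intermediate, and tip regions, the rescaled metric $\tilde g$ has $C^{k,\alpha}$-bounded geometry on the unit parabolic cylinder, converging respectively to the flat Euclidean, flat cone, and (appropriately rescaled) Lawlor neck model metrics. Consequently standard interior (or boundary, when the cylinder is truncated) parabolic Schauder estimates yield, with a constant independent of $(x_0,t_0)$,
\begin{align*}
\|\tilde u\|_{C^{1,2,\alpha}(B_{1/2}\times[\cdot,0])} \,\leq\, C\bigl(\|\tilde u\|_{C^{0}(B_1\times[\cdot,0])}+\|\tilde\psi\|_{C^{0,\alpha}(B_1\times[\cdot,0])}\bigr).
\end{align*}
Rescaling back, the spatial derivative $\nabla^{j}u$ picks up $r_0^{-j}$ and the time derivative $\partial_t^{i}u$ picks up $r_0^{-2i}$; multiplying by the weights $t^{\mu}\rho^{\nu+j}$ and $t^{\mu}\rho^{\nu+2i}$ and observing that $r_0\asymp\rho_{t_0^{-1/(m-2)}}(x_0)$ produces the correct bound on each constituent seminorm of $\|u\|_{P^{1,2,\alpha}_{\mu,\nu,\Lambda}}$ in terms of $\|u\|_{\mu,\nu,\Lambda}+\|\psi\|_{P^{0,0,\alpha}_{\mu,\nu+2,\Lambda}}$. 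Combining with Step~1 completes the proof.

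The main obstacle I anticipate is verifying that the interior Schauder constant can indeed be chosen uniformly across the three regions, which requires checking that Assumption~\ref{assumption on epsilon} together with Lemma~\ref{lem-volumeestimates} really does deliver uniformly controlled rescaled geometries in each region (including the transition annuli $r\in(\vep^{\tau},2\vep^{\tau})$ and $r\asymp\vep R_1$). A secondary point is consistency of the parabolic rescaling with the time H\"older seminorm $\langle\cdot\rangle_{\mu,\nu,\alpha,\Lambda}$, which allows $|t_1-t_2|<t^{-2/(m-2)}$: since at the critical tip scale $r_0\asymp\vep(t)\asymp t^{-1/(m-2)}$ one has $r_0^{2}\asymp t^{-2/(m-2)}$, the unit parabolic cylinder in the rescaled picture corresponds exactly to the admissible range in the definition of $\langle\cdot\rangle_{\mu,\nu,\alpha,\Lambda}$, so no loss occurs. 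Away from the tip, $r_0$ is bounded below by a positive constant (resp.\ by $\rho\sim r$ in the intermediate region) and the match is even easier.
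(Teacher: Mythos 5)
Your proposal is correct and follows essentially the same route as the paper: the paper likewise invokes the scaling property of $g^{\vep}$ together with standard interior parabolic Schauder estimates to get $\|u\|_{P^{1,2,\alpha}_{\mu,\nu,\Lambda}}\leq C(\|\psi\|_{P^{0,0,\alpha}_{\mu,\nu+2,\Lambda}}+\|u\|_{\mu,\nu,\Lambda})$, and then uses the orthogonality hypothesis to control $\|u\|_{\mu,\nu,\Lambda}$ via Theorem~\ref{weighted sup norm estimate}. Your write-up simply spells out the pointwise rescaling at scale $r_0\asymp\rho_{t_0^{-1/(m-2)}}(x_0)$ and the compatibility with the time-H\"older seminorm, which the paper leaves implicit.
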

\begin{proof}
    By the scaling property of the induced metric $g^{\vep}$ and the standard interior Schauder estimate, we have
    \begin{align*}
        \|u\|_{P^{1, 2, \alpha}_{\mu, \nu, \Lambda}}\leq C\left(\|\psi\|_{P^{0, 0, \alpha}_{\mu, \nu+2, \Lambda}}+\|u\|_{\mu, \nu, \Lambda}\right).
    \end{align*}
    Since $u \in \langle 1, w^{\vep}\rangle^{\perp}$, we may apply Theorem \ref{weighted sup norm estimate} to bound $\|u\|_{\mu, \nu, \Lambda}$ in terms of  $\|\psi\|_{P^{0, 0, \alpha}_{\mu, \nu+2, \Lambda}}$.
\end{proof}

Supposing now that $u \in P^{1, 2, \alpha}_{\mu, \nu, \Lambda} \cap \langle 1, w^{\vep}\rangle^{\perp}$ satisfies
\begin{align}
        \begin{cases}
		\partial_{t}u - \Delta_{g^\vep}u = \psi + a(t) + b(t)w^\vep \quad t\in[\Lambda, \infty),\\
		u(x, \Lambda) = 0,\quad x\in\ul{N},
	\end{cases}\label{eq-kernelequation}
\end{align}
then by the Schauder estimate above we have
\begin{align}
	\|u\|_{P^{1, 2, \alpha}_{\mu, \nu, \Lambda}}\,&\leq\, C\|\psi + a(t) + b(t)w^{\vep}\|_{P^{0, 0, \alpha}_{\mu, \nu+2, \Lambda}} \notag\\
 &\leq\, C\|\psi\|_{P^{0, 0, \alpha}_{\mu, \nu+2, \Lambda}} + C\|a(t)\|_{P^{0, 0, \alpha}_{\mu, \nu+2, \Lambda}} + C\|b(t)w^{\vep}\|_{P^{0, 0, \alpha}_{\mu, \nu+2, \Lambda}}.\label{eq-abestimate}
\end{align}
It is therefore important to estimate $a(t)$ and $b(t)$ in terms of $u$ and $\psi$.
\begin{lemma}\label{lem-a and b estimate}
Consider $u \in P^{1, 2, \alpha}_{\mu, \nu, \Lambda}\cap \langle 1, w^{\vep}\rangle^{\perp}$ and $\psi \in P^{0, 0, \alpha}_{\mu, \nu+2, \Lambda}$ satisfying (\ref{eq-kernelequation}). Then:
\begin{align*}
	&a(t) = \frac{1}{{\rm Vol}(N^{\vep})}\int_{\ul{N}}\left(\partial_t u - \psi\right)\:\dd V_{g^{\vep}}, \quad b(t) = \frac{1}{\|w^{\vep}\|^{2}_{L^{2}}}\int_{\ul{N}}\left(\partial_{t}u - \Delta_{g^{\vep}_{\ul{0}}}u - \psi\right)\cdot w^{\vep}\:\dd V_{g^{\vep}},
\end{align*}
and $a(t)$ and $b(t)$ satisfy the estimates
\begin{align*}
		\|a(t)\|_{P^{0, 0, \alpha}_{\mu, \nu+2, \Lambda}} &\leq C\|\psi\|_{P^{0, 0, \alpha}_{\mu, \nu+2, \Lambda}} + C \Lambda^{-\frac{2m-2-2\alpha -\nu\tau}{m-2}}\|u\|_{P^{1, 2, \alpha}_{\mu, \nu, \Lambda}},\\
  \|b(t)w^{\vep}\|_{P^{0, 0, \alpha}_{\mu, \nu+2, \Lambda}} &\leq C\|\psi\|_{P^{0, 0, \alpha}_{\mu, \nu+2, \Lambda}} + C \Lambda^{-\frac{m-2-2\alpha\tau -\nu\tau}{m-2}}\|u\|_{P^{1, 2, \alpha}_{\mu, \nu, \Lambda}}.
	\end{align*}
\end{lemma}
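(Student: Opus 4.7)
The proof proceeds in two stages: first deriving the explicit formulas for $a(t)$ and $b(t)$ by $L^{2}$-projection, then estimating each in the parabolic H\"older norm.

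\textbf{Stage 1: Projection formulas.} Integrate the equation $\pl_{t}u - \Dt_{g^{\vep}}u = \psi + a(t) + b(t)w^{\vep}$ against $1$ and against $w^{\vep}$. Two crucial facts simplify the system: since $\stm$ is closed, $\int_{\stm}\Dt_{g^{\vep}}u\,\dd V_{g^{\vep}} = 0$; and by the normalisation (\ref{eq-normalisedapproxkernel}), $\int_{\stm} w^{\vep}\,\dd V_{g^{\vep}} = 0$, so the constants and $w^{\vep}$ are $L^{2}$-orthogonal. Solving the resulting $2\times 2$ triangular system gives the stated expressions for $a(t)$ and $b(t)$.

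\textbf{Stage 2: Key identity from orthogonality.} The main trick is to use the orthogonality conditions $\int_{\stm}u\,\dd V_{g^{\vep}} = 0$ and $\int_{\stm}u\cdot w^{\vep}\,\dd V_{g^{\vep}} = 0$ \emph{differentiated in $t$}. This yields
\begin{align*}
\int_{\stm}\pl_{t}u\,\dd V_{g^{\vep}} &= -\int_{\stm} u\,\pl_{t}(\dd V_{g^{\vep}}), \\
\int_{\stm}\pl_{t}u\cdot w^{\vep}\,\dd V_{g^{\vep}} &= -\int_{\stm} u\cdot\pl_{t}w^{\vep}\,\dd V_{g^{\vep}} - \int_{\stm} u\cdot w^{\vep}\,\pl_{t}(\dd V_{g^{\vep}}).
\end{align*}
Substituting these into the formulas from Stage 1 eliminates every occurrence of $\pl_{t}u$ at the cost of introducing factors $\pl_{t}(\dd V_{g^{\vep}})$, $\pl_{t}w^{\vep}$, and also an $\int u\cdot\Dt_{g^{\vep}}w^{\vep}\,\dd V_{g^{\vep}}$ term in the $b$ formula. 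Each of these factors has sharp pointwise estimates supplied by Lemma~\ref{lem-volumeestimates} and Lemma~\ref{lem-approxkernelestimates}, and is \emph{supported} only on the tip and on the part of the intermediate region where $\ir<2\vep^{\tau}$.

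\textbf{Stage 3: Sup-norm estimates.} For the sup-norm portion of $\|a\|_{P^{0,0,\alpha}_{\mu,\nu+2,\Lambda}}$: note that $a(t)$ is $x$-independent and $\rho_{\vep}\leq R_{2}$, so it suffices to control $t^{\mu}|a(t)|$. Bound $|\int u\,\pl_{t}\dd V|$ by integrating $|u|\leq\|u\|_{\mu,\nu,\Lambda}t^{-\mu}\rho^{-\nu}$ against the estimates of Lemma~\ref{lem-volumeestimates}; since $\pl_{t}\dd V_{g^{\vep}}$ is of order $\vep^{2m-2}\,\dd V_{L}$ on $P$ and of order $\ir^{-m}\vep^{2m-2}\,\dd V_{C}$ on the intermediate region, one may carry out the integrals in the $\ir$ variable and estimate the result by a positive power of $\vep$; then use Assumption~\ref{assumption on epsilon} to replace $\vep$ by $t^{-1/(m-2)}$ and bound by $\Lambda$ at the worst time. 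The $\int\psi\,\dd V$ piece is estimated directly since $\nu<m-2$ keeps $\rho^{-\nu-2}$ integrable. A similar argument for $b(t)$, now using Lemma~\ref{lem-approxkernelestimates}, $\|w^{\vep}\|_{L^{2}}^{2}$ bounded below by a positive constant, and $|w^{\vep}|\leq 1$, yields the sup-norm part of $\|b(t)w^{\vep}\|_{P^{0,0,\alpha}_{\mu,\nu+2,\Lambda}}$.

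\textbf{Stage 4: H\"older-in-time seminorms.} The spatial H\"older seminorm of $a(t)$ vanishes; that of $b(t)w^{\vep}$ is controlled by $|b(t)|$ times the spatial H\"older seminorm of $w^{\vep}$, which is finite since $|\dd w^{\vep}|_{g^{\vep}} = O(\vep^{-1})$ on $P$ and the resulting quotient $\rho^{\nu+2+2\alpha}\cdot|\dd w^{\vep}|\cdot d(x_{1},x_{2})^{1-2\alpha}$ is bounded. For the time H\"older seminorm, write
\begin{align*}
a(t_{1})-a(t_{2}) \,=\,& \tfrac{1}{V(t_{1})}\int[u(t_{1})-u(t_{2})]\,\pl_{t}\dd V\big|_{t_{1}} + \tfrac{1}{V(t_{1})}\int u(t_{2})[\pl_{t}\dd V\big|_{t_{1}}-\pl_{t}\dd V\big|_{t_{2}}] \\
& + \left[\tfrac{1}{V(t_{1})}-\tfrac{1}{V(t_{2})}\right]\int u(t_{2})\,\pl_{t}\dd V\big|_{t_{2}} + \text{(analogous }\psi\text{ terms)},
\end{align*}
and estimate each using the H\"older quotient bounds in Definition~\ref{def-parabolicholdernorms} combined with the H\"older-in-time estimates $|\slashed{\pl}^{\alpha}_{t_{1},t_{2}}\pl_{t}\dd V_{g^{\vep}}|$ from Lemma~\ref{lem-volumeestimates}; the $1/V$ difference is harmless since $V(t)$ has uniformly bounded time derivative under Assumption~\ref{assumption on epsilon}. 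The decomposition for $b$ is analogous but longer, with additional contributions from the H\"older-in-time of $w^{\vep}$ and $\Dt_{g^{\vep}}w^{\vep}$ supplied by Lemma~\ref{lem-approxkernelestimates}.

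\textbf{Main obstacle.} The proof is conceptually transparent—orthogonality plus integration by $t$-derivative trades $\pl_{t}u$ for $u$ times geometric time-derivatives that are small in $\vep$—but the bookkeeping is delicate. The principal difficulty is tracking the balance between the spatial weight $\rho^{-\nu}$ (or $\rho^{-\nu-2\alpha}$ in H\"older terms) coming from $|u|$ and the scaling of $\pl_{t}\dd V_{g^{\vep}}$, $\pl_{t}w^{\vep}$, $\Dt_{g^{\vep}}w^{\vep}$ over the three regimes $\ir\in(\vep R_{1},\vep^{\tau})$, $\ir\in(\vep^{\tau},2\vep^{\tau})$, and the tip $P$, to recover the precise exponents $\frac{2m-2-2\alpha-\nu\tau}{m-2}$ and $\frac{m-2-2\alpha\tau-\nu\tau}{m-2}$ after converting $\vep$ to $\Lambda$ via Assumption~\ref{assumption on epsilon}.
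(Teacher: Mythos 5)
Your proposal follows essentially the same route as the paper's proof: derive the formulas for $a(t)$, $b(t)$ by pairing the equation with $1$ and $w^{\vep}$ and using $\int_{\stm}\Delta_{g^{\vep}}u\,\dd V_{g^{\vep}}=0$ and $\int_{\stm}w^{\vep}\,\dd V_{g^{\vep}}=0$, then differentiate the orthogonality conditions in time (and integrate by parts on the $\Delta u$ term) so that $\partial_{t}u$ is traded for $u$ against $\partial_{t}(\dd V_{g^{\vep}})$, $\partial_{t}w^{\vep}$, $\Delta_{g^{\vep}}w^{\vep}$, and finish with the estimates of Lemma~\ref{lem-volumeestimates}, Lemma~\ref{lem-approxkernelestimates} and Assumption~\ref{assumption on epsilon}. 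The only differences are presentational (e.g.\ your explicit difference decomposition in Stage~4 versus the paper's single H\"older-quotient chain), so the plan matches the paper's argument.
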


\begin{proof} The formulae for $a(t)$, $b(t)$ are obtained by integrating the differential equation against the elements of the approximate kernel $\{1, w^\vep\}$, and using the orthogonality conditions.

For the estimates, recall that by Assumption \ref{assumption on epsilon}, 
\begin{align*}
    |t_1 - t_2| < t^{-\frac{2}{m-2}} \implies |\slashed \partial^\alpha_{t_1,t_2}f| \leq C\cdot |\partial_t f (c)| \vep^{2-2\alpha}
\end{align*}
for some $c \in [t_1,t_2]$, where for convenience we use the notation $\slashed\partial^\alpha_{t_1,t_2} f := \frac{f(t_1) - f(t_2)}{|t_1-t_2|^\alpha}$ for the H\"older quotient. Differentiating the orthogonality condition and using the estimates on the volume form from Lemma \ref{lem-volumeestimates} yields 
\begin{align*}
&t^\mu \rho^{\nu + 2}_{t^{-\frac{1}{m-2}}}(x)|a(t)|
\leq C \cdot \sup_{t \in [\Lambda, \infty)} \left( \|u\|_{P^{1, 2, \alpha}_{\mu, \nu, \Lambda}} \int_{\ul N} \rho^{-\nu} |\partial_t (\dd V_{g^\vep})| + \|\psi\|_{P^{0, 0, \alpha}_{\mu, \nu+2, \Lambda}} \int_{\ul N} \rho^{-\nu -2} \:\dd V_{g^\vep} \right)\\
		&\hspace{2.1cm}\qquad\leq C \cdot \sup_{t \in [\Lambda, \infty)} \left( \|u\|_{P^{1, 2, \alpha}_{\mu, \nu, \Lambda}} \cdot \vep^{2m-2-\tau\nu}+ \|\psi\|_{P^{0, 0, \alpha}_{\mu, \nu+2, \Lambda}} \right) ~, \\
&t^\mu \rho^{\nu + 2+2\alpha}_{t^{-\frac{1}{m-2}}}(x)|\slashed\partial^\alpha_{t_1,t_2} a(t)|
\leq C \cdot \sup_{t \in [\Lambda, \infty)} \bigg(
\|u\|_{P^{1, 2, \alpha}_{\mu, \nu, \Lambda}}\int_{\ul N} \rho^{-\nu-2\alpha} |\partial_t (\dd V_{g^\vep})| + \rho^{-\nu} |\slashed \partial^\alpha_{t_1,t_2} \partial_t(\dd V_{g^\vep})|\\
    &\hspace{6cm}\quad\quad +\, \|\psi\|_{P^{0, 0, \alpha}_{\mu, \nu+2, \Lambda}} \int_{\ul N} \rho^{-\nu - 2 - 2\alpha}\: \dd V_{g^\vep} + \rho^{-\nu-2}  |\slashed \partial^\alpha_{t_1,t_2} (\dd V_{g^\vep})| \bigg)\\
    &\hspace{3cm}\qquad\leq C \cdot \sup_{t \in [\Lambda, \infty)} \left( \|u\|_{P^{1, 2, \alpha}_{\mu, \nu, \Lambda}}\: \vep^{2m-2-2\alpha-\tau\nu} + \|\psi\|_{P^{0, 0, \alpha}_{\mu, \nu+2, \Lambda}} \right) ~,
	\end{align*}
 which implies the estimate for $a(t)$. The estimate for $b(t)$ follows analogously, using the estimates from Lemma \ref{lem-approxkernelestimates} and the fact that $w^\vep$, ${\parallel}w^\vep{\parallel}_{L^2}$ are uniformly bounded.
\end{proof}

Finally, to extend the above estimates from the heat operator to our linearised operator $\mathcal{L}^\vep_{\ul 0}$, we require the following estimate on the difference between the Laplacian and the linearised operator:
\begin{lemma}\label{lem-lower order term estimate} Given $\tau<\frac{1}{m+2}$, we have the decomposition $\mathcal{L}^{\vep}_{\ul{0}} = \Delta_{g^{\vep}} + \mathcal{P}^{\vep}_{\ul{0}}$, where $\mathcal{P}^{\vep}_{\ul{0}}$ is a first order differential operator satisfying
 \begin{align*}
     |\mathcal{P}^{\vep}_{\ul{0}}[u]|\leq \begin{cases}
     C\:\vep(t)^{m-1}|\dd u|_{g^{\vep}}\quad&\mbox{on}\quad P_{j}\cup Q^{\pm}_{j},\; t\geq\Lambda,\\
     0,\quad&\mbox{otherwise}.
     \end{cases}
 \end{align*}
 In particular, there exists $C(\Lambda)>0$ with $\lim_{\Lambda\to \infty}C(\Lambda) = 0$ such that
 \begin{align*}
		\|\mathcal{P}^{\vep}_{\ul{0}}[u]\|_{P^{0, 0, \alpha}_{\mu, \nu+2, \Lambda}}\leq C(\Lambda)\|u\|_{P^{1, 2, \alpha}_{\mu, \nu, \Lambda}}.
\end{align*}
As a result, $\mathcal{L}^{\vep}_{\ul{0}}$ extends to a bounded operator $\mathcal{L}^{\vep}_{\ul{0}}:P^{1, 2, \alpha}_{\mu, \nu, \Lambda}\to P^{0, 0, \alpha}_{\mu, \nu+2, \Lambda}$.
\end{lemma}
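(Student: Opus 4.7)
The plan is to establish the lemma in three steps: identify the operator $\mathcal{P}^\vep_{\ul 0}$ from \eqref{linearisation}, prove the pointwise bound $|\mathcal P^\vep_{\ul 0}[u]|\leq C\vep(t)^{m-1}|du|_{g^\vep}$ region by region, and then upgrade this pointwise bound to the weighted parabolic H\"older estimate.

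First, from Proposition \ref{prop: linearised operator}, the decomposition $\mathcal L^\vep_{\ul 0} = \Delta_{g^\vep} + \mathcal P^\vep_{\ul 0}$ forces
\[ \mathcal P^\vep_{\ul 0}[u] \,=\, -\langle \nabla\theta_{N^\vep},\,\widehat V(du)\rangle_{g^\vep} \,+\, S^\vep[u], \]
manifestly a first-order linear operator. On $\out$, $\iota^\vep$ agrees with the special Lagrangian immersion $\iota$, so $\theta_{N^\vep}\equiv 0$ and $S^\vep[u] = 0$ by definition; hence $\mathcal P^\vep_{\ul 0}[u] \equiv 0$ on $\out$, matching the lemma's claim.

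Next I would verify the pointwise bound on $P$ and $Q^\pm$. On $P$, the Lawlor neck is special Lagrangian so $\theta_{N^\vep}$ is constant, leaving $S^\vep[u] = (2\log\vep(t))'(d\beta_L)(du^V)$. Assumption \ref{assumption on epsilon} gives $|(2\log\vep(t))'|\leq C\vep^{m-2}$; the scaling $g^\vep = \vep^2 g_L$ (Lemma \ref{lem-volumeestimates}) forces $|du|_{g_L} = \vep|du|_{g^\vep}$; and the fibre-derivative $(\partial_p\alpha_L)|_{\ul 0}$ is uniformly bounded on the compact region $P$ by the asymptotic formula of Theorem \ref{AC Lag nbhd}. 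Combining these yields $|S^\vep[u]|\leq C\vep^{m-1}|du|_{g^\vep}$. On $Q^\pm$, Lemma \ref{lem: angle and mean curvature in torus case} tells us $\nabla\theta_{N^\vep}$ is supported on $\Sigma\times(\vep^\tau, 2\vep^\tau)$ with $|\nabla\theta_{N^\vep}|_{g^\vep} = O(\vep^{m(1-\tau)-\tau})$; since $\tau<\frac{1}{m+2}<\frac{1}{m+1}$, we have $m(1-\tau)-\tau\geq m-1$, and with $|\widehat V(du)|\leq C|du|_{g^\vep}$ from \eqref{deformation vector field} the angle term obeys the required bound. For $S^\vep[u] = \vep'(t)(\partial_\vep\kappa_\vep/\partial_r\kappa_\vep)\partial_r u$, using $|\partial_r u| = \partial_r\kappa_\vep\cdot|\partial_{\mathfrak r} u|\leq \partial_r\kappa_\vep|du|_{g^\vep}$ in the cone coordinates cancels the $\partial_r\kappa_\vep$ denominator, and together with $|\partial_\vep\kappa_\vep|\leq CR_2$ (Lemma \ref{static_dilate_property}) and $|\vep'(t)|\leq C\vep^{m-1}$, one finds $|S^\vep[u]|\leq C\vep^{m-1}|du|_{g^\vep}$.

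Finally, to promote the pointwise bound to the weighted parabolic H\"older norm, I would write $\mathcal P^\vep_{\ul 0}[u] = A^\vep(x,t)\cdot du(x,t)$ for a coefficient tensor $A^\vep$ supported on $P\cup Q^\pm$ with $|A^\vep|_{g^\vep}\leq C\vep(t)^{m-1}$. Using $\rho_\vep\leq R_2$ and $\vep(t)\leq Ct^{-1/(m-2)}$, the sup-norm piece gives
\[ \sup_{\ul N\times[\Lambda,\infty)} t^\mu\rho^{\nu+2}|\mathcal P^\vep_{\ul 0}[u]| \,\leq\, C\vep(t)^{m-1}\|du\|_{\mu,\nu+1,\Lambda} \,\leq\, C\Lambda^{-\frac{m-1}{m-2}}\|u\|_{P^{1,2,\alpha}_{\mu,\nu,\Lambda}}. \]
For the spatial and temporal H\"older seminorms, one distributes the difference quotients as $|A_1 du_1 - A_2 du_2|\leq |A_1||du_1 - du_2| + |A_1 - A_2||du_2|$ and controls the H\"older quotients of $A^\vep$ using Assumption \ref{assumption on epsilon}'s H\"older bound on $\vep'(t)$ together with the smoothness of $\chi$, $\kappa_\vep$, $\beta_L$, and of $\nabla\theta_{N^\vep}$ away from the boundary of its support. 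The hardest part will be the bookkeeping of three competing scaling factors---the $\vep^{-1}$ scaling of covector norms on the tip, the $\vep^{m(1-\tau)-\tau}$ decay of $\nabla\theta$ in the angle-transition zone, and the H\"older-in-time control on $\vep'$---which together pin down the threshold $\tau<\frac{1}{m+2}$ and produce the decaying prefactor $C(\Lambda) = O(\Lambda^{-(m-1)/(m-2)})\to 0$.
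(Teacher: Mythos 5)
Your proposal is correct and follows essentially the same route as the paper: the same decomposition $\mathcal{P}^{\vep}_{\ul{0}} = -\langle\nabla\theta_{N^{\vep}},\widehat V(\dd u)\rangle + S^{\vep}[u]$ from Proposition \ref{prop: linearised operator}, the same region-by-region pointwise bounds via Lemma \ref{lem: angle and mean curvature in torus case} and Assumption \ref{assumption on epsilon} (with $\tau$ small forcing $m(1-\tau)-\tau\geq m-1$), and the same weighting argument $t^{\mu}\rho^{\nu+2}|\mathcal{P}^{\vep}_{\ul{0}}[u]|\leq C\vep^{m-1}\rho\,(t^{\mu}\rho^{\nu+1}|\dd u|)\leq C\Lambda^{\frac{1-m}{m-2}}\|u\|_{P^{1,2,\alpha}_{\mu,\nu,\Lambda}}$. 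Your treatment of the H\"older seminorms is a sketch, but so is the paper's, which simply invokes $|\nabla\dd\theta_{N^{\vep}}| = O(\vep^{m(1-\tau)-2\tau})$ and $|\partial_t\nabla\theta_{N^{\vep}}| = O(\vep^{m-2+\tau})$ — exactly the bounds your bookkeeping would use, and indeed the source of the threshold $\tau<\tfrac{1}{m+2}$.
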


\begin{proof}
By Proposition~\ref{prop: linearised operator} we have
    \begin{align*}
    \mathcal{L}^{\vep}_{\ul{0}}[u] = \Delta_{g^{\vep}}u-\langle\nabla\theta_{N^{\vep}}, \widehat{V}_{\ul{0}}(\dd u)\rangle_{g^{\vep}} + S^{\vep}[u],
\end{align*}
where $S^{\vep}[u]$ is a first order linear differential operator defined by
\begin{align*}
    S^{\vep}[u] &= \begin{cases}
	(2\log\varepsilon_(t))'(\dd\bt_{L})(\dd u^{V}) & \text{on }P ~, \\
	\vep'(t)\cdot \frac{\pl_\vep\kp_{\vep(t)}}{\pl_r\kp_{\vep(t)}}\cdot \pl_ru & \text{on }Q ~, \\
	0 & \text{on }\out_{\,b} ~.
\end{cases}
\end{align*}
Hence,
\begin{align*}
    \mathcal{P}^{\vep}_{\ul{0}}[u] = -\langle\nabla\theta_{N^{\vep}}, \widehat{V}_{\ul{0}}(\dd u)\rangle_{g^{\vep}} + S^{\vep}[u] ~.
\end{align*}
By Lemma~\ref{lem: angle and mean curvature in torus case},
\begin{align*}
    |\langle\nabla\theta_{N^{\vep}}, \widehat{V}_{\ul{0}}(\dd u)\rangle_{g^{\vep}}|\leq C\vep(t)^{m(1-\tau)-\tau}|\dd u|_{g^{\vep}},\quad\mbox{on}\;Q ~,
\end{align*}
By Assumption~\ref{assumption on epsilon}, we have
\begin{align*}
    |S^{\vep}[u]| \leq C\vep(t)^{m-1}|\dd u|_{g^{\vep}},\quad\mbox{on}\;P\cup Q ~.
\end{align*}
Combining these together and using the assumption $\tau<\frac{1}{m+2}$ yields
\begin{align*}
    |\mathcal{P}^{\vep}_{\ul{0}}[u]|\leq C\:\vep(t)^{m-1}|\dd u|_{g^{\vep}}.
\end{align*}
From this, we further estimate:
\begin{align*}
    t^{\mu}\rho_{\vep}^{\nu+2}|\mathcal{P}^{\vep}_{\ul{0}}[u]|\leq C\vep(t)^{m-1}\rho_{\vep}\left(t^{\mu}\rho_{\vep}^{\nu+1}|\dd u|_{g^{\vep}}\right)\leq C\Lambda^{\frac{1-m}{m-2}}\|u\|_{P^{1, 2, \alpha}_{\mu, \nu, \Lambda}}
\end{align*}
if $\tau\in(0, \frac{1}{m+2})$. The H\"older norm estimate follows similarly, by using $|\nabla \dd \theta_{N^{\vep}}| = O(\vep^{m(1-\tau)-2\tau})$ and $|\partial_{t}\nabla\theta_{N^{\vep}}| = O(\vep^{m-2+\tau})$ from Lemma \ref{lem: angle and mean curvature in torus case}.
\end{proof}

We now combine these estimates to deduce the estimates and existence theory for $\partial_t - \mathcal{L}_{\ul o}^\vep$.

\begin{theorem}\label{linear theory}
	Given $\mu>0$, $\nu\in(0, m-2)$, $\alpha\in(0, \frac{1}{2})$, $\tau\in(0, \frac{1}{m+2})$, there exists $\Lambda\gg 1$ with the following significance. Given $\psi\in P^{0, 0, \alpha}_{\mu, \nu+2, \Lambda}$, there exists a unique $u\in P^{1, 2, \alpha}_{\mu, \nu, \Lambda}\cap\langle 1, w^{\vep}\rangle^{\perp}$ and $a, b:[\Lambda, \infty)\to\mathbb{R}$ such that
	\begin{align}\label{full linear eq}
		\begin{cases}
		\partial_{t}u - \mathcal{L}^{\vep}_{\ul{0}}[u] = \psi + a(t)+b(t)w^{\vep},\quad t\in[\Lambda, \infty),\\
		u(x, \Lambda) = 0,\quad x\in\ul{N},
	\end{cases}
	\end{align}
	and $u$ satisfies the a priori estimate
	\begin{align}
	\|u\|_{P^{1, 2, \alpha}_{\mu, \nu, \Lambda}}\leq C\|\psi\|_{P^{0, 0, \alpha}_{\mu, \nu+2, \Lambda}}	\label{eq-linear theory apriori estimate}
	\end{align}
	for some $C>0$ independent of $t$.
\end{theorem}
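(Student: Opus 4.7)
The proof proceeds in three steps: establishing the a priori estimate (which yields uniqueness directly), solving on finite time intervals, and passing to the limit.

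\textbf{A priori estimate.} Suppose $u \in P^{1,2,\alpha}_{\mu,\nu,\Lambda} \cap \langle 1, w^{\vep}\rangle^{\perp}$ and $\psi \in P^{0,0,\alpha}_{\mu,\nu+2,\Lambda}$ satisfy~(\ref{full linear eq}). Using the decomposition $\mathcal{L}^{\vep}_{\ul{0}} = \Delta_{g^{\vep}} + \mathcal{P}^{\vep}_{\ul{0}}$ from Lemma~\ref{lem-lower order term estimate}, rewrite~(\ref{full linear eq}) as $\partial_{t}u - \Delta_{g^{\vep}}u = \psi + \mathcal{P}^{\vep}_{\ul{0}}[u] + a(t) + b(t)w^{\vep}$. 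Since $u\in\langle 1,w^{\vep}\rangle^{\perp}$, Corollary~\ref{weighted schauder estimate} applies and gives
\begin{align*}
\|u\|_{P^{1,2,\alpha}_{\mu,\nu,\Lambda}} \leq C\bigl(\|\psi\|_{P^{0,0,\alpha}_{\mu,\nu+2,\Lambda}} + \|\mathcal{P}^{\vep}_{\ul{0}}[u]\|_{P^{0,0,\alpha}_{\mu,\nu+2,\Lambda}} + \|a\|_{P^{0,0,\alpha}_{\mu,\nu+2,\Lambda}} + \|b w^{\vep}\|_{P^{0,0,\alpha}_{\mu,\nu+2,\Lambda}}\bigr).
\end{align*}
Next, integrating~(\ref{full linear eq}) against $1$ and $w^{\vep}$ and using the time-derivatives of the orthogonality conditions produces formulae for $a(t), b(t)$ identical to those of Lemma~\ref{lem-a and b estimate} but with $\psi$ replaced by $\psi + \mathcal{P}^{\vep}_{\ul{0}}[u]$; the same proof then yields
\begin{align*}
\|a\|_{P^{0,0,\alpha}_{\mu,\nu+2,\Lambda}} + \|bw^{\vep}\|_{P^{0,0,\alpha}_{\mu,\nu+2,\Lambda}} \leq C\bigl(\|\psi\|_{P^{0,0,\alpha}_{\mu,\nu+2,\Lambda}} + \|\mathcal{P}^{\vep}_{\ul{0}}[u]\|_{P^{0,0,\alpha}_{\mu,\nu+2,\Lambda}}\bigr) + o_{\Lambda}(1)\|u\|_{P^{1,2,\alpha}_{\mu,\nu,\Lambda}}.
\end{align*}
Finally, Lemma~\ref{lem-lower order term estimate} controls $\|\mathcal{P}^{\vep}_{\ul{0}}[u]\|_{P^{0,0,\alpha}_{\mu,\nu+2,\Lambda}}$ by $C(\Lambda)\|u\|_{P^{1,2,\alpha}_{\mu,\nu,\Lambda}}$ with $C(\Lambda)\to 0$. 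Choosing $\Lambda$ large enough that all $u$-coefficients on the right sum to less than $\tfrac{1}{2}$, we absorb them into the left and obtain~(\ref{eq-linear theory apriori estimate}). Uniqueness is immediate upon applying the estimate to the difference of two solutions with $\psi = 0$.

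\textbf{Existence on finite intervals.} On a finite interval $[\Lambda, T]$, differentiate the two orthogonality conditions in $t$ and substitute~(\ref{full linear eq}); since $\int_{\stm} w^{\vep}\,\dd V_{g^{\vep}} = 0$ by the normalisation~(\ref{eq-normalisedapproxkernel}), this yields a diagonal $2\times 2$ linear system for $(a_{T}(t), b_{T}(t))$ with matrix $\mathrm{diag}(\mathrm{Vol}(N^{\vep}), \|w^{\vep}\|_{L^{2}}^{2})$, uniformly invertible in $t$. Solving for $(a_{T}, b_{T})$ as functionals of $u_{T}$ reduces the coupled system to a single parabolic PDE for $u_{T}$ with smooth coefficients and non-local lower-order perturbations; since $\mathcal{L}^{\vep}_{\ul{0}}$ is uniformly elliptic on the compact manifold $\stm$, standard parabolic Schauder theory on $\stm \times [\Lambda, T]$ produces a unique $u_{T} \in C^{1,2,\alpha}(\stm \times [\Lambda, T]) \cap \langle 1, w^{\vep}\rangle^{\perp}$ with $u_{T}(\cdot,\Lambda) = 0$.

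\textbf{Passage to the limit.} Fix $T_{0} > \Lambda$ and let $T_{k} \to \infty$. Interior parabolic Schauder estimates applied on $\stm \times [\Lambda, T_{0}]$ yield bounds on $u_{T_{k}}$ uniform in $T_{k} > T_{0}$; an Arzel\`a--Ascoli and diagonal argument then extracts a subsequential $C^{1,2,\alpha'}_{\mathrm{loc}}$-limit $u$ (for any $\alpha' < \alpha$) lying in $\langle 1, w^{\vep}\rangle^{\perp}$ and solving~(\ref{full linear eq}) on $\stm \times [\Lambda, \infty)$ with $u(\cdot, \Lambda) = 0$, together with associated $a, b$ obtained as the limits of $a_{T_{k}}, b_{T_{k}}$. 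The a priori estimate of Step~1 now applies to $u$ and gives $u \in P^{1,2,\alpha}_{\mu,\nu,\Lambda}$ together with the claimed bound~(\ref{eq-linear theory apriori estimate}). The heart of the argument---and its only real obstacle---is the absorption step: the constants appearing in the projection bounds on $a, b$ and in the bound on $\mathcal{P}^{\vep}_{\ul{0}}$ must all tend to zero as $\Lambda \to \infty$. This smallness ultimately derives from the sharp pointwise estimates of Lemma~\ref{lem-approxkernelestimates} for $\partial_{t}w^{\vep}$ and $\Delta_{g^{\vep}}w^{\vep}$---nontrivial only on the narrow neck-transition annulus $\overline{\kp}_{\vep}^{-1}(\Sigma\times(\vep R_{1}, \vep^{\tau}))$---together with Assumption~\ref{assumption on epsilon} on the decay rate of $\vep(t)$.
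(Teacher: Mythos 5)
Your argument is correct and rests on exactly the same ingredients as the paper's proof: the Schauder estimate of Corollary~\ref{weighted schauder estimate} (hence Theorem~\ref{weighted sup norm estimate}), the projection bounds of Lemma~\ref{lem-a and b estimate}, and the smallness of $\mathcal{P}^{\vep}_{\ul{0}}$ from Lemma~\ref{lem-lower order term estimate}. The differences are organisational rather than substantive. Where you absorb the $\mathcal{P}^{\vep}_{\ul{0}}$, $a$, $b$ terms directly into the left-hand side for $\Lambda$ large, the paper first proves that the map $\mathcal{A}^{\vep}_{\ul{0}}:(u,a,b)\mapsto \partial_t u-\Delta_{g^{\vep}}u-a-bw^{\vep}$ is an isomorphism with inverse bounded uniformly in $\Lambda$, and then treats $\mathcal{L}^{\vep}_{\ul{0}}=\mathcal{A}^{\vep}_{\ul{0}}-\mathcal{P}^{\vep}_{\ul{0}}$ by a Neumann-series perturbation; these are two standard packagings of the same absorption. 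Likewise, your reduction via the diagonal $2\times 2$ system for $(a,b)$ (using $\int w^{\vep}\,\dd V_{g^{\vep}}=0$ from (\ref{eq-normalisedapproxkernel})) is precisely the paper's modified equation (\ref{eq-fvepu problem}) with the orthogonality-preserving term $F^{\vep}[u]$ and projected datum $\psi^{\perp}$, solved on finite time intervals by standard parabolic theory; your explicit exhaustion $T_k\to\infty$ makes precise a passage the paper leaves implicit ("the estimate holds for any $T>0$").

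One point to tighten: in the limiting step you apply the Step~1 a priori estimate to the limit $u$, but that estimate was derived under the standing hypothesis $u\in P^{1,2,\alpha}_{\mu,\nu,\Lambda}$, i.e.\ with the weighted norm already finite, so as written the conclusion $u\in P^{1,2,\alpha}_{\mu,\nu,\Lambda}$ is circular (if the norm were infinite the absorption inequality is vacuous). The standard fix, which is also what the paper's phrasing amounts to, is to run the estimate on the approximants: on each compact slab $\stm\times[\Lambda,T_k]$ all weighted norms of $u_{T_k}$ are automatically finite, the blow-up proof of Theorem~\ref{weighted sup norm estimate} already works with suprema over $[\Lambda,t_j]$ and hence gives a finite-interval estimate with constant independent of $T_k$, and the same is true of Lemmas~\ref{lem-a and b estimate} and~\ref{lem-lower order term estimate}; the uniform bound $\|u_{T_k}\|_{P^{1,2,\alpha}_{\mu,\nu}([\Lambda,T_k])}\leq C\|\psi\|_{P^{0,0,\alpha}_{\mu,\nu+2,\Lambda}}$ then passes to the $C^{1,2,\alpha'}_{\mathrm{loc}}$ limit and yields (\ref{eq-linear theory apriori estimate}). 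With that adjustment the proof is complete.
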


\begin{proof}
First, we claim that, given $\psi\in P^{0, 0, \alpha}_{\mu, \nu+2, \Lambda}$, there exists $\Lambda_{0}\gg 1$ such that for each $\Lambda\geq \Lambda_{0}$, there exists a unique $u:\ul{N}\times[\Lambda, \infty)\to\mathbb{R}$ solving
\begin{align}\label{Brendle's trick}
	\begin{cases}
		\partial_{t}u - \Delta_{g^{\vep}}u = \psi + a(t) + b(t)w^{\vep} \quad t\in[\Lambda, \infty),\\
		u(x, \Lambda) = 0,\quad x\in\ul{N},
	\end{cases}
\end{align}
with estimate $\|u\|_{P^{1, 2, \alpha}_{\mu, \nu, \Lambda}}\leq C\|\psi\|_{P^{0, 0, \alpha}_{\mu, \nu+2, \Lambda}}$, where $C>0$ is independent of $\Lambda$.

For this purpose, define a zeroth order operator
\begin{align*}
    F^{\vep}[u] := \frac{1}{\|w^{\vep}\|^{2}_{L^{2}}}\left\{\int_{\ul{N}}u\cdot(\partial_{t}w^{\vep} + \Delta_{g^{\vep}}w^{\vep})\:\dd V_{g^{\vep}} + \int_{\ul{N}}u\cdot w^{\vep}\:\partial_{t}\dd V_{g^{\vep}}\right\} + \frac{1}{{\rm Vol}(N^{\vep})}\int_{\ul{N}}u\:\partial_{t}\dd V_{g^{\vep}}.
\end{align*}
Note that $F^{\vep}[u]$ encodes how the orthogonality condition is changed in time. Let
\begin{align*}
	\psi^{\perp} := \psi - \frac{1}{{\rm Vol}(N^{\vep})}\int_{\ul{N}}\psi\:\dd V_{g^{\vep}} - \frac{1}{\|w^{\vep}\|^{2}_{L^{2}}}\int_{\ul{N}}\psi\cdot w^{\vep}\:\dd V_{g^{\vep}}\cdot w^{\vep}.
\end{align*} 
By standard parabolic theory, there exists $u:\ul{N}\times[\Lambda, \Lambda + T]\to\mathbb{R}$ solving
\begin{align}\label{eq-fvepu problem}
	\begin{cases}
		\displaystyle\partial_{t}u - \Delta_{g^{\vep}}u + F^{\vep}[u] = \psi^{\perp},\quad t\in[\Lambda, \Lambda+T],\\
		u(x, \Lambda) = 0,\quad x\in\ul{N}.
	\end{cases}
\end{align}
 Letting
\begin{align*}
	&a(t) = \frac{1}{{\rm Vol}(N^{\vep})}\int_{\ul{N}}\left(\partial_t u - \psi\right)\:\dd V_{g^{\vep}},\\
	&b(t) = \frac{1}{\|w^{\vep}\|^{2}_{L^{2}}}\int_{\ul{N}}\left(\partial_{t}u - \Delta_{g^{\vep}}u - \psi\right)\cdot w^{\vep}\:\dd V_{g^{\vep}},
\end{align*}
it follows that the triple $(u, a, b)$ solves
\begin{align*}
	\partial_{t}u - \Delta_{g^{\vep}}u = \psi + a(t) + b(t)w^{\vep},
\end{align*}
and by Corollary \ref{weighted schauder estimate} and Lemma \ref{lem-a and b estimate}, for $\Lambda>0$ large enough, the estimate $\|u\|_{P^{1, 2, \alpha}_{\mu, \nu, \Lambda}}\leq C\|\psi\|_{P^{0, 0, \alpha}_{\mu, \nu+2, \Lambda}}$ holds, for any $T>0$. It follows that the operator
\begin{align*}
	\mathcal{A}^{\vep}_{\ul{0}}:(u, a(t), b(t))\longmapsto \partial_{t}u - \Delta_{g^{\vep}}u - a(t) - b(t)w^{\vep}
\end{align*}
as a bounded operator from $P^{1, 2, \alpha}_{\mu, \nu, \Lambda}\cap\langle 1, w^{\vep}\rangle^\perp\times C^{0, \alpha}_{\mu, \Lambda}\times C^{0, \alpha}_{\mu, \Lambda}$ to $P^{0, 0, \alpha}_{\mu, \nu+2, \Lambda}$ is a linear isomorphism whose inverse is bounded by $C$, which is independent of $\Lambda\geq \Lambda_{0}$. This proves the claim.

Now, our goal is to show that $\mathcal{L}^{\vep}_{\ul{0}} = \mathcal{A}^{\vep}_{\ul{0}} - \mathcal{P}^{\vep}_{\ul{0}}$ is invertible. Write
\begin{align*}
	\mathcal{A}^{\vep}_{\ul{0}} - \mathcal{P}^{\vep}_{\ul{0}} = \mathcal{A}^{\vep}_{\ul{0}}\left(\mathcal{I} - (\mathcal{A}^{\vep}_{\ul{0}})^{-1}\mathcal{P}^{\vep}_{\ul{0}}\right),
\end{align*}
where $\mathcal{I}$ is the identity operator in $P^{1, 2, \alpha}_{\mu, \nu, \Lambda}\cap\langle 1, w^{\vep}\rangle\times C^{0, \alpha}_{\mu, \Lambda}\times C^{0, \alpha}_{\mu, \Lambda}$. Since by Lemma \ref{lem-lower order term estimate} $\|(\mathcal{A}^{\vep}_{\ul{0}})^{-1}\mathcal{P}^{\vep}_{\ul{0}}\|\to 0$ as $\Lambda\to\infty$, it follows that $\mathcal{I} - (\mathcal{A}^{\vep}_{\ul{0}})^{-1}\mathcal{P}^{\vep}_{\ul{0}}$ is invertible for large $\Lambda>0$. Hence, $\mathcal{A}^{\vep}_{\ul{0}} - \mathcal{P}^{\vep}_{\ul{0}}$ is invertible for large $\Lambda>0$.
\end{proof}

\section{Estimates for the Error Terms} \label{sec-estimate-error}

In this section, we provide pointwise estimates for the zeroth order term, $\theta_{N^\vep} + \xi(0)$ and the quadratic term $Q^\vep[\dd u]$, which will be utilised in the iteration scheme of section \ref{sec-iteration}. We also estimate the projection of the zeroth order term onto the approximate kernel, whose dominant term provides the approximate ODE that $\vep(t)$ should satisfy.

\subsection{The Zeroth Order Error}

The main zeroth order error estimate is the following.
\begin{proposition}\label{prop: zeroth order error}
Assume that the constants $\mu>0$, $\nu\in(0, m-2)$, $\alpha\in(0, \frac{1}{2})$ and $\tau\in(0, \frac{1}{2})$ satisfy the relation
\begin{align}\label{equation: assumption on constants}
    \tau>\frac{2\alpha}{m+1+2\alpha},\quad\frac{\nu+2}{m-2}<\mu<\frac{1}{m-2}(\tau(\nu+2)+(1-\tau)m).
\end{align}
Then, we have
\begin{align}
    \lim_{\Lambda\to\infty}\|\theta_{N^{\vep}} + \xi(0)\|_{P^{0, 0, \alpha}_{\mu, \nu+2, \Lambda}} = 0.
\end{align}
Precisely, we have the following bounds in terms of $\Lambda$:
	\begin{align}
		\|\theta_{N^\vep}\|_{P^{0,0,\alpha}_{\mu,\nu+2,\Lambda}} \leq C\Lambda^{\mu - \frac{1}{m-2}\left( \tau(\nu+2)+(1-\tau)m\right)}, \quad \|\xi(0)\|_{P^{0,0,\alpha}_{\mu,\nu+2,\Lambda}} \leq C\Lambda^{\mu-\frac{1}{m-2}\left( m - 2\alpha \right)},
	\end{align}
 for some $C>0$ independent of $\Lambda$.
\end{proposition}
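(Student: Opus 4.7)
The plan is to estimate $\theta_{N^\vep}$ and $\xi(0)$ separately, by decomposing $\stm$ into the tip, intermediate, and outer regions (as in Definitions \ref{static_mfd} and \ref{desing}) and bounding the sup norm, spatial H\"older seminorm, and time H\"older seminorm of each piece using the structure results of Section \ref{sec-desingularisation} together with the estimates of Section \ref{sec-exist-torus} (Lemmas \ref{lem-volumeestimates}, \ref{lem-approxkernelestimates}, \ref{lem: angle and mean curvature in torus case}) and Assumption \ref{assumption on epsilon}.

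For the Lagrangian angle $\theta_{N^\vep}$, the key input is Lemma~\ref{lem: angle and mean curvature in torus case}, which tells us that $\theta_{N^\vep}$ vanishes off $\Sigma\times(\vep^{\tau},2\vep^{\tau})$, and on this annulus $|\nabla^{k}\theta_{N^\vep}|=O(\vep^{m(1-\tau)-k\tau})$ for $k\in\{0,1,2\}$. Since on this annulus the weight satisfies $\rho_{\vep}\sim \vep^{\tau}$, the sup norm contribution is bounded by $t^{\mu}\vep^{\tau(\nu+2)+m(1-\tau)}\sim t^{\mu-\frac{1}{m-2}(\tau(\nu+2)+(1-\tau)m)}$, yielding the stated power of $\Lambda$. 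For the spatial H\"older seminorm one uses the interpolation $|\theta(x_{1})-\theta(x_{2})|\leq |\nabla\theta|_\infty\cdot d(x_{1},x_{2})$ together with the fact that the annulus has diameter $O(\vep^{\tau})$, so the seminorm reduces to the same scale. For the time H\"older seminorm, we differentiate the construction in Lemma~\ref{lem: angle and mean curvature in torus case} parametrically in $\vep$ to obtain $|\partial_{t}\theta_{N^\vep}|=|\partial_{\vep}\theta_{N^\vep}|\,|\vep'(t)|=O(\vep^{m-1}\vep^{m(1-\tau)-\tau-1})$ on the annulus, and combine this with the H\"older derivative bound on $\vep'$ from Assumption~\ref{assumption on epsilon} via $|\slashed\partial^{\alpha}_{t_{1},t_{2}}f|\leq |\partial_{t}f|_\infty\, |t_{1}-t_{2}|^{1-\alpha}\leq |\partial_{t}f|_\infty\vep^{2(1-\alpha)}$.

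For $\xi(0)$, we use its piecewise definition \eqref{l.o.t} together with the explicit choice of constants \eqref{choice of constants in l.o.t}. On $\out_{b}$, $\xi(0)=C_{\out_{b}}(t)$ is a spatial constant of order $|\vep^{2}(t)'|=O(\vep^{m})$; since the weight is bounded below by a constant here, the sup contribution is $O(t^{\mu-m/(m-2)})$, and the time H\"older seminorm, using $|\slashed\partial^{\alpha}(\vep\vep')|\leq O(\vep^{m-2\alpha})$ from Assumption~\ref{assumption on epsilon}, produces the sharper exponent $\mu-\tfrac{m-2\alpha}{m-2}$ that appears in the statement. On the tip region $P$, $\xi(0)=(\vep^{2})'\,\bt_{L}\circ\pi + C_{P}(t)$, with $\bt_{L}$ uniformly bounded on $P$; combining $|\xi(0)|=O(\vep^{m})$ with weight $\rho_{\vep}\sim\vep$ gives a bound of order $t^{\mu-(m+\nu+2)/(m-2)}$, which is dominated by the outer contribution. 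On $Q^{\pm}$, $\xi(0)=-\vep'(t)\cdot(\pl_{\vep}\ptq_{\vep})\circ\bar{\kp}_{\vep}+C_{Q^{\pm}}(t)$; we compute $\pl_{\vep}\ptq_{\vep}$ explicitly from \eqref{int_potential}, split into the inner subregion $\Sigma\times(\vep R_{1},\vep^{\tau})$ (where $\chi\equiv 0$ and only the Lawlor piece contributes, giving $|\pl_{\vep}\ptq_{\vep}|=O(\vep^{m-1}\ir^{2-m})$ via the rates $|\nabla^{\ell}\ptl|=O(r^{2-m-\ell})$) and the matching subregion $\Sigma\times(\vep^{\tau},2\vep^{\tau})$ (where the $\chi'$ term is controlled using $|\ptx|=O(\ir^{3})$ and the Lawlor rate). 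In both subregions the resulting weighted bounds are dominated by the outer region contribution.

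The main obstacle is tracking the time H\"older seminorm of $\theta_{N^\vep}$ on the matching annulus $\Sigma\times(\vep^{\tau},2\vep^{\tau})$, where one must simultaneously use the spatial decay, the time derivative, and the parabolic H\"older interpolation; this is precisely where the condition $\tau>\tfrac{2\alpha}{m+1+2\alpha}$ in \eqref{equation: assumption on constants} is required, ensuring the time H\"older contribution does not dominate the sup norm contribution on that annulus. Assembling the region-by-region estimates, taking the worst exponent among them, and checking that the hypotheses on $(\mu,\nu,\alpha,\tau)$ render every exponent of $\Lambda$ strictly negative yields the quantitative bounds and hence the convergence $\|\theta_{N^{\vep}}+\xi(0)\|_{P^{0,0,\alpha}_{\mu,\nu+2,\Lambda}}\to 0$ as $\Lambda\to\infty$.
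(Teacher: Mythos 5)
Your proposal is correct and follows essentially the same route as the paper: region-by-region weighted estimates, using Lemma \ref{lem: angle and mean curvature in torus case} for the sup and spatial H\"older seminorms of $\theta_{N^{\vep}}$, the explicit formula \eqref{l.o.t} with the constants \eqref{choice of constants in l.o.t} for $\xi(0)$ (whose dominant contribution is indeed the time-H\"older regularity of $(\vep^2)'$, giving the exponent $\mu-\frac{m-2\alpha}{m-2}$), and Assumption \ref{assumption on epsilon} for the time seminorms. The only point where the paper is more careful is the time-H\"older bound for $\theta_{N^{\vep}}$: rather than asserting a bound on $\partial_\vep\theta_{N^{\vep}}$ (which is not contained in Lemma \ref{lem: angle and mean curvature in torus case} as stated), it writes the time difference as an integral along a path in $t$ and differentiates $\dd\ptq_{\vep(s)}\circ\bar{\kp}_{\vep(s)}$ explicitly; your asserted bound $|\partial_\vep\theta_{N^{\vep}}|=O(\vep^{m(1-\tau)-\tau-1})$ is in fact a safe overestimate of what that computation yields, so your step is sound but should be justified by this explicit differentiation of the interpolating potential.
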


\begin{remark}
    Notice that $\tau(\nu+2)+(1-\tau)m - (\nu+2) = (1-\tau)(m-2-\nu) > 0$, and the ranges for the constants in (\ref{equation: assumption on constants}) are non-empty.
\end{remark}

\begin{proof}
We first estimate the H\"older norms of $\theta_{N^{\vep}}$. By construction, it suffices to consider the transition region $\overline{\kappa_{\vep}}^{-1}(\Sigma\times(\vep^{\tau}, 2\vep^{\tau}))$.

By Lemma~\ref{lem: angle and mean curvature in torus case}, we have for $x, x'\in \overline{\kappa}_{\vep}^{-1}(\Sigma\times(\vep^{\tau}, 2\vep^{\tau}))$,
	\begin{align*}
		t^{\mu}\rho_{\vep}^{\nu+2}(x, t)|\theta_{N^{\vep}}|(x, t)\leq C\:t^{\mu}\vep(t)^{\tau(\nu+2)}\vep(t)^{(1-\tau)m} \leq C\:t^{\mu - \frac{1}{m-2}(\tau(\nu+2)+(1-\tau)m)},
	\end{align*}
 and similarly, using $|\dd\theta_{N^{\vep}}|\leq C\vep(t)^{(1-\tau)m-\tau}$,
 \begin{align*}
     t^{\mu}\rho_{\vep}^{\nu+2+2\alpha}(x, t)\frac{|\theta_{N^{\vep}}(x, t) - \theta_{N^{\vep}}(x', t)|}{d_{g^{\vep}}(x, x')^{2\alpha}}&\leq C\:t^{\mu}\vep(t)^{\tau(\nu+2+2\alpha)}\cdot\vep(t)^{(1-\tau)m-\tau}\cdot \vep(t)^{\tau(1-2\alpha)}\\
     &\leq C\:t^{\mu - \frac{1}{m-2}(\tau(\nu+2)+(1-\tau)m)}.
 \end{align*}
On the other hand, for $t_2>t_1$, $t_1, t_2\in[t, 2t]$, $0<|t_1 - t_2|<t^{-\frac{2}{m-1}}$, and $x = (\sigma, r)\in \bigcup_{t_1<t<t_2}\overline{\kappa}_{\vep}^{-1}(\Sigma\times(\vep^{\tau}(t), 2\vep^{\tau}(t)))$,
\begin{align}\label{equation: time-derivative of angle}
    &\theta_{N^{\vep}}(x, t_1) - \theta_{N^{\vep}}(x, t_2)\nonumber\\
    &= (\theta\circ\Phi_{C})((\sigma, \kappa_{\vep(t_1)}(r)), \dd\mathfrak{Q}_{\vep(t_1)}(\sigma, \kappa_{\vep(t_1)}(r))) - (\theta\circ\Phi_{C})((\sigma, \kappa_{\vep(t_2)}(r)), \dd\mathfrak{Q}_{\vep(t_2)}(\sigma, \kappa_{\vep(t_2)}(r)))\nonumber\\
    &=\int_{s=0}^{s=1}\frac{\dd}{\dd s}(\theta\circ\Phi_{C})((\sigma, \kappa_{\vep(s)}(r)), \dd\mathfrak{Q}_{\vep(s)}(\sigma, \kappa_{\vep(s)}(r)))\:\dd s,
\end{align}
where $\vep(s) := \vep(s t_2 + (1-s) t_1)$. Using $\vep'(s) \leq C\vep(t)^{m-1}|t_1 - t_2|$ and $\partial_{\vep}\kappa_{\vep}(r) \leq \frac{\kappa_{\vep}(r)}{\vep}\leq\vep^{\tau-1}$, we deduce that
\begin{align*}
    \partial_{s}\dd\mathfrak{Q}_{\vep(s)}(\sigma, \kappa_{\vep(s)}(r)) = O(\vep(t)^{m+(1-\tau)(m-2)})\cdot|t_1 - t_2|,
\end{align*}
measuring by the induced metric $g^{\vep}_{\ul{0}}$. Inserting this into (\ref{equation: time-derivative of angle}) gives
\begin{align*}
    |\theta_{N^{\vep}}(x, t_1) - \theta_{N^{\vep}}(x, t_2)|\leq C\:\vep(t)^{m-2+\tau}|t_1 - t_2|.
\end{align*}
Thus,
\begin{align*}
    t^{\mu}\rho_{\vep(t)}^{\nu+2+2\alpha}(x)\frac{|\theta_{N^{\vep}}(x, t_1) - \theta_{N^{\vep}}(x, t_2)|}{|t_1 - t_2|^{\alpha}}\leq C\:t^{\mu-\frac{1}{m-2}(\tau(\nu+2)+m+\tau-2\alpha(1-\tau))}.
\end{align*}
 
Putting these together, we obtain
\begin{align*}
     \|\theta_{N^{\vep}}\|_{P^{0, 0, \alpha}_{\mu, \nu+2, \Lambda}}\leq C\:\Lambda^{\mu-\frac{\tau(\nu+2)+(1-\tau)m}{m-2}} = o(1),\quad\mbox{as}\;\Lambda\to\infty.
\end{align*}
if $\tau>\frac{2\alpha}{m+1+2\alpha}$ and  $\mu<\frac{\tau(\nu+2)+(1-\tau)m}{m-2}$.

For $\xi(0)$, it is not hard to deduce from \eqref{choice of constants in l.o.t} and \eqref{l.o.t} that $\|\xi(0)\|_{P^{0, 0, \alpha}_{\mu, \nu+2, \Lambda}}\leq C\:\Lambda^{\mu-\frac{m-2\alpha}{m-2}}$.
This finishes the proof of this proposition.
\end{proof}

\subsection{The Quadratic Error}
Let $Q^{\vep}[\dd u] := -\theta_{N^{\vep}} - {\xi}(0) + \partial_{t}u - \mathcal{L}^{\vep}_{\ul{0}}[u]$ be the quadratic error term. We now estimtate $Q^{\vep}[\dd u]$ in terms of weighted norms of $u$.
\begin{proposition}\label{Prop: quadratic error}
    There is $C>0$ and $\Lambda\gg 1$ such that if $u\in P^{1, 2, \alpha}_{\mu, \nu, \Lambda}$, $t, t_1, t_2\geq \Lambda$ with $0<|t_1 - t_2|<t^{\frac{-2}{m-2}}$, and $x, x_1, x_2 \in\ul{N}$ with $0<d_{g^{\vep}}(x_1, x_2)<\rho_{\vep(t)}(x_1)$, then
    \begin{align}
        |Q^{\vep}[\dd u]|(x, t)&\leq C\|u\|_{P^{1, 2, \alpha}_{\mu, \nu, \Lambda}}\cdot t^{-2\mu}\rho_{\vep(t)}^{-2\nu-4}(x),\label{C0 of Q}\\
        \frac{|Q^{\vep}[\dd u](x_1, t) - Q^{\vep}[\dd u](x_2, t)|}{d_{g^{\vep}}(x_1, x_2)^{2\alpha}}&\leq C\|u\|_{P^{1, 2, \alpha}_{\mu, \nu, \Lambda}}\cdot t^{-2\mu}\rho_{\vep(t)}^{-2\nu-4-2\alpha}(x_1),\label{C^alpha of Q in space}\\
        \frac{|Q^{\vep}[\dd u](x, t_1) - Q^{\vep}[\dd u](x, t_2)|}{|t_1 - t_2|^{\alpha}}&\leq C\|u\|_{P^{1, 2, \alpha}_{\mu, \nu, \Lambda}}\cdot t^{-2\mu}\rho_{\vep(t)}^{-2\nu-4-2\alpha}(x) + Ct^{-\frac{m+2-2\alpha}{m-1}}.\label{C_alpha of Q in time}
    \end{align}
\end{proposition}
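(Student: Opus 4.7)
The quadratic error $Q^{\vep}[\dd u]$ is the second-order Taylor remainder of the right-hand side of the potential LMCF equation \eqref{single_eq} at the zero section; equivalently,
\[
Q^{\vep}[\dd u] \, = \, \big(\theta(\dd u) + \xi(\dd u)\big) - \big(\theta_{N^{\vep}} + \xi(0)\big) - \mathcal{L}^{\vep}_{\ul{0}}[u] \, = \, \int_{0}^{1} (1-s)\,\frac{\dd^{2}}{\dd s^{2}}\Big[\theta(s\,\dd u) + \xi(s\,\dd u)\Big]\,\dd s.
\]
The problem therefore reduces to estimating the second variation of $\theta + \xi$ in the direction $\dd u$ pointwise on $\stm\times[\Lambda,\infty)$, and then controlling its spatial and temporal H\"older differences.

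For the pointwise bound \eqref{C0 of Q} I will estimate region by region. On $\out$, $\xi$ is constant in $\dd u$ and the metric is flat, so $|\ddot\theta(s\,\dd u)| \leq C(|\dd u|^{2} + |\dd u|\,|\nabla^{2} u| + |\nabla^{2} u|^{2})$ by a standard graphical expansion of the Lagrangian angle. On the tip region $P$, the scaling formulas of Corollary~\ref{Lag nbhd scaling} together with the smoothness of $\af_{L}$ yield $|\ddot\theta| + |\ddot\xi| \lesssim |\dd u|^{2} + |\nabla^{2} u|^{2}$ in the $\vep$-rescaled picture, once one uses $(\vep^{2})' = O(\vep^{m})$ from Assumption~\ref{assumption on epsilon} to dominate the nonlinear $\xi$-contribution $(\vep^{2})'\,\af_{L}\circ\dd(\vep^{-2}u)$ by the main term. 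On the intermediate region, the equivariance of $\Phi_{C}$ combined with the interpolation formula \eqref{int_potential} yields an analogous bound. Substituting the weighted estimates $|\nabla^{k} u| \leq \|u\|_{P^{1,2,\alpha}_{\mu,\nu,\Lambda}}\, t^{-\mu}\rho_{\vep}^{-\nu-k}$ for $k=1,2$, the dominant contribution is $|\nabla^{2} u|^{2}$, which produces the announced decay $t^{-2\mu}\rho_{\vep}^{-2\nu-4}$; one factor of $\|u\|_{P^{1,2,\alpha}_{\mu,\nu,\Lambda}}$ is absorbed by the standing smallness assumption on $\|u\|$ in the iteration scheme.

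The spatial H\"older bound \eqref{C^alpha of Q in space} follows from the same Taylor representation by differencing in $x$: the differences either fall on the smooth coefficients (bounded using the geometric estimates of Lemmas~\ref{lem-volumeestimates}, \ref{lem-approxkernelestimates} and~\ref{lem: angle and mean curvature in torus case}) or on $u,\dd u,\nabla^{2} u$ themselves, where the required H\"older regularity is furnished by $\|u\|_{P^{1,2,\alpha}_{\mu,\nu,\Lambda}}$. In either case this costs an additional $\rho_{\vep}^{-2\alpha}$ weight, matching the right-hand side of \eqref{C^alpha of Q in space}.

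The temporal H\"older bound \eqref{C_alpha of Q in time} is the most delicate. The time differences split into (i) a part in which the time variation falls on $u$ (through $\partial_{t}u$, $\partial_{t}\nabla u$ and $\partial_{t}\nabla^{2}u$), recovered from the parabolic content of $\|u\|_{P^{1,2,\alpha}_{\mu,\nu,\Lambda}}$ and producing the first term on the right-hand side; and (ii) a part in which the time variation falls on the ambient structure, i.e.\ on $\Phi_{N^{\vep(t)}}$, $\kp_{\vep(t)}$, $\ptq_{\vep(t)}$ and $\af_{L}\circ f_{\vep(t)}$. The H\"older-in-time bounds on $\vep(t)$ and $\vep'(t)$ from Assumption~\ref{assumption on epsilon}, combined with the explicit $\vep$-scaling of these structures, produce the additive term $t^{-(m+2-2\alpha)/(m-1)}$ after absorbing a factor $\|u\|_{P^{1,2,\alpha}_{\mu,\nu,\Lambda}} \leq 1$. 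The main obstacle will be the intermediate region: the composition $\ptq_{\vep(t)}(\sigma,\kp_{\vep(t)}(r))$ depends on $t$ through both $\vep$ and $\kp_{\vep}$, so applying the chain rule to its second $s$-derivative generates several error terms whose leading contributions must be carefully tracked region-by-region, in essentially the same spirit as the zeroth-order computation of Proposition~\ref{prop: zeroth order error}.
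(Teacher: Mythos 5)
Your proposal is correct and follows essentially the same route as the paper's proof: you expand $\theta+\xi$ to second order about the zero section (the paper splits $Q^{\vep}=Q^{\vep}_{\theta}+Q^{\vep}_{\xi}$ and uses Taylor/mean value arguments rather than the integral remainder, a cosmetic difference), obtain the pointwise and spatial H\"older bounds from the scale-invariance of the Lagrangian angle together with the weighted bounds on $\dd u$ and $\nabla\dd u$, and for the temporal bound split the difference quotient into the variation of the $1$-form $\dd u$ (controlled by the parabolic norm, as in the Joyce--Pacini-type Lipschitz estimate the paper invokes) plus the variation of the $\vep(t)$-dependent structure, which via Assumption \ref{assumption on epsilon} produces the additive term $Ct^{-\frac{m+2-2\alpha}{m-1}}$.
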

\begin{proof}
    Write $Q^{\vep}[\dd u] = Q^{\vep}_{\theta}[\dd u] + Q^{\vep}_{\xi}[\dd u]$, where
    \begin{align*}
        &Q^{\vep}_{\theta}[\dd u] = \theta_{N^{\vep}}[\dd u] - \theta_{N^{\vep}} - \Delta_{N^{\vep}}u + \langle\nabla\theta_{N^{\vep}}, \hat{V}_{\ul{0}}(\dd u)\rangle,\\
        &Q^{\vep}_{\xi}[\dd u] = \xi[\dd u] - \xi(0) - S^{\vep}[u].
    \end{align*}
    We first estimate $Q^{\vep}_{\theta}$. In the tip region, the induced metric is uniformly equivalent to the metric $\vep_{j}^{2}g_{L_{j}}$. Using the scale-invariant property of Lagrangian angle we have
    \begin{align*}
        |Q^{\vep}_{\theta}(x, t, \dd u(x, t), \nabla\dd u(x, t))|\leq C\left(\vep_{j}^{-2}|\dd u|^{2}_{g^{\vep}}+|\nabla\dd u|^{2}_{g^{\vep}}\right),\quad x\in P_{j},
    \end{align*}
    for some $C>0$ independent of $\vep$. Similarly, in the intermediate region, the metric is uniformly equivalent to the cone metric, and the scale-invariant property of Lagrangian angle implies
    \begin{align*}
         |Q^{\vep}_{\theta}((\mathfrak{r}, \sigma), t, \dd u((\mathfrak{r}, \sigma), t), \nabla\dd u((\mathfrak{r}, \sigma), t))|\leq C\left(\mathfrak{r}^{-2}|\dd u|^{2}_{g^{\vep}}+|\nabla\dd u|^{2}_{g^{\vep}}\right),\quad (\mathfrak{r}, \sigma))\in \kappa_{\vep}Q^{\pm}_{j},
    \end{align*}
    for some $C>0$ independent of $\vep$. Combining these estimates yields
    \begin{align*}
        |Q^{\vep}_{\theta}(x, t, \dd u(x, t), \nabla\dd u(x, t))|\leq C\left(\rho_{\vep}^{-2}(x, t)|\dd u(x, t)|^{2}_{g^{\vep}}+|\nabla\dd u(x, t)|^{2}_{g^{\vep}}\right),\quad(x, t)\in \ul{N}\times[\Lambda, \infty).
    \end{align*}
    Multiply both sides by $t^{\mu}\rho_{\vep}^{\nu+2}$ gives
    \begin{align*}
        t^{\mu}\rho_{\vep}^{\nu+2}|Q^{\vep}_{\theta}[\dd u]|&\leq Ct^{\mu}\left(\rho_{\vep}^{\nu}|\dd u|_{g^{\vep}}^{2} + \rho_{\vep}^{\nu+2}|\nabla\dd u|^{2}_{g^{\vep}}\right)\\
        &\leq 2Ct^{-\mu}\rho_{\vep}^{-\nu-2}\|u\|^{2}_{P^{1, 2, \alpha}_{\mu, \nu, \Lambda}} ~.
    \end{align*}

    The estimate for $Q^{\vep}_{\xi}[\dd u]$ follows similarly. From the explicit expression (\ref{l.o.t}), we only need to consider the tip region. By Taylor theorem,
    \begin{align*}
        \xi_{N^{\vep}}[\dd u] = (\vep_{j}^{2})'\left[\alpha_{L}(x, 0) + \partial_{y}\alpha_{L}(x, 0)\cdot\vep_{j}^{-2}\dd u + O(\vep_{j}^{-4}|\dd u|^{2}_{g_{L_{j}}})\right] ~.
    \end{align*}
    It follows that, using $|(\vep_{j}^{2})'|\ll 1$,
    \begin{align*}
        |Q^{\vep}_{\xi}[\dd u](x, t)|\leq C\vep_{j}^{-2}|\dd u(x, t)|^{2}_{g^{\vep}}\leq C\rho_{\vep}^{-2}(x, t)|\dd u(x, t)|^{2}_{g^{\vep}} ~.
    \end{align*}
    Multiply both sides by $t^{\mu}\rho_{\vep}^{\nu+2}$ and estimate as above gives
    \begin{align*}
        t^{\mu}\rho_{\vep}^{\nu+2}|Q^{\vep}_{\xi}[\dd u]|\leq Ct^{-\mu}\rho_{\vep}^{-\nu-2}\|u\|^{2}_{P^{1, 2, \alpha}_{\mu, \nu, \Lambda}} ~.
    \end{align*}
    Combining everything together yields
    \begin{align*}
        |Q^{\vep}[\dd u]|\leq |Q^{\vep}_{\theta}[\dd u]|+|Q^{\vep}_{\xi}[\dd u]|\leq Ct^{-2\mu}\rho_{\vep}^{-2\nu-4} ~.
    \end{align*}
    This proves (\ref{C0 of Q}).

    To prove (\ref{C^alpha of Q in space}), we fix $t\in[\Lambda, \infty)$, and view $\theta_{N^{\vep}}[\dd u](\cdot, t)$ and $\xi[\dd u](\cdot, t)$ as coming from restricting smooth functions
    \begin{align*}
        \Theta(x, y, z),\quad\Xi(x, y),\quad x\in\ul{N},\;y\in T^{*}_{x}\ul{N},\;z\in \textstyle\bigotimes^{2}T^{*}_{x}\ul{N} ~,
    \end{align*}
    to the graph $\{(x, \dd u(x, t), \nabla\dd u(x, t))\::\:x\in\ul{N}\}$, namely, we have
    \begin{align*}
        \theta_{N^{\vep}}[\dd u](x, t) = \Theta(x, \dd u(x, t), \nabla\dd u(x, t)), \quad\xi[\dd u](x, t) = \Xi(x, \dd u(x, t)) ~.
    \end{align*}
    Note that by scale-invariant property we have
    \begin{align*}|\partial_{x}^{a}\partial_{y}^{b}\partial_{z}^{c}\Theta|\leq C\rho_{\vep}^{-a-b},\quad|\partial_{x}^{a}\partial_{y}^{b}\Xi|\leq C\rho_{\vep}^{-a-b},\quad a, b, c\in\mathbb{N}\cup\{0\} ~.
    \end{align*}
    Then a long but straightforward computation using mean value theorem shows that, for $x_1\neq x_2\in\ul{N}$ with $d_{g^{\vep}}(x_1, x_2)<\rho_{\vep}(x_1, t)$, 
    \begin{align*}
        t^{\mu}\rho_{\vep}^{\nu+2+2\alpha}(x_1)\frac{|Q^{\vep}_{\theta}[\dd u](x_1, t) - Q^{\vep}_{\theta}[\dd u](x_2, t)|}{d_{g^{\vep}}(x_1, x_2)^{2\alpha}}\leq Ct^{-\mu}\rho_{\vep}^{-\nu-2}(x_1)\|u\|_{P^{1, 2, \alpha}_{\mu, \nu, \Lambda}}^{2} ~.
    \end{align*}
    Similarly,
    \begin{align*}
        t^{\mu}\rho_{\vep}^{\nu+2+2\alpha}(x_1)\frac{|Q^{\vep}_{\xi}[\dd u](x_1, t) - Q^{\vep}_{\xi}[\dd u](x_2, t)|}{d_{g^{\vep}}(x_1, x_2)^{2\alpha}}\leq Ct^{-\mu}\rho_{\vep}^{-\nu-2}(x_1)\|u\|_{P^{1, 2, \alpha}_{\mu, \nu, \Lambda}}^{2} ~.
    \end{align*}
    Combining these estimates yields (\ref{C^alpha of Q in space}).

    Finally, we prove (\ref{C_alpha of Q in time}). A similar computation to those of Joyce \cite{Joyce2004SLCS3}*{Proposition~5.8} and Pacini \cite{Pacini2013b}*{Proposition~5.6} shows that if $\alpha, \beta$ are small closed $1$-forms on $\ul{N}$, then for each fixed $t$,
    \begin{align*}
          |Q^{\vep}[\alpha] - Q^{\vep}[\beta]|&\leq C\left(\rho_{\vep}^{-1}|\alpha - \beta| + |\nabla(\alpha - \beta)|\right)\left(\rho_{\vep}^{-1}|\alpha|+ \rho_{\vep}^{-1}|\beta| + |\nabla\alpha| + |\nabla\beta|\right).
    \end{align*}
    Letting $\alpha = \dd u(\cdot, t_1)$, $\beta = \dd u(\cdot, t_2)$ yields
    \begin{align*}
        t^{\mu}\rho_{\vep}^{\nu+2+2\alpha}(x, t)\frac{|Q^{\vep(t)}[\dd u(x, t_1)] - Q^{\vep(t)}[\dd u(x, t_2)]|}{|t_1 - t_2|^{\alpha}}\leq Ct^{-\mu}\rho_{\vep}^{-\nu-2}(x, t)\|u\|^{2}_{P^{1, 2, \alpha}_{\mu, \nu, \Lambda}} ~.
    \end{align*}
    On the other hand, by the assumption on $\vep$ we have
    \begin{align*}
        \frac{|Q^{\vep(t_1)}[\dd u(x, t_2)] - Q^{\vep(t_2)}[\dd u(x, t_2)]|}{|t_1 - t_2|}\leq C\sup_{j}|\vep_{j}'(t)\vep_{j}(t)|\leq C t^{-\frac{m}{m-1}} ~.
    \end{align*}
    Combining these estimates, we conclude that for $t_1, t_2\geq\Lambda$ with $0<|t_1 - t_2|<t^{-\frac{2}{m-2}}$,
    \begin{align*}
        \frac{|Q^{\vep}[\dd u](x, t_1) - Q^{\vep}[\dd u](x, t_2)|}{|t_1 - t_2|^{\alpha}}\leq Ct^{-2\mu}\rho_{\vep}^{-2\nu-4-2\alpha}\|u\|^{2}_{P^{1, 2, \alpha}_{\mu, \nu, \Lambda}} + Ct^{-\frac{m+2-2\alpha}{m-1}} ~.
    \end{align*}
    This proves (\ref{C_alpha of Q in time}).
\end{proof}

\subsection{Projection onto the Approximate Kernel}

Finally, we will require the following integral estimates, which are the projection of the zeroth order terms onto the approximate kernel.

\begin{lemma}\label{projection of theta onto 1}
    We have
    \begin{align}
        \int_{\Sigma^{\pm}\times(\vep^{\tau}, 2\vep^{\tau})}\theta_{N^{\vep}}\:\dd V_{N^{\vep}} = \pm \vep^{m} A + O(\vep^{(1+\tau)m}).
    \end{align}
\end{lemma}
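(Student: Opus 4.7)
The plan is to apply Stokes' theorem to reduce the integral of the Lagrangian angle to a topological quantity on the Lawlor neck, namely the $Z$-invariant from Definition~\ref{Z_inv}. Writing $\Om_0|_L = e^{i\ta_L}\dd V_L$ (with the convention consistent with Definition~\ref{Z_inv} that special Lagrangians have $\ta_L = 0$), we have $\im(\Om_0)|_L = \sin(\ta_L)\dd V_L = \ta_L\dd V_L + O(\ta_L^3)\dd V_L$. By Lemma~\ref{lem: angle and mean curvature in torus case}, $|\ta_{N^\vep}| = O(\vep^{m(1-\tau)})$ on the transition region, whose volume (in the induced metric, by Lemma~\ref{lem-volumeestimates}) is $O(\vep^{m\tau})$. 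Hence the error from replacing $\sin\ta$ with $\ta$ in the integral is $O(\vep^{3m(1-\tau) + m\tau}) = O(\vep^{3m-2m\tau})$, which is absorbed in $O(\vep^{(1+\tau)m})$ since $\tau < 1/(m+2) < 2/3$. It therefore suffices to compute $\int_{R^\pm}\im(\Om_0)$, where $R^\pm := \iota^\vep(\Sm^\pm \times (\vep^\tau, 2\vep^\tau))$ is regarded as an open submanifold of $\BC^m$ via the Darboux chart $\Up$ (an isometric embedding in the torus case).

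Introduce the $(m-1)$-form $\beta := \im(z^1\,\dd z^2 \w \cdots \w \dd z^m)$ on $\BC^m$, which is a primitive for $\im(\Om_0)$. By Stokes' theorem,
\[ \int_{R^\pm} \im(\Om_0) \,=\, \int_{\pl_{\mathrm{out}} R^\pm} \beta \,-\, \int_{\pl_{\mathrm{in}} R^\pm} \beta, \]
where $\pl_{\mathrm{out}} R^\pm$ and $\pl_{\mathrm{in}} R^\pm$ are the boundary components at $\ir = 2\vep^\tau$ and $\ir = \vep^\tau$ respectively, oriented as standard $\Sm^\pm$. Using $\chi(1) = \chi'(1) = 0$, $\chi(2) = 1$, $\chi'(2) = 0$, and the fact that $\ptx = 0$ in the torus case, one verifies directly that $\ptq_\vep$ and $\dd\ptq_\vep$ both vanish at $\ir = 2\vep^\tau$, so $\pl_{\mathrm{out}} R^\pm$ lies exactly on $\Pi^0$ (resp.\ $\Pi^{\bphi}$), while $\ptq_\vep(\sm, \vep^\tau) = \vep^2\ptl(\sm, \vep^{\tau-1})$ so $\pl_{\mathrm{in}} R^\pm$ lies exactly on the dilated Lawlor neck $\vep L$.

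A direct computation using $\sum_j \phi_j = \pi$ (from Lemma~\ref{two_planes} with $k=1$, giving $e^{i\sum_j \phi_j} = -1$) shows that $z^1\,\dd z^2 \w \cdots \w \dd z^m$ is purely real on both $\Pi^0$ and $\Pi^{\bphi}$, hence $\beta|_{\Pi^0} = \beta|_{\Pi^{\bphi}} = 0$ and the outer boundary integral vanishes. For the inner integral, since $\vep L$ is special Lagrangian, $\beta|_{\vep L}$ is closed, so the integral over the cycle $\Sm^\pm \times \{\vep^\tau\} \subset \vep L$ equals the topological pairing $Z(\vep L)\cdot [\Sm^\pm]$; by Lemma~\ref{Lawlor_Z} and the scaling noted immediately afterwards, $Z(\vep L)\cdot [\Sm^\mp] = \pm\vep^m A$. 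Combining with the minus sign from the inner boundary orientation and the Taylor error above, one obtains $\int_{\Sm^\pm \times (\vep^\tau, 2\vep^\tau)} \ta_{N^\vep}\,\dd V_{N^\vep} = \pm\vep^m A + O(\vep^{(1+\tau)m})$ as claimed.

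The main technical subtlety is the careful bookkeeping of orientation signs: the outward-normal convention at the inner boundary introduces a sign flip which, combined with the opposite signs of $Z(\vep L)\cdot [\Sm^\pm]$ from Lemma~\ref{Lawlor_Z}, produces the asserted sign $\pm$ on the right-hand side. A minor auxiliary point is that $\Sm^\pm\times\{\vep^\tau\}$ genuinely represents the homology class pairing with $Z(\vep L)\cdot[\Sm^\pm]$, which follows from the closedness of $\beta|_{\vep L}$ and the fact that any two cycles on the same end of $\vep L$ are homologous through a cylindrical cobordism. Otherwise the argument is a clean application of Stokes' theorem with cohomological input from the Lawlor neck's $Z$-invariant.
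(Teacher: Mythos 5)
Your proof is correct and is essentially the paper's own argument written out in full: the paper's proof simply invokes the proof of Joyce's Proposition 7.5 together with the expansion $\ta = \sin\ta + O(\ta^3)$, Lemma \ref{lem: angle and mean curvature in torus case} and Lemma \ref{Lawlor_Z}, and your Stokes computation with the primitive $\im(z^1\,\dd z^2\w\cdots\w\dd z^m)$ (vanishing outer boundary term on $\Pi^0\cup\Pi^{\bphi}$, inner boundary term on $\vep L$ giving the $Z$-invariant pairing, plus the cubic Taylor error estimated via the angle bound and the volume of the transition region) is precisely the content of that citation, specialised to the torus case where $\ptx=0$ makes the boundary slices lie exactly on the cone and on $\vep L$. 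Your resulting error $O(\vep^{3m-2m\tau})$ is sharper than, and hence absorbed by, the stated $O(\vep^{(1+\tau)m})$.
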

\begin{proof}

This follows from the proof of \cite{Joyce2004SLCS3}*{Proposition~7.5} by estimating $\theta_{N^\vep}$ by $\sin(\theta_{N^\vep}) + O(\theta^3)$, and using Lemma \ref{Lawlor_Z} and Lemma \ref{lem: angle and mean curvature in torus case}.
\end{proof}
Applying this Lemma, we have the following projection formula for the zeroth order term.
\begin{proposition} The $L^{2}$ projection of the zeroth order error $\theta_{N^{\vep}}+\xi(0)$ onto the approximate kernel  ${\rm span}_{\mathbb{R}}\{1, w^{\vep}_{(0, 1)}\}$ is given by
    \begin{align}
    \int_{\ul{N}}\left[\theta_{N^{\vep}}+\xi(0)\right]\cdot w^{\vep}_{(0, 1)}\:\dd V_{g^{\vep}} &= c_L\frac{V_{1}V_{2}}{V_{1}+V_{2}}\left\{\frac{\dd\vep^{2}(t)}{\dd t} + \frac{A}{c_L}\frac{V_1 + V_2}{V_1V_2}\vep^{m}(t)\right\} + O(\vep^{(1+\tau)m})~,\label{full projection on to approx kernel}
    \end{align}
    and
    \begin{align}
        \int_{\ul{N}}\left[\theta_{N^{\vep}}+\xi(0)\right]\cdot 1\:\dd V_{g^{\vep}} &= O(\vep^{(1+\tau)m})~. \label{full projection onto 1}
    \end{align}
\end{proposition}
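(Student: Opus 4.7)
The plan is to decompose $\ul N = X^o_1 \cup \widetilde Q^- \cup P \cup \widetilde Q^+ \cup X^o_2$, writing $\widetilde Q^\pm := \Sigma^\pm \times (R_1, (1-\hbar)R_2)$, and to estimate the two integrals piece-by-piece. For the Lagrangian angle contribution, Lemma~\ref{lem: angle and mean curvature in torus case} localises $\theta_{N^\vep}$ to $\Sigma^\pm \times (\vep^\tau, 2\vep^\tau)$, and on this support one checks directly from \eqref{def: approximate kernel} that $w^\vep_{(0,1)} \equiv 1$ on the $\Sigma^+$-component and $\equiv 0$ on the $\Sigma^-$-component, since the cutoff $\chi(2\vep^{-\tau}\kappa_\vep(r))$ there already equals $1$. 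Lemma~\ref{projection of theta onto 1} then yields
\[\int_{\ul N}\theta_{N^\vep}\cdot w^\vep_{(0,1)}\,\dd V_{g^\vep} = \vep^m A + O(\vep^{m(1+\tau)}), \qquad \int_{\ul N}\theta_{N^\vep}\,\dd V_{g^\vep} = O(\vep^{m(1+\tau)}),\]
the second identity using the cancellation $+\vep^m A - \vep^m A = 0$.

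To handle $\xi(0)$, the decisive simplification of the torus case is $\mathfrak{A} \equiv 0$, which means that on $\widetilde Q^\pm$ the map $\iota^\vep$ decomposes cleanly, in the radial coordinate $\ir = \kappa_\vep(r)$, into a flat subset of $\Pi^0$ or $\Pi^{\bphi}$ on $\ir \in (2\vep^\tau, (1-\hbar)R_2)$; a transition on $\ir \in (\vep^\tau, 2\vep^\tau)$; and, via Remark~\ref{dilate_potential}, an $\vep$-scaled portion of the asymptotic end of $L$ on $\ir \in (\vep R_1, \vep^\tau)$. Summing spherical-shell volumes and applying the Jacobian estimates of Lemma~\ref{lem-volumeestimates} gives $\mathrm{vol}_{g^\vep}(X^o_b \cup \widetilde Q^\pm) = V_b + O(\vep^m)$ for $b = 1,2$. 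Since $w^\vep_{(0,1)} = 1$ on $X^o_2 \cup (\widetilde Q^+ \cap \{\ir \geq \vep^\tau\})$ with $\xi(0) = (\vep^2)' c_L V_1/(V_1+V_2)$ there (up to a $-\vep'(t)\partial_\vep \mathfrak{Q}_\vep$ term on $\widetilde Q^+$), while $w^\vep_{(0,1)} = 0$ on the mirrored $X_1$-side, this gives the advertised leading term $(\vep^2)' c_L V_1 V_2/(V_1+V_2)$.

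The remaining error terms must all be $O(\vep^{m(1+\tau)})$. They arise from four sources: (i) the $O(\vep^m)$ volume discrepancies above, multiplied by $(\vep^2)' = O(\vep^m)$, giving $O(\vep^{2m})$; (ii) the subregion $\{\ir < \vep^\tau\} \subset \widetilde Q^\pm$ where $w^\vep_{(0,1)}$ deviates from its asymptotic constant by at most $|\underline\alpha - c_\pm|/c_L = O((\vep^{-1}\ir)^{2-m})$, using the Lawlor neck rate $\gamma = 2-m$ from Proposition~\ref{Lawlor}; the substitution $\ir = \vep y$ yields $\int |w^\vep_{(0,1)} - \text{asymptote}|\,\dd V = O(\vep^{m+2\tau-2})$ and hence $O(\vep^{2m+2\tau-2})$ after multiplying by $(\vep^2)'$; (iii) the $\vep'(t)\partial_\vep \mathfrak{Q}_\vep$ term, for which $|\partial_\vep \mathfrak{Q}_\vep| = O(\vep^{m-1}\ir^{2-m})$ on the tip and $O(\vep^{m-1-\tau(m-2)})$ on the transition, integrating to $O(\vep^{m-1+2\tau})$ and hence $O(\vep^{2m-2+2\tau})$ after multiplying by $\vep'(t) = O(\vep^{m-1})$; (iv) the contribution from $P$, $O(\vep^{2m})$ via $\dd V_{g^\vep} = \vep^m\,\dd V_L$. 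The dominant error is $O(\vep^{2m-2+2\tau})$, and the comparison $2m-2+2\tau \geq m(1+\tau)$ reduces to $(m-2)(1-\tau) \geq 0$, which holds for $m \geq 3$ and $\tau < 1$; this proves \eqref{full projection on to approx kernel}. For \eqref{full projection onto 1}, the analogous computation with $w^\vep_{(0,1)}$ replaced by $1$ produces a leading $\xi(0)$-contribution proportional to $V_1 V_2 - V_2 V_1 = 0$, so the whole integral is absorbed into the same error bounds. The main obstacle throughout is not any single analytic difficulty but the systematic bookkeeping required to match every error source against the tolerance $O(\vep^{m(1+\tau)})$.
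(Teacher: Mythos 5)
Your argument is correct and is essentially the paper's own proof: both reduce the $\theta_{N^{\vep}}$-contribution to Lemma~\ref{projection of theta onto 1} (using that $w^{\vep}_{(0,1)}$ equals $1$, respectively $0$, on the two components of the transition region where $\theta_{N^{\vep}}$ is supported), and both obtain the $\xi(0)$-contribution by integrating the explicit expression with the constants \eqref{choice of constants in l.o.t} region by region, with the same $O(\vep^{(1+\tau)m})$, $O(\vep^{2m})$ and $O(\vep^{2m-2+2\tau})$ error bookkeeping, including the decisive check $(m-2)(1-\tau)\geq 0$. The only minor imprecision is the claim $\mathrm{vol}_{g^{\vep}}(X^{o}_{b}\cup\widetilde{Q}^{\pm})=V_{b}+O(\vep^{m})$, where the metric deviation $O(\ir^{-m}\vep^{m})$ of Lemma~\ref{lem-volumeestimates} can produce an extra factor $|\log\vep|$, but this is harmless since the discrepancy is multiplied by $(\vep^{2})'=O(\vep^{m})$ and so stays far below the tolerance $O(\vep^{(1+\tau)m})$.
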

\begin{proof}
     It follows from these choices that
    \begin{align*}
        \int_{\ul{N}}\xi(0)\cdot w^{\vep}_{(0, 1)}\:\dd V_{g^{\vep}} &= \int_{P}\frac{\dd\vep^{2}(t)}{\dd t}\beta_{L} + \int_{Q^{\pm}}\xi(0)\cdot w^{\vep}_{(0, 1)} + \int_{X^{o}_{2}}c_L\frac{\dd\vep^{2}(t)}{\dd t}\\
        & = V_{2}\cdot c_L\cdot\frac{\dd\vep^{2}(t)}{\dd t} + O(\vep^{(1+\tau)m}(t)) ~,
    \end{align*}
    where in the second line we used the assumption that $\frac{\dd \vep(t)}{\dd t} = O(\vep^{m-1}(t))$.

    Combining with Lemma~\ref{projection of theta onto 1} and using the fact that the volume of the interpolating region is $O(\vep^{\tau m})$ yield
    \begin{align*}
        \int_{\ul{N}}&\left[\theta_{N^{\vep}}+\xi(0)\right]\cdot w^{\vep}_{(0, 1)}\:\dd V_{g^{\vep}}\\
        &= \vep^{m}(t)A + V_{2}\cdot c_L\frac{\dd\vep^{2}(t)}{\dd t} - \frac{c_LV_{2}}{V_{1}+V_{2}}\frac{\dd\vep^{2}(t)}{\dd t}(V_{2} - O(\vep^{\tau m})) + O(\vep^{(\tau+1)m} + \vep^{(2-\tau)m})\\
        & = c_L\frac{V_{1}V_{2}}{V_{1}+V_{2}}\left\{\frac{\dd\vep^{2}(t)}{\dd t} + \frac{A}{c_L}\frac{V_1 + V_2}{V_1V_2}\vep^{m}(t)\right\} + O(\vep^{(\tau+1)m}) ~.
    \end{align*}
    This proves (\ref{full projection on to approx kernel}). Equation (\ref{full projection onto 1}) follows from a similar computation.
\end{proof}

By (\ref{full projection on to approx kernel}), (\ref{full projection onto 1}), the projection onto the normalised approximate kernel element $w^{\vep}$ (as defined in (\ref{eq-normalisedapproxkernel})\,) takes the same form:
\begin{align}\label{full projection onto normalised approx kernel}
    \int_{\ul{N}}&\left[\theta_{N^{\vep}}+\xi(0)\right]\cdot w^{\vep}\:\dd V_{g^{\vep}}= c_L\frac{V_{1}V_{2}}{V_{1}+V_{2}}\left\{\frac{\dd\vep^{2}(t)}{\dd t} + \frac{A}{c_L}\frac{V_1 + V_2}{V_1V_2}\vep^{m}(t)\right\} + O(\vep^{(\tau+1)m}) ~.
\end{align}
In section \ref{sec-iteration}, the above integrals will appear as error terms that we wish to minimise.  Therefore, $\vep(t)$ shall be a small perturbation of a solution to the ODE given by the leading order term on the right hand side.  This is the reason why Assumption \ref{assumption on epsilon} is imposed.




\section{Solving the Nonlinear Equation} \label{sec-iteration}

We are now ready to state and prove our main theorem precisely. For the remainder of the paper, we make the following assumptions on $\nu, \alpha, \tau, \mu, \zeta$, which imply all previously made assumptions on these constants:
\begin{align}
    \nu &\in \left(\max\left\{\frac{m}{2}-2, 0\right\}, m-2\right),\quad \alpha \in \left(0, \frac{1}{2}\right), \quad \tau \in \left(\frac{2\alpha}{m+1+2\alpha}, \frac{1}{m+2}\right), \label{assumption on constants} \\
    \mu &\in \left(\frac{\nu + 2 + 2\alpha}{m-2}, \frac{1}{m-2}(\tau(\nu+2) + (1-\tau)m)\right), \quad \zeta \in \left(0, \min\left\{ \frac{\tau m}{m-2}, \mu - \frac{\nu + 2 + 2\alpha}{m-2}\right\}\right).\notag
\end{align}
(For example, $(\nu, \alpha, \tau, \mu) = (\frac{3m - 8}{4}, \frac{1}{100}, \frac{1}{2(m+2)}, \frac{7(1-\tau)m}{8(m-2)})$ and $\zeta$ sufficiently small).

\begin{theorem}\label{thm-main_precise}
    Let $m \geq 3$, let $\iota: X \to M$ be a special Lagrangian immersion in a flat complex torus $(M = T^{2m},g, J, \omega, \Omega)$ satisfying Assumption \ref{assumption torus}, and let $\nu, \alpha, \tau, \mu, \zeta$ be real constants satisfying (\ref{assumption on constants}). Let $\stm$ be the corresponding abstract manifold as defined in Definition \ref{static_mfd}, and let $P^{1, 2, \alpha}_{\mu, \nu, \Lambda}, C^{0, \alpha}_{\zeta, \Lambda}$ be the Banach spaces on $\stm \times [\Lambda, \infty)$ and $[\Lambda, \infty)$ respectively as defined in Definition \ref{def-parabolicholdernorms}.
    
    Then there exist $u \in P^{1, 2, \alpha}_{\mu, \nu, \Lambda}$, $\vep:[\Lambda, \infty) \to (0, \infty)$ satisfying Assumption \ref{assumption on epsilon}, and $a:[\Lambda, \infty)\to\mathbb{R}$ such that 
    \begin{align}\label{PDE in simplest case}
    \begin{cases}
        \partial_{t}u = \theta(\dd u) + \xi(\dd u) + a(t) &\text{ for }t > \Lambda\\
        u(x, \Lambda) = 0 &\text{ on }\stm \times \{0\},
    \end{cases}
    \end{align}
    where $\theta(\dd u)$ is the Lagrangian angle of the Lagrangian embedding $\Psi_{N^{\vep(t)}} \circ \dd u:\stm \to M$ as in section \ref{sec-theequation}, and $\xi(\dd u)$ is defined in (\ref{l.o.t}) with constants $C_{P}$, $C_{Q^{\pm}}$ defined by (\ref{choice of constants in l.o.t}).

    The family $\Psi_{N^{\vep(t)}} \circ \dd u : \stm \to M$ of Lagrangian submanifolds satisfies mean curvature flow, and forms an infinite-time singularity. As $t \to \infty$ we have smooth convergence $N^\vep \to \iota(X)$ away from the transverse self-intersection point.
\end{theorem}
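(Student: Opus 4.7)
The plan is to construct the required triple $(u, \vep, a)$ via a Schauder fixed-point argument on a coupled system for the pair $(u, h)$, where $\vep = \vep_0 + h$ is a small perturbation of an explicit approximate neck parameter. Let $\vep_0(t) := \bigl(\tfrac{(m-2)\kappa_0}{2}\,t\bigr)^{-1/(m-2)}$ with $\kappa_0 := \tfrac{A(V_1+V_2)}{c_L V_1 V_2}$, so that $\vep_0$ solves the approximate ODE \eqref{eq-heuristic_ode_in_torus_case} exactly and obeys Assumption~\ref{assumption on epsilon}. Fix exponents as in \eqref{assumption on constants}, $\Lambda \gg 1$, and constants $M_1, M_2 > 0$ to be chosen, and work in
\[ \mathcal{K} := \bigl\{(u,h) \in P^{1,2,\alpha}_{\mu,\nu,\Lambda} \times C^{0,\alpha}_{\zeta,\Lambda} \::\: \|u\|_{P^{1,2,\alpha}_{\mu,\nu,\Lambda}} \leq M_1,\ \|h\|_{C^{0,\alpha}_{\zeta,\Lambda}} \leq M_2,\ h(\Lambda) = 0 \bigr\}. \]

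Define an iteration map $\mathcal{T} : \mathcal{K} \to P^{1,2,\alpha}_{\mu,\nu,\Lambda} \times C^{0,\alpha}_{\zeta,\Lambda}$, $(u,h)\mapsto(u^*,h^*)$, as follows. Put $\vep := \vep_0 + h$ and $\Psi[u,\vep] := \theta_{N^\vep} + \xi(0) + Q^\vep[\dd u]$. Apply Theorem~\ref{linear theory} to obtain $u^*$, orthogonal to $\{1, w^\vep\}$ in $L^2(g^\vep)$, together with functions $a^*(t)$ and $b^*(t)$ satisfying
\[ \partial_t u^* - \mathcal{L}^\vep_{\ul 0}[u^*] = \Psi[u,\vep] + a^*(t) + b^*(t)\,w^\vep, \qquad u^*(\cdot,\Lambda) = 0. \]
Define $h^*$ so that $b^* \equiv 0$ at any fixed point of $\mathcal{T}$, thereby reducing the PDE to \eqref{PDE in simplest case} with $a(t) := a^*(t)$. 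Combining Lemma~\ref{lem-a and b estimate}, the projection formula \eqref{full projection onto normalised approx kernel}, and the identity $(\vep_0^2)' + \kappa_0 \vep_0^m = 0$, the condition $b^*(u,h) = 0$ linearises around $\vep_0$ to the first-order linear ODE
\[ 2\vep_0(t)\,(h^*)'(t) + (m-1)\kappa_0\,\vep_0(t)^{m-1}\,h^*(t) = \mathcal{R}[u,h](t), \quad h^*(\Lambda) = 0, \]
where $\mathcal{R}$ collects the $O(\vep^{(\tau+1)m})$ higher-order error from \eqref{full projection onto normalised approx kernel}, the projection $\int_{\ul N} Q^\vep[\dd u]\,w^\vep\,\dd V_{g^\vep}$, and small corrections involving $u$ that arise from $\partial_t w^\vep$ and $\partial_t (\dd V_{g^\vep})$. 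This ODE is solved explicitly by variation of parameters.

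To verify the Schauder hypotheses, Proposition~\ref{prop: zeroth order error} supplies $\|\theta_{N^\vep} + \xi(0)\|_{P^{0,0,\alpha}_{\mu,\nu+2,\Lambda}} \leq C\Lambda^{-\delta_1}$ for some $\delta_1 > 0$, Proposition~\ref{Prop: quadratic error} supplies $\|Q^\vep[\dd u]\|_{P^{0,0,\alpha}_{\mu,\nu+2,\Lambda}} \leq C\,\Lambda^{-\mu}\,M_1^2$, and the a priori bound \eqref{eq-linear theory apriori estimate} then yields $\|u^*\|_{P^{1,2,\alpha}_{\mu,\nu,\Lambda}} \leq C(\Lambda^{-\delta_1} + \Lambda^{-\mu} M_1^2)$. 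A parallel estimate for the $h^*$-ODE, using Lemmas~\ref{lem-a and b estimate} and~\ref{projection of theta onto 1} together with the decay estimates of Lemmas~\ref{lem-volumeestimates} and~\ref{lem-approxkernelestimates} to bound $\mathcal{R}$, gives $\|h^*\|_{C^{0,\alpha}_{\zeta,\Lambda}} \leq C(\Lambda^{-\delta_2} + M_1\,\Lambda^{-\delta_3})$. Choosing $M_1, M_2$ first and then $\Lambda$ sufficiently large secures $\mathcal{T}(\mathcal{K}) \subset \mathcal{K}$. Precompactness of $\mathcal{T}(\mathcal{K})$ in a weakened topology follows from Lemma~\ref{lem-compactness}, and continuity of $\mathcal{T}$ in that topology is routine. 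The main obstacle is precisely this balancing: the ranges in \eqref{assumption on constants} are tuned so that the zeroth-order error, the quadratic error, and the source of the $h^*$-ODE all decay in $t$ strongly enough to close the iteration. Once the fixed point $(u,h)$ is obtained, Propositions~\ref{dMCF} and~\ref{prop-tree-case} confirm that $\{\Psi_{N^{\vep(t)}} \circ \dd u(\cdot,t)\}_{t \geq \Lambda}$ is a Lagrangian mean curvature flow; $\vep(t) \to 0$ collapses the Lawlor neck to $\xsin$, producing the infinite-time singularity; and the pointwise decay $|u(x,t)| = O(t^{-\mu} \rho_\vep(x)^{-\nu})$, which becomes $O(t^{-\mu})$ away from the neck, combined with parabolic Schauder bootstrapping on compact subsets of $X\setminus\{\xsin^\pm\}$, gives smooth convergence $N^{\vep(t)} \to \iota(X)$ away from $\xsin$.
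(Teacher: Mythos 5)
Your overall architecture — a Schauder fixed point for the pair $(u,h)$, with $u$ produced by the linear theory (Theorem \ref{linear theory}) applied to $\psi=\theta_{N^\vep}+\xi(0)+Q^\vep[\dd u]$, and $h$ updated so that the projection coefficient $b(t)$ vanishes at a fixed point — is the same as the paper's. The genuine gap is in how you parametrise the neck: you set $\vep=\vep_0+h$ with $h$ ranging over a ball in $C^{0,\alpha}_{\zeta,\Lambda}$. First, elements of that ball are only H\"older in $t$, so $\vep'$ need not exist, yet $\vep'$ enters everywhere: in $\xi(0)$ and $\xi(\dd u)$, in $S^\vep$, in the time derivatives and H\"older-in-time quotients of $g^\vep$, $\dd V_{g^\vep}$, $w^\vep$ (Lemmas \ref{lem-volumeestimates}, \ref{lem-approxkernelestimates}), and in Assumption \ref{assumption on epsilon} itself, which demands bounds on $\vep'$ and on its H\"older quotient; none of this is available for a generic $(u,h)$ in your $\mathcal{K}$, so the map $\mathcal{T}$ is not even defined on its stated domain. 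Second, since \eqref{assumption on constants} only forces $\zeta<\min\{\tfrac{\tau m}{m-2},\mu-\tfrac{\nu+2+2\alpha}{m-2}\}$, which can be smaller than $\tfrac1{m-2}$, the bound $|h|\le M_2 t^{-\zeta}$ does not make $h$ small compared with $\vep_0\sim t^{-1/(m-2)}$; hence neither positivity of $\vep$ nor the two-sided comparability $C^{-1}t^{-1/(m-2)}\le\vep\le Ct^{-1/(m-2)}$ of Assumption \ref{assumption on epsilon} is verified, and all the a priori estimates you invoke rest on that assumption.

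The paper avoids both problems by letting $h$ act as a forcing term rather than an additive perturbation: $\vep$ is defined through \eqref{def-iteration epsilon}, i.e.\ $\vep(t)^{2-m}=\tfrac{m-2}{2}\kappa_0 t+\int_\Lambda^t h(s)\,\dd s$, so that $\vep$ automatically gains one derivative over $h$ ($\vep'$ and its H\"older quotient are controlled by $\|h\|_{C^{0,\alpha}_{\zeta,\Lambda}}$ alone), $\int_\Lambda^t h=O(t^{1-\zeta})=o(t)$ forces $\vep\sim t^{-1/(m-2)}$ for any $\zeta>0$, and the balancing quantity $\tfrac{\dd\vep^2}{\dd t}+\kappa_0\vep^m=-\tfrac{2}{m-2}\vep^m h$ appears \emph{algebraically} in the projection formula \eqref{full projection onto normalised approx kernel}; consequently the update of the neck parameter is the closed-form expression \eqref{k definition}, and no linear ODE for $h^*$ (with its homogeneous solutions and extra regularity bookkeeping) has to be solved. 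If you insist on your parametrisation you would have to work in a ball of $C^{1,\alpha}$-type with an appropriately strengthened $\zeta>\tfrac1{m-2}$, re-prove compactness and the source estimates at that regularity, and re-derive Assumption \ref{assumption on epsilon}; as written, the argument does not close. The remaining ingredients of your proposal (use of Propositions \ref{prop: zeroth order error} and \ref{Prop: quadratic error}, Lemma \ref{lem-compactness}, and the final passage to mean curvature flow and convergence via Proposition \ref{dMCF} and interior parabolic estimates) match the paper.
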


To prove Theorem \ref{thm-main_precise}, we first carefully define an iteration map $\mathscr{I}$ on the Banach space $P^{1, 2, \alpha}_{\mu, \nu, \Lambda} \times C^{0, \alpha}_{\zeta, \Lambda}$ for which a fixed point $(u,h)$ corresponds to a solution $(u, \vep)$ of (\ref{PDE in simplest case}). We then show that $\mathscr{I}$ maps a compact subset of $P^{1, 2, \alpha}_{\mu, \nu, \Lambda} \times C^{0, \alpha}_{\zeta, \Lambda}$ continuously into itself, and apply the Schauder fixed point theorem to conclude that a fixed point exists.

\subsection{Definition of the Iteration Map}\label{sec-iteration definition}
Denote the unit balls of $P^{1, 2, \alpha}_{\mu, \nu, \Lambda}$, $C^{0, \alpha}_{\zeta, \Lambda}$ by
\begin{align*}
    &\mathcal{B}^{\alpha}_{\mu, \nu, \Lambda} := \Big\{u\in P^{1, 2, \alpha}_{\mu, \nu, \Lambda}\::\:\|u\|_{P^{1, 2, \alpha}_{\mu, \nu, \Lambda}}\leq 1\Big\},\quad \mathcal{I}^{k,\alpha}_{\zeta, \Lambda} := \Big\{h\in C^{k, \alpha}_{\zeta, \Lambda}\::\:\|h\|_{P^{k, \alpha}_{\zeta, \Lambda}}\leq 1\Big\}.
\end{align*}

We now define the iteration map $\mathscr{I}:\mathcal{B}^{\alpha}_{\mu, \nu, \Lambda}\times\mathcal{I}^{0,\alpha}_{\zeta, \Lambda}\to \mathcal{B}^{\alpha}_{\mu, \nu, \Lambda}\times\mathcal{I}^{0,\alpha}_{\zeta, \Lambda}$. Given a pair $(u,h) \in \mathcal{B}^{\alpha}_{\mu,\nu,\Lambda} \times \mathcal{I}^{0,\alpha}_{\zeta, \Lambda}$, the pair $(v,k) = \mathscr{I}(u,h)\in P^{1,2,\alpha}_{\mu,\nu,\Lambda} \times C^{0,\alpha}_{\zeta, \Lambda}$ is defined as follows:

\ 

\noindent{\it Step~1. (Ansatz for $\vep(t)$)}:\quad First, we define 
\begin{equation}
    \vep(t) := \left[\frac{m-2}{2} \frac{A}{c_L}\frac{V_1 + V_2}{V_1V_2}\cdot t \, + \, \int_{\Lambda}^t h(s)\,\dd s \right]^{-\frac{1}{m-2}}, \label{def-iteration epsilon}
\end{equation}
and use $\vep(t)$ to construct the Lagrangian embedding $\iota^{\vep(t)}$ and related quantities and functions that depend on $\vep(t)$ as in section \ref{sec-desingularisation}. By definition, $\vep(t)$ satisfies the ODE:
\begin{align*}
		\frac{\dd\vep^{2}}{\dd t} + \frac{A}{c_L}\frac{V_1 + V_2}{V_1V_2}\vep^{m} = -\frac{2}{m-2}\vep^{m}h(t).
\end{align*}
It is easy to check that $\vep(t)$ satisfies Assumption \ref{assumption on epsilon}. We then construct the desingularisation $N^{\vep(t)}$ using $\vep(t)$.

\ 

\noindent{\it Step~2. ($u\leadsto v$)}:\quad Next, we define $v \in P^{1, 2, \alpha}_{\mu, \nu, \Lambda}$. Define $\psi := \theta_{N^\vep} + \xi(0) + Q^\vep[\dd u]$. By Proposition~\ref{prop: zeroth order error} and Proposition~\ref{Prop: quadratic error}, we see that $\psi \in {P^{0,0,\alpha}_{\mu, \nu+2, \Lambda}}$. We may therefore apply Theorem \ref{linear theory}, to show that there exist $v \in P^{1,2,\alpha}_{\mu, \nu, \Lambda} \cap \langle 1, w^\vep\rangle^\perp$, $a:[\Lambda, \infty) \rightarrow \mathbb{R}$ and $b:[\Lambda, \infty) \rightarrow \mathbb{R}$ satisfying:
\begin{align}
		\begin{cases}
			\partial_{t}v - \mathcal{L}^{\vep}_{\ul{0}}[v] = \theta_{N^{\vep}} + \xi(0) + Q^{\vep}[\dd u] + a(t) + b(t)w^{\vep},\quad t\in[\Lambda, \infty),\\
			v(x, \Lambda) = 0,\quad x\in\ul{N},
		\end{cases} \label{iteration step}
\end{align}
and
\begin{align}
        \|v\|_{P^{1,2,\alpha}_{\mu,\nu,\Lambda}} \leq C\cdot \|\psi\|_{P^{0,0,\alpha}_{\mu, \nu + 2, \Lambda}}. \label{v estimate}
\end{align}

\ 

\noindent{\it Step~3. ($h\leadsto k$)}:\quad Finally, we define $k \in C^{0,\alpha}_{\zeta, \Lambda}$. Integrating (\ref{iteration step}) against the functions $1$ and $w^\vep$ respectively, and using the projection formulae (\ref{full projection onto 1}) and (\ref{full projection onto normalised approx kernel}), we obtain the following expressions for $a(t)$ and $b(t)$:
\begin{align}
    a(t) 
    &= \frac{1}{{\rm Vol}(N^\vep)} \left(  \int_{\ul N}(\partial_t v - \mathcal{L}_{\ul 0}^\vep[v] - Q^\vep[\dd u])\:\dd V_{g^\vep} + O(\vep^{(1+\tau)m}) \right) ~, \label{a estimate}\\
    b(t) 
    &= \frac{1}{\|w^\vep\|_{L^2}^2} \left(  \int_{\ul N}(\partial_t v - \mathcal{L}_{\ul 0}^\vep[v] - Q^\vep[\dd u])w^\vep \:\dd V_{g^\vep} + \frac{c_LV_{1}V_{2}}{V_1 + V_2} \frac{2 \vep^m h(t)}{m-2} + O(\vep^{(1+\tau)m}) \right) ~. \label{b estimate}
\end{align}
It is therefore natural to define $k(t)$ as follows, in order to cancel out the dominant term from this expansion of $b(t)$: 
    \begin{align}
		k(t) := h(t) - \frac{m-2}{2c_L}\frac{V_{1}+V_{2}}{V_{1}V_{2}}\vep^{-m}\|w^{\vep}\|^{2}_{L^{2}}\cdot b(t) ~.	 \label{k definition}
	\end{align}

\subsection{Estimates for the Iteration Map} In order to apply the Schauder fixed point theorem, we now aim to prove the following proposition regarding the iteration map $\mathscr{I}$:

\begin{proposition}\label{prop-iteration estimate}
    For any $\mu' < \mu$, $\alpha' < \alpha$, $\zeta' < \zeta$, the iteration map $\mathscr{I}: \mathcal{B}^\alpha_{\mu, \nu, \Lambda} \times \mathcal{I}^{0,\alpha}_{\zeta, \Lambda} \to P^{1, 2, \alpha}_{\mu, \nu, \Lambda} \times C^{0, \alpha}_{\zeta, \Lambda}$ defined in section \ref{sec-iteration definition} is continuous with respect to the norm on $P^{1,2,\alpha'}_{\mu', \nu, \Lambda} \times C^{0, \alpha'}_{\zeta', \Lambda}$, and has image lying in $\mathcal{B}^\alpha_{\mu, \nu, \Lambda} \times \mathcal{I}^{0,\alpha}_{\zeta, \Lambda}$.
\end{proposition}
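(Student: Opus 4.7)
The plan is to verify the self-mapping property and the continuity separately, exploiting the fact that the two inclusions in Lemma~\ref{lem-compactness} are not actually needed for the self-mapping step.

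\textbf{Self-mapping, $v$-component.} By Theorem~\ref{linear theory} the linear solution obeys $\|v\|_{P^{1,2,\alpha}_{\mu,\nu,\Lambda}} \leq C\|\psi\|_{P^{0,0,\alpha}_{\mu,\nu+2,\Lambda}}$ where $\psi = \theta_{N^\vep}+\xi(0)+Q^\vep[\dd u]$. Proposition~\ref{prop: zeroth order error} gives $\|\theta_{N^\vep}+\xi(0)\|_{P^{0,0,\alpha}_{\mu,\nu+2,\Lambda}}\to 0$ as $\Lambda\to\infty$ since $\mu < \tfrac{1}{m-2}(\tau(\nu+2)+(1-\tau)m)$, while Proposition~\ref{Prop: quadratic error}, combined with $\rho_{\vep(t)}\geq c\vep(t)\gtrsim t^{-1/(m-2)}$, gives $\|Q^\vep[\dd u]\|_{P^{0,0,\alpha}_{\mu,\nu+2,\Lambda}}\leq C\Lambda^{-(\mu-\frac{\nu+2+2\alpha}{m-2})}\|u\|^2_{P^{1,2,\alpha}_{\mu,\nu,\Lambda}}$, which again tends to zero by the lower constraint on $\mu$. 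Hence $\|v\|_{P^{1,2,\alpha}_{\mu,\nu,\Lambda}}\leq 1$ for all sufficiently large $\Lambda$.

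\textbf{Self-mapping, $k$-component.} The key point is the cancellation already built into \eqref{k definition}: the term $\frac{c_LV_1V_2}{V_1+V_2}\cdot\frac{2\vep^m h(t)}{m-2}$ in \eqref{b estimate} is exactly annihilated, leaving
\[
k(t) = -\frac{m-2}{2c_L}\frac{V_1+V_2}{V_1V_2}\,\vep^{-m}\!\left(\int_{\ul N}\!(\partial_t v-\mathcal L^\vep_{\ul 0}[v]-Q^\vep[\dd u])\,w^\vep\,\dd V_{g^\vep} + O(\vep^{(1+\tau)m})\right).
\]
The explicit remainder is $O(\vep^{\tau m})=O(t^{-\tau m/(m-2)})$, which sits in $\mathcal I^{0,\alpha}_{\zeta,\Lambda}$ with arbitrarily small norm thanks to $\zeta<\tau m/(m-2)$. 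For the integral term, I would integrate by parts in time against the orthogonality $\int v\, w^\vep\,\dd V_{g^\vep}\equiv 0$, replacing $\int(\partial_t v)w^\vep\,\dd V_{g^\vep}$ by $-\int v(\partial_t w^\vep)\,\dd V_{g^\vep}-\int v\,w^\vep\,\partial_t(\dd V_{g^\vep})$, and use the formal self-adjointness of $\Delta_{g^\vep}$ modulo $\mathcal P^\vep_{\ul 0}$ to pass $\mathcal L^\vep_{\ul 0}$ from $v$ onto $w^\vep$. The resulting weights (Lemma~\ref{lem-volumeestimates}, Lemma~\ref{lem-approxkernelestimates}, Lemma~\ref{lem-lower order term estimate}) are concentrated on $\overline\kp_\vep^{-1}(\Sigma\times(\vep R_1,\vep^\tau))$ and carry extra powers of $\vep$; combined with $\|v\|\to 0$ and with $|Q^\vep[\dd u]|\lesssim t^{-2\mu}\rho_\vep^{-2\nu-4}$, a direct radial integration (splitting into tip, intermediate and outer regions as in the proof of Lemma~\ref{lem-a and b estimate}) shows that the contribution to $k$ is $o(t^{-\zeta})$ as $\Lambda\to\infty$, uniformly for $(u,h)\in\mathcal B^\alpha_{\mu,\nu,\Lambda}\times\mathcal I^{0,\alpha}_{\zeta,\Lambda}$. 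Performing the analogous computation on the difference quotient in time (and using the H\"older bounds of Lemma~\ref{lem-volumeestimates} and Lemma~\ref{lem-approxkernelestimates}) controls the $C^{0,\alpha}_{\zeta,\Lambda}$ seminorm. Together these give $\|k\|_{C^{0,\alpha}_{\zeta,\Lambda}}\leq 1$ for $\Lambda$ sufficiently large.

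\textbf{Continuity in the weaker topology.} Let $(u_n,h_n)\to(u_\infty,h_\infty)$ in $P^{1,2,\alpha'}_{\mu',\nu,\Lambda}\times C^{0,\alpha'}_{\zeta',\Lambda}$, with all terms bounded by $1$ in the stronger norms. The map $h\mapsto\vep$ defined by \eqref{def-iteration epsilon} is Lipschitz (uniformly on $\mathcal I^{0,\alpha}_{\zeta,\Lambda}$) from $C^{0,\alpha'}_{\zeta',\Lambda}$ to $C^1_{\mathrm{loc}}$, so $\vep_n\to\vep_\infty$ together with two derivatives, locally uniformly. Consequently the data entering the right-hand side of \eqref{iteration step}, namely $\theta_{N^{\vep_n}}+\xi(0)+Q^{\vep_n}[\dd u_n]$, converge in $P^{0,0,\alpha'}_{\mu',\nu+2,\Lambda}$ to the corresponding expression at $(u_\infty,h_\infty)$; here Proposition~\ref{Prop: quadratic error} is applied to the difference $Q^{\vep_n}[\dd u_n]-Q^{\vep_\infty}[\dd u_\infty]$ via its Lipschitz-in-$\dd u$ form already derived in its proof. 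The uniform invertibility of the linear operator from Theorem~\ref{linear theory}, combined with the interior parabolic Schauder theory on the varying backgrounds $g^{\vep_n}$, then yields $v_n\to v_\infty$ in $P^{1,2,\alpha'}_{\mu',\nu,\Lambda}$. The projection formula for $b(t)$ and hence the defining formula \eqref{k definition} for $k(t)$ are continuous in all of $u,v,\vep$, giving $k_n\to k_\infty$ in $C^{0,\alpha'}_{\zeta',\Lambda}$.

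\textbf{Main obstacle.} The delicate point is the $k$-estimate: the naive bound of $\vep^{-m}\int|\partial_t v|w^\vep\,\dd V$ fails by a power $t^{m/(m-2)-\mu}$, so the orthogonality condition $v\in\langle 1,w^\vep\rangle^\perp$ must genuinely be exploited through integration by parts in time, and the resulting weights for $\partial_t w^\vep$ and $\partial_t\dd V_{g^\vep}$ (which are concentrated on a region of volume $O(\vep^{\tau m})$) must be carried through carefully to match the prescribed decay rate $\zeta<\tau m/(m-2)$ and to cover the H\"older seminorm in time.
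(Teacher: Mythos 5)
Your proposal is correct and, for the self-mapping estimates, is essentially the paper's argument: the $v$-component is handled exactly as in the paper via Theorem \ref{linear theory} together with Propositions \ref{prop: zeroth order error} and \ref{Prop: quadratic error}, and your $k$-component argument — the exact cancellation of the $\frac{2\vep^m h}{m-2}$ term built into \eqref{k definition}, the $O(\vep^{\tau m})$ remainder requiring $\zeta<\tfrac{\tau m}{m-2}$, and above all the integration by parts in time against the orthogonality $\int v\,w^\vep\,\dd V_{g^\vep}=0$ plus moving $\Delta_{g^\vep}$ onto $w^\vep$ modulo $\mathcal P^\vep_{\ul 0}$ — is precisely the content of the paper's Lemmas \ref{lem: balancing condition} and \ref{lem: estimate of non-orthogonality}, which its proof of the proposition simply invokes; your ``main obstacle'' paragraph correctly identifies why the naive bound $\vep^{-m}t^{-\mu}$ fails and why the orthogonality must be used. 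The only place you diverge is the continuity statement: the paper disposes of it by a compactness/contradiction argument modelled on Brendle--Kapouleas, whereas you sketch a direct stability argument (Lipschitz dependence of $\vep$ on $h$, convergence of the inhomogeneous data, uniform invertibility of the linearised operator). Both routes are viable in the weakened topology $P^{1,2,\alpha'}_{\mu',\nu,\Lambda}\times C^{0,\alpha'}_{\zeta',\Lambda}$; the contradiction argument buys you the convenience of only needing the uniform a priori bounds already established, while your direct route is more explicit but obliges you to track the $\vep$-dependence of the operator $\mathcal L^{\vep}_{\ul 0}$, of the approximate kernel $w^{\vep}$ and hence of the orthogonality constraint and the projections defining $a(t),b(t)$ — a point you gloss over but which can be carried out since all these objects depend smoothly on $\vep$ and the losses are absorbed by $\mu'<\mu$, $\alpha'<\alpha$, $\zeta'<\zeta$.
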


We first estimate the projection of the inhomogeneous term $\psi$ onto the approximate kernel.
\begin{lemma}\label{lem: balancing condition}
    Let
    \begin{align}\label{definition of G and H}
        G(t) &:= c_L\frac{V_{1}V_{2}}{V_{1}+V_{2}}\left\{\frac{\dd\vep^{2}(t)}{\dd t} + \frac{\vep^{m}(t)A}{c_L}\frac{V_1 + V_2}{V_1V_2}\right\} - \int_{\ul{N}}\psi\cdot w^{\vep}\:\dd V_{g^{\vep}} ~.
    \end{align}
    Then, if $\|u\|_{P^{1, 2, \alpha}_{\mu, \nu, \Lambda}}\leq 1$, $\nu\in(\max\{\frac{m}{2}-2, 0\}, m-2)$ and $\mu>\frac{\nu+2}{m-2}$, then for any $\overline \zeta > 0$ satisfying
    \begin{align*}
        \overline\zeta < \min\{\textstyle\frac{\tau m}{m-2},  2(\mu-\frac{\nu+2}{m-2})\}
    \end{align*}
    and $0<|t_1 - t_2|<t^{-\frac{2}{m-2}}$, it follows that
    \begin{align}\label{C0 alpha of G and H}
        |G(t)| + \frac{|G(t_1) - G(t_2)|}{|t_1 - t_2|^{\alpha}} \,\leq\, C\vep(t)^{m}\cdot t^{-\overline\zeta} ~.
    \end{align}
\end{lemma}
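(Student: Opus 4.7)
The plan is to substitute the projection formula \eqref{full projection onto normalised approx kernel} into the definition \eqref{definition of G and H} of $G(t)$: the explicit $c_L\frac{V_1V_2}{V_1+V_2}\{\cdots\}$ term cancels precisely against the leading piece of $\int_{\ul N}[\theta_{N^{\vep}}+\xi(0)]\,w^{\vep}\,\dd V_{g^{\vep}}$, yielding the decomposition
\begin{align*}
    G(t) \, = \, -\int_{\ul N} Q^{\vep}[\dd u]\cdot w^{\vep}\,\dd V_{g^{\vep}} \, + \, R(t),
\end{align*}
where $R(t)$ is the projection remainder, satisfying $|R(t)| = O(\vep^{(1+\tau)m})$ by \eqref{full projection onto normalised approx kernel}. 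Each of these two terms will be estimated in $C^{0,\alpha}_{\overline\zeta,\Lambda}$ separately.

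For the quadratic integral, the $C^0$ part combines Proposition~\ref{Prop: quadratic error} with the weighted-volume estimate
\begin{align*}
    \int_{\ul N}\rho_{\vep}^{-2\nu-4}\,\dd V_{g^{\vep}} \, \leq \, C\,\vep^{\,m-2\nu-4}.
\end{align*}
This is obtained by splitting $\ul N$ into the three regions: on the tip $\rho_{\vep}\sim \vep$ and $\dd V_{g^{\vep}}\sim \vep^m\,\dd V_L$; on the intermediate region one computes $\int_{\vep R_1}^{R_2}\mathfrak{r}^{\,m-5-2\nu}\,\dd\mathfrak{r}$, whose non-integrable behaviour at $\mathfrak{r}=\vep R_1$ (which requires precisely the hypothesis $\nu > (m-4)/2$) produces a bound of order $\vep^{\,m-4-2\nu}$; the outer region contributes $O(1)$ and is dominated by the others. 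Using $\|u\|_{P^{1,2,\alpha}_{\mu,\nu,\Lambda}}\leq 1$ and $|w^{\vep}|\leq 1$ then gives
\begin{align*}
    \left|\int_{\ul N}Q^{\vep}[\dd u]\cdot w^{\vep}\,\dd V_{g^{\vep}}\right| \, \leq \, C\,t^{-2\mu}\,\vep^{\,m-2\nu-4} \, = \, C\,\vep^m\, t^{-2(\mu-(\nu+2)/(m-2))}.
\end{align*}
For the time-Hölder quotient, I telescope the difference
$\int Q^{\vep}[\dd u](t_1)\,w^{\vep}(t_1)\,\dd V_{g^{\vep}(t_1)} - \int Q^{\vep}[\dd u](t_2)\,w^{\vep}(t_2)\,\dd V_{g^{\vep}(t_2)}$
into three contributions where a single factor varies: Proposition~\ref{Prop: quadratic error} controls the Hölder behaviour of $Q^{\vep}[\dd u]$; Lemma~\ref{lem-approxkernelestimates} that of $w^{\vep}$; and Lemma~\ref{lem-volumeestimates} that of $\dd V_{g^{\vep}}$. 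In each case the pointwise bound is integrated against the same type of weighted volume (with $-2\nu-4$ replaced by $-2\nu-4-2\alpha$, still giving a power of $\vep$ by the same argument), producing the same scaling as the $C^0$ estimate.

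The bounds on the projection remainder $R(t)$ require more care. The $C^0$ bound is immediate from \eqref{full projection onto normalised approx kernel}, giving $|R(t)|\leq C\,\vep^{(1+\tau)m} = C\,\vep^m\, t^{-\tau m/(m-2)}$. For the Hölder quotient I revisit the proofs of Lemma~\ref{projection of theta onto 1} and of \eqref{full projection onto normalised approx kernel}, tracking a single time derivative through each ingredient of the cancellation (the cutoff $\chi(\vep^{-\tau}\mathfrak{r})$, the potential $\ptq_{\vep}$, the Lagrangian angle $\theta_{N^{\vep}}$ via the $Z$-invariant identity of Lemma~\ref{Lawlor_Z}, the weight $w^{\vep}$, and the volume form $\dd V_{g^{\vep}}$) using Lemmas~\ref{lem-volumeestimates}, \ref{lem-approxkernelestimates}, \ref{lem: angle and mean curvature in torus case} together with Assumption~\ref{assumption on epsilon}; each time derivative costs a factor of $\vep^{m-2}$. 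Interpolating the resulting $C^1$ bound against the $C^0$ bound via $|t_1-t_2|\leq t^{-2/(m-2)}$ then produces the required Hölder estimate of order $\vep^m\, t^{-\tau m/(m-2)}$. Summing the bounds on the two terms yields \eqref{C0 alpha of G and H} with $\overline\zeta<\min\{\tau m/(m-2),\,2(\mu-(\nu+2)/(m-2))\}$, and the main obstacle is precisely this Hölder analysis of $R(t)$, which requires verifying that every step in the cancellation leading to \eqref{full projection onto normalised approx kernel} is stable under a single time differentiation.
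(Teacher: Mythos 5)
Your proposal is correct and follows essentially the same route as the paper: decompose $G(t)$ via the projection formula \eqref{full projection onto normalised approx kernel} into the projected quadratic error plus an $O(\vep^{(1+\tau)m})$ remainder, bound $\int_{\ul N}|Q^{\vep}[\dd u]|\,w^{\vep}\,\dd V_{g^{\vep}}$ by $t^{-2\mu}$ times a weighted volume integral whose convergence uses precisely $\nu>\tfrac{m}{2}-2$, and handle the H\"older quotient analogously, yielding the two exponents in the minimum. The differences are minor: you evaluate $\int_{\ul N}\rho_{\vep}^{-2\nu-4}\,\dd V_{g^{\vep}}$ directly region by region, whereas the paper splits $\rho_{\vep}^{-2\nu-4}=\rho_{\vep}^{-2\nu-4+m-a}\rho_{\vep}^{-m+a}$ with a small loss $a$ absorbed into $\overline\zeta$, and you spell out the time-H\"older control of the projection remainder (via one time derivative costing $\vep^{m-2}$ and interpolation), a step the paper compresses into ``a similar argument.''
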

\begin{proof}
    By projection formula (\ref{full projection onto normalised approx kernel}) we have
    \begin{align*}
        |G(t)|\leq C\vep(t)^{(\tau+1)m} + \int_{\nu_{N}}|Q^{\vep}[\dd u]|\cdot w^{\vep}\:\dd V_{g^{\vep}} ~.
    \end{align*}
    Since $\|u\|_{P^{1, 2, \alpha}_{\mu, \nu, \Lambda}}\leq 1$, by (\ref{C0 of Q}) we have
    \begin{align*}
        \int_{\nu_{N}}|Q^{\vep}[\dd u]|\cdot w^{\vep}\:\dd V_{g^{\vep}}\leq C\: t^{-2\mu}\int_{\ul{N}}\rho^{-2\nu-4}_{\vep}\:\dd V_{g^{\vep}}  ~.
    \end{align*}
    Note that for any small $a>0$, by assumption we have $-2\nu - 4+m-a<0$. Hence,
    \begin{align*}
        \int_{\ul{N}}\rho^{-2\nu-4}_{\vep}\:\dd V_{g^{\vep}} = \int_{\ul{N}}\rho^{-2\nu-4+m-a}_{\vep}\rho_{\vep}^{-m+a}\:\dd V_{g^{\vep}}\leq C\vep(t)^{-2\nu -4 +m - a}\int_{\ul{N}}\rho_{\vep}^{-m+a}\:\dd V_{g^{\vep}} ~.
    \end{align*}
    As $\rho_{\vep}\to \hat{r}$ and $\rho_{\vep}^{-m+a}\leq C\hat{r}^{-m+a}$ on $\ul{N}$, where $\hat{r}$ is the intrinsic distance to the intersection point on $X_{1}\cup X_{2}$, the dominated convergence theorem implies
    \begin{align*}
        \int_{\ul{N}}\rho_{\vep}^{-m+a}\:\dd V_{g^{\vep}}\leq \int_{X_{1}\cup X_{2}}\hat{r}^{-m+a}\:\dd V_{X'}\leq C(a)<\infty
    \end{align*}
    for all $t\geq \Lambda$. It follows that, by choosing $a$ sufficiently small such that $\mu>\frac{\nu+2}{m-2}+\frac{a}{2}$,
    \begin{align*}
        \int_{\ul{N}}|Q^{\vep}(\dd u)|\cdot w^{\vep}\:\dd V_{g^{\vep}}\leq C(a) \vep(t)^{m}t^{-2\mu + \frac{2}{m-2}(\nu+2+\frac{a}{2})}\leq C(a)\vep(t)^{m}t^{-2(\mu - \frac{\nu+2}{m-2} - \frac{a}{2})} ~,
    \end{align*}
    which shows 
    \begin{align*}
        |G(t)|\leq C\vep(t)^{m}\cdot t^{-\overline\zeta}.
    \end{align*}
    A similar argument using (\ref{C^alpha of Q in space}) gives
    \begin{align*}
        \frac{|G(t_1) - G(t_2)|}{|t_1 -t_2|^{\alpha}}\leq C\vep(t)^{m}\cdot t^{-\overline\zeta}
    \end{align*}
    providing $0<|t_1-t_2|<t^{-\frac{2}{m-2}}$. 
\end{proof}

\begin{lemma}\label{lem: estimate of non-orthogonality}
    Given $\psi\in P^{0, 0, \alpha}_{\mu, \nu+2, \Lambda}$ with $\|\psi\|_{P^{0, 0, \alpha}_{\mu, \nu+2, \Lambda}}\leq 1$, let the triple $(u, a(t), b(t))$ be the solution to the Cauchy problem
    \begin{align*}
    \begin{cases}
        \partial_{t}u - \mathcal{L}^{\vep}_{\ul{0}}[u] = \psi + a(t) + b(t)w^{\vep}, &\mbox{on}\;\ul{N}\times(\Lambda, \infty) ~, \\
        u(\cdot, \Lambda) = 0, &\mbox{on}\;\ul{N} ~,
    \end{cases}
    \end{align*}
    with orthogonality condition $u \in \langle 1, w^\vep \rangle^\perp$. Define 
    \begin{align}
        E(t) := \int_{\ul{N}}(\psi + b(t)w^{\vep})\cdot w^{\vep}\:\dd V_{g^{\vep}} ~.
    \end{align}
    Then if $\mu > \frac{\nu + 2 + 2\alpha}{m-2}$, then for $0<|t_1 -t_2|<t^{-\frac{2}{m-2}}$ and $\overline\zeta > 0$ satisfying $\overline\zeta < \mu - \frac{\nu + 2 + 2\alpha}{m-2}$,
    \begin{align}
        |E(t)| + \frac{|E(t_1) - E(t_2)|}{|t_1 -t_2|^{\alpha}} \leq C\:\vep(t)^{m}t^{-\zeta} ~.
    \end{align}
\end{lemma}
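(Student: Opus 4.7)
The starting point is a more transparent formula for $E(t)$. Substituting the formula for $b(t)$ from \eqref{b estimate} (or directly from Lemma~\ref{lem-a and b estimate}) gives $b(t)\|w^\vep\|_{L^2}^2=\int_{\ul N}(\partial_t u-\mathcal L^\vep_{\ul 0}u)w^\vep\,\dd V_{g^\vep}-\int_{\ul N}\psi\,w^\vep\,\dd V_{g^\vep}$, so that
\begin{align*}
   E(t)\;=\;\int_{\ul N}\bigl(\partial_t u-\mathcal L^\vep_{\ul 0}u\bigr)\,w^\vep\,\dd V_{g^\vep}.
\end{align*}
The plan is then to push every derivative off of $u$ using the orthogonality condition $\int_{\ul N}u\,w^\vep\,\dd V_{g^\vep}=0$, and then estimate the resulting integrals using the pointwise bounds for $u$ coming from Theorem~\ref{linear theory} (which yield $\|u\|_{P^{1,2,\alpha}_{\mu,\nu,\Lambda}}\leq C$) together with the weighted bounds of Lemmas~\ref{lem-volumeestimates}, \ref{lem-approxkernelestimates} and \ref{lem-lower order term estimate}.

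First, differentiating the orthogonality condition in time gives $\int(\partial_t u)w^\vep\,\dd V_{g^\vep}=-\int u\,\partial_t w^\vep\,\dd V_{g^\vep}-\int u\,w^\vep\,\partial_t(\dd V_{g^\vep})$. Writing $\mathcal L^\vep_{\ul 0}=\Delta_{g^\vep}+\mathcal P^\vep_{\ul 0}$ and integrating by parts on the closed manifold $\ul N$, one obtains
\begin{align*}
   E(t)\;=\;-\int_{\ul N}u\bigl(\partial_t w^\vep+\Delta_{g^\vep}w^\vep\bigr)\dd V_{g^\vep}\;-\;\int_{\ul N}u\,w^\vep\,\partial_t(\dd V_{g^\vep})\;-\;\int_{\ul N}w^\vep\,\mathcal P^\vep_{\ul 0}u\,\dd V_{g^\vep}.
\end{align*}
Now each of the four summands can be estimated separately. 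Lemma~\ref{lem-approxkernelestimates} locates $\partial_t w^\vep$ and $\Delta_{g^\vep}w^\vep$ inside $\overline\kp_\vep^{-1}(\Sigma\times(\vep R_1,\vep^\tau))$, where $|u|\lesssim t^{-\mu}\kp_\vep(r)^{-\nu}$; the dominant contribution is the Laplacian term, and a cone-type integration gives a bound $\lesssim t^{-\mu}\vep^{m-2-\nu}$. Lemma~\ref{lem-volumeestimates} bounds $|\partial_t\,\dd V_{g^\vep}|$ on each piece of $\ul N$, and combined with $|u|\leq t^{-\mu}\rho_\vep^{-\nu}$ and $|w^\vep|\leq1$ gives $\lesssim t^{-\mu}\vep^{2m-2-\nu}$ in the tip and intermediate regions and is negligibly small elsewhere (identically zero on $X^o$). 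Lemma~\ref{lem-lower order term estimate} bounds the final term by $\lesssim \vep^{m-1}\int|w^\vep||du|_{g^\vep}\,\dd V_{g^\vep}$, which after a similar cone integration is bounded by $\lesssim t^{-\mu}\vep^{m-1}$. Comparing the worst of these bounds to the target $\vep^mt^{-\overline\zeta}\sim t^{-m/(m-2)-\overline\zeta}$ yields the condition $\mu-\overline\zeta\geq(\nu+2)/(m-2)$, which is implied by our hypothesis once we account for the $2\alpha$ loss in the Hölder estimate below.

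For the Hölder seminorm estimate, one forms the difference $E(t_1)-E(t_2)$ and distributes the time-difference across the three factors in each term (e.g.\ $u$, $\Delta_{g^\vep}w^\vep$, $\dd V_{g^\vep}$). In every case the analogous estimate produces an extra multiplicative factor of $\vep^{-2\alpha}$: for a quantity that is $C^{0,\alpha}$ in time with seminorm $\lesssim X$, the contribution is $\lesssim X\cdot|t_1-t_2|^\alpha$; for a quantity with $|\partial_t(\cdot)|\lesssim Y$, the Hölder seminorm is $\lesssim Y\cdot|t_1-t_2|^{1-\alpha}\lesssim Y\cdot t^{-2(1-\alpha)/(m-2)}$, which on dividing by $|t_1-t_2|^\alpha$ produces the $\vep^{-2\alpha}$ loss. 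The Hölder versions of the bounds in Lemmas~\ref{lem-volumeestimates}, \ref{lem-approxkernelestimates}, \ref{lem-lower order term estimate}, together with the $C^{0,\alpha}$-in-time control on $u$ supplied by $\|u\|_{P^{1,2,\alpha}_{\mu,\nu,\Lambda}}$, thus replace the pointwise estimate $\vep^mt^{-\overline\zeta}$ by the same quantity but with the sharper constraint $\overline\zeta<\mu-(\nu+2+2\alpha)/(m-2)$, exactly the condition stated.

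The main technical obstacle is bookkeeping: one must simultaneously track the singular behaviour of $\Delta_{g^\vep}w^\vep$ near the neck, the time-dependence of the domain (via $\kp_{\vep(t)}$) and of the volume form, and the weighted norms of $u$, while making sure that each integrated cone-type integral remains dominated by the target $\vep^m t^{-\overline\zeta}$. The Laplacian term $\int u\,\Delta_{g^\vep}w^\vep\,\dd V_{g^\vep}$ is the binding one, and is what enforces the upper bound on $\overline\zeta$; all other terms are strictly better in powers of $\vep$.
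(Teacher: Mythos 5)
Your plan is correct and follows essentially the same route as the paper: the paper likewise rewrites $E(t)=\int_{\ul N}\bigl(\partial_t u-\mathcal{L}^{\vep}_{\ul{0}}[u]\bigr)\,w^{\vep}\,\dd V_{g^{\vep}}$, handles the $\partial_t u$ piece by differentiating the orthogonality condition in time, splits $\mathcal{L}^{\vep}_{\ul{0}}=\Delta_{g^{\vep}}+\mathcal{P}^{\vep}_{\ul{0}}$ so that the Laplacian lands on $w^{\vep}$, and then estimates each resulting integral with Lemmas \ref{lem-volumeestimates}, \ref{lem-approxkernelestimates}, \ref{lem-lower order term estimate} and the bound $\|u\|_{P^{1, 2, \alpha}_{\mu, \nu, \Lambda}}\leq C$ from Theorem \ref{linear theory}. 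Only your description of where the $2\alpha$ loss comes from is imprecise (it enters because the time-H\"older seminorms of $u$, $w^{\vep}$ and $\dd V_{g^{\vep}}$ carry the weight $\rho^{\nu+2\alpha}$, making the H\"older versions of the auxiliary estimates worse by a factor $\vep^{-2\alpha}\sim t^{\frac{2\alpha}{m-2}}$), but your final constraint $\overline\zeta<\mu-\frac{\nu+2+2\alpha}{m-2}$ and the resulting bound agree exactly with the paper's.
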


\begin{proof}
   Write $E(t) = E_{0}(t)- E_{1}(t)$, where
    \begin{align*}
        E_{0}(t) = \int_{\ul{N}}\partial_{t}u\cdot w^{\vep}\:\dd V_{g^{\vep}}, \quad E_{1}(t) = \int_{\ul{N}}\mathcal{L}^{\vep}_{\ul{0}}[u]\cdot w^{\vep}\:\dd V_{g^{\vep}} ~.
    \end{align*}
    We first estimate $E_{0}(t)$. By differentiating orthogonality condition in time, we have
    \begin{align*}
        E_{0}(t) = -\int_{\ul{N}}u\cdot\partial_{t}w^{\vep}\:\dd V_{g^{\vep}} - \int_{\ul{N}}u\cdot w^{\vep}\:\partial_{t}\dd V_{g^{\vep}} ~.
    \end{align*}
    It follows from Lemma \ref{lem-volumeestimates} and Lemma~\ref{lem-approxkernelestimates}, and the assumption on $\vep(t)$ that
    \begin{align*}
        |E_{0}(t)| + \frac{|E_{0}(t_1) - E_{0}(t_2)|}{|t_1 - t_2|^{\alpha}} \leq C\:\vep(t)^{m}\: t^{-\mu+\frac{1}{m-2}(\nu+2+2\alpha)}\|u\|_{P^{0, 0, \alpha}_{\mu, \nu, \Lambda}}
    \end{align*}
    for $0<|t_1 - t_2|<t^{-\frac{2}{m-2}}$.

    To estimate $E_{1}(t)$, write
    \begin{align*}
        E_{1}(t) = \int_{\ul{N}}\Delta_{g^{\vep}}u\cdot w^{\vep}\:\dd V_{g^{\vep}} + \int_{\ul{N}}\mathcal{P}^{\vep}_{\ul{0}}[u]\cdot w^{\vep}\:\dd V_{g^{\vep}} ~,
    \end{align*}
    where $\mathcal{P}^{\vep}_{\ul{0}} := \mathcal{L}^{\vep}_{\ul{0}} - \Delta_{g^{\vep}}$. Then by Lemma~\ref{lem-approxkernelestimates} and Lemma~\ref{lem-lower order term estimate} we obtain
    \begin{align*}
        |E_{1}(t)|&\leq\int_{\ul{N}}|u|\cdot|\Delta w^{\vep}|\:\dd V_{g^{\vep}} + \int_{\ul{N}}|\mathcal{P}^{\vep}_{\ul{0}}[u]|\cdot|w^{\vep}|\:\dd V_{g^{\vep}}\\
        &\leq C\:\|u\|_{P^{1, 2, \alpha}_{\mu, \nu, \Lambda}}\:t^{-\mu+\frac{\nu+2}{m-2}}\:\vep(t)^{m} ~.
    \end{align*}
    Similar estimate using the H\"older estimates in Lemma~\ref{lem-approxkernelestimates}, Lemma~\ref{lem-volumeestimates} and Lemma~\ref{lem-lower order term estimate} yields
    \begin{align*}
        \frac{|E_{1}(t_1) - E_{1}(t_2)|}{|t_1 - t_2|^{\alpha}}\leq C\:\|u\|_{P^{1, 2, \alpha}_{\mu, \nu, \Lambda}}\:t^{-\mu+\frac{\nu+2+2\alpha}{m-2}}\:\vep(t)^{m}
    \end{align*}
    for $0<|t_1 - t_2|<t^{-\frac{2}{m-2}}$.
    Hence,
    \begin{align*}
        |E_1(t)| + \frac{|E_1(t_1) - E_{1}(t_2)|}{|t_1 - t_2|^{\alpha}} \leq C\:\vep(t)^{m}\:t^{-\mu+\frac{\nu+2+2\alpha}{m-2}}\|u\|_{P^{1, 2, \alpha}_{\mu, \nu, \Lambda}}
    \end{align*}
    for $0<|t_1 - t_2|<t^{-\frac{2}{m-2}}$. 
    Combining these estimates, along with Theorem \ref{linear theory}, we obtain
    \begin{align*}
        |E(t)|+\frac{|E(t_1) - E(t_2)|}{|t_1 - t_2|^{\alpha}}\leq C\:\vep(t)^{m}\:t^{-\mu+\frac{\nu+2+2\alpha}{m-2}}
    \end{align*}
    for $0<|t_1 - t_2|<t^{-\frac{2}{m-2}}$. 
\end{proof}

\begin{proof}[Proof of Proposition \ref{prop-iteration estimate}.] To show that $\mathscr{I}$ is continuous with respect to the norm of $P^{1,2,\alpha'}_{\mu', \nu, \Lambda} \times C^{0, \alpha'}_{\zeta', \Lambda}$, one may use a contradiction argument as in the proof of \cite{BrendleK2017}*{Proposition~5.3}.

For the estimate on $k(t)$, note that by definition we have
\begin{align*}
    k(t) &= h(t) - \frac{m-2}{2c_L}\frac{V_{1}+V_{2}}{V_{1}V_{2}}\vep^{-m}\|w^{\vep}\|^{2}_{L^{2}}\cdot b(t)\\
    &= -\vep(t)^{-m}\frac{(m-2)(V_1 + V_{2})}{2c_LV_1 V_{2}}(G(t) + E(t)) ~.
\end{align*}
It follows from Lemma~\ref{lem: estimate of non-orthogonality} and Lemma~\ref{lem: balancing condition} that we may choose $\overline \zeta > \zeta$ such that 
\begin{align}
    |k(t)| + \frac{|k(t_1) - k(t_2)|}{|t_1 -t_2|^{\alpha}} &\leq C t^{-\overline \zeta},\quad 0<|t_1 -t_2|<t^{-\frac{2}{m-2}}.\\
    \implies \|k \|_{P^{0, \alpha}_{\zeta, \Lambda}} \, &\leq \, C\Lambda^{-(\overline\zeta - \zeta)} ~.
\end{align}
Finally, we may estimate $v$ using (\ref{v estimate}), Proposition~\ref{prop: zeroth order error}, and Proposition~\ref{Prop: quadratic error}:
\begin{align*}
    \|v\|_{P^{1,2,\alpha}_{\mu,\nu,\Lambda}} &\leq C\left(\|\theta_N^\vep\|_{P^{0,0,\alpha}_{\mu,\nu+2,\Lambda}} + \|\xi(0)\|_{P^{0,0,\alpha}_{\mu,\nu+2,\Lambda}} + \|Q^\vep[\dd u]\|_{P^{0,0,\alpha}_{\mu,\nu+2,\Lambda}}\right)\\
    &\leq C\left(\Lambda^{\mu - \frac{1}{m-2}\left( \tau(\nu+2) + (1-\tau)m \right)} + \Lambda^{\mu-\frac{1}{m-2}\left( m - 2\alpha \right)} + \Lambda^{-\mu+\frac{\nu+2}{m-2}}\right) ~.
\end{align*}
Taking $\Lambda$ sufficiently large therefore ensures that $\mathscr{I}$ maps $\mathcal{B}^{\alpha}_{\mu, \nu, \Lambda}\times\mathcal{I}^{0, \alpha}_{\zeta, \Lambda}$ to itself, as required.
\end{proof}

\subsection{Proof of Theorem \ref{thm-main_precise}}

Consider the iteration map $\mathscr{I}$ defined in section \ref{sec-iteration definition}. By Proposition \ref{prop-iteration estimate}, it may be viewed as a function on the product of unit balls, $\mathscr{I}:\mathcal{B}^{\alpha}_{\mu, \nu, \Lambda}\times \mathcal{I}^{0, \alpha}_{\zeta, \Lambda} \, \to \, \mathcal{B}^{\alpha}_{\mu, \nu, \Lambda}\times \mathcal{I}^{0, \alpha}_{\zeta, \Lambda}$.

By Lemma \ref{lem-compactness}, $\mathcal{B}^{\alpha}_{\mu, \nu, \Lambda} \times \mathcal{I}^{0, \alpha}_{\zeta, \Lambda}$ is a compact subset of $P^{1,2,\alpha'}_{\mu', \nu, \Lambda} \times C^{0, \alpha'}_{\zeta', \Lambda}$ for any $\mu' < \mu$, $\alpha' < \alpha$, $\zeta' < \zeta$. Since $\mathscr{I}$ is a continuous map by Proposition \ref{prop-iteration estimate}, we may therefore apply the Schauder fixed point theorem to conclude that there exist $(u, h) \in \mathcal{B}^{\alpha}_{\mu,\nu,\Lambda} \times \mathcal{I}^{0, \alpha}_{\zeta, \Lambda}$ such that $(v, k) := \mathscr{I}(u,h) =  (u,h)$. Define $\vep(t)$ and the Lagrangian embedding $\iota^\vep$ using the function $h$ as in (\ref{def-iteration epsilon}). Since $h\in\mathcal{I}^{0, \alpha}_{\zeta, \Lambda}$, $\vep(t)$ satisfies Assumption~\ref{assumption on epsilon}. By (\ref{iteration step}) and (\ref{k definition}), the fixed point $(u,h)$ satisfies
\begin{align*}
h(t) = k(t) &\implies b(t) = 0 ~,\\
u(t) = v(t) &\implies \partial_{t}u - \mathcal{L}^{\vep}_{\ul{0}}[u] = \theta_{N^{\vep}} + \xi(0) + Q^{\vep}[\dd u] + a(t)\\
&\implies \partial_t u = \theta(\dd u) \, + \, \xi(\dd u) + a(t),
\end{align*}
as required. 

Finally, since $u\in\mathcal{B}^{\alpha}_{\mu, \nu, \Lambda}$, we have
\begin{align}\label{C1 estimate}
    |\dd u(x, t)|_{g^{\vep}}\leq \|u\|_{P^{1, 2, \alpha}_{\mu, \nu, \Lambda}}\:t^{-\mu}\rho_{t^{-\frac{1}{m-2}}}^{-\nu-1}(x)\leq C\:t^{-\mu+\frac{\nu+2}{m-2}}\:\vep(t),\quad \mbox{ for all }(x, t)\in\ul{N}\times[\Lambda, \infty).
\end{align}
Since $\mu>\frac{\nu+2}{m-2}$, this shows that the time-dependent $1$-form $\dd u(\cdot, t)$ is contained in the Lagrangian neighborhood $U_{N^{\vep(t)}}$ for all $t\geq\Lambda$ for $\Lambda$ sufficiently large. We may then apply Proposition~\ref{dMCF} to obtain a solution to the mean curvature flow given by $\Psi_{N^{\vep(t)}}\circ\dd u(\cdot, t)$. The estimate (\ref{C1 estimate}) implies that $\Psi_{N^{\vep(t)}}\circ\dd u$ converges to the immersion $\iota:X\to M$.
\qed

We end this section by studying the convergence of the mean curvature flow solution $N_{t} := \Psi_{N^{\vep(t)}}\circ\dd u (\ul{N}, t)$ as $t \to\infty$.

\begin{proposition}\label{prop: tip convergence} We have the following locally smooth convergence of submanifolds in $\mathbb{C}^m$:
    \begin{align}
        \vep(t)^{-1}\left[\Upsilon^{-1}(N_{t})\cap B_{\vep(t)^{\tau}}\right]\to L\quad\mbox{as}\;\;t\to\infty.
    \end{align}
\end{proposition}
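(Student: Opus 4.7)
The plan is to exhibit $\vep(t)^{-1}\Upsilon^{-1}(N_t)\cap B_R$, for any fixed $R>0$, as the image under $\Phi_L$ of a small exact one-form on $L\cap B_R$, and then show that this one-form decays to zero in $C^k_{\mathrm{loc}}(L)$. Since $\Phi_L$ restricted to the zero section is the inclusion of $L$ into $\BC^m$, this would yield the desired convergence of submanifolds.

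The first step is to verify that the bare desingularisation already rescales correctly. For $t$ sufficiently large so that $(1+\hbar)\vep(t)R_1<\vep(t)^\tau$, I would show $\Upsilon^{-1}(N^{\vep(t)})\cap B_{\vep(t)^\tau}=\vep(t)\cdot L\cap B_{\vep(t)^\tau}$. On the tip region this is built into Step~2 of Definition~\ref{desing}; on the portion of the intermediate region with $\ir<\vep(t)^\tau$, the cutoff $\chi(\vep^{-\tau}\ir)$ in~\eqref{int_potential} vanishes, reducing $\ptq_{\vep}(\sigma,\ir)$ to $\vep^2\ptl(\sigma,\vep^{-1}\ir)$, so identity~\eqref{epsilon-immersion-identity} identifies the image with a piece of $\vep(t) L$. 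After rescaling by $\vep(t)^{-1}$, this exhausts $L\cap B_R$ for any fixed $R>0$ since $\vep(t)^{\tau-1}\to\infty$ as $t\to\infty$. Incorporating the perturbation $\dd u$, Corollary~\ref{Lag nbhd scaling} on the tip region and identity~\eqref{epsilon-cotangent-identity} on the inner intermediate region together exhibit $\vep(t)^{-1}\Upsilon^{-1}(N_t)\cap B_R$ as the image under $\Phi_L$ of $\vep(t)^{-2}\dd u$, pulled back to $T^*L$ via a composition of $\vph_\srp$, $\ir_\srp$, $f_{\vep}$ and $(\bar{\kp}_{\vep})_\srp$ whose derivatives are uniformly bounded in $t$ on the relevant region.

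The central analytic input is to show that $\vep(t)^{-2}\dd u$ decays in $C^{1,\alpha}_{\mathrm{loc}}(L)$. On the tip region the induced metric is $g^{\vep(t)}=\vep(t)^2 g_L$, so $|T|_{g^{\vep}}=\vep^{-k}|T|_{g_L}$ for any covariant tensor $T$ of rank $k$. Using $\rho_{\vep(t)}\sim\vep(t)$ on this region and $\|u\|_{P^{1,2,\alpha}_{\mu,\nu,\Lambda}}\leq 1$, one computes for $k\in\{0,1\}$,
\[
\bigl|\nabla^k_{g_L}(\vep(t)^{-2}\dd u)\bigr|_{g_L}\,=\,\vep(t)^{-2}\bigl|\nabla^{k+1}u\bigr|_{g_L}\,\leq\,C\,\vep(t)^{-2}\cdot\vep(t)^{k+1}\cdot t^{-\mu}\cdot\vep(t)^{-\nu-k-1}\,\leq\,C\,t^{-\mu+(\nu+2)/(m-2)},
\]
which tends to zero as $t\to\infty$ by the choice $\mu>(\nu+2)/(m-2)$ in~\eqref{assumption on constants}. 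The same estimate persists on the inner intermediate region because Lemma~\ref{lem-volumeestimates} shows $g^{\vep}$ is uniformly equivalent to $\vep^2$ times the cone metric, which the asymptotic parametrisation $\vph$ identifies with $\vep^2 g_L$.

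Finally, to upgrade from $C^{1,\alpha}_{\mathrm{loc}}$ to $C^\infty_{\mathrm{loc}}$ convergence, I would use the minimality of $L$ and the fact that each $N_t$ is smooth (being a solution of Lagrangian mean curvature flow): standard elliptic bootstrapping at each fixed time on bounded subsets of $L$ in the rescaled picture yields uniform $C^k_{\mathrm{loc}}$ bounds for every $k\geq 0$, and Arzel\`a--Ascoli then promotes the $C^{1,\alpha}$ convergence. The main obstacle I anticipate is the bookkeeping in the inner intermediate region: carefully tracking how the weighted norms on $\ul{N}$ transform under the chain of diffeomorphisms identifying this region with a subset of $T^*L$, and verifying that the diffeomorphism derivatives are uniformly bounded in $t$ on arbitrarily large bounded subsets of $L$. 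While routine, this bookkeeping is the principal place where the geometric content of the identities~\eqref{epsilon-immersion-identity}--\eqref{epsilon-cotangent-identity} enters the convergence argument.
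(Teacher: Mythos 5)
Your first two steps coincide with the paper's proof: the paper also observes that, by construction, $\vep(t)^{-1}\Upsilon^{-1}(N_t)\cap B_{\vep(t)^{\tau-1}}$ is the graph $\Phi_L\circ(\vep(t)^{-2}\dd u)$ over $L$, and then uses $u\in\mathcal{B}^{\alpha}_{\mu,\nu,\Lambda}$ together with $g^{\vep}=\vep^2 g_L$ and $\rho_{\vep}\sim\vep$ on the tip region to get $|\dd u|_{g_L}+|\nabla\dd u|_{g_L}=o(\vep(t)^2)$, i.e.\ exactly your bound $\vep^{-2}|\nabla^{k+1}u|_{g_L}\leq C\,t^{-\mu+\frac{\nu+2}{m-2}}\to 0$, which yields local $C^1$ convergence to $L$. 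Your care with the identities \eqref{epsilon-immersion-identity}--\eqref{epsilon-cotangent-identity} on the inner intermediate region is a correct (if more explicit) version of the paper's ``by construction'' step.

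The gap is in your final upgrade to smooth convergence. You propose ``standard elliptic bootstrapping at each fixed time,'' citing the minimality of $L$ and the smoothness of each $N_t$. But at a fixed time $N_t$ satisfies no elliptic equation: it is not minimal (it is merely a time slice of a mean curvature flow), and the minimality of the limit $L$ gives no equation for the perturbation $\vep^{-2}u(\cdot,t)$ either. The only equation available, $\theta(\dd u)=\partial_t u-\xi(\dd u)-a(t)$, has the time derivative $\partial_t u$ on the right-hand side, so bounding spatial derivatives of order $\geq 3$ uniformly in $t$ forces you to control spatial derivatives of $\partial_t u$, which is precisely parabolic, not elliptic, bootstrapping; smoothness of each individual $N_t$ only gives finite $C^k$ norms at each time, with no uniformity in $t$, so Arzel\`a--Ascoli cannot be applied as you state. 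The paper closes this step by noting that $u$ solves the parabolic equation \eqref{MCF1_tip} on the (rescaled) tip region and invoking interior parabolic regularity, which yields uniform higher-order bounds on parabolic cylinders and hence locally smooth convergence; your argument should be repaired by replacing the fixed-time elliptic bootstrap with this parabolic Schauder/smoothing estimate.
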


\begin{proof}
    By construction we know that $\vep(t)^{-1}\Upsilon^{-1}(N_{t})\cap B_{\vep(t)^{\tau-1}}$ can be written as a graph $\Phi_{L}\circ(\vep(t)^{-2}\dd u)$ over $L$. Then $u\in\mathcal{B}^{\alpha}_{\mu, \nu, \Lambda}$ implies that $|\dd u|_{g_{L}} + |\nabla \dd u|_{g_{L}} = o(\vep(t)^{2})$ as $t\to\infty$. Hence, the graph $\Phi_{L}\circ(\vep(t)^{-2}\dd u)$ converges to $L$ as $t\to\infty$ on $B_{\vep(t)^{\tau-1}}$, locally in the $C^1$-sense. Since $u$ is a solution to the parabolic equation (\ref{MCF1_tip}) on $L$, by parabolic regularity the convergence is locally smooth.
\end{proof}

\begin{corollary}\label{cor: curvature blow up}
    The second fundamental form $|A_{N_{t}}|$ blows up at a rate $O\left(t^{\frac{1}{m-2}}\right)$ as $t\to\infty$.
\end{corollary}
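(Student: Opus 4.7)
The plan is to establish both an upper bound and a matching lower bound $\sup_{N_t}|A_{N_t}|\sim \vep(t)^{-1}$, which together with $\vep(t)\sim t^{-1/(m-2)}$ from Assumption~\ref{assumption on epsilon} gives the corollary.

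For the upper bound, the argument splits according to the three regions of $\stm$. On the outer region $\out$, $N_t$ converges smoothly to the immersed special Lagrangian $\iota(X)$ (which has bounded curvature away from $\xsin$), so $|A_{N_t}|$ is uniformly bounded. On the intermediate region, I would first compute $|A_{N^{\vep(t)}}|$ directly: the approximate desingularisation is a graph of $\dd\ptq_{\vep(t)}$ over the cone $C$, and at leading order its second fundamental form is controlled by $|\nabla^3\ptq_{\vep(t)}|_{g_C}$, which by the decay rates of $\ptl$ and $\ptx$ is $O(\vep(t)^m\ir^{-1-m})\leq C\rho_{\vep(t)}^{-1}$. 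The perturbation $\dd u$ contributes an $O(|\nabla^3 u|_{g^{\vep(t)}})$ term; by parabolic Schauder bootstrap at the natural scale $\rho_{\vep(t)}$ this is also bounded by $C\rho_{\vep(t)}^{-1}$. On the tip region, I would invoke Proposition~\ref{prop: tip convergence}: since $\tilde N_t:=\vep(t)^{-1}\Upsilon^{-1}(N_t\cap\Upsilon(B_{\vep(t)^\tau}))$ converges in $C^k_{\mathrm{loc}}$ to $L$ and $\sup_L|A_L|<\infty$, one has $|A_{\tilde N_t}|\leq C$ on compact sets, and the standard conformal scaling $|A_{N_t}|=\vep(t)^{-1}|A_{\tilde N_t}|$ yields $|A_{N_t}|\leq C\vep(t)^{-1}$. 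Since $\rho_{\vep(t)}\geq \vep(t)R_1$ everywhere on $\stm$, we obtain $|A_{N_t}|\leq C\vep(t)^{-1}\leq Ct^{1/(m-2)}$ globally.

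For the lower bound (the ``blows up'' part), pick $p_0\in L$ where $|A_L(p_0)|>0$, for instance any point near the ``waist'' of $L$ at $s=0$ in the Lawlor parameterisation. By the locally smooth convergence of Proposition~\ref{prop: tip convergence}, for $t$ sufficiently large there is $\tilde p_t\in\tilde N_t$ near $p_0$ with $|A_{\tilde N_t}(\tilde p_t)|\geq\tfrac12|A_L(p_0)|$; the corresponding point in $N_t$ then satisfies $|A_{N_t}|\geq \tfrac12|A_L(p_0)|\vep(t)^{-1}\geq c\,t^{1/(m-2)}$ for some $c>0$.

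The main technical obstacle is the upper bound in the intermediate region: one must obtain a pointwise bound on $|\nabla^3 u|$, even though the weighted space $P^{1, 2, \alpha}_{\mu,\nu,\Lambda}$ only controls up to two derivatives. This is handled by a parabolic Schauder estimate at the variable scale $\rho_{\vep(t)}$: rescaling the metric and time variable near a given point to unit scale (as in the blow-up argument of Theorem~\ref{weighted sup norm estimate}) produces a uniformly elliptic parabolic equation with bounded coefficients, and the standard interior Schauder estimate then gives one more derivative of control; unscaling produces the desired $\rho_{\vep(t)}^{-1}$ factor in $|\nabla^3 u|_{g^{\vep(t)}}$.
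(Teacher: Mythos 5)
Your proposal is correct and follows essentially the same route as the paper: boundedness of $|A_{N_t}|$ away from the shrinking neck region (from the construction of $N^{\vep(t)}$ and the bound $u\in\mathcal{B}^{\alpha}_{\mu,\nu,\Lambda}$), combined with the locally smooth convergence of Proposition~\ref{prop: tip convergence} and the scaling $|A_{N_t}|=\vep(t)^{-1}|A_{\tilde N_t}|$ in the tip, with $\vep(t)\sim t^{-1/(m-2)}$. Your extra details --- the explicit lower bound at a point of nonzero curvature of $L$, and the parabolic Schauder bootstrap supplying the $|\nabla^3 u|$ control that the weighted space itself does not provide --- are a legitimate fleshing-out of steps the paper leaves implicit (it likewise invokes parabolic regularity in the proof of Proposition~\ref{prop: tip convergence}).
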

\begin{proof}
    By construction and the fact that $u\in\mathcal{B}^{\alpha}_{\mu, \nu, \Lambda}$, $|A_{N_{t}}|$ remains bounded away from the region $N_{t}\cap\Upsilon(B_{\vep(t)^{\tau}})$. The blow-up rate now follows from Proposition~\ref{prop: tip convergence}.
\end{proof}

\begin{bibdiv}
\begin{biblist}

\bib{ADGW20}{article}{
   author={Ao, Weiwei},
   author={DelaTorre, Azahara},
   author={Gonz\'{a}lez, Mar\'{\i}a del Mar},
   author={Wei, Juncheng},
   title={A gluing approach for the fractional Yamabe problem with isolated singularities},
   journal={J. Reine Angew. Math.},
   volume={763},
   date={2020},
   pages={25--78},
}

\bib{Behrndt2011}{book}{
   author={Behrndt, Tapio},
   title={Generalized Lagrangian mean curvature flow in almost Calabi–Yau manifolds},
   note={Thesis (Ph.D.)--University of Oxford},
   date={2011},
}

\bib{BrendleK2017}{article}{
   author={Brendle, Simon},
   author={Kapouleas, Nikolaos},
   title={Gluing Eguchi-Hanson metrics and a question of Page},
   journal={Comm. Pure Appl. Math.},
   volume={70},
   date={2017},
   number={7},
   pages={1366--1401},
}

\bib{CdS2001}{book}{
   author={Cannas da Silva, Ana},
   title={Lectures on symplectic geometry},
   series={Lecture Notes in Mathematics},
   volume={1764},
   publisher={Springer-Verlag, Berlin},
   date={2001},
   pages={xii+217},
}

\bib{CS23}{article}{
   author={Chen, Jingwen},
   author={Sun, Ao},
   title={Mean curvature flow with multiplicity 2 convergence},
   journal={},
   volume={},
   date={},
   number={},
   pages={},
   eprint={arXiv:2312.17457},
   status={preprint},
}

\bib{CH2010}{article}{
   author={Chen, Jingyi},
   author={He, Weiyong},
   title={A note on singular time of mean curvature flow},
   journal={Math. Z.},
   volume={266},
   date={2010},
   number={4},
   pages={921--931},
}

\bib{CDK21}{article}{
   author={Choi, Beomjun},
   author={Daskalopoulos, Panagiota},
   author={King, John},
   title={Type II singularities on complete non-compact Yamabe flow},
   journal={J. Reine Angew. Math.},
   volume={772},
   date={2021},
   pages={83--119},
}

\bib{DPS18}{article}{
   author={Daskalopoulos, Panagiota},
   author={del Pino, Manuel},
   author={Sesum, Natasa},
   title={Type II ancient compact solutions to the Yamabe flow},
   journal={J. Reine Angew. Math.},
   volume={738},
   date={2018},
   pages={1--71},
}

\bib{HL1982}{article}{
   author={Harvey, Reese},
   author={Lawson, H. Blaine, Jr.},
   title={Calibrated geometries},
   journal={Acta Math.},
   volume={148},
   date={1982},
   pages={47--157},
}

\bib{Hattori19}{article}{
   author={Hattori, Kota},
   title={New examples of compact special Lagrangian submanifolds embedded in hyper-K\"{a}hler manifolds},
   journal={J. Symplectic Geom.},
   volume={17},
   date={2019},
   number={2},
   pages={301--335},
}

\bib{IJO}{article}{
   author={Imagi, Yohsuke},
   author={Joyce, Dominic},
   author={Oliveira dos Santos, Joana},
   title={Uniqueness results for special Lagrangians and Lagrangian mean curvature flow expanders in $\mathbb{C}^m$},
   journal={Duke Math. J.},
   volume={165},
   date={2016},
   number={5},
   pages={847--933},
}

\bib{Joyce2002SLCS1}{article}{
   author={Joyce, Dominic},
   title={Special Lagrangian submanifolds with isolated conical singularities. I. Regularity},
   journal={Ann. Global Anal. Geom.},
   volume={25},
   date={2004},
   number={3},
   pages={201--251},
}

\bib{Joyce2004SLCS2}{article}{
   author={Joyce, Dominic},
   title={Special Lagrangian submanifolds with isolated conical singularities. II. Moduli spaces},
   journal={Ann. Global Anal. Geom.},
   volume={25},
   date={2004},
   number={4},
   pages={301--352},
}

\bib{Joyce2004SLCS3}{article}{
   author={Joyce, Dominic},
   title={Special Lagrangian submanifolds with isolated conical singularities. III. Desingularisation, the unobstructed case},
   journal={Ann. Global Anal. Geom.},
   volume={26},
   date={2004},
   number={1},
   pages={1--58},
}

\bib{Joyce2004SLCS4}{article}{
   author={Joyce, Dominic},
   title={Special Lagrangian submanifolds with isolated conical singularities. IV. Desingularisation, obstructions and families},
   journal={Ann. Global Anal. Geom.},
   volume={26},
   date={2004},
   number={2},
   pages={117--174},
}

\bib{Joyce2003SLCS5}{article}{
   author={Joyce, Dominic},
   title={Special Lagrangian submanifolds with isolated conical singularities. V. Survey and applications},
   journal={J. Differential Geom.},
   volume={63},
   date={2003},
   number={2},
   pages={279--347},
}

\bib{Joyce2015}{article}{
   author={Joyce, Dominic},
   title={Conjectures on Bridgeland stability for Fukaya categories of Calabi-Yau manifolds, special Lagrangians, and Lagrangian mean curvature flow},
   journal={EMS Surv. Math. Sci.},
   volume={2},
   date={2015},
   number={1},
   pages={1--62},
}

\bib{LLSch}{article}{
   author={Lambert, Ben},
   author={Lotay, Jason D.},
   author={Schulze, Felix},
   title={Ancient solutions in Lagrangian mean curvature flow},
   journal={Ann. Sc. Norm. Super. Pisa Cl. Sci. (5)},
   volume={22},
   date={2021},
   number={3},
   pages={1169--1205},
}

\bib{Lawlor1989}{article}{
   author={Lawlor, Gary},
   title={The angle criterion},
   journal={Invent. Math.},
   volume={95},
   date={1989},
   number={2},
   pages={437--446},
}

\bib{DLee2004}{article}{
   author={Lee, Dan A.},
   title={Connected sums of special Lagrangian submanifolds},
   journal={Comm. Anal. Geom.},
   volume={12},
   date={2004},
   number={3},
   pages={553--579},
}

\bib{Lee2003}{article}{
   author={Lee, Yng-Ing},
   title={Embedded special Lagrangian submanifolds in Calabi-Yau manifolds},
   journal={Comm. Anal. Geom.},
   volume={11},
   date={2003},
   number={3},
   pages={391--423},
}

\bib{Lock1987}{article}{
   author={Lockhart, Robert},
   title={Fredholm, Hodge and Liouville theorems on noncompact manifolds},
   journal={Trans. Amer. Math. Soc.},
   volume={301},
   date={1987},
   number={1},
   pages={1--35},
}

\bib{LockMc1985}{article}{
   author={Lockhart, Robert B.},
   author={McOwen, Robert C.},
   title={Elliptic differential operators on noncompact manifolds},
   journal={Ann. Scuola Norm. Sup. Pisa Cl. Sci. (4)},
   volume={12},
   date={1985},
   number={3},
   pages={409--447},
}

\bib{LotayNeves2013}{article}{
   author={Lotay, Jason D.},
   author={Neves, Andr\'{e}},
   title={Uniqueness of Langrangian self-expanders},
   journal={Geom. Topol.},
   volume={17},
   date={2013},
   number={5},
   pages={2689--2729},
}

\bib{LSchSz}{article}{
   author={Lotay, Jason D.},
   author={Schulze, Felix},
   author={Sz\'{e}kelyhidi, G\'{a}bor},
   title={Neck pinches along the Lagrangian mean curvature flow of surfaces},
   journal={},
   volume={},
   date={},
   number={},
   pages={},
   eprint={arXiv:2208.11054},
   status={preprint},
}

\bib{Lyc1977}{article}{
   author={Ly\v{c}agin, V. V.},
   title={Sufficient orbits of a group of contact diffeomorphisms},
   language={Russian},
   journal={Mat. Sb. (N.S.)},
   volume={104(146)},
   date={1977},
   number={2(10)},
   pages={248--270, 335},
}

\bib{McLean1998}{article}{
   author={McLean, Robert C.},
   title={Deformations of calibrated submanifolds},
   journal={Comm. Anal. Geom.},
   volume={6},
   date={1998},
   number={4},
   pages={705--747},
}

\bib{Neves2007}{article}{
   author={Neves, Andr\'{e}},
   title={Singularities of Lagrangian mean curvature flow: zero-Maslov class case},
   journal={Invent. Math.},
   volume={168},
   date={2007},
   number={3},
   pages={449--484},
}

\bib{Neves2013}{article}{
   author={Neves, Andr\'{e}},
   title={Finite time singularities for Lagrangian mean curvature flow},
   journal={Ann. of Math. (2)},
   volume={177},
   date={2013},
   number={3},
   pages={1029--1076},
}

\bib{Oh1994}{article}{
   author={Oh, Yong-Geun},
   title={Mean curvature vector and symplectic topology of Lagrangian submanifolds in Einstein-K\"{a}hler manifolds},
   journal={Math. Z.},
   volume={216},
   date={1994},
   number={3},
   pages={471--482},
}

\bib{Pacini2013b}{article}{
   author={Pacini, Tommaso},
   title={Special Lagrangian conifolds, II: gluing constructions in $\BC^m$},
   journal={Proc. Lond. Math. Soc. (3)},
   volume={107},
   date={2013},
   number={2},
   pages={225--266},
}

\bib{Smoczyk96}{article}{
   author={Smoczyk, Knut},
   title={A canonical way to deform a Lagrangian submanifold},
   journal={},
   volume={},
   date={},
   number={},
   pages={},
   eprint={arXiv:dg-ga/9605005},
   status={preprint},
}

\bib{SYZ96}{article}{
   author={Strominger, Andrew},
   author={Yau, Shing-Tung},
   author={Zaslow, Eric},
   title={Mirror symmetry is $T$-duality},
   journal={Nuclear Phys. B},
   volume={479},
   date={1996},
   number={1-2},
   pages={243--259},
}

\bib{Su2020}{article}{
   author={Su, Wei-Bo},
   title={Mean curvature flow of asymptotically conical Lagrangian submanifolds},
   journal={Trans. Amer. Math. Soc.},
   volume={373},
   date={2020},
   number={2},
   pages={1211--1242},
}

\bib{Thomas00}{article}{
   author={Thomas, R. P.},
   title={Moment maps, monodromy and mirror manifolds},
   conference={
      title={Symplectic geometry and mirror symmetry},
      address={Seoul},
      date={2000},
   },
   book={
      publisher={World Sci. Publ., River Edge, NJ},
   },
   date={2001},
   pages={467--498},
}

\bib{TY02}{article}{
   author={Thomas, R. P.},
   author={Yau, S.-T.},
   title={Special Lagrangians, stable bundles and mean curvature flow},
   journal={Comm. Anal. Geom.},
   volume={10},
   date={2002},
   number={5},
   pages={1075--1113},
}

\bib{WZZ22}{article}{
   author={Wei, Juncheng},
   author={Zhang, Qidi},
   author={Zhou, Yifu},
   title={On the parabolic gluing method and singularity formation},
   journal={C. R. Math. Acad. Sci. Soc. R. Can.},
   volume={44},
   date={2022},
   number={4},
   pages={69--121},
}

\bib{Yau78}{article}{
   author={Yau, Shing Tung},
   title={On the Ricci curvature of a compact K\"{a}hler manifold and the complex Monge-Amp\`ere equation. I},
   journal={Comm. Pure Appl. Math.},
   volume={31},
   date={1978},
   number={3},
   pages={339--411},
}

\end{biblist}
\end{bibdiv}

\end{document}